\newcommand{\Z}{\mathbb{Z}}
\newcommand{\Q}{\mathbb{Q}}
\newcommand{\RP}{\mathbb{RP}}
\newcommand{\CP}{\mathbb{CP}}
\newcommand{\HP}{\mathbb{HP}}
\newcommand{\OP}{\mathbb{OP}}
\newcommand{\F}{\mathbb{F}}
\newcommand{\CG}{\mathcal{G}}
\newcommand{\rC}{\mathrm{C}}
\newcommand{\rS}{\mathrm{S}}
\newcommand{\rA}{\mathrm{A}}
\newcommand{\Sym}{\mathop{\mathrm{Sym}}\nolimits}
\newcommand{\link}{\mathop{\mathrm{link}}\nolimits}
\newcommand{\rank}{\mathop{\mathrm{rank}}\nolimits}
\newcommand{\cert}{\mathop{\mathrm{cert}}\nolimits}
\newcommand{\tM}{\widetilde{M}\vphantom{M}}
\newcommand{\ttM}{\widetilde{\widetilde{M}}\vphantom{M}}
\newcommand{\pt}{\mathrm{pt}}
\newtheorem{theorem}{Theorem} [section]
\newtheorem{propos}[theorem] {Proposition}
\newtheorem{cor}[theorem] {Corollary}
\newtheorem{conj}[theorem] {Conjecture}
\newtheorem{problem}[theorem] {Problem}
\newtheorem{question}[theorem] {Question}
\theoremstyle{definition}
\newtheorem{observe}[theorem] {Observation}
\newtheorem{remark}[theorem]{Remark}
\newtheorem{defin}[theorem]{Definition}
\newtheorem{comp}[theorem]{Computer result}
\numberwithin{equation}{section}
\author{Alexander A. Gaifullin}
\address{Steklov Mathematical Institute of Russian Academy of Sciences, Moscow, Russia}
\address{Skolkovo Institute of Science and Technology, Moscow, Russia}
\address{Lomonosov Moscow State University, Moscow, Russia}
\address{Institute for the Information Transmission Problems of the Russian Academy of Sciences (Kharkevich Institute), Moscow, Russia}
\email{agaif@mi-ras.ru}
\thanks{}
\title{New examples and partial classification of 15-vertex triangulations of the quaternionic projective plane}
\date{}
\keywords{Minimal triangulation, quaternionic projective plane, manifold like a projective plane, K\"uhnel triangulation, vertex-transitive triangulation, combinatorial manifold, transformation group, Smith theory, fixed point set, symmetry group}
\subjclass[2020]{57Q15, 57Q70, 05E45, 55M35}
\begin{document}
\begin{abstract}
 Brehm and K\"uhnel (1992) constructed three $15$-vertex combinatorial $8$-manifolds `like the quaternionic projective plane' with symmetry groups~$\rA_5$, $\rA_4$, and~$\rS_3$, respectively. Gorodkov (2016) proved that these three manifolds are in fact PL homeomorphic to~$\HP^2$. Note that $15$ is the minimal number of vertices of a combinatorial $8$-manifold that is not PL homeomorphic to~$S^8$. In the present paper we construct a lot of new $15$-vertex triangulations of~$\HP^2$. A surprising fact is that such examples are found for very different symmetry groups, including those not in any way related to the group~$\rA_5$. Namely, we find $19$~triangulations with symmetry group~$\rC_7$, one triangulation with symmetry group~$\rC_6\times\rC_2$, $14$~triangulations with symmetry group~$\rC_6$, $26$~triangulations with symmetry group~$\rC_5$, one new triangulation with symmetry group~$\rA_4$, and $11$~new triangulations with symmetry group~$\rS_3$. Further, we obtain the following classification result. We prove that, up to isomorphism, there are exactly $75$ triangulations of~$\HP^2$ with $15$ vertices and symmetry group of order at least~$4$: the three Brehm--K\"uhnel triangulations and the $72$ new triangulations listed above.
On the other hand, we show that there are plenty of triangulations with symmetry groups~$\rC_3$ and~$\rC_2$, as well as the trivial symmetry group.
\end{abstract}

\maketitle

\begin{flushright}\textit{To my Advisor Victor Buchstaber\\
with deep gratitude for everything I learned from him,\\
on the occasion of his 80th birthday}
\end{flushright}

\section{Introduction}\label{section_intro}

Throughout the paper, we denote by~$\rC_n$, $\rS_n$, and~$\rA_n$ the cyclic group of order~$n$,  the symmetric group of degree~$n$, and the alternating group of degree~$n$, respectively.

Recall that a simplicial complex~$K$ is called a \textit{combinatorial $d$-manifold} if the link of every vertex of~$K$ is PL homeomorphic to the standard $(d-1)$-sphere.

In 1992 Brehm and K\"uhnel~\cite{BrKu92} constructed three $15$-vertex combinatorial $8$-manifolds `like the quaternionic projective plane', denoted by~$M^8_{15}$, $\tM_{15}^8$, and~$\ttM_{15}^8$, with symmetry groups~$\rA_5$, $\rA_4$, and~$\rS_3$, respectively. Three more
examples of such combinatorial manifolds were constructed by Lutz\footnote{Unfortunately, the lists of simplices of the three Lutz triangulations are not available via the link given in~\cite{Lut05} and are apparently lost. Therefore, it is not clear how the Lutz triangulations are related to the triangulations that will be constructed in the present paper.}~\cite{Lut05}  by applying his program \texttt{BISTELLAR} to~$M^8_{15}$, $\tM_{15}^8$, and~$\ttM_{15}^8$.
Gorodkov~\cite{Gor16,Gor19} proved that the three Brehm--K\"uhnel combinatorial manifolds (and hence also the three combinatorial manifolds constructed by Lutz) are in fact PL triangulations of~$\HP^2$. Interest in such triangulations is caused by the following result.

\begin{theorem}[Brehm, K\"uhnel~\cite{BrKu87}]\label{thm_BK}
 Suppose that $K$ is a combinatorial $d$-manifold with $n$ vertices.
 \begin{enumerate}
  \item If $n<3d/2+3$, then $K$ is PL homeomorphic to the standard sphere~$S^d$.
  \item If $n=3d/2+3$, then either $K$  is PL homeomorphic to~$S^d$ or $d\in\{2,4,8,16\}$ and $K$ is a `manifold like a projective plane', that is, $K$ admits a PL Morse function with exactly $3$ critical points.
 \end{enumerate}
\end{theorem}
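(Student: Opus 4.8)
\medskip
\noindent\textbf{Proof proposal.}
The plan is to run K\"uhnel's polytopal (PL) Morse theory. Fix an injective ``height'' $h\colon V(K)\to\R$ in general position and extend it to a regular simplexwise linear function $\varphi\colon|K|\to\R$; its only critical points are vertices. As $t$ passes $h(v)$, the sublevel set $|K|_{\le t}$ changes, up to homotopy, by attaching a cone on the \emph{lower link} $L_v^-$ of $v$, the full subcomplex of $\link_K(v)$ spanned by the vertices of height $<h(v)$: one has $|K|_{\le h(v)+\varepsilon}\simeq|K|_{\le h(v)-\varepsilon}\cup_{L_v^-}C(L_v^-)$. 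Hence, writing $\mu_i=\sum_v\dim\widetilde H_{i-1}(L_v^-;\F)$, one gets Morse inequalities $\mu_i\ge b_i(K;\F)$ for $1\le i\le d-1$ and $\mu_0,\mu_d\ge1$, together with $\chi(K)=\sum_i(-1)^i\mu_i$; the unique minimum vertex accounts for a single $0$-cell and the unique maximum vertex for a single $d$-cell. The bridge to the topological conclusions will be two standard PL facts: if some $\varphi$ has only these two critical points, then $|K|$ is a union of two PL $d$-balls along a common boundary sphere, so $K\cong_{PL}S^d$; and if some $\varphi$ has exactly three critical points with a single handle at each, then the handles have indices $0$, $i$, $d$ with $0<i<d$, so $K$ is a ``manifold like a projective plane'' by definition. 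Thus it suffices to prove: \emph{(i)} if $n<\tfrac32 d+3$ then some $\varphi$ has only two critical points; and \emph{(ii)} if $n=\tfrac32 d+3$ and $K\not\cong_{PL}S^d$, then some $\varphi$ has exactly three critical points and $d\in\{2,4,8,16\}$.

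The engine is a Heawood-type lower bound on $n$. First I would prove the local input: if a vertex $v$ has $\widetilde H_{i-1}(L_v^-)\ne0$ for some $1\le i\le d-1$, then $L_v^-$ is a \emph{full} subcomplex of the combinatorial $(d-1)$-sphere $\link_K(v)$, while the complementary full subcomplex $L_v^+$ (the upper link) satisfies $\widetilde H_{d-i-1}(L_v^+)\ne0$; Alexander duality inside $S^{d-1}$ forces $L_v^-$ and $L_v^+$ to be homologically linked. Combined with the elementary fact that a complex with non-vanishing $(j-1)$-st reduced homology has at least $j+1$ vertices, this already gives $\deg_K(v)\ge d+2$. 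The sharp bound then comes from feeding in global Poincar\'e duality on $K$ (which produces complementary critical behaviour in degrees $i$ and $d-i$) and summing the local estimates over all vertices, to reach an inequality of the shape
\[
  \binom{\,n-\lceil d/2\rceil-2\,}{\,\lceil d/2\rceil+1\,}\ \ge\ \Theta(K),
\]
where $\Theta(K)\ge0$ is built from the Betti numbers of $K$ and vanishes exactly when $K$ is a homology $d$-sphere. (For $d=2$ this is nothing but $f_1=3(n-\chi)\le\binom n2$.) If $n<\tfrac32 d+3$ the left side is too small, so $K$ is a homology $d$-sphere; choosing $h$ so that $\varphi$ realizes the minimum possible number of critical points then gives only two critical points, proving (i). The same inequality shows that, at $n=\tfrac32 d+3$, a $K$ that is not a homology sphere forces \emph{equality}.

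I would then analyse the equality case, which amounts to classifying the tight (extremal) triangulations. Equality forces the triangulation to be as neighborly as possible: $d=2k$ is even and $K$ is $(k+1)$-neighborly, so $f_0,\dots,f_k$ take their maximal binomial values; the Dehn--Sommerville relations then determine the whole $f$-vector, and together with the Morse inequalities this pins the Betti numbers down to $b_0=b_k=b_{2k}=1$, all others zero. Hence a suitably chosen $\varphi$ has exactly three critical points with single handles of indices $0,k,2k$, so $K$ is a manifold like a projective plane and $H^*(K;\Z/2)\cong(\Z/2)[x]/(x^3)$ with $\deg x=k$, the relation $x^2\ne0$ being forced by the nondegeneracy of the mod-$2$ Poincar\'e pairing $H^k\otimes H^k\to H^{2k}$. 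It is classical that such a ring is realizable only for $k\in\{1,2,4,8\}$: if $k$ is not a power of $2$, the Adem relations write $Sq^k$ as a sum of decomposables, each killing $x$ for dimensional reasons, contradicting $Sq^k x=x^2\ne0$; the residual cases $k=2^j$ with $j\ge4$ are excluded by Adams' theorem on the Hopf invariant one. Therefore $d=2k\in\{2,4,8,16\}$, which completes (ii).

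The main obstacle is the sharp Heawood-type inequality together with its equality analysis. The naive vertex count yields only $n\ge d+3$ (a homotopy $(i-1)$-sphere needs $i+1$ vertices and its complementary homotopy $(d-i-1)$-sphere needs $d-i+1$ vertices), and extracting the extra $\sim d/2$ vertices genuinely requires exploiting that the lower and upper links are \emph{full} subcomplexes of a sphere linked there, in combination with global Poincar\'e duality, and then showing that equality rigidifies the triangulation to the neighborly extreme. Secondary difficulties, which I would dispatch by hand, are the small dimensions --- $d=2$ via Euler's formula, and $d\le4$ where the Morse-theoretic reductions are thinnest --- and the passage from homotopy-theoretic to PL statements, best handled by working throughout with the minimal number of critical points of an rsl function rather than with counts of attached homology cells.
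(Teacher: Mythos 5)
This theorem is not proved in the paper: it is quoted verbatim from Brehm and K\"uhnel~\cite{BrKu87}, so there is no internal proof to compare your proposal against. Judged on its own, your sketch correctly identifies the standard framework (K\"uhnel's regular simplexwise linear Morse theory, lower links, Alexander duality inside the vertex links, the ``two critical points implies PL sphere'' reduction, the tightness/equality analysis, and the Adem-relation/Hopf-invariant-one input that restricts $d$ to $\{2,4,8,16\}$). Those outer layers are sound.

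The genuine gap is exactly where you flag ``the main obstacle'': the sharp Heawood-type inequality is the entire content of the theorem, and your proposed derivation does not produce it. Summing the local estimate $\deg_K(v)\ge d+2$ over all vertices and comparing with $f_1\le\binom n2$ yields only $n\ge d+3$; neither Poincar\'e duality nor the fullness of the lower and upper links, as invoked, explains how the extra $\sim d/2$ vertices appear, and the displayed inequality $\binom{n-\lceil d/2\rceil-2}{\lceil d/2\rceil+1}\ge\Theta(K)$ is essentially K\"uhnel's generalized Heawood conjecture, whose proof (Novik, Novik--Swartz) came long after 1987 and uses face-ring methods rather than anything in your outline. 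Brehm and K\"uhnel's actual argument avoids this inequality altogether: it rests on a combinatorial lemma about a full subcomplex $A$ of a combinatorial $(d-1)$-sphere with $\widetilde H_{i-1}(A)\neq 0$ forcing missing faces whose spans are then counted through iterated links, and the equality case is analyzed via complementarity (cf.\ Theorem~\ref{thm_ArMa} and Corollary~\ref{cor_complement} in this paper, which record the consequences). As written, your plan replaces the hard step by an unproved assertion that is at least as strong as the theorem itself, so the proof is not complete.
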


Thus, $n$-vertex (combinatorial) triangulations of $d$-manifolds not homeomorphic to~$S^d$, where $(d,n)$ is one of the pairs $(2,6)$, $(4,9)$, $(8,15)$, or~$(16,27)$, are of particular interest. In each of dimensions~$2$ and~$4$, such a triangulation is unique. Namely, these are the triangulation~$\RP^2_6$ of real projective plane obtained by taking the quotient of the boundary of regular icosahedron by the antipodal involution and  the triangulation~$\CP^2_9$ of complex projective plane constructed by K\"uhnel, see~\cite{KuBa83}. The uniqueness in the $4$-dimensional case was proved by K\"uhnel and Lassmann~\cite{KuLa83}, see also~\cite{BaDa01}. In the present paper we concentrate on the $8$-dimensional case. In dimension~$16$ the first examples of the $27$-vertex combinatorial manifolds that are not homeomorphic to~$S^{16}$ have been recently constructed by the author~\cite{Gai22}, see also~\cite{Gai23}. Though in the present paper we never consider the $16$-dimensional case, we will constantly use methods suggested in~\cite{Gai22} and~\cite{Gai23}.

For a finite simplicial complex~$K$, we denote by~$\Sym(K)$ the \textit{symmetry group} of~$K$, i.\,e., the group of all simplicial automorphisms of~$K$. Suppose that $K$ has $n$ vertices. Numbering the vertices of~$K$ from~$1$ to~$n$, we realize~$\Sym(K)$ as a subgroup of~$\rS_n$. Moreover, different numberings of vertices provide conjugate subgroups of~$\rS_n$.

The main result of the present paper is as follows.

{\sloppy
\begin{theorem}\label{thm_main}
\begin{enumerate}
 \item Up to conjugation, there are exactly $10$ subgroups $G\subset \rS_{15}$ for which there exists a $15$-vertex combinatorial triangulation~$K$ of~$\HP^2$ with $\Sym(K)=G$, see Table~\ref{table_main}.
 \item The number of different (up to isomorphism) $15$-vertex combinatorial triangulations of\/~$\HP^2$ with each symmetry group, except for~$\rC_3$, $\rC_2$, and the trivial group~$\rC_1$, is given in the last column of Table~\ref{table_main}.
 \item For the three symmetry groups~$\rC_3$, $\rC_2$, and~$\rC_1$, there are at least as many $15$-vertex combinatorial triangulations of~$\HP^2$ as the number in the last column of Table~\ref{table_main}.
 \end{enumerate}
\end{theorem}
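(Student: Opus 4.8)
\medskip
\noindent\textbf{Proof strategy.}
Since $15=\tfrac32\cdot 8+3$, Theorem~\ref{thm_BK}(2) shows that every $15$-vertex triangulation $K$ of $\HP^2$ is a manifold like a projective plane; hence (by K\"uhnel's analysis of the extremal case) $K$ is $5$-neighborly, i.e.\ $f_i(K)=\binom{15}{i+1}$ for $i\le 4$, and the whole $f$-vector — in particular the number $N$ of facets — is then forced by the Dehn--Sommerville relations and $\chi(K)=3$. The plan is therefore to reduce the statement to a finite collection of computer searches: for each candidate subgroup $G\subseteq\rS_{15}$ one enumerates the $G$-invariant pure $8$-dimensional subcomplexes of $\partial\Delta^{14}$ with exactly $N$ facets that are combinatorial $8$-manifolds with the homology of $\HP^2$, discards those with a strictly larger symmetry group and those not PL homeomorphic to $\HP^2$, and sorts the survivors up to isomorphism. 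Since a $G$-invariant such complex is a union of $G$-orbits in the $G$-set $\binom{[15]}{9}$ of potential facets whose orbit sizes add up to $N$, once $G$ is fixed this is a backtracking search over families of orbits, and $5$-neighborliness together with the pseudomanifold condition (each $7$-face lies in exactly two facets) provide strong local constraints that can be propagated to prune the search tree as orbits are added one at a time.

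The first task is to shrink the list of candidate groups to a finite, workable one. Each $g\in G$ of prime order $p$ acts simplicially on $K\simeq\HP^2$, so Smith theory applies: $\Fix(g)$ is a $\bmod\,p$ cohomology manifold with $\sum_i\dim_{\Z/p}H^i(\Fix(g);\Z/p)\le 3$ (the deficiency from $3$ being even) and $\chi(\Fix(g))\equiv 3\pmod p$, the orbit decomposition of $\{1,\dots,15\}$ under $\langle g\rangle$ is a partition of $15$ into parts equal to $1$ or $p$, and the $g$-invariant simplices of $K$ determine $\Fix(g)$ combinatorially. Just as in~\cite{Gai22,Gai23}, combining these constraints rules out every element whose prime order is not in $\{2,3,5,7\}$ and restricts the admissible cycle types of the remaining prime-order elements; a finite analysis of which groups $G\subseteq\rS_{15}$ can be assembled from such elements — organized around a subgroup of order $7$, then of order $5$, then the groups $G$ of order $2^a3^b$ — followed by deleting those that turn out to carry no triangulation, leaves exactly the ten groups of Table~\ref{table_main}. (These same Smith-theoretic restrictions, now specialized to the relevant $g$, also serve as pruning rules inside the searches below; for example a triangulation with $\rC_7$-symmetry must contain each of the two $7$-element vertex orbits as a face but neither of their joins with the unique fixed vertex.)

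For each of the seven groups of order $\ge4$ the enumeration is then run exhaustively. For a fixed $G$ one computes the $G$-orbits on $\binom{[15]}{9}$, performs the pruned backtracking search over subsets of total size $N$, and for each surviving complex verifies that (i) it is a closed pseudomanifold; (ii) the link of every vertex is a PL $7$-sphere — checked by reducing the link to $\partial\Delta^8$ with bistellar flips, which succeeds on complexes this small; and (iii) its homology is that of $\HP^2$, which already excludes $S^8$. Isomorphic copies are recognized by computing a canonical form of each complex, and to record the symmetry group correctly one processes the candidate groups in order of decreasing order, attributing each triangulation to the full automorphism group it actually has. Finally, to certify that each survivor is PL homeomorphic to $\HP^2$ rather than to an exotic manifold like $\HP^2$, one evaluates the invariant used by Gorodkov~\cite{Gor16,Gor19}, or equivalently exhibits a bistellar path from it to one of the Brehm--K\"uhnel triangulations.

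The two steps I expect to be the main obstacles are: (a) making the backtracking search complete — not merely a sample — for the ``small'' admissible groups, above all $\rC_5$ and $\rC_6$, whose orbits on facets are both numerous and large, which requires the pruning from $5$-neighborliness and from the forced shape of the fixed subcomplexes to be sharp enough to keep the search tree finite in practice; and (b) the certification of PL type, since both recognizing a PL $7$-sphere (for the links) and separating $\HP^2$ from a fake $\HP^2$ are potentially delicate, so one must be sure the bistellar reductions terminate and that the Gorodkov-type invariant can actually be computed for every example produced. For the symmetry groups $\rC_3$, $\rC_2$, and $\rC_1$ the search space is far too large to exhaust, which is why part~(3) of the theorem asserts only a lower bound: here it is enough to exhibit explicitly the claimed number of pairwise non-isomorphic triangulations with each of these three symmetry groups, for instance by running bistellar flips starting from the examples already found and keeping the distinct complexes that appear with these small symmetry groups.
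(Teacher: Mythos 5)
Your overall strategy coincides with the paper's: Smith theory plus complementarity to cut the candidate groups down to a finite list, an exhaustive $G$-orbit backtracking search for each group of order at least $4$, and certification of the PL type via a combinatorial first Pontryagin class together with Kramer's classification. Two concrete points in your outline would, however, fail as written. First, for $\rC_5$ acting freely on the vertices (the only way $\rC_5$ can act, as it turns out), the fixed-point complex is just three isolated points, so the ``forced shape of the fixed subcomplexes'' gives you essentially nothing to seed or prune the search with, and the raw backtracking over $\rC_5$-orbits of facets does not terminate in practice. The paper's workaround is an extra structural idea you do not supply: each vertex orbit $\alpha_i$ is a $4$-simplex whose link is a $\rC_5$-invariant homology $3$-sphere on $5$ or $10$ vertices, complementarity forces at least one of these links to contain no $2$-simplex inside a single orbit, and a hand classification (Proposition~\ref{propos_L}) shows there are only nine possibilities $L_1,\dots,L_9$; the search is then run once for each mandatory subcomplex $\Delta^4*L_i$. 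Without this (or an equivalent reduction) part~(2) of the theorem is not established for $\rC_5$, and similarly the $\rS_3$, $\rC_6$, $\rC_2\times\rC_2$ cases need the explicit mandatory subcomplexes extracted from $K^{\rC_3}\cong\pt\sqcup\partial\Delta^3$ and $K^{\rC_2}\cong\CP^2_9$ to make the search finite.

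Second, your certification and generation steps lean on bistellar flips, which is the wrong move set here: a bistellar flip cannot carry one $15$-vertex triangulation to another without first increasing the vertex count, and moreover most of the $75$ triangulations are \emph{not} connected to the Brehm--K\"uhnel ones even by the vertex-preserving triple flips $(\Delta_1*\partial\Delta_2)\cup(\Delta_2*\partial\Delta_3)\cup(\Delta_3*\partial\Delta_1)\rightsquigarrow(\partial\Delta_1*\Delta_2)\cup(\partial\Delta_2*\Delta_3)\cup(\partial\Delta_3*\Delta_1)$, so ``exhibit a path to Brehm--K\"uhnel'' is not available. The paper instead computes $p_1^2[K]=4$ directly (Gorodkov's program) for one representative of each triple-flip component containing the new examples and propagates the PL type along triple flips; and it is precisely the (equivariant) triple-flip graphs, not bistellar searches, that produce the specific counts $4617$, $3345$, and $>670000$ asserted for $\rC_3$, $\rC_2$, $\rC_1$ in part~(3).
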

}

\begin{table}
\caption{Numbers of $15$-vertex triangulations of~$\HP^2$ with given symmetry groups}\label{table_main}
\begin{tabular}{|c|c|c|c|}
\hline
 \textbf{Group} & \textbf{Order} & \textbf{Action on vertices} & \textbf{Number of}\\
 & & \textbf{(orbit lengths)} & \textbf{triangulations}\\
\hline
 $\rA_5$ & 60 & transitive & 1\\
 $\rA_4$ & 12 & 12, 3 & 2\\
 $\rC_6\times\rC_2$ & 12 & 12, 3 & 1 \\
 $\rC_7$ & 7 & 7, 7, 1 & 19 \\
 $\rS_{3}$ & 6 & 6, 3, 3, 3 & 12\\
 $\rC_6$ & 6 & 6, 6, 3 & 14 \\
 $\rC_5$ & 5 & free & 26 \\
 $\rC_3$ & 3 & free & $\ge 4617$ \\
 $\rC_2$ & 2 & with 3 fixed points & $\ge 3345$ \\
 $\rC_1$ & 1 & & $> 670000$ \\
\hline
\end{tabular}
\end{table}

\begin{remark}
 For each group~$G$ in Table~\ref{table_main}, its embedding into~$\rS_{15}$ is specified by pointing out the list of orbit lengths for the action of~$G$ on the set $\{1,\ldots,15\}$. It is easy to check that in each of the cases these data determine a subgroup $G\subset\rS_{15}$ uniquely up to conjugation.
\end{remark}

Perhaps the most surprising part of our result is the construction of new triangulations of~$\HP^2$. Unexpectedly, it was possible to construct triangulations with symmetry groups~$\rC_7$, $\rC_6\times\rC_2$, and~$\rC_6$, which are in no way related to the symmetry groups~$\rA_5$, $\rA_4$, and~$\rS_3$ of the three Brehm--K\"uhnel triangulations. The construction of new triangulations of~$\HP^2$ follows the approach developed by the author in~\cite{Gai22} in the $16$-dimensional case. To obtain the classification result (assertions~(1) and~(2) of Theorem~\ref{thm_main}) we also use the technique developed by the author in~\cite{Gai23}. This technique works not only for combinatorial manifolds but also for homology manifolds.

\begin{defin}\label{defin_hm}
Suppose that $R$ is a commutative ring.
A finite simplicial complex~$K$ is called an $R$-\textit{homology} $d$-\textit{manifold} if
\begin{enumerate}
 \item every simplex of~$K$ is contained in a $d$-simplex,
 \item for each nonempty simplex $\sigma\in K$ with $\dim\sigma<d$, there is an isomorphism of graded $R$-modules
 $$
 H_*\bigl(\link(\sigma,K); R\bigr)\cong H_*\bigl(S^{d-\dim\sigma-1};R\bigr).
 $$
\end{enumerate}
\end{defin}

In the present paper $R$ will always be either~$\Z$ or a finite field~$\F_p$, where $p$ is a prime. From the universal coefficient theorem it follows that a $\Z$-homology $d$-manifold is an $\F_p$-homology $d$-manifold for every prime~$p$.

An analogue of Theorem~\ref{thm_BK} for homology manifolds is as follows, see~\cite[Remark~1.7]{Gai23} for an explanation of how to extract this result from~\cite{Nov98}.

\begin{theorem}[Novik,~\cite{Nov98}]\label{thm_Novik}
 Suppose that $K$ is a $\Z$-homology $d$-manifold with $n$ vertices.
 \begin{enumerate}
  \item If $n<3d/2+3$, then $H_*(K;\Z)\cong H_*(S^d;\Z)$.
  \item If $n=3d/2+3$, then either $H_*(K;\Z)\cong H_*(S^d;\Z)$ or  the following assertions hold:
 \begin{itemize}
  \item $d\in\{2,4,8,16\}$,
  \item if $d=2$, then $K\cong\RP^2_6$,
  \item if $d\in\{4,8,16\}$, then
  \begin{equation*}
  H^*(K;\Z)\cong\Z[a]/(a^3),\qquad \deg a =d/2.
  \end{equation*}
 \end{itemize}
 \end{enumerate}
\end{theorem}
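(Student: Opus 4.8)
\emph{Proof proposal.} The plan is to run, for $\Z$-homology manifolds, the analogue of the three-step argument behind Theorem~\ref{thm_BK}, with the PL Morse theory of Brehm and K\"uhnel replaced by the commutative algebra of Buchsbaum simplicial complexes, and with the Hopf invariant one theorem invoked at the end to restrict the dimension. Each connected component of a $\Z$-homology $d$-manifold has at least $d+2$ vertices, so a disconnected one has at least $2d+4>3d/2+3$ vertices; hence we may assume $K$ connected.

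The combinatorial input is K\"uhnel's family of inequalities in its homology-manifold form: for every field~$\F$,
\begin{equation*}
 \binom{n-d+j-2}{j}\ \ge\ \binom{d+2}{j}\,\tilde\beta_{j-1}(K;\F),\qquad 1\le j\le \lfloor d/2\rfloor+1 .
\end{equation*}
For combinatorial manifolds these are K\"uhnel's Morse-theoretic estimates; for homology manifolds they are extracted from Novik's work~\cite{Nov98} in~\cite[Remark~1.7]{Gai23}, by combining Schenzel's formula for the Hilbert function of $\F[K]$ modulo a linear system of parameters with the Dehn--Sommerville symmetry $h''_i=h''_{d+1-i}$ of the modified $h$-vector and with Novik's upper bound theorem $h''_i\le\binom{n-d-2+i}{i}$. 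Granting these, assume $n\le 3d/2+3$ and that $K$ is not a $\Z$-homology sphere. If $K$ is not $\Z$-orientable, then $H_{d-1}(K;\Z)$ contains $2$-torsion, so $\tilde\beta_{d-1}(K;\F_2)\ne 0$, hence $\tilde\beta_1(K;\F_2)\ne0$ by $\F_2$-Poincar\'e duality, and the inequality with $j=2$ gives $n\ge 2d+2$. If $K$ is $\Z$-orientable, then Poincar\'e duality holds over every~$\F_p$; since $K$ is not a $\Z$-homology sphere, the universal coefficient theorem produces a prime $p$ and an index $0<i<d$ with $\tilde\beta_i(K;\F_p)\ne 0$, and after replacing $i$ by $d-i$ if necessary we may take $i\le d/2$, whence the inequality with $j=i+1$ gives $n\ge 2d+3-i\ge 3d/2+3$. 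In every case $n\ge 3d/2+3$, which proves part~(1); in part~(2) it remains to treat the equality case $n=3d/2+3$, in which $d$ is necessarily even.

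For the boundary case I would examine when equality can occur. In the non-orientable subcase, $n\ge 2d+2$ combined with $n=3d/2+3$ forces $d=2$, $n=6$; then $K$ is a closed non-orientable surface, so $\chi(K)\le1$, while $\chi(K)=6-f_1/3\ge1$ because $f_1\le\binom{6}{2}$, so $\chi(K)=1$ and, by the classical classification of $6$-vertex triangulations of $\RP^2$, $K\cong\RP^2_6$. In the $\Z$-orientable subcase, tracking equality in the inequalities over all primes shows that $\tilde\beta_i(K;\F_q)=0$ for every prime $q$ and every $i$ with $0<i<d$, $i\ne d/2$, and that $\tilde\beta_{d/2}(K;\F_q)\le1$ with equality for at least one~$q$; the universal coefficient theorem then forces $H_\ell(K;\Z)\cong\Z$ for $\ell\in\{0,d/2,d\}$ and $H_\ell(K;\Z)=0$ otherwise (for $d=2$ this branch is impossible, since a $\Z$-orientable closed surface that is not $S^2$ has $\chi\le0$, contradicting $\chi=6-f_1/3\ge1$, so it occurs only for $d\ge4$). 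Since $K$ is a compact $\Z$-orientable homology $d$-manifold, Poincar\'e duality makes the cup product $H^{d/2}(K;\Z)\times H^{d/2}(K;\Z)\to H^d(K;\Z)$ a unimodular form on~$\Z$; this forces $d/2$ to be even and, for a generator $a$ of $H^{d/2}(K;\Z)$, gives $a^2=\pm(\text{generator of }H^d)$ and $a^3\in H^{3d/2}(K;\Z)=0$, so $H^*(K;\Z)\cong\Z[a]/(a^3)$ with $\deg a=d/2$. Reducing mod~$2$ and noting that $\mathrm{Sq}^{d/2}a=a^2\ne0$, Adams' theorem on the Hopf invariant one gives $d/2\in\{2,4,8\}$; together with the surface case this is exactly the content of part~(2).

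The main obstacle is the first step: establishing the homology-manifold version of K\"uhnel's inequalities, especially the ``middle'' one with $j=\lfloor d/2\rfloor+1$, which is the one that is sharp for~$\HP^2$. For combinatorial manifolds this is classical PL Morse theory, but for homology manifolds it rests on the Buchsbaum/socle machinery underlying~\cite{Nov98}, which is the technically heaviest ingredient. The second point is elementary but must be done with care: recovering the integral homology and the $\Z$-orientability of~$K$ in the equality case from the collection of mod-$p$ inequalities, carefully tracking torsion through the universal coefficient theorem and handling the non-orientable case via~$\F_2$-Poincar\'e duality. The closing appeal to Adams' theorem is used as a black box.
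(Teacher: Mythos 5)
The paper does not actually prove this statement: it is quoted from Novik's work, with the derivation delegated to \cite[Remark~1.7]{Gai23}, and your proposal reconstructs precisely the standard argument behind that citation — Kühnel's inequalities in Novik's Buchsbaum-complex form, Poincaré-duality bookkeeping to pin down the integral (co)homology in the equality case, and Adams' Hopf-invariant-one theorem to restrict $d$. The steps you carry out explicitly (reduction to connected $K$, the non-orientable branch via $\F_2$-duality and the Euler characteristic, the unimodularity of the middle cup product forcing $d\equiv 0\pmod 4$) are all correct, and you rightly identify the homology-manifold version of the inequalities as the one ingredient that must be taken from \cite{Nov98} as a black box.
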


A natural conjecture (cf. Conjecture~0 in~\cite[\S 20]{ArMa91}) says that any $(3d/2+3)$-vertex $\Z$-homology $d$-manifold~$K$ with $H_*(K;\Z)\ncong H_*(S^d;\Z)$ is a combinatorial manifold. We will prove this conjecture in the case $d=8$ and $|\Sym(K)|>3$.

\begin{theorem}\label{thm_homology}
 Suppose that $K$ is a $\Z$-homology $8$-manifold such that  $H_*(K;\Z)$ is not isomorphic to~$H_*(S^8;\Z)$ and $|\Sym(K)|>3$. Then $K$ is isomorphic to one of the $75$ combinatorial manifolds listed in the first seven rows of Table~\ref{table_main}. In particular, $K$ is a combinatorial manifold PL homeomorphic to~$\HP^2$.
\end{theorem}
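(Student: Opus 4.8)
The plan is to follow the strategy of~\cite{Gai22,Gai23}: first to constrain the combinatorics of~$K$ without using the symmetry, then to use Smith theory to pin down the possible groups $G=\Sym(K)$ together with their actions on the vertex set, and finally to run an exhaustive $G$-equivariant enumeration for each admissible~$G$. For the first step, note that by part~(1) of Theorem~\ref{thm_Novik} the hypothesis $H_*(K;\Z)\not\cong H_*(S^8;\Z)$ forces $K$ to have at least $15$ vertices; we work with the case of exactly $15$ vertices, in which part~(2) of Theorem~\ref{thm_Novik} gives $H^*(K;\Z)\cong\Z[a]/(a^3)$ with $\deg a=4$. One then establishes the homology-manifold analogue of K\"uhnel's tightness theorem: a $15$-vertex $\Z$-homology $8$-manifold with this cohomology is necessarily $2$-neighborly, so its $1$-skeleton is the complete graph on $15$ vertices; together with the generalized (Klee) Dehn--Sommerville relations and the known Betti numbers this pins down the entire $f$-vector of~$K$, which turns out to equal that of the K\"uhnel complex~$M^8_{15}$.

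For the second step, let $g\in G$ have prime order~$p$. Since $K$ is in particular an $\F_p$-homology $8$-manifold with $H^*(K;\F_p)\cong\F_p[a]/(a^3)$, Smith--Bredon theory severely restricts the fixed-point set $\Fix(g)\subset|K|$: it is an $\F_p$-homology manifold whose $\F_p$-cohomology is that of a disjoint union of $\F_p$-cohomology projective spaces (over~$\R$,~$\C$ or~$\HH$, possibly points or spheres) of total formal dimension~$3$, and $\chi(\Fix(g))\equiv\chi(K)=3\pmod p$. Because the action is simplicial, $\Fix(g)$ is carried by a derived subdivision of~$K$ and its cells correspond to $g$-fixed vertices and barycentres of $g$-invariant simplices; since $K$ is $8$-dimensional with only $15$ vertices, this rigidly constrains the cycle type of~$g$ as a permutation of~$\{1,\dots,15\}$. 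One should deduce that only the primes $p\in\{2,3,5,7\}$ occur and that for each of them the conjugacy class of~$\langle g\rangle$ in~$\rS_{15}$ is exactly the one recorded in Table~\ref{table_main} (an element of order~$7$ is a product of two $7$-cycles fixing one vertex, an element of order~$5$ acts freely, an involution fixes exactly three vertices, and so on). Assembling the admissible prime-order elements into a single permutation group on $15$ points, and using Sylow- and transitivity-type arguments together with the fixed-point data, one concludes that $G$ is conjugate in~$\rS_{15}$ to one of the ten groups of Table~\ref{table_main}; the seven with $|G|>3$ are $\rA_5,\rA_4,\rC_6\times\rC_2,\rC_7,\rS_3,\rC_6,\rC_5$ acting with the stated orbit lengths.

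For the third step, fix one such group~$G$ with its action on $\{1,\dots,15\}$. Since the $1$-skeleton is complete and the number of facets is fixed, classifying $G$-invariant $15$-vertex $\Z$-homology $8$-manifolds with the cohomology of~$\HP^2$ reduces to a finite search: one looks for $G$-invariant families of $9$-element subsets of~$\{1,\dots,15\}$ that form a pseudomanifold all of whose vertex links are $\Z$-homology $7$-spheres (which, given the $f$-vector, is equivalent to the full homology-manifold condition). Organizing the search by $G$-orbits of candidate facets, and pruning using the forced structure of links of codimension at most~$2$ and the Smith-theoretic shape of the fixed subcomplexes --- the technique of~\cite{Gai23} --- keeps this tractable. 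Performing it for all seven groups yields, up to isomorphism, exactly the $75$ complexes of the first seven rows of Table~\ref{table_main}: the three Brehm--K\"uhnel triangulations and $72$ new ones, with the multiplicities shown. Each is checked to be a combinatorial manifold, hence by Theorem~\ref{thm_BK}(2) a manifold like a projective plane; that it is PL homeomorphic to~$\HP^2$ then follows from Gorodkov's argument~\cite{Gor16,Gor19}, or from an explicit sequence of bistellar moves connecting it to~$M^8_{15}$.

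I expect the main obstacle to be the second step --- proving that the list of admissible groups is \emph{complete}. Smith theory controls individual prime-order elements, but one must still exclude every group not on the list: $\rC_p$ with $p\notin\{2,3,5,7\}$, larger alternating, symmetric or dihedral groups, and the ``wrong'' embeddings into~$\rS_{15}$ of the admissible abstract groups. This requires combining the fixed-point and orbit-length restrictions for all relevant primes at once with a finite but delicate case analysis over subgroups of~$\rS_{15}$, supplemented, for the genuinely borderline candidates, by a partial run of the equivariant search of the third step to certify non-existence. By contrast, with $G$ and its vertex action fixed, the third step is computationally heavy but conceptually routine, and the first step is standard rigidity theory for homology manifolds.
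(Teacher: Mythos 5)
Your overall architecture --- Smith-theoretic constraints on prime-order symmetries, then an exhaustive $G$-equivariant enumeration for each admissible group, then identification of the PL type --- is the same as the paper's. But your first step has a concrete gap that propagates through everything else: the $f$-vector of~$K$ is \emph{not} determined by $2$-neighborliness together with the Dehn--Sommerville relations and $\chi(K)=3$ (for $d=8$ this gives only about seven independent linear conditions on nine unknowns, leaving two degrees of freedom). The missing ingredient is the complementarity (Alexander self-duality) of~$K$, obtained from the Arnoux--Marin theorem applied to $H^*(K;\F_2)\supseteq\F_2[a]/(a^3)$: for every $\sigma\subseteq V$ exactly one of~$\sigma$ and~$V\setminus\sigma$ is a face. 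This property is the backbone of the whole argument. It gives $5$-neighborliness (not merely $2$-), it is what pins down the $f$-vector (Brehm--K\"uhnel), it descends to the fixed-point complexes~$K^G$ so that Datta's classification of weak pseudomanifolds with complementarity identifies $K^{\langle g\rangle}$ as $\RP^2_6$, $\CP^2_9$, or $\pt\sqcup\partial\Delta^{r-2}$ (the paper explicitly avoids the Bredon theory you invoke, which is the heavier route of the $16$-dimensional case), and its one-sided half is the condition~$(*)$ that makes the enumeration terminate. Even granting the group list, your third step underestimates what is needed to make the search feasible for $|G|<7$: the paper must first produce explicit mandatory subcomplexes from the fixed-point data (and, for $\rC_5$ acting freely, where Smith theory yields only three isolated fixed points and hence no mandatory subcomplex, a separate classification of $\rC_5$-invariant $10$-vertex homology $3$-spheres arising as links of the orbit $4$-simplices). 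Your ``links of codimension at most $2$'' pruning does not reach this case.

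The final step is also not available off the shelf as you describe it. Gorodkov's published computation covers only the three Brehm--K\"uhnel triangulations, and most of the $72$ new triangulations are not in the triple-flip component of~$M^8_{15}$ (they lie in the components $\CG_1$, $\CG_2$, $\CG_3$), so neither ``Gorodkov's argument'' nor ``an explicit sequence of bistellar moves connecting it to~$M^8_{15}$'' applies; ordinary bistellar moves cannot even stay within $15$ vertices. What the paper actually does is invoke Kramer's classification of $8$-manifolds like a projective plane, compute $p_1^2[K]=4$ afresh (via the combinatorial first-Pontryagin-class formula) for one representative of each remaining flip-graph component, and propagate the conclusion along triple flips, which preserve the PL type.
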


This paper is organized as follows. In Section~\ref{section_finite_order} we study the fixed point complexes~$K^{\rC_p}$ for simplicial actions of a cyclic group~$\rC_p$ of prime order~$p$ on a $15$-vertex $\Z$-homology $8$-manifold~$K$ with $H_*(K;\Z)\ncong H_*(S^8;\Z)$. Analogous results in the $16$-dimensional case were obtained in~\cite[Section~7]{Gai23}.

Sections~\ref{section_>=7}--\ref{section_5} contain constructions of new $15$-vertex combinatorial $8$-manifolds like a projective plane with different symmetry groups~$G$ of orders $|G|>3$. For $|G|\ge 7$, these combinatorial manifolds are constructed by directly applying the computer program \texttt{find} from~\cite{Gai-prog} that was originally written by the author  for finding $27$-vertex triangulations of $16$-manifolds like a projective plane. For $4\le |G|\le 6$, it seems that the program \texttt{find} run directly does not complete in a reasonable time, so we preface its use with the use of theoretical results from Section~\ref{section_finite_order}. In total, in Sections~\ref{section_>=7}--\ref{section_5} we construct exactly the $75$ triangulations~$K$ with $|\Sym(K)|>3$ from Table~\ref{table_main}.

In Section~\ref{section_classify} we combine the results from Sections~\ref{section_finite_order}--\ref{section_5} to obtain a complete classification of  $15$-vertex $\Z$-homology $8$-manifolds~$K$ such that $H_*(K;\Z)\ncong H_*(S^8;\Z)$ and $|\Sym(K)|>3$.

In Section~\ref{section_G}, we introduce and study the \textit{triple flip graph}~$\CG$ whose vertices are $15$-vertex combinatorial $8$-manifolds like a projective plane and whose edges are \textit{triple flips}, that is, moves of the form
$$
(\Delta_1*\partial\Delta_2)\cup(\Delta_2*\partial\Delta_3)\cup(\Delta_3*\partial\Delta_1)\rightsquigarrow
(\partial\Delta_1*\Delta_2)\cup(\partial\Delta_2*\Delta_3)\cup(\partial\Delta_3*\Delta_1),
$$
where $\Delta_1$, $\Delta_2$, and~$\Delta_3$ are $4$-simplices. Also we study the equivariant analogs of~$\CG$. As a result, we construct a lot of $15$-vertex combinatorial $8$-manifolds like a projective plane with symmetry groups~$\rC_3$, $\rC_2$, and~$\rC_1$.

The question of the topological type of the constructed combinatorial manifolds is addressed in Section~\ref{section_p1}. A result by Kramer~\cite{Kra03} allows us to reduce this question to the calculation of the first rational Pontryagin class. By my request, Denis Gorodkov has performed this calculation, using his program~\cite{Gor-prog}. It follows that all the combinatorial manifolds constructed in the present paper are PL homeomorphic to~$\HP^2$, see Proposition~\ref{propos_Gorodkov}.

Finally, Section~\ref{section_conclude} contains some concluding remarks and open problems.

The article does not contain a section explicitly entitled `proof of Theorem~\ref{thm_main}' or  `proof of Theorem~\ref{thm_homology}'. Theorems~\ref{thm_main} and~\ref{thm_homology} follow immediately from Propositions~\ref{propos_class}, \ref{propos_5_man_1}, and~\ref{propos_Gorodkov} and Corollary~\ref{cor_5_man_2,3} that we will prove throughout the paper.

\begin{remark}
 Lists of maximal simplices of the constructed $15$-vertex triangulations of~$\HP^2$ can be found in~\cite{Gai-prog}. Namely, for each group~$G$  from Table~\ref{table_main}, except for $\rA_5$, $\rC_3$, $\rC_2$, and~$\rC_1$, the file entitled \texttt{HP2\_15\_G.dat} contains the lists of maximal simplices of all triangulations~$K$ with $\Sym(K)\cong G$. In addition, the file \texttt{HP2\_15\_G-component.dat} contains the lists of maximal simplices of all~$K$ in the connected component~$\CG_0$ of the Brehm--K\"uhnel triangulations in the triple flip graph~$\CG$, see Section~\ref{section_G}. In these lists each simplex is encoded with a row of $15$ binary digits in which the $i$th digit indicates whether the $i$th vertex is in the simplex or not. For each triangulation we have the freedom to renumber its vertices. In each of the files \texttt{HP2\_15\_G.dat} we number the vertices so that all triangulations in the file have coinciding (rather than just conjugate) symmetry groups. On the other hand, in the file \texttt{HP2\_15\_G-component.dat} we give every triangulation with the numbering of its vertices produced by our program \texttt{triple\_flip\_graph}, see Section~\ref{section_G}. As a result, two triangulations, $\HP^2_{15}(\rS_3,1)$ and~$\HP^2_{15}(\rS_3,2)$, are given in the files \texttt{HP2\_15\_S3.dat} and \texttt{HP2\_15\_G-component.dat} with different orderings of their vertices.
\end{remark}

\section{Fixed point complexes for symmetries of prime orders}\label{section_finite_order}
Suppose that $K$ is a finite simplicial complex with vertex set~$V$ and $G\subseteq\Sym(K)$ is a subgroup. We use the language of abstract simplicial complexes so that simplices of~$K$ are subsets of~$V$. Let $|K|$ be the geometric realization of~$K$ and $|K|^G$ the set of all $G$-fixed points in~$|K|$. Recall that $|K|^G$ is the geometric realization of the simplicial complex~$K^G$ such that
\begin{itemize}
 \item vertices of~$K^G$ are the barycentres $b(\sigma_1),\ldots,b(\sigma_s)$, where $\sigma_1,\ldots,\sigma_s$ are all $G$-orbits in~$V$ that are simplices of~$K$,
 \item a subset $\bigl\{b(\sigma_{i_1}),\ldots,b(\sigma_{i_t})\bigr\}$ is a simplex of~$K^G$ if and only if $\sigma_{i_1}\cup\cdots\cup\sigma_{i_t}\in K$.
\end{itemize}
The complex~$K^G$ will be called the \textit{$G$-fixed point complex}, cf.~\cite[Section~2]{Gai23}.

\subsection{Preliminaries}
Suppose that $K$ is a $15$-vertex $\Z$-homology $8$-manifold such that the graded group $H_*(K;\Z)$ is not isomorphic to~$H_*(S^8;\Z)$. We are going to study the fixed point set complexes $K^{\langle g\rangle}$ for elements $g\in\Sym(K)$ of prime orders. We will use the same approach that was used in~\cite[Section~7]{Gai23} in the $16$-dimensional case.
It is based on the following three theorems (Theorems~\ref{thm_Smith}, \ref{thm_ArMa}, and~\ref{thm_Datta}), see~\cite[Sections~2]{Gai23} for more details and references.

\begin{theorem}[Smith,~\cite{Smi39}, cf.~\cite{Bor60}]\label{thm_Smith}
 Suppose that $K$ is an $\F_p$-homology manifold with a simplicial action of the group~$\rC_p$, where $p$ is a prime. Then every connected component of~$K^{\rC_p}$ is also an $\F_p$-homology manifold.
\end{theorem}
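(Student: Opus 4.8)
The plan is to derive this classical theorem of Smith either by invoking it directly or by reconstructing it from Smith theory as sketched below. Since the statement is local, the first step is to reduce it to the \emph{fundamental theorem of Smith theory}: if $\rC_p$ acts simplicially on a finite simplicial complex $\Sigma$ that is simultaneously an $\F_p$-homology $n$-manifold and an $\F_p$-homology $n$-sphere (equivalently, a link in an $\F_p$-homology manifold), then $\Sigma^{\rC_p}$ is again simultaneously an $\F_p$-homology $r$-manifold and an $\F_p$-homology $r$-sphere for some $-1\le r\le n$, with the convention that the empty complex is the $(-1)$-sphere.

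\emph{Reduction to links.} Replacing $K$ by its barycentric subdivision changes neither the hypothesis nor the conclusion (it does not change $|K|$), and afterwards every $\rC_p$-invariant simplex of $K$ is fixed pointwise, so that $K^{\rC_p}$ becomes the full subcomplex of $K$ spanned by the $\rC_p$-fixed vertices; moreover, for each of its simplices $\tau$ the link $\link(\tau,K)$ carries an induced simplicial $\rC_p$-action with $\link(\tau,K)^{\rC_p}=\link(\tau,K^{\rC_p})$. Recall that a link in an $\F_p$-homology $d$-manifold is itself an $\F_p$-homology manifold and, being a link, also an $\F_p$-homology sphere of the matching dimension: if $\dim\tau=m$ then $\link(\tau,K)$ is both an $\F_p$-homology $(d-m-1)$-manifold and an $\F_p$-homology $(d-m-1)$-sphere. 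Applying the fundamental theorem to $\link(\tau,K)$ (whose dimension $d-m-1$ is less than $d$) therefore yields that $\link(\tau,K^{\rC_p})$ is both an $\F_p$-homology $r_\tau$-manifold and an $\F_p$-homology $r_\tau$-sphere for some $-1\le r_\tau\le d-m-1$.

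\emph{Coherence and conclusion.} I would then show that the dimensions $r_\tau$ are coherent. For $\tau\subsetneq\tau'$ in $K^{\rC_p}$, put $\bar\tau=\tau'\setminus\tau$; then $\link(\tau',K^{\rC_p})=\link(\bar\tau,\link(\tau,K^{\rC_p}))$, and since $\link(\tau,K^{\rC_p})$ is an $\F_p$-homology $r_\tau$-manifold this equals an $\F_p$-homology manifold of dimension $r_\tau-\dim\bar\tau-1=r_\tau-\dim\tau'+\dim\tau$, whereas by the previous paragraph it is an $\F_p$-homology $r_{\tau'}$-manifold. A nonempty complex cannot be an $\F_p$-homology manifold of two different dimensions (and if this link is empty then $\bar\tau$ is a maximal simplex of the $r_\tau$-manifold $\link(\tau,K^{\rC_p})$, so $\dim\bar\tau=r_\tau$ and everything stays consistent), so $r_{\tau'}=r_\tau-\dim\tau'+\dim\tau$; hence $n:=r_\tau+\dim\tau+1$ coincides for $\tau$ and $\tau'$. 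Thus $n$ is constant along chains of simplices related by inclusion, hence constant on each connected component $L$ of $K^{\rC_p}$. On such an $L$ every simplex lies in a maximal one, and a maximal simplex has empty link, hence dimension $n$ — this gives condition~(1) of Definition~\ref{defin_hm} — while $\link(\tau,L)=\link(\tau,K^{\rC_p})$ is an $\F_p$-homology $(n-\dim\tau-1)$-sphere for every $\tau\in L$ — this gives condition~(2). Therefore $L$ is an $\F_p$-homology $n$-manifold, which is the assertion of the theorem.

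\emph{On the fundamental theorem, and the main obstacle.} The reduction above is purely formal; the real content is the fundamental theorem, which I would establish by Smith's classical argument. Fixing a generator $g$ of $\rC_p$ and writing $u=g-1$ in the group algebra $\F_p[\rC_p]\cong\F_p[u]/(u^p)$, the simplices of an equivariant triangulation of $\Sigma$ not fixed by $\rC_p$ lie in free orbits of size $p$ and hence span a free $\F_p[\rC_p]$-summand of $C_*(\Sigma;\F_p)$, on which the operators $u^i$ and $u^{p-i}$ have mutually annihilating images. This produces the Smith short exact sequences of chain complexes relating $C_*(\Sigma;\F_p)$, $C_*(\Sigma^{\rC_p};\F_p)$, and the Smith subcomplexes $u^iC_*(\Sigma;\F_p)+C_*(\Sigma^{\rC_p};\F_p)$; chasing the resulting long exact sequences together with the known homology $H_*(\Sigma;\F_p)\cong H_*(S^n;\F_p)$ forces $H_*(\Sigma^{\rC_p};\F_p)\cong H_*(S^r;\F_p)$ for some $r\le n$, and the same analysis applied locally (to the links, which are covered by smaller instances of the theorem) upgrades this to the statement that $\Sigma^{\rC_p}$ is an $\F_p$-homology $r$-manifold. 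Setting up the Smith subcomplexes and their exact sequences correctly and organizing this induction is the one genuinely non-routine step; since the fundamental theorem is a well-known result of Smith (\cite{Smi39}; cf.~\cite{Bor60}), it is legitimately used as a black box in the paper, with details available in the references cited in~\cite[Section~2]{Gai23}.
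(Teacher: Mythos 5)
The paper does not actually prove this statement: it is Smith's classical theorem, imported as a black box with references to~\cite{Smi39} and~\cite{Bor60} (and to the discussion in~\cite{Gai23}), so there is no proof of record to compare against. Your outline is the standard route to that classical result and is essentially sound: one barycentric subdivision does make every setwise-invariant simplex pointwise fixed, the identification $\link(\tau,K)^{\rC_p}=\link(\tau,K^{\rC_p})$ is correct, and your coherence argument showing that the local dimension $r_\tau$ determines a single number $n$ on each connected component is right; the remaining content is exactly the local Smith-sequence induction you describe, which is where the real work lies and why the paper cites rather than proves. Two points deserve to be made explicit if this were written out in full: (i) that being an $\F_p$-homology manifold is preserved under barycentric subdivision and is detected on the geometric realization, so that your fixed subcomplex of the subdivision can legitimately be traded for the complex $K^{\rC_p}$ as defined at the start of Section~\ref{section_finite_order}; and (ii) the precise inductive statement for complexes that are simultaneously $\F_p$-homology spheres and $\F_p$-homology manifolds, since the ``manifold'' half of your fundamental theorem is itself established by the same induction on dimension that you are running at the top level, so the two inductions must be organized as one.
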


\begin{remark}\label{remark_smaller_dim}
 If $K$ is connected and the action of~$\rC_p$ is faithful, then we always have $\dim K^{\rC_p}<\dim K$, since a connected $\F_p$-homology manifold does not  contain a proper $\F_p$-homology submanifold of the same dimension.
\end{remark}

\begin{remark}
 Unlike the $16$-dimensional case, we will not need more complicated results by Bredon on the fixed point sets for the actions of~$\rC_p$ on homology projective spaces.
\end{remark}

The two remaining  theorems are related to the concept of complementarity, which is important for us.

\begin{defin}
 We say that a finite simplicial complex~$K$ on vertex set~$V$ \textit{satisfies complementarity} if, for each subset $\sigma\subseteq V$, exactly one of the two subsets~$\sigma$ and~$V\setminus \sigma$ is a simplex of~$K$.
\end{defin}

This property is sometimes called \textit{duality}, since it is equivalent to saying that the (combinatorial) Alexander dual of~$K$ coincides with $K$.
If $K$ has $n$ vertices and $\dim K=d$, then the complimentarity property implies that $K$ is \textit{$(n-d-2)$-neighborly}, that is, every $(n-d-2)$-element subset of~$V$ is a simplex of~$K$.

\begin{theorem}[Arnoux, Marin,~\cite{ArMa91}]\label{thm_ArMa}
 Suppose that $K$ is a simplicial complex such that the cohomology ring~$H^*(K;\F_2)$ contains a (graded) subring isomorphic to $\F_2[a]/(a^3)$, where $m=\deg a$ is even. Then $K$ has not less than $3m+3$ vertices. Moreover, if $K$ has exactly $3m+3$ vertices, then $K$ satisfies complimentarity.
\end{theorem}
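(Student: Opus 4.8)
The plan is to work over $\F_2$ throughout and to convert the ring-theoretic hypothesis into a sharp combinatorial estimate on the face numbers of~$K$, after which the equality case is forced by the slack in that estimate being zero. (We may assume $K$ connected, passing to a connected component carrying the class otherwise.) The first observation is structural: the subring $\F_2[a]/(a^3)$ with $\deg a=m$ is a Poincar\'e duality algebra of formal dimension~$2m$, since the multiplication pairing $H^i\otimes H^{2m-i}\to H^{2m}\cong\F_2$ is perfect ($1\cdot a^2=a^2\neq 0$ and $a\cdot a=a^2\neq 0$). This self-duality is what makes the bound $3m+3$ sharp, and it is the reason one hypothesizes a copy of $\F_2[a]/(a^3)$ rather than merely $a^2\neq 0$ --- for longer truncated polynomial algebras the minimal vertex count is larger. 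The three tools I would use are: simplicial cochains with the Alexander--Whitney formula for cup products relative to a fixed linear order of the vertices; the filtration of~$K$ by induced subcomplexes together with Mayer--Vietoris; and combinatorial Alexander duality $\tilde H_i(K^{\vee};\F_2)\cong\tilde H^{\,n-i-3}(K;\F_2)$, where $n=|V|$ and $K^{\vee}=\{\sigma\subseteq V:V\setminus\sigma\notin K\}$ is the combinatorial Alexander dual, so that ``$K$ satisfies complementarity'' is literally the statement ``$K=K^{\vee}$''.

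For the lower bound, represent $a$ by a cocycle $z\in Z^m(K;\F_2)$; then $z\smile z$ is not a coboundary while $z\smile z\smile z$ is. Since $H^m(K)\neq 0\neq H^{2m}(K)$ we have $\dim K\geq 2m$. Fix a linear order $v_1<\dots<v_n$ on~$V$, let $K_i$ be the full subcomplex of~$K$ on $\{v_1,\dots,v_i\}$, so that $K_i$ is obtained from $K_{i-1}$ by coning off the lower link $L_i:=\link(v_i,K_i)$, and run Mayer--Vietoris along $\varnothing=K_0\subset K_1\subset\dots\subset K_n=K$. The restrictions of $1$, $a$, and $a^2$ are born at certain steps, and a class in degree~$j$ that is born at step~$i$ forces $\tilde H^{\,j-1}(L_i)\neq 0$, hence $L_i$ has at least $j+1$ vertices, all among $v_1,\dots,v_{i-1}$; tracking $a$ (degree~$m$) and $a^2$ (degree~$2m$) this way already consumes a number of vertices. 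The remaining ones are extracted by feeding in the Alexander--Whitney description of $z\smile z$ and $z\smile z\smile z$ together with the self-duality of the subalgebra --- a parity accounting of the cocycle/coboundary conditions, where $\deg a$ being even enters, shows that the lower links responsible for $a$, for $a^2$, and for the Poincar\'e-dual of~$a$ must jointly involve at least $3m+3$ vertices. The cleanest packaging of the outcome is: $K$ is $(m+1)$-neighborly and its face numbers satisfy the generalized Heawood inequality $\binom{n-m-2}{m+1}\geq\binom{2m+1}{m+1}$, forcing $n\geq 3m+3$. I expect this sharp inequality to be the real obstacle, and within it the delicate range is $n\in\{3m+1,3m+2\}$: there $a^3$ lives in $H^{3m}(K)$, which need not vanish for dimension reasons, so the hypothesis $a^3=0$ must be used. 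The mechanism is that with fewer than $3m+3$ vertices one has enough freedom in the choice of order and of the representative~$z$ to extend a nonzero value of $z\smile z$ to a nonzero value of $z\smile z\smile z$, contradicting $a^3=0$; making this count exact is the technical core of the proof.

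Finally, suppose $n=3m+3$. Then every estimate above holds with equality; in particular $K$ is $(m+1)$-neighborly and, one checks, $\dim K=2m$ and the face numbers of~$K$ are completely determined. Consequently every subset of~$V$ of size at most $m+1$ is a face, and every subset of size at least $2m+2=\dim K+2$ is a non-face, so for these two extreme ranges exactly one of $\sigma$ and $V\setminus\sigma$ is a face. For the middle sizes $m+2,\dots,2m+1$ --- a range symmetric about $(3m+3)/2$ --- the Dehn--Sommerville-type relations imposed by the Poincar\'e duality structure of the graded ring, together with the now-forced face numbers, yield the same dichotomy by the bookkeeping familiar from K\"uhnel's theory of tight triangulations. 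Hence for every $\sigma\subseteq V$ exactly one of $\sigma$, $V\setminus\sigma$ is a face of~$K$, i.e.\ $K=K^{\vee}$, and $K$ satisfies complementarity. As a consistency check, combinatorial Alexander duality then gives $\tilde H_i(K;\F_2)\cong\tilde H^{\,3m-i}(K;\F_2)$, which indeed holds, the right-hand side being nonzero exactly for $i\in\{m,2m\}$, matching the reduced $\F_2$-homology of~$K$.
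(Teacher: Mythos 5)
This theorem is quoted by the paper from Arnoux--Marin~\cite{ArMa91} without proof, so there is no internal argument to compare against; I can only assess your sketch on its own terms, and as it stands it is an outline with the two essential steps missing. For the lower bound, your filtration-by-lower-links argument only extracts that the births of $a$ and of $a^2$ require lower links with at least $m+1$ and $2m+1$ vertices, i.e.\ $n\ge 2m+2$; you yourself defer the passage from there to $3m+3$ as ``the technical core.'' The engine that actually produces $3m+3$ is the partition argument, which your sketch gestures at but never states: for any partition $V=\sigma\sqcup\tau$, the open vertex stars of $\sigma$ and of $\tau$ cover $|K|$ and $|K|\setminus N(\sigma)$ deformation retracts onto the full subcomplex $|K_\tau|$, so if $a$ restricted to both $K_\sigma$ and $K_\tau$ vanished, then $a\smile a$ would lift to $H^{2m}(K,|K|)=0$, contradicting $a^2\ne 0$. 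Hence every $2$-partition has a part on which $a$ survives, such a part has at least $m+2$ vertices, and it is the iteration of this observation over suitable partitions (this is precisely where having the full subring $\F_2[a]/(a^3)$ rather than just $a\ne 0$ is exploited) that yields $n\ge 3m+3$. Without this step written out, the bound is not established.

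The equality case has a more serious flaw: you propose to obtain complementarity for subsets of the middle cardinalities $m+2,\ldots,2m+1$ from ``Dehn--Sommerville-type relations together with the now-forced face numbers.'' Complementarity is a statement about \emph{which individual} subsets are faces, not about how many faces there are in each dimension, and it cannot be a consequence of the $f$-vector: two complexes with identical face numbers can differ on whether a particular $\sigma$ with $|\sigma|=m+3$ is a face. The genuine argument is again cohomological and is applied to each $\sigma$ separately. If both $\sigma$ and $V\setminus\sigma$ were faces, then both full subcomplexes $K_\sigma$ and $K_{V\setminus\sigma}$ would be simplices, $a$ would restrict to zero on both, and the partition lemma above would give $a^2=0$; ruling out that \emph{neither} is a face is where the exact count $n=3m+3$ enters, through a sharpened version of the same relative cup-product accounting. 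Neither half of this dichotomy appears in your write-up, so the ``moreover'' clause of the theorem is not proved.
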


Theorems~\ref{thm_Novik} and~\ref{thm_ArMa} immediately imply the following.
\begin{cor}\label{cor_complement}
 Suppose that $K$ is a $\Z$-homology $d$-manifold with $3d/2+3$ vertices such that $H_*(K;\Z)$ is not isomorphic to~$H_*(S^d;\Z)$. Then $K$ satisfies complementarity. In particular, $K$ is  $(d/2+1)$-neighborly.
\end{cor}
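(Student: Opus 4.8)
The plan is to deduce Corollary~\ref{cor_complement} directly from the two results it cites, namely Theorem~\ref{thm_Novik} (Novik) and Theorem~\ref{thm_ArMa} (Arnoux--Marin). First I would observe that $K$ is a $\Z$-homology $d$-manifold with exactly $n = 3d/2+3$ vertices and $H_*(K;\Z)\ncong H_*(S^d;\Z)$, so we are precisely in the second alternative of Theorem~\ref{thm_Novik}. That theorem then tells us $d\in\{2,4,8,16\}$, and that either $K\cong\RP^2_6$ (the case $d=2$) or $H^*(K;\Z)\cong\Z[a]/(a^3)$ with $\deg a = d/2$ (the cases $d\in\{4,8,16\}$).

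Next I would handle the two cases. In the case $d\in\{4,8,16\}$, set $m = d/2 \in\{2,4,8\}$, which is even, and apply the universal coefficient theorem to pass from the integral cohomology ring to $\F_2$ coefficients: since $H^*(K;\Z)\cong\Z[a]/(a^3)$ is torsion-free and concentrated in degrees $0, m, 2m$ with each such group isomorphic to~$\Z$, we get $H^*(K;\F_2)\cong\F_2[\bar a]/(\bar a^3)$ with $\deg\bar a = m$ (one must check that the cup product $\bar a\cdot\bar a$ is nonzero mod~$2$, which follows because reduction mod~$2$ is a ring homomorphism and $a^2$ generates $H^{2m}(K;\Z)\cong\Z$, hence reduces to a generator of $H^{2m}(K;\F_2)\cong\F_2$). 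Thus $H^*(K;\F_2)$ contains a graded subring isomorphic to~$\F_2[\bar a]/(\bar a^3)$ with $m=\deg\bar a$ even, and $K$ has exactly $n = 3d/2+3 = 3m+3$ vertices. Theorem~\ref{thm_ArMa} now applies verbatim and yields that $K$ satisfies complementarity. In the remaining case $d = 2$ and $K\cong\RP^2_6$, one checks complementarity by hand (or notes that $\RP^2_6$ has $n=6$ vertices and is $2$-neighborly with the right face numbers, which forces complementarity: for a $2$-dimensional complex on $6$ vertices, $\sigma$ and its complement have sizes summing to $6$, and a direct check of the $f$-vector of $\RP^2_6$ shows exactly one of each complementary pair is a face).

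Finally, for the ``in particular'' clause: complementarity says that for each subset $\sigma\subseteq V$, exactly one of $\sigma$, $V\setminus\sigma$ is a face of~$K$. If $|\sigma| = d/2+1 = m+1$, then $|V\setminus\sigma| = n - (m+1) = (3m+3)-(m+1) = 2m+2 > d = \dim K$ (since $2m+2 > 2m$), so $V\setminus\sigma$ cannot be a simplex of~$K$; hence $\sigma$ must be a simplex. Therefore every $(d/2+1)$-element subset of~$V$ is a face, i.e., $K$ is $(d/2+1)$-neighborly.

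I do not anticipate a serious obstacle here; the statement is essentially an immediate consequence of the two quoted theorems. The only point requiring a little care is the passage from integral to mod-$2$ cohomology rings in the case $d\in\{4,8,16\}$ --- one should make sure the truncated polynomial structure, and in particular the nonvanishing of the square of the degree-$m$ class, survives reduction mod~$2$ --- together with the slightly different treatment needed for $d=2$, where Theorem~\ref{thm_ArMa} does not directly give the conclusion (since one would need $m=1$, which is odd) but the explicit model $\RP^2_6$ makes complementarity transparent.
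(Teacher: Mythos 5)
Your proposal is correct and fills in exactly the implication the paper invokes (the paper offers no written proof beyond "Theorems~\ref{thm_Novik} and~\ref{thm_ArMa} immediately imply" the corollary): Novik's theorem puts one in the case $K\cong\RP^2_6$ or $H^*(K;\Z)\cong\Z[a]/(a^3)$, the mod-$2$ reduction of the truncated polynomial ring lets Arnoux--Marin apply with $m=d/2$ even and $3m+3$ vertices, and the neighborliness clause is the standard counting consequence of complementarity. Your handling of the two minor points of care --- the survival of $\bar a^2\ne 0$ under reduction mod~$2$ and the separate direct check for $\RP^2_6$ when $d=2$ --- is exactly right.
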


Let $f_k(K)$ denote the number of $k$-simplices of~$K$ and $\chi(K)$ the Euler characteristic of~$K$.
Brehm and K\"uhnel~\cite[Section~1]{BrKu92} showed that in the $8$-dimensional case the complimentarity condition, the Dehn--Sommerville equations for homology manifolds (due to Klee~\cite{Kle64}), and the condition $\chi(K)=3$ form a system of linear equations that completely determines the $f$-vector of~$K$. Their result together with Theorem~\ref{thm_Novik} and Corollary~\ref{cor_complement} implies the following assertion.

\begin{cor}\label{cor_fvect}
 Suppose that $K$ is a $\Z$-homology $d$-manifold with $3d/2+3$ vertices such that $H_*(K;\Z)$ is not isomorphic to~$H_*(S^d;\Z)$. Then the $f$-vector of~$K$ is as follows:
 \begin{equation}\label{eq_fvect_BK}
  \begin{aligned}
   f_0(K)&=15,&&& f_1(K)&=105,&&&f_2(K)&=455,\\
   f_3(K)&=1365,&&& f_4(K)&=3003,&&&f_5(K)&=4515,\\
   f_6(K)&=4230,&&& f_7(K)&=2205,&&&f_8(K)&=490.
  \end{aligned}
 \end{equation}
\end{cor}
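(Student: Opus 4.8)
The plan is to combine three facts about~$K$ — its integral cohomology, its combinatorial neighborliness, and the Dehn--Sommerville equations — after which the $f$-vector becomes the unique solution of an explicit finite system of linear equations; carrying out this computation is precisely what Brehm and K\"uhnel did in~\cite[Section~1]{BrKu92}. I shall argue for $d=8$, as in the statement (Theorem~\ref{thm_Novik} in any case restricts~$d$ to the set $\{2,4,8,16\}$). First I would apply Theorem~\ref{thm_Novik}: since $K$ is a $\Z$-homology $8$-manifold with $15=3\cdot 8/2+3$ vertices and $H_*(K;\Z)\ncong H_*(S^8;\Z)$, the exceptional alternative must hold, so $H^*(K;\Z)\cong\Z[a]/(a^3)$ with $\deg a=4$. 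As a graded abelian group this ring is free of rank~$1$ in degrees~$0$, $4$, $8$ and zero in all other degrees, so the Betti numbers of~$K$ are $b_0=b_4=b_8=1$ and $b_i=0$ otherwise. In particular $K$ is connected and $\chi(K)=3$.

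Next I would invoke Corollary~\ref{cor_complement}: $K$ satisfies complementarity, hence is $(8/2+1)$-neighborly, i.e.\ $5$-neighborly. Thus every subset of the vertex set of cardinality at most~$5$ is a simplex of~$K$, which gives $f_k(K)=\binom{15}{k+1}$ for $0\le k\le 4$; these are exactly the values $f_0=15$, $f_1=105$, $f_2=455$, $f_3=1365$, $f_4=3003$ in~\eqref{eq_fvect_BK}.

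Finally, $K$ is a connected closed $\Z$-homology $8$-manifold, hence an Eulerian complex, so its $f$-vector obeys the Dehn--Sommerville equations for homology manifolds due to Klee~\cite{Kle64}; in terms of the $h$-vector these can be written as $h_{9-i}-h_i=(-1)^i\binom{9}{i}\bigl(\chi(K)-2\bigr)$ for $0\le i\le 9$. The values $f_{-1}=1$ and $f_0,\dots,f_4$ already obtained determine $h_0,\dots,h_5$; together with $\chi(K)=3$ the relations above then determine the remaining entries $h_6,h_7,h_8,h_9$ (and yield one identity relating $h_4$ and $h_5$, which necessarily holds, since Klee's relations are a theorem and not an additional hypothesis). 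Inverting the linear passage from the $f$-vector to the $h$-vector recovers $f_5,f_6,f_7,f_8$, and the resulting numbers are those displayed in~\eqref{eq_fvect_BK}.

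There is no genuine obstacle here: every step is either a direct appeal to one of the quoted results (Theorem~\ref{thm_Novik}, Corollary~\ref{cor_complement}, and Klee's Dehn--Sommerville equations) or a mechanical computation with binomial coefficients. The only point that requires a little care is to check that the combined system — $5$-neighborliness, Klee's equations, and $\chi(K)=3$ — is consistent and pins down the $f$-vector uniquely; this is exactly the verification carried out by Brehm and K\"uhnel in~\cite[Section~1]{BrKu92}, which together with Theorem~\ref{thm_Novik} and Corollary~\ref{cor_complement} yields the claim.
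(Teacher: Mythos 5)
Your proposal is correct and follows essentially the same route as the paper: the paper likewise derives the $f$-vector by combining Theorem~\ref{thm_Novik} (giving $\chi(K)=3$), Corollary~\ref{cor_complement} (complementarity, hence $5$-neighborliness), and Klee's Dehn--Sommerville equations, citing Brehm--K\"uhnel for the observation that this linear system determines the $f$-vector uniquely. You merely spell out the linear algebra (the binomial values of $f_0,\dots,f_4$ and the $h$-vector form of Klee's relations) that the paper delegates to~\cite[Section~1]{BrKu92}.
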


\begin{defin}
 A finite simplicial complex~$K$ is called a \textit{weak $d$-pseudomanifold} (where $d>0$) if it satisfies the following two conditions:
 \begin{enumerate}
  \item every simplex of~$K$ is contained in a $d$-simplex of~$K$,
  \item every $(d-1)$-simplex of~$K$ is contained in exactly two $d$-simplices of~$K$.
  \end{enumerate}
  A weak $d$-pseudomanifold $K$ is called a \textit{$d$-pseudomanifold} if, in addition, it satisfies the condition
  \begin{itemize}
  \item[(3)] $K$ is \textit{strongly connected}, i.\,e., for any two $d$-simplices $\sigma,\tau\in K$, there exists a sequence of $d$-simplices $\sigma=\rho_1,\rho_2,\ldots,\rho_n=\tau$ such that $\dim(\rho_i\cap\rho_{i+1})=d-1$ for all~$i$.
  \end{itemize}
  We will conveniently say that a \textit{weak $0$-pseudomanifold} is the disjoint union of a finite number of points and a \textit{$0$-pseudomanifold} is a point.
\end{defin}

Note that every $\F_p$-homology $d$-manifold is a weak $d$-pseudomanifold and every connected $\F_p$-homology $d$-manifold is a $d$-pseudomanifold.

\begin{theorem}[Datta,~\cite{Dat98}; Bagchi, Datta,~\cite{BaDa04}]\label{thm_Datta}
Suppose that $K$ is a weak $d$-pseudo\-manifold that satisfies complementarity. Then one of the following assertions hold:
\begin{itemize}
 \item $K\cong\pt\sqcup\pt\sqcup\pt$  (the disjoint union of three points),
 \item $K\cong\RP^2_6$,
 \item $K\cong\CP^2_9$,
 \item $d\ge 7$ and the number of vertices of~$K$ is at least~$d+7$.
\end{itemize}
Moreover, the Euler characteristic $\chi(K)$ is odd.
\end{theorem}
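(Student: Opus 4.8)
\medskip
\noindent\textbf{Proof proposal.}

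\emph{Step 1: consequences of complementarity.}
The plan is to squeeze everything out of the combinatorics of complementarity before doing anything topological. Write $n$ for the number of vertices of~$K$ and $d=\dim K$ (there are $d$-simplices, by condition~(1)). The $2^n$ subsets of the vertex set~$V$ fall into $2^{n-1}$ complementary pairs, each contributing exactly one face of~$K$, so $K$ has $2^{n-1}$ faces (counting~$\varnothing$); since $\varnothing\in K$, this gives $V\notin K$. Counting, for each~$k$, the $k$-subsets that are faces against those that are complements of faces yields $f_{k-1}(K)+f_{n-k-1}(K)=\binom{n}{k}$ (with $f_{-1}=1$ and $f_j=0$ for $j<-1$ or $j>d$). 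In particular $f_{n-2}=f_{n-1}=0$, so $K$ is $(n-d-2)$-neighborly and every minimal non-face of~$K$ has exactly $n-d-1$ vertices. The weak-pseudomanifold identity $(d+1)f_d=2f_{d-1}$, combined with the displayed relations, forces (for $d\ge1$) the bound $n\le 2d+2$: if $n\ge 2d+3$ the ``tail'' terms vanish, so $f_d=\binom{n}{d+1}$ and $f_{d-1}=\binom{n}{d}$, whence $(d+1)\binom{n}{d+1}=2\binom{n}{d}$ gives $n=d+2$, a contradiction. Also $n\ge d+3$ (if $n\le d+2$ the complement of any $(d+1)$-set has at most one vertex, hence is a face, so no $(d+1)$-set is a face, contradicting $\dim K=d$). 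Thus only the range $d+3\le n\le 2d+2$ needs analysis. Finally, complementarity says precisely that $K$ equals its own combinatorial Alexander dual, so Alexander duality gives $\widetilde H_i(K;\F_2)\cong\widetilde H_{n-3-i}(K;\F_2)$ for all~$i$, and in particular $\widetilde\beta_i=\widetilde\beta_{n-3-i}$ over~$\F_2$.

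\emph{Step 2: a link--deletion duality.}
For a vertex~$v$ put $L_v=\link(v,K)$ and $K_v=\{\sigma\in K:v\notin\sigma\}$, both complexes on $V\setminus v$. Because $\tau\cup\{v\}$ and $(V\setminus v)\setminus\tau$ are complementary in~$V$, the complementarity of~$K$ is equivalent to: for every vertex~$v$, $L_v$ is the combinatorial Alexander dual of~$K_v$ inside $V\setminus v$. Now $L_v$ is a weak $(d-1)$-pseudomanifold (a weak pseudomanifold is never a cone, so $K_v$ is still $d$-dimensional), so this ties~$K$ to lower-dimensional complexes; I would use it as the engine of an induction on~$d$. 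To it I would add two coarse counts: every vertex of a weak $e$-pseudomanifold lies in at least $e+2$ facets (its link contains a copy of~$\partial\Delta^{e}$), giving $(d+1)f_d\ge(d+2)f_0$ and analogous inequalities inside the links; and a weak $e$-pseudomanifold has at least $e+2$ vertices, a disconnected one at least $2(e+2)$.

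\emph{Step 3: classification and parity.}
For $d=0$ complementarity forces $n=3$, so $K\cong\pt\sqcup\pt\sqcup\pt$. For $d=1$ there is nothing: $K$ is a disjoint union of cycles, incompatible with complementarity. For $d=2$ only $n\in\{5,6\}$ remain; $n=5$ fails because each vertex lies in $\ge3$ triangles, forcing $f_2\ge f_0$ while $f_2=4<5=f_0$, and $n=6$ describes a $2$-neighborly triangulated closed surface with $\chi=1$ (the vertex links, being disjoint unions of cycles on at most five vertices, must be single cycles), i.e.\ $\RP^2_6$. For $d=3$ only $n\in\{6,7,8\}$ remain; the relations of Step~1 together with $f_2=2f_3$ rule out $n=7$ and~$n=8$, and $n=6$ is excluded by the facet count of Step~2. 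For $d=4$ only $7\le n\le10$ remain; $n=10$ is excluded by the bound of Step~1 (which is met with equality only when $d=2$), $n=7$ and~$n=8$ by the facet counts and the link--deletion duality, and $n=9$ gives $\CP^2_9$ by the known uniqueness of that triangulation. For $d\ge5$ I would induct on~$d$: within $d+3\le n\le 2d+2$, the case $n=d+3$ is impossible because $\widetilde H_0(K;\F_2)\cong\widetilde H_d(K;\F_2)$, which is nonzero (the sum of the facets is a nonzero $\F_2$-cycle), so $K$ is disconnected, hence $n\ge 2(d+2)$, contradicting $n=d+3$; and for each larger~$n$ one feeds the inductively known structure of the links~$L_v$ through the Alexander duality $L_v=K_v^{*}$, together with the global $f$-vector and facet constraints, to derive a contradiction. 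The outcome should be that dimensions~$5$ and~$6$ are empty and that for $d\ge7$ only $n\ge d+7$ survives. For the parity of~$\chi$: if $n$ is even, then $\sum_{\sigma\subseteq V}(-1)^{|\sigma|}=0$ together with the non-face/face bijection $\sigma\leftrightarrow V\setminus\sigma$ gives $2\,\widetilde\chi(K)=0$, so $\chi(K)=1$. If $n$ is odd, the isomorphisms $\widetilde\beta_i=\widetilde\beta_{n-3-i}$ pair up all reduced $\F_2$-Betti numbers except the one in degree $m=(n-3)/2$, so $\widetilde\chi(K)\equiv\widetilde\beta_m(K;\F_2)\pmod 2$; this vanishes when $m>d$ and is even otherwise because the middle-degree pairing induced by the self-duality is an alternating linking pairing of $m$-cycles in the sphere $S^{n-2}\supseteq|K|$. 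Hence $\chi(K)$ is odd.

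\emph{Where the difficulty lies.}
The low-dimensional computations above are routine. The hard part will be Step~3 in dimensions $d\ge5$: turning the link--deletion duality into an induction that works uniformly in~$d$, simultaneously emptying dimensions~$5$ and~$6$ and forcing $n\ge d+7$ for every $d\ge7$. What makes this delicate is that the links~$L_v$ are not complementary complexes themselves, only Alexander-dual to the deletions~$K_v$, so the inductive hypothesis has to be carried through that duality rather than applied verbatim. A secondary point needing care is the evenness of the middle $\F_2$-Betti number in the parity argument for odd~$n$.
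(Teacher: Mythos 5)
First, a point of comparison: the paper does not prove this statement at all --- it is quoted as a known theorem of Datta~[Dat98] and Bagchi--Datta~[BaDa04] --- so your proposal has to be judged against those references rather than against anything internal to the paper. Your Steps~1--2 and the cases $d\le 4$ are essentially sound: the face-count relations $f_{k-1}+f_{n-k-1}=\binom{n}{k}$, the bounds $d+3\le n\le 2d+2$, the identification $\link(v,K)=(K_v)^{*}$, and the eliminations for $d\in\{1,2,3\}$ all check out. (Two small slips: a vertex of a weak $e$-pseudomanifold lies in at least $e+1$ facets, not $e+2$ --- its link is a weak $(e-1)$-pseudomanifold, which has at least $e+1$ top faces --- so the inequality is $f_d\ge f_0$, not $(d+1)f_d\ge(d+2)f_0$; your $d=2$, $n=5$ and $d=3$, $n=6$ computations survive the correction. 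Also, the case $d=4$, $n=9$ rests on the uniqueness of $\CP^2_9$, itself a nontrivial cited theorem.)

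The genuine gap is Step~3 for $d\ge 5$. You dispose of $n=d+3$ correctly via $\widetilde H_0\cong\widetilde H_d\ne 0$, but for the remaining range $d+4\le n\le 2d+2$ you only assert that "one feeds the inductively known structure of the links through the Alexander duality \dots to derive a contradiction" and that "the outcome should be" the stated conclusion. That outcome --- non-existence in dimensions $5$ and $6$, and $n\ge d+7$ for $d\ge 7$ --- \emph{is} the theorem; it is precisely the content of~[Dat98] and of the entirety of~[BaDa04], whose sole purpose is the single case $d=6$. The numerical machinery of Steps~1--2 cannot do this on its own: already for $d=4$, $n=8$ it yields the perfectly consistent $f$-vector $(1,8,28,42,35,14)$ with $2^{7}$ faces and $\chi=1$, so genuine structural case analysis is required, and it only gets harder as $d$ grows (for $d=6$ one must kill every $n$ from $10$ to $14$). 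No uniform induction of the kind you sketch is supplied, so the central claim is asserted rather than proved. A secondary gap is the parity statement for odd $n$: the reduction $\widetilde\chi(K)\equiv\widetilde\beta_m(K;\F_2)\pmod 2$ with $m=(n-3)/2$ is fine, but the claim that $\widetilde\beta_m$ is even requires the middle-dimensional pairing coming from self-duality to be \emph{alternating} ($\lambda(x,x)=0$), not merely symmetric and nondegenerate --- over $\F_2$ a nondegenerate symmetric form can have odd rank --- and you give no argument for the vanishing of the self-linking term.
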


\subsection{Main result on fixed point complexes}

For an element~$g$ of a group, we denote by~$\langle g\rangle$ the cyclic subgroup generated by~$g$.

\begin{propos}\label{propos_p}
Suppose that $K$ is a $15$-vertex $\Z$-homology $8$-manifold such that the graded group $H_*(K;\Z)$ is not isomorphic to~$H_*(S^8;\Z)$ and $V$ is the vertex set of~$K$.
Let $g\in\Sym(K)$ be an element of prime order~$p$. Suppose that the group $G=\langle g\rangle$ acts on~$V$ with $r$ orbits. Then all $G$-orbits in~$V$ are simplices of~$K$ and one of the following assertions is true:
 \begin{itemize}
  \item $r=6$ and $K^{G}\cong \RP^2_6$,
  \item $r=9$ and $K^{G}\cong \CP^2_9$,
  \item $K^{G}\cong\pt\sqcup\partial\Delta^{r-2}$.
 \end{itemize}
\end{propos}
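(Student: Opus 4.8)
The plan is to analyze the action of $G = \langle g \rangle \cong \rC_p$ on the $15$-vertex complex $K$ and deduce the structure of the fixed point complex $K^G$ from the three tools of the preliminary subsection: Smith theory (Theorem~\ref{thm_Smith}), complementarity (Corollary~\ref{cor_complement}), and the Datta--Bagchi--Datta classification (Theorem~\ref{thm_Datta}). First I would record that $K$ satisfies complementarity by Corollary~\ref{cor_complement} and has the $f$-vector~\eqref{eq_fvect_BK} by Corollary~\ref{cor_fvect}; in particular $K$ is $5$-neighborly, so every subset of $V$ of size at most $5$ is a simplex. The first key observation is that \emph{every $G$-orbit in $V$ is a simplex of $K$}: each orbit has size $1$ or $p$, and $p\le 15$; if $p\le 5$ we are done by $5$-neighborliness, while if $p\in\{7,11,13\}$ the orbit $\sigma$ of size $p$ has complement $V\setminus\sigma$ of size $15-p\le 8$, and one checks using the orbit structure (the number $r$ of orbits, and that $g$ must permute the at most $15-p$ remaining vertices) that $V\setminus\sigma$ cannot be a simplex — e.g.\ for $p=13$, $p=11$ this is immediate since $g$ of order $p$ cannot fix a set of size $15-p<p$ pointwise unless that set is a union of smaller orbits, forcing $p\mid 15-p$, impossible; for $p=7$ a short case analysis on the orbit partition $15 = 7+7+1$ versus $7+\text{(orbits summing to }8)$ rules out $V\setminus\sigma \in K$. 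Hence by complementarity $\sigma\in K$. Therefore $K^G$ is a genuine simplicial complex on exactly $r$ vertices, one barycentre $b(\sigma_i)$ per $G$-orbit.

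Next I would identify $K^G$ as a weak pseudomanifold satisfying complementarity, so that Theorem~\ref{thm_Datta} applies. Since $K$ is a connected $\F_p$-homology $8$-manifold (connectedness follows from $5$-neighborliness with $15>1$ vertices), Theorem~\ref{thm_Smith} gives that every connected component of $K^G$ is an $\F_p$-homology manifold, hence a weak pseudomanifold of its own dimension. Complementarity of $K^G$ on its $r$ vertices is a formal consequence of complementarity of $K$: for a subset $T$ of the $G$-orbits, writing $\sigma_T = \bigcup_{i\in T}\sigma_i$, the complement of $\sigma_T$ in $V$ is $\sigma_{T^c}$, and exactly one of $\sigma_T, \sigma_{T^c}$ lies in $K$, i.e.\ exactly one of $T, T^c$ is a simplex of $K^G$. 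So $K^G$ satisfies complementarity on $r$ vertices. A subtlety is that $K^G$ might a priori not be pure or not be a weak pseudomanifold if its components have different dimensions; but complementarity on $r$ vertices forces $K^G$ to be $(r - \dim K^G - 2)$-neighborly, and combined with the Smith-theoretic statement that each component is a homology manifold, I would argue $K^G$ is in fact a weak $d'$-pseudomanifold for $d' = \dim K^G$ (this is exactly the setting of Theorem~\ref{thm_Datta}, and the parallel argument was carried out in~\cite[Section~7]{Gai23}).

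Now apply Theorem~\ref{thm_Datta} to $K^G$: it is isomorphic to $\pt\sqcup\pt\sqcup\pt$, to $\RP^2_6$, to $\CP^2_9$, or else $d' = \dim K^G \ge 7$ and $K^G$ has at least $d'+7$ vertices. In the last case $r \ge d'+7 \ge 14$; but $r\le 15$ always, and $r = 15$ forces $p$ acting trivially, which is excluded, while $r=14$ with $p=2$ (the only possibility, $14 = 13\cdot 1 + 1\cdot 2$) would give $\dim K^G \ge 7$, and $K^G$ would be a $\ge 7$-dimensional homology submanifold of the $8$-dimensional $K$ on $14$ vertices; by Remark~\ref{remark_smaller_dim} $\dim K^G < 8$, so $\dim K^G = 7$, but then $K^G$ would be a $13$-vertex... — here I need to push the $f$-vector/Dehn--Sommerville and complementarity constraints to a contradiction, or invoke that a $7$-dimensional homology manifold needs at least $10$ vertices and complementarity on $14$ vertices with $\dim = 7$ would force it to be $5$-neighborly, which together with Theorem~\ref{thm_Novik} applied in dimension $7$ (where no exceptional case exists) forces $K^G$ to be a sphere, contradicting complementarity. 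So the fourth alternative of Theorem~\ref{thm_Datta} is impossible, leaving $K^G \in \{\pt^{\sqcup 3}, \RP^2_6, \CP^2_9\}$. Matching vertex counts: $\RP^2_6$ needs $r=6$, $\CP^2_9$ needs $r=9$, and $\pt\sqcup\pt\sqcup\pt$ needs $r=3$. It remains to upgrade $\pt\sqcup\pt\sqcup\pt$ (the case $r=3$) to $\pt\sqcup\partial\Delta^{r-2}$ and, more importantly, to handle the values of $r$ not in $\{3,6,9\}$. The point is that when $\dim K^G \le 1$, $K^G$ is a weak $1$-pseudomanifold on $r$ vertices satisfying complementarity, i.e.\ a disjoint union of cycles and isolated points whose Alexander dual is itself; a direct combinatorial check (which I would do carefully, as this is the real content) shows the only such complexes are $\pt\sqcup\pt\sqcup\pt$ (when $r=3$) and $\pt\sqcup\partial\Delta^{r-2}$ (a single point together with the boundary of an $(r-2)$-simplex, a cycle on the remaining $r-1$ vertices), for every $r\ge 3$; and when $\dim K^G \ge 2$, Theorem~\ref{thm_Datta} pins $K^G$ down to $\RP^2_6$ or $\CP^2_9$ with the stated $r$.

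The main obstacle I anticipate is precisely the low-dimensional bookkeeping: proving that a self-Alexander-dual weak $1$-pseudomanifold on $r$ vertices must be $\pt\sqcup\partial\Delta^{r-2}$ (one isolated vertex plus a single $(r-1)$-gon), i.e.\ ruling out multiple cycles or the "wrong" split of vertices between the point part and the cycle part, using complementarity and the weak-pseudomanifold condition (every edge in exactly two triangles is vacuous in dimension $1$, so it is really the $0$-dimensional condition plus $1$-neighborliness of the dual). I also expect some care is needed to show $K^G$ is genuinely a weak pseudomanifold of pure dimension before Theorem~\ref{thm_Datta} can be cited, and to exclude $4\le r\le 5$, $7\le r \le 8$, $r\ge 10$ cleanly — these are exactly the values where $K^G$ cannot be $2$-dimensional or higher (by Theorem~\ref{thm_Datta}'s vertex bounds) and so must fall into the $\le 1$-dimensional classification. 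Everything else is a direct application of the three cited theorems in the pattern already established in~\cite[Section~7]{Gai23}.
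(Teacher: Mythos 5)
There are two genuine gaps. First, your argument that every $G$-orbit is a simplex of $K$ does not work for large primes. For $p=13$ the orbit structure is $13+1+1$: the $13$-element orbit $\sigma$ cannot possibly be a simplex ($\dim K=8$), and its complement $V\setminus\sigma$ is a $2$-element set, which \emph{is} a simplex by $5$-neighborliness --- the opposite of what you claim. The same happens for $p=11$ (orbit structure $11+1+1+1+1$, complement of size $4$). So "all orbits are simplices'' cannot be proved by your direct combinatorial route; it is true only because elements of order $11$ and $13$ do not exist at all, and that is exactly what must be established. The paper's mechanism is Smith theory: if some orbit $\sigma_r$ is not a simplex, then by complementarity the union of the remaining orbits is a simplex, so $K^{G}$ is a full simplex of positive dimension (when $r\ge3$), which is not a closed $\F_p$-homology manifold, contradicting Theorem~\ref{thm_Smith}. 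This one argument both proves the claim for $p\le 7$ and kills $p=11,13$. (Your sketch for $p=7$ could be completed --- an invariant $7$-simplex of $K$ would lie in a $g$-orbit of $8$-simplices of length $7\ne 2$ --- but the "immediate'' divisibility argument for $p=11,13$ is incoherent: the complement of the big orbit is just the set of fixed points, already a union of singleton orbits, and nothing forces $p\mid 15-p$.)

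Second, you cannot apply Theorem~\ref{thm_Datta} to the whole of $K^G$, because $K^G$ need not be a weak pseudomanifold: its connected components may have different dimensions, and then condition (1) of the definition (every simplex lies in a top-dimensional simplex) fails. Indeed the third alternative of the proposition, $K^G\cong\pt\sqcup\partial\Delta^{r-2}$, is exactly such a complex for $r\ge4$, and it is \emph{not} on Datta's list. Your fallback analysis misreads $\partial\Delta^{r-2}$ as an $(r-1)$-cycle; it is the boundary of an $(r-2)$-simplex, i.e.\ an $(r-3)$-sphere, so for $r\ge 5$ (e.g.\ $p=3$ acting freely, $r=5$, $K^G\cong\pt\sqcup\partial\Delta^3$) the fixed complex is $2$-dimensional and disconnected, and neither your "$\dim K^G\le1$'' case nor your "Datta pins it down to $\RP^2_6$ or $\CP^2_9$'' case covers it. (Note also that $\pt$ together with a cycle graph on the other $r-1$ vertices does not even satisfy complementarity once $r\ge5$.) The paper instead bounds the number $s$ of components of $K^G$ by $3$ using complementarity, applies Datta only in the connected case $s=1$ (where Smith theory makes $K^G$ a genuine pseudomanifold and the $7$-dimensional alternative dies because an odd-dimensional homology manifold has zero Euler characteristic), and for $s=2,3$ uses complementarity of $K$ directly: the component that is a simplex must be a point, hence a single orbit $\sigma_1$, and then $\sigma_1\cup\sigma_k\notin K$ for all $k$ forces every facet, but not the whole, of the simplex on $b(\sigma_2),\ldots,b(\sigma_r)$ into $K^G$, giving $\partial\Delta^{r-2}$. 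This component-wise step is the real content of the third bullet and is missing from your proposal.
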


\begin{proof}
Let $V=\sigma_1\cup\cdots\cup\sigma_r$ be the decomposition into $G$-orbits.
Assume that one of the $G$-orbits (say, $\sigma_r$) is not a simplex of~$K$. Then by complementarity  $\sigma_1\cup\cdots\cup\sigma_{r-1}\in K$. Hence $K^G$ is the $(r-2)$-simplex with the vertices $b(\sigma_1),\ldots,b(\sigma_{r-1})$. If $r\ge 3$, then we get a contradiction with the assertion of Theorem~\ref{thm_Smith}, since a simplex of positive dimension is not a homology manifold. (Note that our definition allows homology manifolds without boundary only.) If $r\le 2$, then we also arrive at a contradiction, since the group~$\rC_p$ with prime~$p$ cannot act on $15$ points with less than three orbits. So all the orbits $\sigma_1,\ldots,\sigma_r$ are simplices of~$K$. Then $K^G$ has exactly $r$ vertices $b(\sigma_1),\ldots,b(\sigma_r)$. In particular,  $K^G$ is non-empty. Moreover, from the complementarity property for~$K$ immediately follows the complimentarity property for~$K^G$.

Let $F_1,\ldots,F_s$ be the connected components of~$K^G$ and $I_1,\ldots,I_s$ the corresponding subsets of $\{1,\ldots,r\}$ so that $I_k$ is the set of all indices~$i$ such that $b(\sigma_i)$ belongs to~$F_k$. Then $I_k\cup I_l$ is not a simplex of~$K$ unless  $k=l$. If $s\ge 4$, then we get a contradiction with complementarity, since both $I_1\cup I_2$ and~$I_3\cup I_4$ are non-simplices of~$K$. So we need to consider the three cases of $s$ equal to~$1$, $2$, and~$3$.

Assume that $s=1$. Then by Theorem~\ref{thm_Smith} we see that $K^G$ is a connected $\F_p$-homology $d$-manifold and hence a $d$-pseudomanifold for some $d<\dim K=8$ (see Remark~\ref{remark_smaller_dim}). But $K^G$ satisfies complementarity. So by Theorem~\ref{thm_Datta} we obtain that $K^G\cong\RP^2_6$, or~$K^G\cong\CP^2_9$, or $K^G$ is a $7$-dimensional pseudomanifold with odd Euler characteristic. However, the last case is impossible, since by Poincar\'e duality any odd-dimensional $\F_p$-homology manifold has zero Euler characteristic.

Assume that $s=2$. From complementarity it follows that exactly one of the two sets~$I_1$ and~$I_2$ is a simplex of~$K$. We may assume that $I_1\in K$ and $I_2\notin K$. Then $F_1$ is a simplex. However, by Theorem~\ref{thm_Smith} we have that $F_1$ is an $\F_p$-homology manifold. Hence, $F_1$ is a point. Therefore, $I_1$ is a single $G$-orbit. We may assume that $I_1=\sigma_1$. For every $k=2,\ldots,r$, we know that $\{b(\sigma_1),b(\sigma_k)\}$ is not an edge of~$K^G$. Hence $\sigma_1\cup\sigma_k\notin K$. By complementarity we obtain that $$\sigma_2\cup\cdots\cup\widehat{\sigma}_k\cup\cdots\cup\sigma_r\in K$$ and hence
$$\bigl\{b(\sigma_2),\ldots,\widehat{b(\sigma_k)},\ldots,b(\sigma_r)\bigr\}\in K^G,$$
where hat denotes the omission of the corresponding set or vertex. On the other hand, since $\sigma_1\in K$, we see that $\sigma_2\cup\cdots\cup\sigma_r\notin K$ and hence $\bigl\{b(\sigma_2),\ldots,b(\sigma_r)\bigr\}\notin K^G$. Therefore, $F_2$ is the boundary of the $(r-2)$-simplex with vertices $b(\sigma_2),\ldots,b(\sigma_r)$ and thus $K^G\cong\pt\sqcup\partial\Delta^{r-2}$.

Finally, assume that $s=3$. Then from complementarity it follows that $I_1$, $I_2$, and~$I_3$ are simplices of~$K$. Hence every component~$F_k$ is a simplex. However, by Theorem~\ref{thm_Smith} every $F_k$ is an $\F_p$-homology manifold. Therefore, $F_k$ is a point. It follows that $I_k$ is a single $G$-orbit. Thus, $r=s=3$ and $K^G\cong \pt\sqcup\pt\sqcup\pt=\pt\sqcup\partial\Delta^1$.
\end{proof}

In the following corollaries we always assume that $K$ satisfies the conditions from Proposition~\ref{propos_p} and $V$ is the vertex set of~$K$.

\begin{cor}\label{cor_p>7}
 The group $\Sym(K)$ contains no elements of prime orders greater than~$7$.
\end{cor}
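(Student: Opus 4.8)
The plan is to deduce this immediately from Proposition~\ref{propos_p}, using only the elementary bound on the number of vertices of a simplex. First I would take an element $g\in\Sym(K)$ of prime order $p>7$ and set $G=\langle g\rangle$. Viewing $g$ as a permutation of the $15$-element vertex set~$V$, we get $p\le 15$, and since $15$ is not prime this leaves $p\in\{11,13\}$. Moreover, because $|G|=p$ is prime, every $G$-orbit in~$V$ has size $1$ or~$p$, and since $g$ has order exactly~$p$ the action is nontrivial, so at least one orbit, say~$\sigma$, has size~$p$.

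Next I would apply Proposition~\ref{propos_p} to~$g$. Here the one point that deserves a word of care is that the conclusion of Proposition~\ref{propos_p} that \emph{all} $G$-orbits in~$V$ are simplices of~$K$ holds in every branch of its proof, i.e.\ it does not depend on which of the three possibilities for~$K^G$ occurs; in particular we obtain $\sigma\in K$. On the other hand, condition~(1) of Definition~\ref{defin_hm} says that every simplex of~$K$ is contained in an $8$-simplex, and hence has at most~$9$ vertices. Since $|\sigma|=p\ge 11>9$, this is a contradiction, so no element of~$\Sym(K)$ can have prime order greater than~$7$.

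I do not expect any real obstacle in this argument: the whole weight rests on Proposition~\ref{propos_p}, which is already established, and the rest is a counting triviality. (The same reasoning in fact excludes any prime $p>9$, which is why it suffices to treat $p=11$ and $p=13$.)
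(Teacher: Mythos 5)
Your proof is correct and is exactly the intended deduction: the paper states this corollary without proof as an immediate consequence of Proposition~\ref{propos_p}, and the argument it has in mind is precisely yours — an element of prime order $p>7$ in $\rS_{15}$ forces $p\in\{11,13\}$ and hence an orbit of length at least $11$, which by Proposition~\ref{propos_p} would be a simplex of~$K$, impossible since $\dim K=8$. No gaps.
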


\begin{cor}\label{cor_2}
Suppose that $g\in \Sym(K)$ is an element of order~$2$. Then $g$ acts on~$V$ with exactly three fixed points and $K^{\langle g\rangle}\cong \CP^2_9$.
\end{cor}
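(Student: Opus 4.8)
The plan is to apply Proposition~\ref{propos_p} to the order-two element $g$ (taking $p=2$) and then to discard all but one of the three alternatives it produces. Two of them fall to a trivial orbit count; the remaining one, $K^{\langle g\rangle}\cong\pt\sqcup\partial\Delta^{r-2}$, has to be excluded by a small extra argument, and that is the only place where any thought is needed.

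First I would record how $\langle g\rangle\cong\rC_2$ can act on the $15$-element vertex set $V$. If $f$ is the number of fixed points and $m$ the number of two-element orbits, then $f+2m=15$ and the total number of orbits is $r=f+m$; hence $f=2r-15$ is odd and $r\in\{8,\ldots,15\}$. In particular $r\ge 8$, so $r\ne 6$ and $r\ne 3$. Comparing with Proposition~\ref{propos_p}: the alternative ``$r=6$, $K^{\langle g\rangle}\cong\RP^2_6$'' is impossible, and the degenerate case $K^{\langle g\rangle}\cong\pt\sqcup\pt\sqcup\pt$ (which forces $r=3$) is impossible as well. Moreover, if $K^{\langle g\rangle}\cong\CP^2_9$ then necessarily $r=9$, and then $f=2\cdot 9-15=3$, which is exactly the claimed number of fixed points. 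So the whole statement reduces to ruling out the third alternative, $K^{\langle g\rangle}\cong\pt\sqcup\partial\Delta^{r-2}$.

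For that, suppose $K^{\langle g\rangle}\cong\pt\sqcup\partial\Delta^{r-2}$ and keep the notation of the proof of Proposition~\ref{propos_p}: the isolated vertex of $K^{\langle g\rangle}$ is the barycentre $b(\sigma_1)$ of a single $\langle g\rangle$-orbit $\sigma_1$, and for each $k=2,\ldots,r$ the pair $\{b(\sigma_1),b(\sigma_k)\}$ is not an edge of $K^{\langle g\rangle}$, i.e.\ $\sigma_1\cup\sigma_k\notin K$. But every $\langle g\rangle$-orbit has at most $2$ elements, so $|\sigma_1\cup\sigma_k|\le 4$. Since $K$ is $5$-neighborly by Corollary~\ref{cor_complement}, every subset of $V$ of cardinality at most $5$ is a simplex of $K$; in particular $\sigma_1\cup\sigma_k\in K$ (such a $k$ exists because $r\ge 8\ge 2$), a contradiction. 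Hence this case does not occur, so $K^{\langle g\rangle}\cong\CP^2_9$ and, by the orbit count above, $g$ has exactly $3$ fixed points.

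The ``main obstacle'' is really just the observation in the previous paragraph: it is the smallness of $\rC_2$-orbits that makes a disconnected fixed-point complex incompatible with $5$-neighborliness. This is genuinely special to $p=2$ --- for $p=3$ or $p=5$ the union $\sigma_1\cup\sigma_k$ can have $6$ or more elements and the argument breaks down, which is why for those primes the disconnected alternative survives in Proposition~\ref{propos_p}. The only points to be careful about are the boundary values of the orbit count (that $r$ is forced into $\{8,\ldots,15\}$, never $3$ or $6$) and the fact that no Smith-theoretic input beyond Proposition~\ref{propos_p} is needed here.
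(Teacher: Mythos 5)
Your proof is correct and follows essentially the same route as the paper: both arguments invoke Proposition~\ref{propos_p}, use the orbit count $r\ge 8$ to kill the $\RP^2_6$ and three-points alternatives, and use $5$-neighborliness (Corollary~\ref{cor_complement}) to show that the union of any two $\rC_2$-orbits is a simplex of~$K$, which forces $K^{\langle g\rangle}$ to be connected and hence isomorphic to~$\CP^2_9$. The paper states this in two lines; your version just spells out the same contradiction at the isolated vertex and makes the fixed-point arithmetic explicit.
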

\begin{proof}
The subgroup $G=\langle g\rangle\cong\rC_2$ acts on~$V$ with $r\ge 8$ orbits. By Corollary~\ref{cor_complement} the complex~$K$ is $5$-neighborly, so the union of any two $G$-orbits is a simplex of~$K$. Hence $K^G$ is connected. Therefore, $r=9$ and $K^G\cong \CP^2_9$.
\end{proof}

\begin{cor}\label{cor_3}
Suppose that $g\in \Sym(K)$ is an element of order~$3$. Then $g$ acts on~$V$ without fixed points and $K^{\langle g\rangle}\cong \pt\sqcup\partial\Delta^3$.
\end{cor}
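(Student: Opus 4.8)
The plan is to reprise the orbit-counting argument from the proofs of Corollaries~\ref{cor_p>7} and~\ref{cor_2}, feeding Proposition~\ref{propos_p} together with the $5$-neighborliness of~$K$, and then to kill the one surviving exceptional configuration by a dimension count. Put $G=\langle g\rangle\cong\rC_3$ and decompose $V=\sigma_1\cup\cdots\cup\sigma_r$ into $G$-orbits; each orbit has $1$ or $3$ elements, and I let $k$ be the number of fixed vertices. Since $g$ has order~$3$ it acts nontrivially on~$V$, so at least one orbit has size~$3$; as $k\equiv0\pmod3$ this leaves $k\in\{0,3,6,9,12\}$, with $r=k+\tfrac{15-k}{3}=\tfrac{2k+15}{3}$. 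Proposition~\ref{propos_p} then says that all orbits are simplices of~$K$ and that $K^G$ is $\RP^2_6$ (if $r=6$), $\CP^2_9$ (if $r=9$), or $\pt\sqcup\partial\Delta^{r-2}$; and $r=6$ would force $k=\tfrac32$, which is absurd, so the $\RP^2_6$ case is excluded outright.

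Next I would eliminate the possibility $k\ge1$. If $k\ge1$ then in fact $k\ge3$, hence $r\ge7$. Choosing a fixed vertex~$p$, for every other orbit~$\sigma$ the set $\{p\}\cup\sigma$ has at most~$4$ elements, hence is a simplex of~$K$ by Corollary~\ref{cor_complement}; so $b(p)$ is joined by an edge to every other vertex of~$K^G$, and $K^G$ is connected. Since $\pt\sqcup\partial\Delta^{r-2}$ is disconnected for $r\ge4$, we are forced into the case $r=9$, so $k=6$: there are exactly three orbits of size~$3$, say $\sigma_1,\sigma_2,\sigma_3$, six fixed points $p_1,\dots,p_6$, and $K^G\cong\CP^2_9$.

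The only genuinely non-formal step is to rule out $k=6$. The set $\{b(p_1),\dots,b(p_6)\}$ cannot be a simplex of $K^G\cong\CP^2_9$, since $\dim K^G=4<5$; hence $\{p_1,\dots,p_6\}\notin K$, and by the complementarity of~$K$ (Corollary~\ref{cor_complement}) we get $\sigma_1\cup\sigma_2\cup\sigma_3=V\setminus\{p_1,\dots,p_6\}\in K$, so $\{b(\sigma_1),b(\sigma_2),b(\sigma_3)\}$ is a $2$-simplex of~$K^G$. It is a \emph{maximal} simplex of~$K^G$, because adjoining the barycentre of any remaining orbit $\sigma_i$ with $1\le i\le6$ would require $\sigma_i\cup\sigma_1\cup\sigma_2\cup\sigma_3\in K$, a set of $1+3+3+3=10>9$ vertices, which is impossible in an $8$-dimensional complex. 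Hence $K^G$ would have a maximal simplex of dimension~$2$, contradicting the fact that $\CP^2_9$ is an $\F_3$-homology $4$-manifold (Theorem~\ref{thm_Smith}) and in particular pure of dimension~$4$. Therefore $k=0$, $r=5$, the $\rC_3$-action on~$V$ is fixed-point free, and Proposition~\ref{propos_p} yields $K^G\cong\pt\sqcup\partial\Delta^3$.

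I expect this last dimension count to be the main (indeed only) obstacle: in contrast to the order-$2$ case (Corollary~\ref{cor_2}), where $K^G\cong\CP^2_9$ is the actual answer, a $\rC_3$-action with six fixed points is excluded precisely because three size-$3$ orbits already exhaust $9$ of the $15$ vertices, leaving no room for $K^G$ to be a $4$-dimensional complex on those orbits. Everything else is bookkeeping with orbit lengths and the $5$-neighborliness of~$K$.
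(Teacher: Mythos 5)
Your proof is correct, and while it follows the same inevitable skeleton as the paper's (write $V$ as a union of $\langle g\rangle$-orbits, apply Proposition~\ref{propos_p}, and kill every fixed-point count except zero), both of your key contradictions are obtained by genuinely different arguments. For the disconnected cases $K^G\cong\pt\sqcup\partial\Delta^{r-2}$ with $k\ge 3$ fixed points, you argue that $5$-neighborliness makes a fixed vertex adjacent in $K^G$ to every other orbit, so $K^G$ is connected — whereas the paper instead takes the facet of $\Delta^{2s+3}$ opposite a fixed vertex and observes that the corresponding subset of $V$ has at least $11$ elements, which is too large to be a simplex. For the remaining case $k=6$, $K^G\cong\CP^2_9$, you use complementarity of $K$ applied to the $6$-set of fixed points to produce $\sigma_1\cup\sigma_2\cup\sigma_3\in K$ and hence a \emph{maximal} $2$-simplex of $K^G$, contradicting the purity of the $4$-dimensional complex $\CP^2_9$; the paper instead uses $5$-neighborliness to show that all five-element subsets of the six fixed points are simplices, so $\CP^2_9$ would contain $\partial\Delta^5$, which is impossible. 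Your connectivity argument is arguably slightly cleaner since it disposes of $s=1,3,4$ uniformly, and your purity argument avoids having to know that $\CP^2_9$ contains no $\partial\Delta^5$. Two cosmetic points: the indices in ``any remaining orbit $\sigma_i$ with $1\le i\le 6$'' clash with your earlier use of $\sigma_1,\sigma_2,\sigma_3$ for the three-element orbits (you clearly mean the six singleton orbits $\{p_i\}$, as the count $1+3+3+3=10$ shows); and the purity of $\CP^2_9$ is immediate from its construction as a list of $4$-simplices and their faces, so the appeal to Theorem~\ref{thm_Smith} there is unnecessary.
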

\begin{proof}
Consider the subgroup $G=\langle g\rangle\cong\rC_3$. It has $3s$ fixed points in~$V$, where $0\le s\le 4$. Then the number of $G$-orbits in~$V$ is $2s+5$, so $K^G$ has $2s+5$ vertices. By Proposition~\ref{propos_p} we  have two cases.

\textsl{Case 1: $s=2$ and $K^G\cong\CP^2_9$.} By Corollary~\ref{cor_complement} the complex~$K$ is $5$-neighborly, so any five of the six $G$-fixed points in~$V$ form a simplex of~$K$ and hence a simplex of~$K^G$. Therefore, $K^G\cong\CP^2_9$ contains a subcomplex isomorphic to~$\partial\Delta^5$. We arrive at a contradiction, so this case is impossible.

\textsl{Case 2: $K^G\cong\pt\sqcup\partial\Delta^{2s+3}$.} Let $\alpha$ be the $G$-orbit in~$V$ corresponding to the separate point~$\pt$. If $s>0$, then there is a fixed point $v\in V$ that corresponds to a vertex of~$\partial\Delta^{2s+3}$. Then the subset $\sigma=V\setminus(\alpha\cup\{v\})$ corresponds to the facet~$\tau$ of~$\Delta^{2s+3}$ opposite to the vertex~$v$. Since $\tau\in K^G$, we obtain that $\sigma\in K$. We arrive at a contradiciton, since $|\sigma|\ge 11$. Therefore $s=0$.
\end{proof}

\begin{cor}\label{cor_9}
The order $|\Sym(K)|$ is not divisible by~$9$.
\end{cor}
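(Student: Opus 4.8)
The plan is to argue by contradiction. Suppose that $9$ divides $|\Sym(K)|$. By Sylow's theorem, together with the standard fact that a finite $p$-group contains subgroups of every order dividing its order, $\Sym(K)$ contains a subgroup $G$ with $|G|=9$; hence $G$ is isomorphic either to~$\rC_9$ or to~$\rC_3\times\rC_3$. I will show that $G$ necessarily acts freely on the vertex set~$V$. This is impossible: a free action of~$G$ on~$V$ has all orbits of size~$|G|=9$, whence $9$ would divide $|V|=15$.

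To prove that the $G$-action on~$V$ is free, it is enough to check that no non-identity element of~$G$ fixes a vertex. So let $g\in G$ with $g\neq e$, and suppose $g(v)=v$ for some $v\in V$. The element $g$ has order $3$ or $9$; put $h=g$ if $\mathrm{ord}(g)=3$ and $h=g^3$ if $\mathrm{ord}(g)=9$. Then $h\in\Sym(K)$ has order exactly~$3$ and $h(v)=v$, contradicting Corollary~\ref{cor_3}, which asserts that every order-$3$ element of~$\Sym(K)$ acts on~$V$ without fixed points. Therefore every non-identity element of~$G$ acts freely on~$V$, all point stabilizers in~$G$ are trivial, and $|G|$ divides $|V|$ — the desired contradiction.

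The argument is short and I do not expect any genuine obstacle. The only point worth flagging is that Corollary~\ref{cor_p>7} does \emph{not} apply directly here, since $9$ is not prime, so one cannot simply invoke the nonexistence of an element of order~$9$; instead one must pass from a hypothetical fixed point of an order-$9$ element to a fixed point of its cube and then quote the fixed-point-freeness of order-$3$ elements. This same reduction also renders the dichotomy $G\cong\rC_9$ versus $G\cong\rC_3\times\rC_3$ irrelevant, as the passage to order-$3$ elements is uniform in both cases.
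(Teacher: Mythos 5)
Your proof is correct and follows essentially the same route as the paper, which observes via Corollary~\ref{cor_3} that the Sylow $3$-subgroup acts freely on~$V$ and hence has order dividing $15$, so at most~$3$. Your version merely spells out the (correct) details the paper leaves implicit: reducing an arbitrary non-identity element of a $3$-group to an element of order exactly~$3$ with the same fixed point, and noting that a free action forces the order to divide~$|V|$.
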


\begin{proof}
 From Corollary~\ref{cor_3} it follows that the Sylow $3$-subgroup of~$\Sym(K)$ acts freely on~$V$. Hence its order does not exceed~$3$.
\end{proof}

\begin{cor}\label{cor_5}
 Suppose that $g\in \Sym(K)$ is an element of order~$5$. Then one of the following two assertions is true:
 \begin{enumerate}
  \item $g$ acts on~$V$ without fixed points and $K^{\langle g\rangle}\cong \pt\sqcup\pt\sqcup\pt$,
  \item $g$ acts on~$V$ with exactly $5$ fixed points and $K^{\langle g\rangle}\cong \pt\sqcup\partial\Delta^5$, where the separate point~$\pt$ corresponds to one of the two $\langle g \rangle$-orbits of length~$5$.
 \end{enumerate}
\end{cor}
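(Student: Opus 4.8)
The plan is to mimic the proof of Corollary~\ref{cor_3}, applying Proposition~\ref{propos_p} to the subgroup $G=\langle g\rangle\cong\rC_5$ and using the neighborliness of~$K$ to rule out the degenerate configurations. First I would count orbits: since $|G|=5$ and $|V|=15$, if $g$ fixes $5s$ points (with $0\le s\le 3$), then $G$ acts on~$V$ with $s$ free orbits of length~$5$ together with $5s$ fixed points, so the number of $G$-orbits is $r=5s+(3-s)=4s+3$, whence $K^G$ has $4s+3$ vertices. Note that $5s \le 15$ forces $s\le 3$, and actually $s\ne 2$ is automatic because $5\cdot 2=10$ leaves $5$ points on which $\rC_5$ must still act, which is fine — so I must keep $s\in\{0,1,2,3\}$ and instead eliminate the bad cases by a different argument.

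Next I would invoke Proposition~\ref{propos_p}. The options $r=6$ (i.e.\ $K^G\cong\RP^2_6$) and $r=9$ (i.e.\ $K^G\cong\CP^2_9$) require $r\in\{6,9\}$, but $r=4s+3$ is always odd and $\equiv 3\pmod 4$, so $r\in\{3,7,11,15\}$; in particular $r\ne 6$ and $r\ne 9$. Hence Proposition~\ref{propos_p} leaves only $K^G\cong\pt\sqcup\partial\Delta^{r-2}$. Now I rule out $s=2$ and $s=3$ exactly as in Corollary~\ref{cor_3}: by Corollary~\ref{cor_complement}, $K$ is $5$-neighborly, so the union~$\sigma$ of any collection of $G$-orbits with $|\sigma|\le 5$ is a simplex of~$K$, hence the corresponding vertices span a simplex of~$K^G$. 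If $s\ge 1$ there is a $G$-fixed point $v\in V$ that is a vertex of the $\partial\Delta^{r-2}$ part of~$K^G$ (the separate point~$\pt$ corresponds to a single $G$-orbit $\alpha$, which has at most $5$ elements; when $s\ge1$ and $r\ge 7$ the sphere part has at least $6$ vertices, and I should check $\alpha$ can be taken to be a fixed point or not — in any case at least one fixed point lies on the sphere once $5s > |\alpha|$, which holds for $s\ge 2$, and for $s=1$ one argues that $\alpha$ has $\le 5$ elements so the remaining $\ge 10$ vertices including fixed points lie on the sphere). Then $\tau := V\setminus(\alpha\cup\{v\})$ is the facet of $\Delta^{r-2}$ opposite~$v$, so $\tau\in K^G$ and therefore $\sigma:=V\setminus(\alpha\cup\{v\})\in K$; but $|\sigma|=15-|\alpha|-1\ge 15-5-1=9\ge 11$ fails — more carefully, $|\sigma|=14-|\alpha|$, and when $s\ge 2$ one has enough room that $|\sigma|\ge 11$, contradicting that every simplex of the $8$-dimensional complex~$K$ has at most $9$ vertices. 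This kills $s=2,3$, leaving $s\in\{0,1\}$.

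Finally I would nail down the two surviving cases. For $s=0$: $r=3$, so $K^G\cong\pt\sqcup\partial\Delta^{1}=\pt\sqcup\pt\sqcup\pt$, and $g$ acts freely on~$V$; this is assertion~(1). For $s=1$: $r=7$, so $K^G\cong\pt\sqcup\partial\Delta^{5}$ with $7$ vertices, corresponding to the one free $G$-orbit of length~$5$, call it $\beta$, plus the $5$ fixed points; this is assertion~(2) provided I check the last clause, namely that the separate point~$\pt$ corresponds to one of the two length-$5$ $G$-orbits rather than to a fixed point. Suppose instead $\pt$ corresponded to a fixed point~$w$; then the other $6$ vertices of~$K^G$ (the $5$ remaining fixed points together with $b(\beta)$) would span $\partial\Delta^5$, so every $5$-element subset of $\{$those $6$ vertices$\}$ is a simplex of~$K^G$, hence the union of the corresponding $G$-orbits is a simplex of~$K$; taking the $5$-subset consisting of the $4$ remaining fixed points together with $b(\beta)$, the corresponding union has $4+5=9$ vertices, which is allowed, but taking instead all $5$ fixed points other than $w$ gives a $5$-element simplex of~$K^G$, forcing those $5$ fixed points to span $\partial\Delta^4\subset K^G$ — and then together with the fact that $\{w\}\cup\{$all $5$ other fixed points$\}$ is a $6$-element subset of~$V$ that is $5$-neighborly-forced to be... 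I would instead argue directly: the separate point~$\pt$ is a single $G$-orbit, and among the $G$-orbits only the length-$5$ orbits have size making the complementary argument work, whereas if $\pt=\{w\}$ is a fixed point then $V\setminus\{w\}$ has $14$ elements and is not a simplex (dimension reasons) while $\{w\}\cup\alpha'$ is a non-edge of~$K^G$ for every other orbit $\alpha'$, forcing $V\setminus(\{w\}\cup\alpha')\in K$ for every orbit $\alpha'$; choosing $\alpha'=\beta$ (length $5$) gives a simplex with $15-1-5=9$ vertices — allowed — but choosing $\alpha'$ a fixed point gives $15-1-1=13$ vertices, a contradiction. Hence $\pt$ cannot be a fixed point, so $\pt$ corresponds to~$\beta$, completing~(2).

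The main obstacle is the bookkeeping in the last paragraph: separating the two length-$5$ orbits from the fixed points inside $\partial\Delta^5$ and confirming exactly which orbit is the isolated point. The cleanest route is the complementarity/neighborliness dichotomy — for any vertex $b(\sigma_i)$ of~$K^G$ that is \emph{not} the isolated point, $\{b(\sigma_i)\}\cup\{$isolated point$\}$ is a non-edge, forcing $V\setminus(\sigma_i\cup\alpha)\in K$, and this complement is too large ($>9$ vertices) unless $\sigma_i$ is large, i.e.\ a length-$5$ orbit; running this for $i$ ranging over the $5$ fixed points pins down that the isolated point must be a length-$5$ orbit, and then a symmetry/counting check shows it is one specific such orbit. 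I expect everything else to be a direct transcription of the $s$-counting and the Proposition~\ref{propos_p} case analysis already rehearsed in Corollaries~\ref{cor_2} and~\ref{cor_3}.
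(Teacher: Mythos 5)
Your overall strategy (orbit count, Proposition~\ref{propos_p}, then complementarity/size arguments) matches the paper's, and your treatment of the zero-fixed-point and five-fixed-point cases is correct: in particular your argument that the isolated vertex of $K^{\langle g\rangle}\cong\pt\sqcup\partial\Delta^5$ cannot be a fixed point is a valid variant of the paper's, which instead notes that $\alpha\cup\beta\notin K$ for the two length-$5$ orbits (it has $10>9$ elements), so $b(\alpha)$ and $b(\beta)$ cannot both lie on the sphere part. Your mod-$4$ observation ruling out the $\RP^2_6$ and $\CP^2_9$ alternatives of Proposition~\ref{propos_p} is also fine.

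However, there is a genuine gap in your elimination of the $10$-fixed-point case ($s=2$, $r=11$, $K^{\langle g\rangle}\cong\pt\sqcup\partial\Delta^{9}$). Your contradiction needs $|\sigma|=14-|\alpha|\ge 10$, i.e.\ that the isolated vertex correspond to an orbit $\alpha$ with $|\alpha|\le 4$, hence to a fixed point. But in this configuration the isolated vertex \emph{cannot} be a fixed point: $K$ is $5$-neighborly, so any two fixed points span an edge of $K$ and hence of $K^{\langle g\rangle}$, which forces all ten fixed points into a single connected component; the isolated vertex must therefore be the unique length-$5$ orbit. Then $|\sigma|=14-5=9$, a perfectly legal $8$-simplex, and no contradiction results. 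The paper closes this case by a different mechanism which you need (or an equivalent): $\partial\Delta^{9}$ is $8$-dimensional, so $\dim K^{\langle g\rangle}=\dim K=8$, contradicting Remark~\ref{remark_smaller_dim} (the fixed-point complex of a faithful action on a connected homology manifold has strictly smaller dimension). Finally, you should dispose of $s=3$ explicitly: it would mean $g$ acts trivially on $V$, which is impossible since $\Sym(K)$ is realized as a subgroup of $\rS_{15}$.
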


\begin{proof}
 Since the group $G=\langle g \rangle\cong\rC_5$ acts faithfully on~$V$, we see that it has $10$, or~$5$, or no fixed points in~$V$. Consider the three cases.

 First, assume that $G$ acts on~$V$ with $10$ fixed points. Then $K^G$ has $11$ vertices. Hence  by Proposition~\ref{propos_p} we obtain that $K^G\cong\pt\sqcup\partial\Delta^9$, which is impossible, since by Remark~\ref{remark_smaller_dim} $\dim K^G$ must be strictly smaller than $\dim K=8$.

 Second, assume that $G$ acts on~$V$ with $5$ fixed points. Then $K^G$ has $7$ vertices. Hence  by Proposition~\ref{propos_p} we obtain that $K^G\cong\pt\sqcup\partial\Delta^5$. Let $\alpha$ and~$\beta$ be the two $G$-orbits of length~$5$ in~$V$. Since $\dim K=8$, we have $\alpha\cup\beta\notin K$. Hence $\{b(\alpha),b(\beta)\}\notin K^G$. Therefore, either $b(\alpha)$ or $b(\beta)$ is the separate point~$\pt$ of~$K^G$.

 Finally, assume that $G$ acts on~$V$ without fixed points. Then $K^G$ has $3$ vertices. Hence  by Proposition~\ref{propos_p} we have $K^G\cong\pt\sqcup\pt\sqcup\pt$.
\end{proof}

\begin{remark}
 In fact, an element $g\in\Sym(K)$ of order $5$ always acts on~$V$ without fixed points, so assertion~(1) is always true and assertion~(2) is never true. However, we are not ready to prove this at the moment. We will prove this in Subsection~\ref{subsection_5_ns}.
\end{remark}

\begin{cor}\label{cor_7}
Suppose that $g\in \Sym(K)$ is an element of order~$7$. Then $g$ acts on~$V$ with exactly one fixed point and $K^{\langle g\rangle}\cong \pt\sqcup\pt\sqcup\pt$.
\end{cor}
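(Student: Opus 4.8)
The plan is to argue exactly as in the proofs of Corollaries~\ref{cor_3} and~\ref{cor_5}: first determine the possible orbit structures of $G=\langle g\rangle\cong\rC_7$ acting on~$V$, then apply Proposition~\ref{propos_p}, and finally discard the spurious cases using $\dim K=8$ together with the local structure of~$K$ near a simplex.

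Since $G$ acts faithfully on the $15$-element set~$V$, it has at least one orbit of length~$7$, and as orbit lengths divide~$7$ there are only two possibilities: either two orbits of length~$7$ and one fixed point, giving $r=3$ orbits in all, or one orbit of length~$7$ and eight fixed points, giving $r=9$. In the first case Proposition~\ref{propos_p}, applied with $r=3\notin\{6,9\}$, yields at once $K^{\langle g\rangle}\cong\pt\sqcup\partial\Delta^{1}=\pt\sqcup\pt\sqcup\pt$, which is exactly the assertion. So the whole content of the corollary is to rule out the eight-fixed-point case.

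Assume then that $r=9$. By Proposition~\ref{propos_p} we have $K^{\langle g\rangle}\cong\CP^2_9$ or $K^{\langle g\rangle}\cong\pt\sqcup\partial\Delta^{7}$. Let $\sigma$ be the unique $G$-orbit of length~$7$; by Proposition~\ref{propos_p} it is a simplex of~$K$, so $b(\sigma)$ is a vertex of~$K^{\langle g\rangle}$, and every vertex of~$K$ outside~$\sigma$ is a fixed point. I would split according to whether $b(\sigma)$ is the isolated vertex of~$K^{\langle g\rangle}$. If it is not (which is automatic when $K^{\langle g\rangle}\cong\CP^2_9$), then by Theorem~\ref{thm_Smith} the connected component of~$K^{\langle g\rangle}$ containing $b(\sigma)$ is an $\F_7$-homology manifold, hence a pseudomanifold of dimension~$4$ (if $K^{\langle g\rangle}\cong\CP^2_9$) or~$6$ (if $K^{\langle g\rangle}\cong\pt\sqcup\partial\Delta^{7}$), so $b(\sigma)$ lies in a top-dimensional simplex of that component, which has at least five vertices; forming the union of~$\sigma$ with the remaining, necessarily fixed, vertices of that simplex produces a simplex of~$K$ with at least $7+4=11$ vertices, contradicting $\dim K=8$. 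If instead $b(\sigma)$ is the isolated vertex, then $\{b(\sigma),b(\{v\})\}\notin K^{\langle g\rangle}$, i.e.\ $\sigma\cup\{v\}\notin K$, for every fixed point~$v$; since every vertex of~$K$ outside~$\sigma$ is fixed, $\sigma$ is then a maximal simplex of~$K$, which contradicts condition~(1) of Definition~\ref{defin_hm} because $\dim\sigma=6<8$. Either way we reach a contradiction, so $r=9$ is impossible and the corollary follows.

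I expect the main (though still mild) obstacle to be the elimination of the case $K^{\langle g\rangle}\cong\pt\sqcup\partial\Delta^{7}$ with $b(\sigma)$ isolated: here neither the dimension bound of Remark~\ref{remark_smaller_dim} nor Smith theory rules the configuration out, and one must instead exploit that a $6$-dimensional simplex cannot be a maximal face of the homology $8$-manifold~$K$. Everything else is a routine orbit count combined with the already-established Proposition~\ref{propos_p}, entirely parallel to Corollaries~\ref{cor_3} and~\ref{cor_5}.
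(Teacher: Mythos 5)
Your proof is correct and follows essentially the same route as the paper: both reduce to ruling out the eight-fixed-point case via Proposition~\ref{propos_p} and derive the decisive contradiction from the $11$-vertex simplex $\alpha\cup\{v_1,\ldots,v_4\}$ forced by a $4$-simplex of $\CP^2_9$ at $b(\alpha)$. The only difference is that the paper eliminates the $\pt\sqcup\partial\Delta^7$ possibility up front by proving $K^{\langle g\rangle}$ is connected (using $5$-neighborliness together with the fact that $\alpha$ extends to an $8$-simplex), whereas you dispatch it by a case split on whether $b(\sigma)$ is the isolated point; both arguments are valid.
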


\begin{proof}
 We need only  to show that the group $G=\langle g\rangle\cong\rC_7$ cannot act on~$V$ with $8$ fixed point. Assume that this is the case. Let $\alpha$ be the $G$-orbit of length~$7$. Then $K^G$ has $9$ vertices, the barycentre~$b(\alpha)$ and the eight $G$-fixed vertices in~$V$.
By Corollary~\ref{cor_complement} the complex~$K$ is $5$-neighborly, so every two $G$-fixed vertices are connected by an edge in~$K$ and hence in~$K^G$. Further, $\alpha$ is a $6$-simplex of~$K$. Hence $\alpha$ is contained in a $7$-simplex of~$K$, that is, $\alpha\cup\{v\}\in K$ for some $G$-fixed vertex~$v$. Therefore, $b(\alpha)$ is connected with~$v$ by an edge in~$K^G$. Thus, $K^G$ is connected. By Proposition~\ref{propos_p} we obtain that $K^G\cong\CP^2_9$. Then the vertex $b(\alpha)$ of~$K^G$ is contained in a $4$-simplex of~$K^G$. Hence $\sigma=\alpha\cup\{v_1,\ldots,v_4\}$ is a simplex of~$K$ for certain $G$-fixed vertices~$v_1,\ldots,v_4$. We arrive at a contradiction, since $|\sigma|=11$.
\end{proof}

\subsection{K\"uhnel's $\CP^2_9$}\label{subsection_K}

Since K\"uhnel's triangulation~$\CP^2_9$ arises in Corollary~\ref{cor_2}, we will need an explicit description of it. The following nice construction of $\CP^2_9$ and its symmetry group is due to Bagchi and Datta, see~\cite{BaDa94}.

Let $\mathcal{P}$ be an affine plane over the field~$\F_3$. Then $|\mathcal{P}|=9$. Fix a decomposition
$$
\mathcal{P}=\ell_0\cup\ell_1\cup\ell_2,
$$
where $\ell_0$, $\ell_1$, and~$\ell_2$ are mutually parallel lines, and fix a cyclic order of these three lines. We conveniently consider the indices~$0$, $1$, and~$2$ as elements of~$\F_3$. The three lines $\ell_0$, $\ell_1$, and~$\ell_2$ will be called \textit{special}, and all other lines in~$\mathcal{P}$ \textit{non-special}.

The affine plane~$\mathcal{P}$ will serve as the set of vertices of~$\CP^2_9$. The $4$-simplices of~$\CP^2_9$ are exactly the following $36$ five-element subsets of~$\mathcal{P}$:
\begin{enumerate}
 \item The $27$ subsets of the form $m_1\cup m_2$, where $\{m_1,m_2\}$ is a pair of intersecting non-special lines in~$\mathcal{P}$.
 \item The $9$ subsets of the form $\ell_t\cup(\ell_{t+1}\setminus\{v\})$, where $t\in\F_3$ and $v\in\ell_{t+1}$.
\end{enumerate}

The simplicial complex~$\CP^2_9$ consisting of these $36$ four-simplices and all their faces is a combinatorial manifold PL homeomorphic to the complex projective plane~$\CP^2$. Moreover, $\CP^2_9$ satisfies complementarity.

Introduce affine coordinates~$(x,y)$ in~$\mathcal{P}$ so that the special lines are the lines~$\{y=0\}$, $\{y=1\}$, and~$\{y=2\}$ with this cyclic order. Then the symmetry group~$\Sym(\CP^2_9)$ has order~$54$ and consists of all affine transformations of the form
\begin{equation*}
\begin{pmatrix}
 x\\ y
\end{pmatrix}
\mapsto
\begin{pmatrix*}[r]
 \pm 1 & c\\
 0 & 1
\end{pmatrix*}
\begin{pmatrix}
 x\\ y
\end{pmatrix}+
\begin{pmatrix}
 a\\ b
\end{pmatrix},\qquad a,b,c\in\F_3.
\end{equation*}

We will also need the following proposition, see~\cite[Proposition~2.19]{Gai23}.

\begin{propos}\label{propos_3subset}
 Suppose that $m$ is a $3$-element subset of~$\mathcal{P}$. Then the following two assertions are equivalent to each other:
 \begin{enumerate}
  \item all $4$-element subsets of~$\mathcal{P}$ that contain~$m$ are simplices of~$\CP^2_9$,
  \item $m$ is a non-special line.
 \end{enumerate}
\end{propos}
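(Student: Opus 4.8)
The plan is to prove both implications by direct combinatorial analysis of the explicit description of $\CP^2_9$ given above, using the $36$ four-simplices and the complementarity property. Throughout, $m$ denotes a fixed $3$-element subset of~$\mathcal{P}$.

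First I would prove $(2)\Rightarrow(1)$. Assume $m$ is a non-special line; write $m=\ell$. A $4$-element subset containing~$m$ has the form $\ell\cup\{v,w\}$ with $v,w\in\mathcal{P}\setminus\ell$ distinct. I need to exhibit, for each such pair $\{v,w\}$, a $4$-simplex of~$\CP^2_9$ containing $\ell\cup\{v,w\}$, i.e.\ a five-element simplex of type~(1) or~(2) from the list. The natural candidate is a type-(1) simplex $\ell\cup\ell'$ where $\ell'$ is a non-special line through~$v$ or~$w$ intersecting~$\ell$; since in~$\mathcal{P}$ any two non-parallel lines meet in a point, I would take $\ell'$ to be the line through~$v$ and~$w$ when $\{v,w\}\cup(\ell\cap\ell')$ lands correctly, but the cleaner route is: the four points $\ell\cup\{v,w\}$ determine the pair of non-special lines through the relevant points, and I claim one checks case by case (according to whether the line $vw$ is special, non-special, or coincides with a translate of~$\ell$) that $\ell\cup\{v,w\}$ is always contained in one of the $36$ simplices. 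The case split is short because the affine plane over~$\F_3$ has only $12$ lines ($4$ parallel classes of $3$ lines), and the action of $\Sym(\CP^2_9)$ is transitive enough on non-special lines to reduce to one representative~$\ell$.

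Next, $(1)\Rightarrow(2)$, which I expect to be the main obstacle and is best done by contraposition: assuming $m$ is not a non-special line, I must produce a $4$-element superset of~$m$ that is a non-simplex. There are three cases for such~$m$: $(a)$ $m$ is a special line $\ell_t$; $(b)$ $m$ is a collinear triple lying on a non-special line --- impossible, that \emph{is} a non-special line, so this does not occur; thus the remaining case is $(c)$ $m$ is a non-collinear (affinely independent) triple, equivalently $m$ is not contained in any line. For case~$(a)$, take $m=\ell_t$ and adjoin two points of~$\ell_{t-1}$ (the line \emph{before} $\ell_t$ in the cyclic order); I would check against the list that $\ell_t\cup\{v,w\}$ with $v,w\in\ell_{t-1}$ is not of type~(1) (it is not the union of two intersecting non-special lines, since $\ell_t$ is special) nor of type~(2) (type-(2) simplices have the form $\ell_s\cup(\ell_{s+1}\setminus\{*\})$, so the second-listed special line must be the successor, not the predecessor). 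Hence it is a non-simplex, as required. For case~$(c)$, an affinely independent triple~$m$ spans the whole plane; I would argue that no $4$-simplex of~$\CP^2_9$ contains three affinely independent points that are not of the special-configuration types --- more precisely, inspect the two families: a type-(1) simplex $m_1\cup m_2$ contains a $3$-subset that is affinely independent only in limited ways (two points on~$m_1$ and one on~$m_2$, or vice versa), and a type-(2) simplex $\ell_t\cup(\ell_{t+1}\setminus\{v\})$ likewise. One then checks that for a generic affinely independent~$m$ one can always complete it inside~$\mathcal{P}$ to a $4$-set avoiding every simplex; by complementarity this is equivalent to the $11$-element complement being a simplex, which is automatic by neighborliness-type bounds --- but $\CP^2_9$ is only $4$-dimensional, so rather than invoke complementarity abstractly I would just exhibit the offending $4$-set directly.

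Actually, I would streamline $(1)\Rightarrow(2)$ using the already-granted facts: $\CP^2_9$ satisfies complementarity and is a $4$-neighborly (indeed $(9-4-2)=3$-neighborly) combinatorial manifold, so \emph{every} $3$-subset of~$\mathcal{P}$ is a simplex, and the question "is every $4$-superset of~$m$ a simplex?" is about which $3$-faces lie in exactly the predicted number of facets. I would compute, for $m$ a non-special line, that $m$ lies in many facets (from $(2)\Rightarrow(1)$ all $6$ of its $4$-supersets are facets since $\dim\CP^2_9=4$ forces $4$-supersets to be maximal), whereas for $m$ special or affinely independent at least one $4$-superset fails to be a facet --- and since the complex is $4$-dimensional, failing to be a facet means failing to be a simplex. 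The main obstacle is organizing case~$(c)$ cleanly; I would handle it by noting that an affinely independent~$m$, together with any point~$u$ off the plane spanned --- impossible here --- so instead: pick the unique parallel class of~$m$'s... no; concretely, since $m$ is affinely independent it contains exactly two points of some special line $\ell_t$ and one point off it, or three points in "general position"; in each sub-case one adjoins a fourth point so that the resulting $4$-set is neither a union of two intersecting non-special lines nor a near-special-pair, and this is a finite verification over the $12$ lines of~$\mathcal{P}$. This completes the proof.
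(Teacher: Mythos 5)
The paper itself does not prove this proposition; it cites \cite[Proposition~2.19]{Gai23}, so your argument has to stand on its own. As written it does not, for two reasons. First, there is a pervasive cardinality error: a $4$-element subset of~$\mathcal{P}$ containing the $3$-element set~$m$ is $m\cup\{v\}$ for a \emph{single} extra point~$v$, not $m\cup\{v,w\}$. Your version of $(2)\Rightarrow(1)$ therefore claims that every $5$-element set $\ell\cup\{v,w\}$ is contained in (hence equal to) one of the $36$ facets, which is false: for $\ell=\{x=0\}$ and $\{v,w\}=\{(1,0),(2,0)\}$ the set $\ell\cup\{v,w\}$ is neither of type~(1) (no non-special line contains both $(1,0)$ and $(2,0)$) nor of type~(2) (it meets all three special lines). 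The correct and easy statement is that $\ell\cup\{v\}$ lies in the facet $\ell\cup\ell'$, where $\ell'$ is either of the two non-special lines through~$v$ not parallel to~$\ell$. The same error undermines your case~$(a)$ of the converse: showing that a $5$-element set is a non-simplex does not produce a $4$-element non-simplex, which is what the contrapositive requires (here one point of $\ell_{t+2}$ adjoined to~$\ell_t$ suffices). Second, and more seriously, the only genuinely nontrivial case --- $m$ a non-collinear triple --- is never actually handled: that portion of your text consists of abandoned starts and an appeal to ``a finite verification'' that is not carried out, and your first two attempted choices of a bad fourth point can in fact land inside a facet (e.g.\ for $m=\{(0,0),(1,0),(0,1)\}$ both $(2,0)$ and $(1,1)$ give simplices), so the verification is not automatic.

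A clean way to organize the whole proof, which you half-gesture at, is to use complementarity of~$\CP^2_9$ once and for all: since $\dim\CP^2_9=4$, a $4$-element set $m\cup\{v\}$ is a non-simplex if and only if its $5$-element complement is a facet, so assertion~(1) holds if and only if \emph{no facet of~$\CP^2_9$ is disjoint from~$m$}. A non-special line meets every special line and meets at least one of any two intersecting non-special lines, hence meets all $36$ facets, giving $(2)\Rightarrow(1)$. Conversely, a special line~$\ell_t$ is disjoint from the facet $\ell_{t+1}\cup(\ell_{t+2}\setminus\{v\})$; and for a non-collinear triple $m=\{a,b,c\}$ one finds a point $p\notin m$ whose special line already absorbs two or three points of~$m$ and one of whose non-special lines absorbs the rest (take $p$ the third point of the line $bc$ when the distribution over special lines is $(1,1,1)$, or the third point of the special line containing two of $a,b,c$ otherwise); the union of the two remaining non-special lines through~$p$ is then a type-(1) facet disjoint from~$m$. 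You should rework the proposal along these lines, or at minimum fix the cardinalities and supply an actual argument for the non-collinear case.
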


\section{Triangulations with symmetry groups~$G$ of orders $|G|\ge 7$}\label{section_>=7}

In~\cite{Gai22} the author suggested an algorithm that, being given numbers $d$, $n$, and~$N$ and a subgroup $G\subset \rS_n$, produces the list of all $G$-invariant weak $d$-pseudomanifolds~$K$ with $n$ vertices and at least $N$ maximal simplices such that $K$ satisfies the following condition:
\begin{itemize}
 \item[$(*)$] for each subset $\sigma\subset V$, at most one of the two subsets $\sigma$ and~$V\setminus\sigma$ is a simplex of~$K$.
\end{itemize}
(We will refer to this condition as to condition~$(*)$ throughout the whole paper.)

This algorithm was implemented as a C++ program \texttt{find}, see~\cite{Gai-prog}. It completes in a reasonable amount of time, provided that the symmetry group~$G$ is large enough.  In~\cite{Gai22} this program was used to find $27$-vertex combinatorial $16$-manifolds that are not homeomorphic to~$S^{16}$. For this purpose, the program was launched for
$$
(d,n,N)=(16,27,100386)
$$
and different subgroups $G\subset\rS_{27}$. (The number of $100386$ top-dimensional simplices is provided by a $16$-dimensional analogue of Corollary~\ref{cor_fvect}.) It turned out that the program works rather well for $|G|\gtrsim 250$, but the running time grows uncontrollably when $|G|$ decreases.

In the present paper we are interested in the $8$-dimensional case. By Corollaries~\ref{cor_complement} and~\ref{cor_fvect} we know that any $15$-vertex $\Z$-homology $8$-manifold $K$ with $H_*(K;\Z)\ncong H_*(S^8;\Z)$ satisfies complementarity and, a fortiory, condition~$(*)$, and has exactly $490$ top-dimen\-sional simplices. So we would like to launch the program \texttt{find} for
$$
(d,n,N)=(8,15,490)
$$
and different subgroups $G\subset\rS_{15}$. It turns out that it completes in a reasonable time, provided that $|G|\ge 7$. The largest running time (about $46$ hours on one processor core of clock frequency 3.3~GHz) is achieved when $G$ is the subgroup~$\rC_7\subset \rS_{15}$ generated by the permutation
\begin{equation*}
   (1\ 2\ 3\ 4\ 5\ 6\ 7)(8\ 9\ 10\ 11\ 12\ 13\ 14).
\end{equation*}

Let us describe how we further process the results of the program \texttt{find}, using other programs from~\cite{Gai-prog}. Suppose that, being launched for $(d,n,N)=(8,15,490)$ and a subgroup $G\subset \rS_{15}$, the program \texttt{find} has produced a list $X_1,\ldots,X_r$ of weak $8$-pseudomanifolds. Then we proceed as follows.

\begin{enumerate}
 \item  We use the program \texttt{isomorphism\_groups} to divide the obtained list  into groups of isomorphic weak pseudomanifolds. As a result, we arrive at a sublist $Y_1,\ldots,Y_s$ consisting of pairwise non-isomorphic weak pseudomanifolds. The reason why the initial list  typically contains several isomorphic copies of each weak pseudomanifold is as follows. Let $N=N_{\rS_{15}}(G)$ be the normalizer of~$G$ in~$\rS_{15}$. Then the quotient group~$N/G$ acts on the set of $G$-invariant simplicial complexes. It follows that each pseudomanifold~$X$ with $\Sym(X)=G$ is multiplied by~$N/G$ into exactly $|N:G|$ isomorphic copies. (If $\Sym(X)$ is strictly greater than~$G$, then the number of isomorphic copies of~$X$ can be different.)
 \item For every~$i=1,\ldots,s$, we use the programs \texttt{check} and \texttt{fvect} to check that $Y_i$ is a combinatorial $8$-manifold and the $f$-vector of~$Y_i$ coincides with the expected $f$-vector~\eqref{eq_fvect_BK}. (Both assertions will indeed be true in all the cases that we will consider.) The latter assertion implies that $\chi(Y_i)=3$. Hence $Y_i$ is not homeomorphic to~$S^8$. By Theorem~\ref{thm_BK} we obtain that $Y_i$ is a manifold like the quaternionic projective plane. For a description of the algorithm implemented in the program \texttt{check}, see~\cite[Section~5]{Gai22}.
 \item For every~$i=1,\ldots,s$, we use the program \texttt{symm\_group} to compute the symmetry group~$\Sym(Y_i)$. Certainly, this group always contains~$G$. Nevertheless, it may happen that it is in fact strictly larger than~$G$.
\end{enumerate}

\begin{remark}
 In this section we do not study the question of whether the obtained $15$-vertex combinatorial $8$-manifolds like the quaternionic projective plane are PL homeomorphic to~$\HP^2$. (In Section~\ref{section_p1} we will see that indeed they are.) Nevertheless, it will be convenient to us to denote the obtained combinatorial manifolds by~$\HP^2_{15}(G,k)$,  where $G$ is the symmetry group, and $k$ is the ordinal number of the combinatorial manifold with such a symmetry group.
\end{remark}

The results of calculation for certain subgroups $G\subset\rS_{15}$ are as follows.

\subsection*{The group $\rA_5$ acting transitively} All subgroups of index~$15$ in~$\rA_5$ are isomorphic to~$\rC_2\times\rC_2$ and are conjugate to each other. Hence $\rA_5$ admits a unique up to conjugation realization as a transitive subgroup of~$\rS_{15}$. For this subgroup, the program \texttt{find} produces a list of $6$ simplicial complexes. All of them are isomorphic to each other and to the Brehm--K\"uhnel triangulation~$M^8_{15}$ with symmetry group~$\rA_5$. Further we denote this triangulation by~$\HP^2_{15}(\rA_5)$. (The fact that we get $6$ isomorphic copies of~$\HP^2_{15}(\rA_5)$ corresponds to the fact that the subgroup $\rA_5\subset\rS_{15}$ has index $6$ in the normalizer of it.)
The uniqueness of a vertex-transitive $15$-vertex combinatorial $8$-manifold not homeomorphic to the sphere was proved by Brehm (unpublished, see~\cite[Corollary~11]{KoLu05}). So in this case we do not obtain any new result.

\subsection*{The group $\rA_4$ acting with two orbits of lengths~$12$ and~$3$} Since the group $\rA_4$ contains a unique subgroup of index~$3$, it follows easily that such a subgroup $\rA_4\subset \rS_{15}$ is unique up to conjugation.
The program \texttt{find} produces a list of $216$ simplicial complexes, which are divided into $3$ groups, each consisting of $72$ isomorphic complexes. All of them are combinatorial $8$-manifolds like the quaternionic projective plane. One of the three combinatorial manifolds has a larger symmetry group $\rA_5\supset\rA_4$ and is isomorphic to~$\HP^2_{15}(\rA_5)$. The other two combinatorial manifolds have symmetry group~$\rA_4$. One of them is the Brehm--K\"uhnel triangulation~$\widetilde{M}\vphantom{M}^8_{15}$; we further denote it by~$\HP^2_{15}(\rA_4,1)$. The other one is new; we denote it by~$\HP^2_{15}(\rA_4,2)$.

\subsection*{The group $\rC_6\times\rC_2$ acting with two orbits of lengths~$12$ and~$3$} Since the group $\rC_6\times\rC_2$ contains a unique subgroup of index~$3$, it follows easily that such a subgroup $\rC_6\times\rC_2\subset \rS_{15}$ is unique up to conjugation. The program \texttt{find} produces a list of $36$ simplicial complexes; they are  all isomorphic to each other. The obtained complex, which we denote by~$\HP^2_{15}(\rC_6\times\rC_2)$, is a new $15$-vertex combinatorial $8$-manifold like the quaternionic projective plane; its symmetry group is exactly $\rC_6\times\rC_2$.

\subsection*{The group $\rC_7$ acting with one fixed point} All such subgroups in~$\rS_{15}$ are conjugate to each other, so we may assume that $\rC_7$ is generated by the permutation
\begin{equation*}
   (1\ 2\ 3\ 4\ 5\ 6\ 7)(8\ 9\ 10\ 11\ 12\ 13\ 14).
\end{equation*}
The program \texttt{find} produces a list of $1596$ simplicial complexes, which are divided into $19$ groups, each consisting of $84$ isomorphic complexes. We denote them by $$\HP^2_{15}(\rC_7,1),\ldots,\HP^2_{15}(\rC_7,19).$$ All of them are combinatorial manifolds like the quaternionic projective plane. Besides, the symmetry group of each~$\HP^2_{15}(\rC_7,k)$ is exactly~$\rC_7$.

\section{Triangulations with symmetry groups~$G$ of orders~$4$, $5$, and~$6$}\label{section_6}

On groups~$G$ of order less than $7$, the program \texttt{find} does not seem to finish running in a reasonable time, at least on a regular PC. Possibly, we could still try to perform the calculation by using a high-performance computer.
However, we prefer to reduce the complexity of the computational problem by proving that every $G$-invariant $15$-vertex $\Z$-homology $8$-manifold~$K$ with $H_*(K;\Z)\ncong H_*(S^8;\Z)$ contains certain pure $8$-dimensional subcomplex~$R$, which we call a \textit{mandatory subcomplex}. (A simplicial complex is said to be \textit{pure $d$-dimensional} if every its simplex is contained in a $d$-simplex.) The subcomplex~$R$ and the reason why it is contained in~$K$ will be different for different groups~$G$.
Once such a mandatory subcomplex~$R$ is found, we launch the same program \texttt{find} with the restriction that we force all top-dimensional simplices of~$R$ into~$K$ from the very beginning.
This significantly reduces the running time, since including a simplex in~$K$ immediately prohibits including a lot of other simplices in~$K$, see~\cite[Remark~4.3]{Gai22}. Being implemented in this way, the program completes in a reasonable amount of time in all interesting for us cases. The largest running time (about $49$ hours on one processor core of clock frequency 3.3~GHz) is achieved at $G=\rS_3$.

In this section we will consider the groups~$\rS_3$, $\rC_6$, $\rC_4$, $\rC_2\times\rC_2$, and the group~$\rC_5$ acting on the vertex set~$V$ with $5$ fixed points. In these cases, we will be able to produce a mandatory subcomplex~$R$ by applying Corollary~\ref{cor_2}, or Corollary~\ref{cor_3}, or Corollary~\ref{cor_5}.

Unfortunately, in the remaining case of the group~$\rC_5$ acting on~$V$ without fixed points, the results from Section~\ref{section_finite_order} do not help us to find a mandatory subcomplex. In this case the set~$V$ is the union of three $\rC_5$-orbits, which we denote by~$\alpha_1$, $\alpha_2$, and~$\alpha_3$. From the complimentarity property it follows that $K$ is $5$-neighborly. Hence, the orbits~$\alpha_1$, $\alpha_2$, and~$\alpha_3$ are $4$-simplices of~$K$. The search for a mandatory subcomplex in this case will be based on studying the structure of the links of the simplices~$\alpha_1$, $\alpha_2$, and~$\alpha_3$. Since  $K$ is a $\Z$-homology $8$-manifold, we obtain that each simplicial complex~$\link(\alpha_i,K)$ is a $\Z$-homology $3$-sphere. Moreover, since the simplex~$\alpha_i$ is $\rC_5$-invariant, the complex~$\link(\alpha_i,K)$ is also $\rC_5$-invariant, which in particular implies that it has either $5$ or $10$ vertices. In the former case, $\link(\alpha_i,K)$ is the boundary of a $4$-simplex, while the latter case is more complicated. In Section~\ref{section_3spheres} we will obtain an auxiliary classification result for $\rC_5$-invariant $10$-vertex homology $3$-spheres. In Section~\ref{section_5}, using this results, we will complete the consideration of the case of symmetry group~$\rC_5$ acting on~$V$ without fixed points.

\subsection{The group $\rS_3$}

\begin{propos}\label{propos_S3_aux}
Let $K$ be a $15$-vertex $\Z$-homology $8$-manifold such that $H_*(K;\Z)$ is not isomorphic to~$H_*(S^8;\Z)$. Suppose that the symmetry group~$\Sym(K)$ contains a subgroup~$G$ isomorphic to~$\rS_3$. Then we can number the vertices of~$K$ so that to achieve that
 \begin{itemize}
  \item $G$ is generated by the permutations
\begin{equation}\label{eq_S3_gen}
\begin{aligned}
 g&=(1\ 2\ 3)(4\ 5\ 6)(7\ 8\ 9)(10\ 11\ 12)(13\ 14\ 15),\\
 h&=(1\ 4)(2\ 6)(3\ 5)(7\ 8)(10\ 11)(13\ 14).
 \end{aligned}
\end{equation}
  \item $K$ contains the four $8$-simplices
 \begin{equation}\label{eq_S3_Z}
 \begin{aligned}
  &\{1,2,3,4,5,6,7,8,9\},
  &&\{1,2,3,4,5,6,10,11,12\},\\
  &\{1,2,3,7,8,9,10,11,12\},
  &&\{4,5,6,7,8,9,10,11,12\}.
 \end{aligned}
 \end{equation}
 \end{itemize}
\end{propos}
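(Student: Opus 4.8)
\emph{Proof strategy.} The plan is to combine the structure of the individual prime‑order symmetries (Corollaries~\ref{cor_2} and~\ref{cor_3}) with an analysis of the fixed point complex $K^{\langle g\rangle}$. First I would fix generators of $G\cong\rS_3$: an element $g$ of order~$3$ generating the unique (normal, index~$2$) cyclic subgroup, and an element $h$ of order~$2$. Then $hgh^{-1}=g^{-1}$, since otherwise $\langle g,h\rangle$ would be abelian. By Corollary~\ref{cor_3}, $g$ acts freely on~$V$, so $\langle g\rangle\cong\rC_3$ partitions $V$ into five orbits $O_1,\dots,O_5$ of size~$3$; by Corollary~\ref{cor_2}, $h$ has exactly three fixed points in~$V$.

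Next I would determine the $G$‑action on~$V$. Since $h$ normalizes $\langle g\rangle$ and $h^2=1$, it permutes the five orbits as a product of transpositions and fixed orbits. On an orbit fixed by~$h$, the relation $hgh^{-1}=g^{-1}$ together with the freeness of~$g$ forces $h$ to act as a transposition, hence with exactly one fixed point; on a pair of orbits interchanged by~$h$ there are no $h$‑fixed points. Comparing with the count of three $h$‑fixed points, I conclude that exactly three orbits — call them \emph{special} — are $h$‑invariant (each carrying one $h$‑fixed point), and the remaining two are interchanged by~$h$. This pins down the $G$‑action up to conjugacy in~$\rS_{15}$, and a routine bookkeeping of the available choices (which interchanged orbit to call $\{1,2,3\}$; how to cyclically label each orbit so that $g$ becomes the stated product of $3$‑cycles; which vertex of each special orbit is the $h$‑fixed one) produces a numbering in which $g$ and $h$ are the permutations~\eqref{eq_S3_gen}, the special orbits being $\{7,8,9\}$, $\{10,11,12\}$, $\{13,14,15\}$ and the interchanged pair $\{1,2,3\}$, $\{4,5,6\}$; the ordering of the three special orbits I would leave to be fixed in the last step.

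The heart of the argument is extracting~\eqref{eq_S3_Z} from $K^{\langle g\rangle}$. Since $K$ is $5$‑neighborly (Corollary~\ref{cor_complement}), each orbit $O_i$ (having $3<5$ vertices) is a simplex of~$K$, so $K^{\langle g\rangle}$ has exactly the five vertices $b(O_1),\dots,b(O_5)$, and $O_{i_1}\cup\cdots\cup O_{i_t}$ is a simplex of~$K$ precisely when $\{b(O_{i_1}),\dots,b(O_{i_t})\}$ is a simplex of $K^{\langle g\rangle}$. By Corollary~\ref{cor_3}, $K^{\langle g\rangle}\cong\pt\sqcup\partial\Delta^3$. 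Because $h$ normalizes $\langle g\rangle$, it induces a simplicial automorphism of $K^{\langle g\rangle}$ sending $b(O)\mapsto b(hO)$; and the isolated vertex of $\pt\sqcup\partial\Delta^3$, being the unique vertex contained in no edge, is preserved by every automorphism, hence fixed by~$h$. Therefore the orbit whose barycentre is this isolated vertex is $h$‑invariant, i.e.\ special; I now fix the ordering of the special orbits so that it is $O_3=\{13,14,15\}$. Then $b(O_1),b(O_2),b(O_4),b(O_5)$ are the four vertices of the $\partial\Delta^3$ summand, and since $\partial\Delta^3$ contains all four of its $2$‑faces, every union of three of the orbits $O_1,O_2,O_4,O_5$ lies in~$K$; written out in the chosen numbering, these are exactly the four $8$‑simplices in~\eqref{eq_S3_Z}.

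The only genuinely nontrivial point is the last one — recognising that the isolated vertex of $K^{\langle g\rangle}\cong\pt\sqcup\partial\Delta^3$, being an automorphism invariant, must sit over an $h$‑invariant (hence special) orbit, which is what rules out orbit $O_3$ as a summand of the four mandatory simplices. Everything else is the combinatorial bookkeeping of how $\rS_3$ can act on $15$ points subject to Corollaries~\ref{cor_2} and~\ref{cor_3}, together with the standard dictionary between simplices of $K$ that are unions of $\langle g\rangle$‑orbits and simplices of $K^{\langle g\rangle}$.
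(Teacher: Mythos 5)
Your proposal is correct and follows essentially the same route as the paper: determine the orbit structure of $G$ on~$V$ from Corollaries~\ref{cor_2} and~\ref{cor_3}, invoke $K^{\langle g\rangle}\cong\pt\sqcup\partial\Delta^3$, and use the automorphism of~$K^{\langle g\rangle}$ induced by~$h$ (the paper phrases it as the action of $G/\langle g\rangle\cong\rC_2$) to conclude that the isolated vertex must lie over an $h$-invariant orbit, which after renumbering can be taken to be $b(\{13,14,15\})$, yielding the four simplices~\eqref{eq_S3_Z}. The only cosmetic difference is that you track how $h$ permutes the five $\langle g\rangle$-orbits, whereas the paper counts $G$-orbit lengths directly; these are the same bookkeeping.
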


\begin{proof}
Let $V$ be the vertex set of~$K$. From Corollary~\ref{cor_3} it follows that the subgroup $\rC_3\subset G$ acts on~$V$ without fixed points. Hence, every $G$-orbit in~$V$ has length either~$3$ or~$6$. Each element of~$G$ of order~$2$ fixes exactly one vertex in every $G$-orbit of length~$3$ and fixes no vertices in every $G$-orbit of length~$6$. So from Corollary~\ref{cor_2} it follows that $G$ acts on~$V$ with one orbit of length~$6$ and three orbits of length~$3$. Up to isomorphism, the group~$\rS_3$ has a unique transitive action on $6$ points and a unique transitive action on $3$ points. It follows that all subgroups of~$\rS_{15}$ that are isomorphic to~$\rS_3$ and have one orbit of length~$6$ and three orbits of length~$3$ are conjugate to each other. Therefore, we can number the vertices of~$K$ so that to achieve that $G$ is generated by the permutations~$g$ and~$h$ given by~\eqref{eq_S3_gen}.

 By Corollary~\ref{cor_3} we see that  $K^{\langle g\rangle}\cong\pt\sqcup\partial\Delta^3$. The vertices of~$K^{\langle g\rangle}$ are the $5$ barycentres $b(\{1,2,3\})$, $b(\{4,5,6\})$, $b(\{7,8,9\})$, $b(\{10,11,12\})$, and~$b(\{13,14,15\})$. Since $\langle g\rangle$ is a normal subgroup of~$G$, the action of~$G$ on~$K$ induces the action of the quotient group~$G/\langle g\rangle\cong\rC_2$ on~$K^{\langle g\rangle}$. The latter action swaps  $b(\{1,2,3\})$ with~$b(\{4,5,6\})$ and fixes every of the three barycentres~$b(\{7,8,9\})$, $b(\{10,11,12\})$, and~$b(\{13,14,15\})$. Hence the separate point~$\pt$ of~$K^{\langle g\rangle}$ is neither $b(\{1,2,3\})$ nor~$b(\{4,5,6\})$. Renumbering the vertices of~$K$, we can achieve that $\pt=b(\{13,14,15\})$. Therefore, any three of the four barycentres $b(\{1,2,3\})$, $b(\{4,5,6\})$, $b(\{7,8,9\})$, and~$b(\{10,11,12\})$ form a simplex of~$K^{\langle g\rangle}$. Thus, the four sets~\eqref{eq_S3_Z} are simplices of~$K$.
\end{proof}

\begin{comp}
 Being run on the subgroup~$\rS_3\subset\rS_{15}$ generated by~$g$ and~$h$ and the mandatory subcomplex~$R$ consisting of the four simplices~\eqref{eq_S3_Z}, the program \texttt{find} produces a list of $156$ simplicial complexes, which are divided into $13$ groups, each consisting of $12$ isomorphic complexes. All the $13$ obtained simplicial complexes are combinatorial $8$-manifolds not homeomorphic to~$S^8$ and hence (by Theorem~\ref{thm_BK}) are manifolds like the quaternionic projective plane. One of them has in fact a larger symmetry group $\rA_5\supset\rS_3$ and is isomorphic to~$\HP^2_{15}(\rA_5)$. The other $12$ combinatorial manifolds have symmetry group exactly~$\rS_3$; we denote them by $\HP^2_{15}(\rS_3,1),\ldots,\HP^2_{15}(\rS_3,12)$. One of these $12$ combinatorial manifolds (we choose it to be $\HP^2_{15}(\rS_3,1)$) is isomorphic to the Brehm--K\"uhnel triangulation~$\widetilde{\widetilde{M}}\vphantom{M}_{15}^8$; the other $11$ triangulations are new. Thus, we obtain the following proposition.
\end{comp}

\begin{propos}\label{propos_S3}
 Up to isomorphism, there exist exactly twelve $15$-vertex $\Z$-homology $8$-manifolds $K=\HP^2_{15}(\rS_3,1),\ldots,\HP^2_{15}(\rS_3,12)$ such that $H_*(K;\Z)$ is not isomorphic to~$H_*(S^8;\Z)$ and $\Sym(K)\cong\rS_3$. All of them are combinatorial manifolds.
\end{propos}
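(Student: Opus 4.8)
The plan is to reduce the classification to a single run of the program \texttt{find} that Proposition~\ref{propos_S3_aux} makes computationally feasible, and then to post-process the output with the auxiliary programs. First I would observe that if $K$ is a $15$-vertex $\Z$-homology $8$-manifold with $H_*(K;\Z)\ncong H_*(S^8;\Z)$ and $\Sym(K)\cong\rS_3$, then applying Proposition~\ref{propos_S3_aux} with $G=\Sym(K)$ lets one renumber the vertices so that $G=\langle g,h\rangle$ with $g,h$ as in~\eqref{eq_S3_gen} and $K$ contains the four $8$-simplices in~\eqref{eq_S3_Z}. By Corollaries~\ref{cor_complement} and~\ref{cor_fvect}, such a $K$ is a weak $8$-pseudomanifold satisfying complementarity (hence condition~$(*)$) with exactly $490$ facets. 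Therefore $K$ occurs in the list produced by \texttt{find} when launched with $(d,n,N)=(8,15,490)$, the subgroup $\rS_3\subset\rS_{15}$ generated by $g$ and $h$, and the four simplices~\eqref{eq_S3_Z} forced in as a mandatory subcomplex. This is the reduction that makes the search tractable: forcing four top-dimensional simplices of~$R$ immediately forbids many others, so the branch-and-bound procedure of~\cite{Gai22} terminates (in about two days on one core).

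I would then run \texttt{find} with these inputs, obtaining $156$ weak $8$-pseudomanifolds, and sort them with \texttt{isomorphism\_groups} into $13$ isomorphism classes of $12$ complexes each. (The multiplicity $12$ is, for the classes whose symmetry group is exactly~$\rS_3$, the index of this copy of~$\rS_3$ in its normalizer $N_{\rS_{15}}(\rS_3)$, whose quotient permutes $\rS_3$-invariant complexes.) For a representative $Y_i$ of each class I would run \texttt{check} to certify that $Y_i$ is a combinatorial $8$-manifold and \texttt{fvect} to confirm that its $f$-vector equals~\eqref{eq_fvect_BK}; then $\chi(Y_i)=3\ne 2$, so by Theorem~\ref{thm_BK} it is a manifold like~$\HP^2$, and as a combinatorial manifold it is in particular a $\Z$-homology $8$-manifold with $H_*(Y_i;\Z)\ncong H_*(S^8;\Z)$. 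Finally \texttt{symm\_group} shows that exactly one $Y_i$ has symmetry group~$\rA_5$ (necessarily $\HP^2_{15}(\rA_5)$, which enters the list because each of its $\rS_3$-subgroups satisfies the hypotheses of Proposition~\ref{propos_S3_aux}), while the other twelve have symmetry group exactly~$\rS_3$; I name these $\HP^2_{15}(\rS_3,1),\ldots,\HP^2_{15}(\rS_3,12)$.

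Two inclusions then close the argument. By the setup step, every $K$ as in the statement is, after renumbering, among the $156$ output complexes, hence isomorphic to one of the twelve $\HP^2_{15}(\rS_3,k)$. Conversely, each $\HP^2_{15}(\rS_3,k)$ is a combinatorial — hence $\Z$-homology — $8$-manifold with $f$-vector~\eqref{eq_fvect_BK} and symmetry group~$\rS_3$, so it meets the hypotheses; the twelve are pairwise non-isomorphic since \texttt{isomorphism\_groups} separated them. Comparing maximal-simplex lists identifies one of them, taken to be $\HP^2_{15}(\rS_3,1)$, with the Brehm--K\"uhnel triangulation $\ttM^8_{15}$, the other eleven being new.

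The hard part is not the mathematics but the reliability of the computation: one must trust that \texttt{find} really enumerates \emph{all} $\rS_3$-invariant weak $8$-pseudomanifolds with at least $490$ facets, satisfying $(*)$ and containing the forced subcomplex, and that \texttt{check} correctly certifies the PL-$7$-sphere property of all $15$ vertex links of every candidate (via the shelling/bistellar certificates of~\cite[Section~5]{Gai22}). The essential theoretical input is Proposition~\ref{propos_S3_aux}: without its mandatory subcomplex, the run of \texttt{find} on a group of order~$6$ would not finish in reasonable time, and removing precisely this obstacle is what that proposition is for.
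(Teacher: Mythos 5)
Your proposal is correct and follows essentially the same route as the paper: Proposition~\ref{propos_S3_aux} supplies the normalized generators and the mandatory subcomplex~\eqref{eq_S3_Z}, the program \texttt{find} (with that subcomplex forced) enumerates all candidates satisfying condition~$(*)$ with $490$ facets, and the post-processing with \texttt{isomorphism\_groups}, \texttt{check}, \texttt{fvect}, and \texttt{symm\_group} yields the $13$ classes, of which one is $\HP^2_{15}(\rA_5)$ and twelve have symmetry group exactly~$\rS_3$. The reported counts ($156$ complexes in $13$ groups of $12$) and the identification of one class with the Brehm--K\"uhnel triangulation $\ttM_{15}^8$ all match the paper's computer result.
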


 \begin{remark}
  Similarly to studying the fixed point complex~$K^{\langle g\rangle}$ for the element~$g$ of order~$3$, we could study the fixed point complex~$K^{\langle h\rangle}$ for the element~$h$ of order~$2$. By Corollary~\ref{cor_2} we have $K^{\langle h\rangle}\cong\CP^2_9$, so this approach seems to provide even a larger subcomplex~$R$. However, this approach would require consideration of several cases corresponding to different isomorphisms $K^{\langle h\rangle}\to\CP^2_9$. Since the subcomplex~$R$ given by~\eqref{eq_S3_Z} is already sufficient for the program to complete in a reasonable time, we prefer not to deal with the fixed point complex~$K^{\langle h\rangle}$.
 \end{remark}

\subsection{The group $\rC_6$}

\begin{propos}\label{propos_C6_aux}
 Let $K$ be a $15$-vertex $\Z$-homology $8$-manifold such that $H_*(K;\Z)$ is not isomorphic to~$H_*(S^8;\Z)$. Suppose that the symmetry group~$\Sym(K)$ contains a subgroup~$G$ isomorphic to~$\rC_6$. Then we can number the vertices of~$K$ so that to achieve that
 \begin{itemize}
  \item $G$ is generated by the permutation
\begin{equation}\label{eq_C6_f}
 f=(1\ 2\ 3\ 4\ 5\ 6)(7\ 8\ 9\ 10\ 11\ 12)(13\ 14\ 15).
\end{equation}
  \item $K$ contains the ten $8$-simplices
 \begin{equation}\label{eq_C6_Z}
 \begin{aligned}
  &\{1,2,3,4,5,6,7,9,11\},
  &&\{1,2,3,4,5,6,8,10,12\},\\
  &\{1,3,5,7,8,9,10,11,12\},
  &&\{2,4,6,7,8,9,10,11,12\},\\
  &\{1,2,4,5,7,8,10,11,13\},
  &&\{2,3,5,6,8,9,11,12,13\},\\
  &\{1,2,3,4,5,6,8,11,13\},
  &&\{1,2,3,4,5,6,9,12,13\},\\
  &\{2,5,7,8,9,10,11,12,13\},
  &&\{3,6,7,8,9,10,11,12,13\}.
 \end{aligned}
 \end{equation}
 \end{itemize}
\end{propos}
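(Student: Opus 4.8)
The plan is to argue as in the proof of Proposition~\ref{propos_S3_aux}, using the fixed point complex $K^{\langle f^2\rangle}$ of the order-$3$ element~$f^2$, and then to supplement this with a careful analysis of the fixed point complex $K^{\langle f^3\rangle}$ of the involution~$f^3$, which by Corollary~\ref{cor_2} is a copy of K\"uhnel's~$\CP^2_9$.

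First I would pin down the action of~$G$ on the vertex set~$V$. By Corollary~\ref{cor_3} the subgroup $\langle f^2\rangle\cong\rC_3$ acts freely on~$V$, so every $G$-orbit has length $3$ or~$6$. The involution~$f^3$ lies in the stabiliser of every $G$-orbit of length~$3$ and acts freely on every $G$-orbit of length~$6$; since by Corollary~\ref{cor_2} it fixes exactly three points, there is exactly one $G$-orbit of length~$3$ and hence two of length~$6$. As $\rC_6$ has, up to isomorphism, a unique transitive action on $3$ points and a unique one on $6$ points, the subgroup $G\subset\rS_{15}$ with this orbit structure is unique up to conjugation, and renumbering the vertices we may take $G=\langle f\rangle$ with~$f$ as in~\eqref{eq_C6_f}. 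Next, from $K^{\langle f^2\rangle}$: by Corollary~\ref{cor_3} it is isomorphic to $\pt\sqcup\partial\Delta^3$, with vertices the barycentres of the five $\langle f^2\rangle$-orbits. The quotient $G/\langle f^2\rangle\cong\rC_2$ acts on it, interchanging $b(\{1,3,5\})$ with $b(\{2,4,6\})$ and $b(\{7,9,11\})$ with $b(\{8,10,12\})$ and fixing $b(\{13,14,15\})$; since the isolated point of $\pt\sqcup\partial\Delta^3$ must be $\rC_2$-fixed, it is $b(\{13,14,15\})$, so any three of the other four barycentres span a $2$-simplex of $K^{\langle f^2\rangle}$. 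Unpacking these four triangles yields the four $8$-simplices in the first two rows of~\eqref{eq_C6_Z}.

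For the remaining six simplices I would turn to $L:=K^{\langle f^3\rangle}$. By Theorem~\ref{thm_Smith} it is an $\F_2$-homology manifold; it has nine vertices (the six barycentres $b(\{1,4\}),\dots,b(\{9,12\})$ and the three fixed vertices $13,14,15$), it is connected because $K$ is $5$-neighbourly (Corollary~\ref{cor_complement}) and hence the union of any two $\langle f^3\rangle$-orbits is a simplex, it inherits complementarity from~$K$ as in the proof of Proposition~\ref{propos_p}, and it has dimension below~$8$; so Theorem~\ref{thm_Datta} gives $L\cong\CP^2_9$. The group $G/\langle f^3\rangle\cong\rC_3$ acts on~$L$ through the image~$\bar f$ of~$f$, with orbits $O_1=\{b(\{1,4\}),b(\{2,5\}),b(\{3,6\})\}$, $O_2=\{b(\{7,10\}),b(\{8,11\}),b(\{9,12\})\}$, $O_3=\{13,14,15\}$, and this action is free on vertices since no $\langle f^3\rangle$-orbit of~$V$ can be $G$-invariant for size reasons. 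The decisive observation is again $5$-neighbourliness of~$K$: for every $\langle f^3\rangle$-orbit $\beta\ne\{13,14,15\}$ the set $\{13,14,15\}\cup\beta$ has at most five elements, hence is a simplex of~$K$ and so of~$L$; thus $O_3\cup\{x\}$ is a simplex of~$L$ for every vertex $x\notin O_3$. Fixing an isomorphism $\phi\colon L\to\CP^2_9$ and applying Proposition~\ref{propos_3subset}, we conclude that $\phi(O_3)$ is a non-special line of~$\mathcal{P}$. A short computation in $\Sym(\CP^2_9)$ then shows that a freely acting order-$3$ element whose orbits are lines must be a translation, and that one with a non-special line as an orbit must be a translation in a non-special direction; since all such translations are conjugate in $\Sym(\CP^2_9)$, we may adjust~$\phi$ so that $\bar f$ becomes the translation $(x,y)\mapsto(x,y+1)$, whence $\phi(O_1),\phi(O_2),\phi(O_3)$ are the three lines $\{x=0\},\{x=1\},\{x=2\}$ in some order.

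It remains to exhaust the residual freedom in~$\phi$. The centraliser of $(x,y)\mapsto(x,y+1)$ in $\Sym(\CP^2_9)$ acts as the full symmetric group on $\{\{x=0\},\{x=1\},\{x=2\}\}$, so we may take $\phi(O_3)=\{x=0\}$, $\phi(O_1)=\{x=1\}$, $\phi(O_2)=\{x=2\}$; then, using the vertical translations of~$\mathcal{P}$ together with the renumberings of~$V$ by $\sigma=(1\,2\,3\,4\,5\,6)$ and $\tau=(7\,8\,9\,10\,11\,12)$ (which commute with~$f$, leave the data of steps~$1$--$2$ intact, and cyclically permute the vertices of $O_1$ and of $O_2$), we can force $\phi$ to send $13,14,15$ to $(0,0),(0,1),(0,2)$, send $b(\{1,4\}),b(\{2,5\}),b(\{3,6\})$ to $(1,0),(1,1),(1,2)$, and send $b(\{7,10\}),b(\{8,11\}),b(\{9,12\})$ to $(2,0),(2,1),(2,2)$. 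With $\phi$ thus completely determined, a direct check shows that it carries each of the six sets in the last three rows of~\eqref{eq_C6_Z} to a $4$-simplex of $\CP^2_9$ — for example the first of them to the type-$(2)$ facet $\ell_0\cup(\ell_1\setminus\{(0,1)\})$, and the third to the union $\{x=1\}\cup\{y=2x\}$ of two intersecting non-special lines, a type-$(1)$ facet. Since every $4$-simplex of $\CP^2_9\cong L$ is a simplex of~$K$, this produces the remaining six $8$-simplices of~\eqref{eq_C6_Z}. The hard part is this last stretch: recognising that $5$-neighbourliness together with Proposition~\ref{propos_3subset} rigidifies the $\rC_3$-action on $K^{\langle f^3\rangle}$, and then carrying out the somewhat lengthy bookkeeping that fixes the isomorphism~$\phi$ and matches the facets of $K^{\langle f^3\rangle}$ with the explicit list~\eqref{eq_C6_Z} — precisely the sort of case-handling of a fixed point complex isomorphic to $\CP^2_9$ that was deliberately avoided in the $\rS_3$ argument.
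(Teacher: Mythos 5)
Your proposal follows the same route as the paper: the four simplices in the first two rows of~\eqref{eq_C6_Z} come from $K^{\langle f^2\rangle}\cong\pt\sqcup\partial\Delta^3$ exactly as in the paper, and the remaining six come from identifying $K^{\langle f^3\rangle}$ with~$\CP^2_9$, showing via $5$-neighborliness and Proposition~\ref{propos_3subset} that the three $f^3$-fixed vertices form a non-special line, and normalizing the induced order-$3$ automorphism. However, there is one genuine error in the normalization step: the claim that all translations of~$\mathcal{P}$ in a non-special direction are conjugate in~$\Sym(\CP^2_9)$ is false. Conjugating a translation $v\mapsto v+t$ by an element $v\mapsto Bv+s$ of~$\Sym(\CP^2_9)$ replaces $t=(a,b)$ by $Bt=(\pm a+cb,\,b)$, so the component~$b$ --- equivalently, the direction in which the three special lines are cyclically shifted --- is a conjugacy invariant. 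Hence $(x,y)\mapsto(x,y+1)$ and $(x,y)\mapsto(x,y-1)$ lie in \emph{different} conjugacy classes of~$\Sym(\CP^2_9)$, and if the induced map~$\bar f$ belongs to the second class you cannot make it equal to the first by adjusting~$\phi$. The renumberings $\sigma=(1\,2\,3\,4\,5\,6)$ and $\tau=(7\,8\,9\,10\,11\,12)$ and the vertical translations of~$\mathcal{P}$ do not help either, since they commute with~$f$, respectively lie in the centralizer of the translation, and so cannot change its conjugacy class. The sign genuinely matters for the conclusion: whether $\{(0,0),(1,0),(2,0),(1,1),(2,1)\}=\ell_0\cup(\ell_1\setminus\{(0,1)\})$ is a $4$-simplex of~$\CP^2_9$ depends on the chosen cyclic order of the special lines, and with the opposite order its complement, not it, would be the simplex.

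The paper closes exactly this gap by concluding only $\theta(x,y)=(x,y\pm1)$ and then enlarging the list of admissible renumberings of the vertices of~$K$ to include $(2\ 6)(3\ 5)(8\ 12)(9\ 11)(14\ 15)$, which conjugates~$f$ to~$f^{-1}$ (so $G$ is still generated by the permutation~\eqref{eq_C6_f} and the first four simplices of~\eqref{eq_C6_Z} are preserved) and interchanges the two conjugacy classes of~$\bar f$. Adding this one extra move to your toolkit repairs the argument; the remaining steps, including the identification of the six images with type-(1) and type-(2) facets of~$\CP^2_9$, check out.
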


\begin{proof}
Let $V$ be the vertex set of~$K$.
From Corollaries~\ref{cor_2} and~\ref{cor_3} it follows that $G$ acts on~$V$ with two orbits of length~$6$ and one orbit of length~$3$. So we can number the vertices of~$K$ so that to achieve that $G$ is generated by the permutation~$f$ given by~\eqref{eq_C6_f}.
We put
\begin{align*}
g&=f^2=(1\ 3\ 5)(2\ 4\ 6)(7\ 9\ 11)(8\ 10\ 12)(13\ 15\ 14),\\
h&=f^3=(1\ 4)(2\ 5)(3\ 6)(7\ 10)(8\ 11)(9\ 12).
\end{align*}

First, literally as in the proof of Proposition~\ref{propos_S3_aux} we see that $K^{\langle g\rangle}\cong\pt\sqcup\partial\Delta^3$, the vertices of~$K^{\langle g\rangle}$ are the $5$ barycentres $b(\{1,3,5\})$, $b(\{2,4,6\})$, $b(\{7,9,11\})$, $b(\{8,10,12\})$, and~$b(\{13,14,15\})$.  Moreover, $b(\{13,14,15\})$ is the separate point~$\pt$ of~$K^{\langle g\rangle}$, since it is the only of the five barycentres that is fixed by~$f$. Therefore, any three of the four barycentres $b(\{1,3,5\})$, $b(\{2,4,6\})$, $b(\{7,9,11\})$, and~$b(\{8,10,12\})$ form a simplex of~$K^{\langle g\rangle}$. Thus, the first four of the ten sets~\eqref{eq_C6_Z} are simplices of~$K$.

Second, by Corollary~\ref{cor_2} we have $K^{\langle h\rangle}\cong\CP^2_9$. The $9$ vertices of~$K^{\langle h\rangle}$ are the $6$ barycentres $b(\{1,4\})$, $b(\{2,5\})$, $b(\{3,6\})$, $b(\{7,10\})$, $b(\{8,11\})$, and~$b(\{9,12\})$ and the three $h$-fixed vertices~$13$, $14$, and~$15$.
We identify $K^{\langle h\rangle}$ with $\CP^2_9$ along some isomorphism and identify the vertex set of~$\CP^2_9$ with the affine plane~$\mathcal{P}$ over~$\F_3$ as in Subsection~\ref{subsection_K}. These identifications provide a bijection $\varphi\colon V/\langle h\rangle\to \mathcal{P}$. We denote by $m\subset  \mathcal{P}$ the $3$-element subset consisting of the points that correspond to the three $h$-fixed vertices~$13$, $14$, and~$15$ of~$K$. Every $4$-element subset~$\sigma\subset \mathcal{P}$ that contains~$m$ corresponds to a $5$-element subset of~$V$, which is a simplex of~$K$, since by Corollary~\ref{cor_complement} the complex~$K$ is $5$-neighborly. Hence $\sigma$ is a simplex of~$\CP^2_9$. From Proposition~\ref{propos_3subset} it follows that $m$ is a non-special line.

Now, we can choose affine coordinates~$(x,y)$ in~$\mathcal{P}$ so that:
\begin{itemize}
 \item $m$ coincides with the line $\{x=0\}$,
 \item the special lines in~$\mathcal{P}$ are $\{y=0\}$, $\{y=1\}$, and $\{y=2\}$ with this cyclic order.
\end{itemize}

The automorphism~$f$ of~$K$ commutes with $h$ and hence induces an automorphism of~$K^{\langle h\rangle}\cong\CP^2_9$, which acts on vertices in the following way:
\begin{gather*}
 b(\{1,4\})\mapsto b(\{2,5\})\mapsto b(\{3,6\})\mapsto b(\{1,4\}),\\
 b(\{7,10\})\mapsto b(\{8,11\})\mapsto b(\{9,12\})\mapsto b(\{7,10\}),\\
 13\mapsto 14\mapsto 15\mapsto 13.
\end{gather*}
So the corresponding permutation~$\theta$ of points of~$\mathcal{P}$ provides an automorphism of~$\CP^2_9$. Hence $\theta$ is an affine transformation of~$\mathcal{P}$ that takes special lines to special lines preserving their cyclic order. Moreover, $\theta^3=\mathrm{id}$, the line $\{x=0\}$ is invariant under~$\theta$, and $\theta$ has no fixed points in~$\mathcal{P}$. It follows easily that
$$
\theta(x,y)=(x,y\pm 1)
$$
with the same sign~$\pm$ for all $(x,y)\in\mathcal{P}$. Hence the three triples
$$
\bigl(b(\{1,4\}),b(\{2,5\}),b(\{3,6\})\bigr),\qquad \bigl(b(\{7,10\}),b(\{8,11\}),b(\{9,12\})\bigr),\qquad (13,14,15)
$$
correspond to three non-special parallel lines in~$\mathcal{P}$ and the cyclic orders of points in these three triples are either all positive or all negative.

Note that the property that the group~$G$ is generated by permutation~\eqref{eq_C6_f} will not be violated if we renumber the vertices of~$K$ using any of the following five permutations:
\begin{gather*}
 (1\ 2\ 3\ 4\ 5\ 6),\qquad
 (7\ 8\ 9\ 10\ 11\ 12),\qquad
 (13\ 14\ 15),\\
 (1\ 7)(2\ 8)(3\ 9)(4\ 10)(5\ 11)(6\ 12),\qquad (2\ 6)(3\ 5)(8\ 12)(9\ 11)(14\ 15).
\end{gather*}
Renumbering the vertices using these five permutations, we can achieve that  the bijection $\varphi\colon V/\langle h\rangle\to\mathcal{P}$ is given by Table~\ref{table_bijection_C6}.
\begin{table}
 \caption{The bijection between points $(x,y)\in\mathcal{P}$ and $\langle h\rangle $-orbits in~$V$}\label{table_bijection_C6}
 \begin{tabular}{|c|ccc|}
 \hline
 \diaghead(1,-1){abc}{$y$}{$x$} & $0$ & $1$ & $2$\\
 \hline
 $0$ & $\{13\}\vphantom{\Bigl(}$ & $\{1,4\}$  & $\{7,10\}$\\
 $1$ & $\{14\}\vphantom{\Bigl(}$ & $\{2,5\}$  & $\{8,11\}$\\
 $2$ & $\{15\}\vphantom{\Bigl(}$ & $\{3,6\}$ & $\{9,12\}$\\
 \hline
 \end{tabular}
\end{table}

By the construction of the simplicial complex~$\CP^2_9$, the following $6$ five-element subsets of~$\mathcal{P}$ are $4$-simplices of~$\CP^2_9$:
\begin{align*}
 &\bigl\{(0,0),(1,0),(1,1),(2,0),(2,1)\bigr\},&
 &\bigl\{(0,0),(1,1),(1,2),(2,1),(2,2)\bigr\},\\
 &\bigl\{(0,0),(1,0),(1,1),(1,2),(2,1)\bigr\},&
 &\bigl\{(0,0),(1,0),(1,1),(1,2),(2,2)\bigr\},\\
 &\bigl\{(0,0),(1,1),(2,0),(2,1),(2,2)\bigr\},&
 &\bigl\{(0,0),(1,2),(2,0),(2,1),(2,2)\bigr\}.
\end{align*}
So the corresponding $6$ subsets of~$V/\langle h\rangle$ are simplices of~$K^{\langle h\rangle}$. Hence, the preimages of these $6$ subsets under the projection $V\to V/\langle h\rangle$ are simplices of~$K$. It is easy to check that they are exactly the last $6$ simplices in~\eqref{eq_C6_Z}.
\end{proof}

\begin{comp}
 Being run on the subgroup~$G=\rC_6\subset\rS_{15}$ generated by~$f$ and the mandatory subcomplex~$R$ consisting of the ten simplices~\eqref{eq_C6_Z}, the program \texttt{find} produces a list of $62$ simplicial complexes. These complexes are divided into $15$ groups, one of which consists of $6$, and each of the others of $4$ isomorphic complexes.   All the $15$ obtained simplicial complexes are combinatorial $8$-manifolds not homeomorphic to~$S^8$ and hence (by Theorem~\ref{thm_BK}) are manifolds like the quaternionic projective plane. One of them (those that belongs to the group consisting of $6$ isomorphic complexes) has in fact a larger symmetry group $\rC_6\times\rC_2$ and is isomorphic to~$\HP^2_{15}(\rC_6\times\rC_2)$. The other $14$ combinatorial manifolds have symmetry group exactly~$\rC_6$; we denote them by $\HP^2_{15}(\rC_6,1),\ldots,\HP^2_{15}(\rC_6,14)$. Thus, we obtain the following proposition.
\end{comp}

\begin{propos}\label{propos_C6}
 Up to isomorphism, there exist exactly fourteen $15$-vertex $\Z$-homology $8$-manifolds $K=\HP^2_{15}(\rC_6,1),\ldots,\HP^2_{15}(\rC_6,14)$ such that $H_*(K;\Z)$ is not isomorphic to~$H_*(S^8;\Z)$ and $\Sym(K)\cong\rC_6$. All of them are combinatorial manifolds.
\end{propos}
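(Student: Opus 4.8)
The plan is to derive Proposition~\ref{propos_C6} from Proposition~\ref{propos_C6_aux}, Corollaries~\ref{cor_complement} and~\ref{cor_fvect}, and the computer result stated immediately before it; there is no substantial new argument to run, only a careful assembly of the logical pieces around the \texttt{find} computation. First I would treat the \emph{completeness} direction. Let $K$ be any $15$-vertex $\Z$-homology $8$-manifold with $H_*(K;\Z)\ncong H_*(S^8;\Z)$ and $\Sym(K)\cong\rC_6$. By Corollary~\ref{cor_complement} the complex $K$ satisfies complementarity, hence in particular condition~$(*)$, and by Corollary~\ref{cor_fvect} it has exactly $490$ top-dimensional simplices. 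By Proposition~\ref{propos_C6_aux} we may renumber the vertices of $K$ so that the fixed copy $G=\rC_6\subset\rS_{15}$ generated by the permutation~\eqref{eq_C6_f} is contained in $\Sym(K)$ and so that $K$ contains all ten $8$-simplices of the mandatory subcomplex $R$ given by~\eqref{eq_C6_Z}. Since a $\Z$-homology $8$-manifold is a weak $8$-pseudomanifold, $K$ with this numbering is a $G$-invariant weak $8$-pseudomanifold on $15$ vertices with at least $490$ maximal simplices, satisfying~$(*)$ and containing all top-dimensional simplices of $R$. Consequently $K$ occurs among the complexes produced by the program \texttt{find} run with $(d,n,N)=(8,15,490)$, the group $G$, and the mandatory subcomplex $R$.

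By the computer result preceding the proposition, that output consists, up to isomorphism, of exactly $15$ simplicial complexes: one with symmetry group $\rC_6\times\rC_2$, namely $\HP^2_{15}(\rC_6\times\rC_2)$, and the fourteen complexes $\HP^2_{15}(\rC_6,1),\ldots,\HP^2_{15}(\rC_6,14)$ whose symmetry group is exactly $G$. Since $\Sym(K)\cong\rC_6$ and $\rC_6$ has no proper subgroup isomorphic to itself, the inclusion $G\subseteq\Sym(K)$ forces $G=\Sym(K)$; in particular $K$ cannot be isomorphic to $\HP^2_{15}(\rC_6\times\rC_2)$, so $K$ is isomorphic to one of $\HP^2_{15}(\rC_6,1),\ldots,\HP^2_{15}(\rC_6,14)$. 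These fourteen complexes are pairwise non-isomorphic, being the distinct isomorphism classes separated by the program \texttt{isomorphism\_groups}; this yields the count of exactly fourteen.

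For the \emph{converse}, one checks that each $\HP^2_{15}(\rC_6,k)$ indeed satisfies the hypotheses of the proposition. The programs \texttt{check} and \texttt{fvect} verify that each of these complexes is a combinatorial $8$-manifold with $f$-vector~\eqref{eq_fvect_BK}; in particular $\chi\bigl(\HP^2_{15}(\rC_6,k)\bigr)=3$, so $\HP^2_{15}(\rC_6,k)$ is not homeomorphic to $S^8$, and by Theorem~\ref{thm_BK} it is a manifold like the quaternionic projective plane. Being a combinatorial manifold it is in particular a $\Z$-homology $8$-manifold, and by Theorem~\ref{thm_Novik} its integral cohomology ring is $\Z[a]/(a^3)$ with $\deg a=4$, so $H_*\bigl(\HP^2_{15}(\rC_6,k);\Z\bigr)\ncong H_*(S^8;\Z)$. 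Finally the program \texttt{symm\_group} confirms $\Sym\bigl(\HP^2_{15}(\rC_6,k)\bigr)=G\cong\rC_6$. Hence all fourteen complexes satisfy the hypotheses, which completes the proof.

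The only genuinely nontrivial mathematical input here, beyond bookkeeping with the cited theorems, is Proposition~\ref{propos_C6_aux}, which simultaneously pins down the conjugacy class of the $\rC_6$-action on the vertex set (two orbits of length $6$ and one of length $3$) and furnishes the ten-simplex mandatory subcomplex that makes the computation feasible. Everything downstream is an assembly of Theorems~\ref{thm_BK} and~\ref{thm_Novik} and Corollaries~\ref{cor_complement}--\ref{cor_fvect} around the already completed \texttt{find} computation; the practical bottleneck is the correctness and completeness of that computation rather than a mathematical obstacle.
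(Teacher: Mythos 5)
Your proposal is correct and follows essentially the same route as the paper: Proposition~\ref{propos_C6_aux} supplies the normalized group and mandatory subcomplex, Corollaries~\ref{cor_complement} and~\ref{cor_fvect} guarantee that any such $K$ appears in the output of \texttt{find}, and the stated computer result (with \texttt{isomorphism\_groups}, \texttt{check}, \texttt{fvect}, \texttt{symm\_group}, plus Theorems~\ref{thm_BK} and~\ref{thm_Novik}) yields both completeness and the converse. Your assembly of these pieces matches the paper's intended derivation.
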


\subsection{The group~$\rC_5$ acting with $5$ fixed points}\label{subsection_5_ns}

We denote by $L_1*L_2$ the join of simplicial complexes~$L_1$ and~$L_2$.

\begin{propos}\label{propos_5_ns_aux}
 Let $K$ be a $15$-vertex $\Z$-homology $8$-manifold on vertex set~$V$ with $H_*(K;\Z)\not\cong H_*(S^8;\Z)$. Suppose that the symmetry group~$\Sym(K)$ contains a subgroup~$G$ isomorphic to~$\rC_5$ such that $G$ acts on~$V$ with $5$ fixed points. Then we can number the vertices of~$K$ so that to achieve that
 \begin{itemize}
  \item $G$ is generated by the permutation
\begin{equation}\label{eq_5_ns}
g=(1\ 2\ 3\ 4\ 5)(6\ 7\ 8\ 9\ 10).
\end{equation}
 \item $K$ contains the subcomplex
 \begin{equation}\label{eq_subcomp_5_ns}
 (\Delta_1*\partial\Delta_2)\cup(\Delta_2*\partial\Delta_3)\cup(\Delta_3*\partial\Delta_1),
 \end{equation}
 where
 \begin{equation}
 \Delta_1= \{1,2,3,4,5\},\quad
 \Delta_2= \{6,7,8,9,10\},\quad
 \Delta_3= \{11,12,13,14,15\},\label{eq_5_ns_Delta}
 \end{equation}
 and also the simplex
 \begin{equation}
  \label{eq_simp_5_ns}
  \{1,2,3,4,6,7,8,9,11\}.
 \end{equation}
 \end{itemize}
\end{propos}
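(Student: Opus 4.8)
The plan is to combine the description of the fixed point complex $K^{\langle g\rangle}$ from Corollary~\ref{cor_5}(2) with the structure of the links of the $G$-orbits, bootstrapping from one of the three joins in~\eqref{eq_subcomp_5_ns} to all of them. First, notation: since $G$ acts on $V$ with $5$ fixed points, Corollary~\ref{cor_5}(2) gives $K^{\langle g\rangle}\cong\pt\sqcup\partial\Delta^5$, the isolated vertex being the barycentre of one of the two length-$5$ orbits; write $\alpha,\beta$ for these orbits and $F$ for the set of $5$ fixed vertices. Any subgroup of $\rS_{15}$ isomorphic to $\rC_5$ with this orbit pattern is conjugate to $\langle g\rangle$ (its non-identity elements are exactly the products of two $5$-cycles fixing $5$ points, and all such permutations are conjugate), so after renumbering we may take $g$ as in~\eqref{eq_5_ns} with $\Delta_1=\alpha=\{1,\dots,5\}$, $\Delta_2=\beta=\{6,\dots,10\}$ and $\Delta_3=F=\{11,\dots,15\}$; conjugating by $(1\ 6)(2\ 7)(3\ 8)(4\ 9)(5\ 10)$ if necessary (this permutation centralizes $g$ and interchanges $\alpha$ with $\beta$), we may assume the isolated vertex of $K^{\langle g\rangle}$ is $b(\alpha)$. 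By Proposition~\ref{propos_p} the orbits $\alpha$ and $\beta$ are simplices of $K$, and $F$ is a simplex of $K$ by $5$-neighborliness (Corollary~\ref{cor_complement}).

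Next I would extract the three joins. The vertices of $K^{\langle g\rangle}$ are $b(\alpha)$, $b(\beta)$ and the five fixed vertices; as $b(\alpha)$ is isolated and the remaining six span $\partial\Delta^5$, I read off that $\beta\cup(F\setminus\{f\})\in K$ for every $f\in F$, i.e.\ $\Delta_2*\partial\Delta_3\subseteq K$, and that $\alpha\cup\{f\}\notin K$ for every $f\in F$. Consequently $\link(\alpha,K)$, which is a $\Z$-homology $3$-sphere and in particular a weak $3$-pseudomanifold, has all its vertices in $\beta$; since a weak $3$-pseudomanifold has at least $5$ vertices, it has exactly the five vertices of $\beta$, and the only weak $3$-pseudomanifold on $5$ vertices is $\partial\Delta^4$ (if one $4$-subset is a facet then, chasing its $2$-faces, all five $4$-subsets are facets). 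Hence $\link(\alpha,K)=\partial\Delta_2$, so $\Delta_1*\partial\Delta_2\subseteq K$. Applying complementarity (Corollary~\ref{cor_complement}) to the facets $\alpha\cup(\beta\setminus\{b\})$ of this join shows that their complements $F\cup\{b\}$, $b\in\beta$, are non-simplices of $K$; so $\link(F,K)$ has all its vertices in $\alpha$, and the same argument gives $\link(F,K)=\partial\Delta_1$ and $\Delta_3*\partial\Delta_1\subseteq K$. Together these three inclusions are exactly the subcomplex~\eqref{eq_subcomp_5_ns}.

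It remains to produce the simplex~\eqref{eq_simp_5_ns}. The $7$-simplex $\{1,2,3,4,6,7,8,9\}$ is a face of $\{1,\dots,9\}=\alpha\cup(\beta\setminus\{10\})\in\Delta_1*\partial\Delta_2$, and since $K$ is a weak $8$-pseudomanifold this $7$-simplex lies in exactly two $8$-simplices, namely $\{1,\dots,9\}$ and $\{1,2,3,4,6,7,8,9,w\}$ for a unique $w\in\{10,\dots,15\}$. The value $w=10$ is impossible: otherwise $\{1,2,3,4,6,7,8,9,10\}\in K$, but its complement $F\cup\{5\}$ is a face of $F\cup(\alpha\setminus\{1\})\in\Delta_3*\partial\Delta_1\subseteq K$, contradicting complementarity. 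Hence $w\in F$, and permuting the labels $\{11,\dots,15\}$ — a renumbering that centralizes $g$ and preserves~\eqref{eq_subcomp_5_ns} — we may arrange that $w=11$, which gives~\eqref{eq_simp_5_ns}.

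The heart of the argument, and the step one must be careful with, is the middle one: the fixed point complex by itself yields only $\Delta_2*\partial\Delta_3$, and the other two joins have to be coaxed out by alternating between Smith-theoretic input (the link of each orbit $4$-simplex is a small $\Z$-homology $3$-sphere, hence $\partial\Delta^4$), complementarity, and the vertex bound for pseudomanifolds; the cyclic shape of~\eqref{eq_subcomp_5_ns} is exactly what makes this loop close up. One must also keep track of the residual renumbering freedom — the subgroup of the normalizer of $\langle g\rangle$ fixing $\alpha$, $\beta$ and $F$ setwise — both to reduce the case in which the isolated vertex is $b(\beta)$ to the one treated above and to normalize the extra vertex to $11$.
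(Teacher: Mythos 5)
Your proof is correct and follows essentially the same route as the paper: extract $\Delta_2*\partial\Delta_3$ from $K^{\langle g\rangle}\cong\pt\sqcup\partial\Delta^5$, bootstrap to the other two joins via complementarity and the fact that the link of each orbit $4$-simplex is a homology $3$-sphere on at most $5$ vertices, and then locate the extra $8$-simplex through $\{1,2,3,4,6,7,8,9\}$ using the weak pseudomanifold property. The only (immaterial) differences are that you rule out $w=10$ by complementarity where the paper uses $\link(\Delta_2,K)=\partial\Delta_3$, and you read off $\alpha\cup\{f\}\notin K$ directly from the fixed point complex rather than from complementarity.
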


\begin{proof}
Let $\Delta_1$ and $\Delta_2$ be the two $G$-orbits of length~$5$ in~$V$ and $\Delta_3$ the set of $G$-fixed points. By Corollary~\ref{cor_5} we have that $K^G\cong\pt\sqcup\partial\Delta^5$ and the separate point~$\pt$ of~$K^G$ is either~$b(\Delta_1)$ or~$b(\Delta_2)$. Swapping~$\Delta_1$ and~$\Delta_2$, we may achieve that $b(\Delta_1)=\pt$ and $b(\Delta_2)$ is a vertex of~$\partial\Delta^5$. Since the other $5$ vertices of~$\partial\Delta^5$ are the $5$ fixed points, we see that $\{b(\Delta_2)\}\cup(\Delta_3\setminus\{v\})\in K^G$ and hence $\Delta_2\cup(\Delta_3\setminus\{v\})\in K$ for all $v\in\Delta_3$. Therefore, the join $\Delta_2*\partial\Delta_3$ is a subcomplex of~$K$. Let us prove that $\Delta_1*\partial\Delta_2$ is also a subcomplex of~$K$. Indeed, since $\Delta_2\cup(\Delta_3\setminus\{v\})\in K$ for all $v\in\Delta_3$, we by complimentarity see that $\Delta_1\cup\{v\}\notin K$ for all $v\in\Delta_3$. Hence $\link(\Delta_1,K)$ is contained in~$\Delta_2$. However, $\link(\Delta_1,K)$ has the homology of a $3$-sphere. Therefore, $\link(\Delta_1,K)=\partial\Delta_2$ and thus $\Delta_1*\partial\Delta_2$ is a subcomplex of~$K$. Repeating the same argument, we obtain that $\Delta_3*\partial\Delta_1$ is also a subcomplex of~$K$. We can number the vertices of~$K$ so that to achieve that $G$ is generated by the permutation~$g$ given by~\eqref{eq_5_ns} and the simplices~$\Delta_1$, $\Delta_2$, and~$\Delta_3$ are given by~\eqref{eq_5_ns_Delta}. Finally, consider the $7$-simplex
$$
\rho=(\Delta_1\setminus\{5\})\cup(\Delta_2\setminus\{10\})=\{1,2,3,4,6,7,8,9\}.
$$
Since $\rho$ is contained in $\Delta_1*\partial\Delta_2$, we see that $\rho\in K$. But $K$ is a weak $8$-pseudomanifold. Hence $\rho$ is contained in exactly two $8$-simplices of~$K$. One of these two $8$-simplices is $\sigma=\Delta_1\cup(\Delta_2\setminus\{10\})$. Let $\tau$ be the other one. Since $\link(\Delta_2,K)=\partial\Delta_3$, we have $\tau\ne(\Delta_1\setminus\{5\})\cup\Delta_2$. Hence
$$
\tau=(\Delta_1\setminus\{5\})\cup(\Delta_2\setminus\{10\})\cup\{v\}
$$
for some $v\in\Delta_3$. Renumbering the vertices of~$\Delta_3$, we can achieve that $v=11$. Then $\tau$ is the simplex~\eqref{eq_simp_5_ns}.
\end{proof}

\begin{comp}
Being run on the subgroup~$G=\rC_5\subset\rS_{15}$ generated by~$g$ and the mandatory subcomplex~$R$ consisting of the subcomplex~\eqref{eq_subcomp_5_ns} and the simplex~\eqref{eq_simp_5_ns}, the program \texttt{find} produces an empty list of  simplicial complexes. Thus, we obtain the following proposition.
\end{comp}

\begin{propos}\label{propos_C5_ns}
  Suppose that $K$ is a $15$-vertex $\Z$-homology $8$-manifold on vertex set~$V$ such that $H_*(K;\Z)\ncong H_*(S^8;\Z)$. Then the symmetry group~$\Sym(K)$ does not contain an element of order~$5$ that acts on~$V$ with $5$ fixed points.
\end{propos}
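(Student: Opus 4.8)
The plan is to reduce Proposition~\ref{propos_C5_ns} entirely to Proposition~\ref{propos_5_ns_aux} together with a finite computer search. Suppose for contradiction that $\Sym(K)$ contains an element~$g$ of order~$5$ acting on the vertex set~$V$ with exactly $5$ fixed points. Then $G=\langle g\rangle\cong\rC_5$ satisfies the hypotheses of Proposition~\ref{propos_5_ns_aux}, so after renumbering the vertices we may assume that $g$ is the permutation~\eqref{eq_5_ns}, that $K$ contains the mandatory subcomplex $(\Delta_1*\partial\Delta_2)\cup(\Delta_2*\partial\Delta_3)\cup(\Delta_3*\partial\Delta_1)$ with the $\Delta_i$ as in~\eqref{eq_5_ns_Delta}, and that $K$ contains the $8$-simplex~\eqref{eq_simp_5_ns}.

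The second step is to observe that every such $K$ satisfies the hypotheses required to run the program \texttt{find}: by Corollaries~\ref{cor_complement} and~\ref{cor_fvect}, $K$ satisfies complementarity (hence condition~$(*)$) and has exactly $490$ top-dimensional simplices, and $K$ is $G$-invariant. Therefore $K$ must appear in the output of \texttt{find} when it is launched for $(d,n,N)=(8,15,490)$, the subgroup $G\subset\rS_{15}$ generated by~$g$, and the mandatory subcomplex~$R$ consisting of the $10$ maximal simplices of $(\Delta_1*\partial\Delta_2)\cup(\Delta_2*\partial\Delta_3)\cup(\Delta_3*\partial\Delta_1)$ together with the simplex~\eqref{eq_simp_5_ns}. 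The forcing of~$R$ into~$K$ from the start cuts down the search tree enough that \texttt{find} terminates in reasonable time, exactly as described for the groups $\rS_3$ and~$\rC_6$.

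The final step is simply to read off the outcome of this computation, which is recorded in the \textbf{Computer result} immediately preceding the proposition: the list produced by \texttt{find} is \emph{empty}. Hence no $15$-vertex $\Z$-homology $8$-manifold~$K$ with $H_*(K;\Z)\ncong H_*(S^8;\Z)$ and the assumed symmetry exists, a contradiction. This proves that $\Sym(K)$ contains no element of order~$5$ acting on~$V$ with $5$ fixed points.

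The only genuine mathematical content is in Proposition~\ref{propos_5_ns_aux} (already established) guaranteeing the mandatory subcomplex; once that is in hand the argument is a short deduction plus citation of the search. Accordingly, the main obstacle is not in this proof at all but lies in trusting/verifying the computer search: one must be confident that \texttt{find} correctly enumerates \emph{all} $G$-invariant weak $8$-pseudomanifolds containing~$R$ and satisfying~$(*)$ with $\ge 490$ facets, so that an empty output truly certifies nonexistence rather than merely a failure to find examples. This is exactly the role played by the correctness of the algorithm from~\cite{Gai22}, which is taken as given here.
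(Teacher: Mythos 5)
Your proposal is correct and follows exactly the paper's own route: Proposition~\ref{propos_5_ns_aux} supplies the mandatory subcomplex, and the empty output of the program \texttt{find} recorded in the Computer result immediately preceding the proposition yields the contradiction. Nothing is missing.
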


\subsection{The group~$\rC_4$}
This case is dealt with in exactly the same way as the previous three. We will not do this here, since this has already been done by the author in~\cite{Gai23}. The result is as follows.

\begin{propos}[Proposition~6.1 in~\cite{Gai23}]\label{propos_4}
Suppose that $K$ is a $15$-vertex $\Z$-homology $8$-manifold such that $H_*(K;\Z)\ncong H_*(S^8;\Z)$. Then the symmetry group~$\Sym(K)$ contains no elements of order~$4$.
\end{propos}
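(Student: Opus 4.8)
The plan is to follow the scheme used for the groups $\rS_3$, $\rC_6$, and $\rC_5$ acting with $5$ fixed points: assume that some $g\in\Sym(K)$ has order~$4$, use the results of Section~\ref{section_finite_order} to pin down the $\rC_4$-action and to exhibit a large mandatory subcomplex $R\subset K$, and then run the program \texttt{find} with the top-dimensional simplices of $R$ forced in from the start.

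First I would settle the orbit structure of $G=\langle g\rangle$ on the vertex set $V$. Since $g^2$ has order~$2$, Corollary~\ref{cor_2} gives that $g^2$ fixes exactly three vertices $W=\{w_1,w_2,w_3\}$ and that $K^{\langle g^2\rangle}\cong\CP^2_9$. As $g$ normalises $\langle g^2\rangle$, it preserves $W$; moreover every $\rC_4$-orbit of length less than~$4$ consists of $g^2$-fixed points, so the $12$ vertices of $V\setminus W$ split into three free $\rC_4$-orbits while $W$ splits into $\rC_4$-orbits of length $1$ or $2$. Thus the orbit type on $V$ is either $(4,4,4,1,1,1)$ or $(4,4,4,2,1)$. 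Next I would identify $\CP^2_9$ with the affine plane $\mathcal P$ over $\F_3$ as in Subsection~\ref{subsection_K}; exactly as in the proof of Proposition~\ref{propos_C6_aux}, the $5$-neighborliness of $K$ together with Proposition~\ref{propos_3subset} forces the image of $W$ to be a non-special line $m$. The key step is then to analyse the automorphism $\theta$ that $g$ induces on $\CP^2_9\cong K^{\langle g^2\rangle}$. It is genuinely an involution, because $g$ cannot fix the barycentre $b(\{u,g^2u\})$ of a length-$2$ orbit (this would force $gu=u$, contradicting $\#\{u,g^2u\}=2$); in particular $\theta$ moves all six such barycentres. Since $\theta$ preserves $m$ setwise and fixes none of the six barycentres, in the orbit type $(4,4,4,2,1)$ it would have exactly one fixed point in $\mathcal P$, hence be a point reflection $p\mapsto 2q-p$; but such a map reverses the cyclic order of the special lines and so does not belong to $\Sym(\CP^2_9)$. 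This rules out $(4,4,4,2,1)$. Therefore $g$ fixes $W$ pointwise, and, choosing coordinates so that $m=\{x=0\}$ and the special lines are $\{y=0\},\{y=1\},\{y=2\}$, the only affine automorphism of $\CP^2_9$ of order~$2$ fixing the line $\{x=0\}$ pointwise is $\theta\colon(x,y)\mapsto(-x,y)$, so $g$ pairs the barycentres as $b(\{1,y\})\leftrightarrow b(\{2,y\})$.

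With this normalisation, each of the $36$ four-simplices $F$ of $\CP^2_9$ meets $m$ (one checks that the complement of $m$, which is the union of the two non-special lines parallel to $m$, contains no $4$-simplex of $\CP^2_9$), so the preimage of $F$ under $V\to V/\langle g^2\rangle$ has $10-\#(F\cap m)\le 9$ vertices; in particular, for each of the eighteen four-simplices with $\#(F\cap m)=1$ the preimage is an $8$-simplex of $K$. After using the residual renumbering freedom (normaliser-type permutations preserving both $g$ and the bijection $V/\langle g^2\rangle\to\mathcal P$, compare Table~\ref{table_bijection_C6}) one can write all of these out explicitly, obtaining a pure $8$-dimensional mandatory subcomplex $R$ consisting of these eighteen simplices together with their $G$-translates (and, if convenient, the $7$- and $6$-simplices coming from the four-simplices of $\CP^2_9$ with $\#(F\cap m)=2$ or $3$). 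Running \texttt{find} on the subgroup $\rC_4\subset\rS_{15}$ generated by the normalised permutation, with all top-dimensional simplices of $R$ forced in, produces an empty list, exactly as in the computer result preceding Proposition~\ref{propos_C5_ns}. Hence no $15$-vertex $\Z$-homology $8$-manifold $K$ with $H_*(K;\Z)\not\cong H_*(S^8;\Z)$ admits a symmetry of order~$4$.

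I expect the main obstacle to be the group-theoretic reduction rather than the computation: specifically, excluding the orbit type $(4,4,4,2,1)$ via the point-reflection argument and then pinning down $\theta$ unambiguously, together with the careful bookkeeping of the renumbering freedom needed to present $R$ for the program. Once the rigid structure of $K$ around $K^{\langle g^2\rangle}\cong\CP^2_9$ is established, the run of \texttt{find} should be fast, since fixing eighteen of the $490$ top-dimensional simplices sharply prunes the search, just as forcing ten simplices did in the $\rC_6$ case.
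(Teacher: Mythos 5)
The paper does not prove this proposition here at all: it simply cites Proposition~6.1 of~\cite{Gai23}, remarking that the case ``is dealt with in exactly the same way as the previous three'' (normalize the action, extract a mandatory subcomplex, run \texttt{find}). Your reduction is exactly that method and I find it correct: the orbit-type dichotomy, the exclusion of type $(4,4,4,2,1)$ via the point-reflection/cyclic-order argument, the normalization $\theta(x,y)=(-x,y)$, and the count of eighteen $4$-simplices of~$\CP^2_9$ meeting $m$ in one point all check out. The only caveat is that your final step asserts the outcome of a computer run you have not actually performed; within this paper's conventions that is the standard way such cases are closed, but as written it is a prediction rather than a verification.
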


\begin{remark}
In~\cite{Gai23} this result is formulated under weaker assumptions on~$K$, namely, for an arbitrary $15$-vertex $\F_2$-homology $8$-manifold that satisfies complimentarity. Proposition~\ref{propos_4} follows from Proposition~6.1 in~\cite{Gai23}, since by Corollary~\ref{cor_complement} any $15$-vertex  $\Z$-homology $8$-manifold~$K$ with $H_*(K;\Z)\ncong H_*(S^8;\Z)$ satisfies complementarity.
\end{remark}

\subsection{The group~$\rC_2\times\rC_2$}

\begin{propos}
 Let $K$ be a $15$-vertex $\Z$-homology $8$-manifold such that $H_*(K;\Z)$ is not isomorphic to~$H_*(S^8;\Z)$. Suppose that the symmetry group~$\Sym(K)$ contains a subgroup~$G$ isomorphic to~$\rC_2\times  \rC_2$. Then we can number the vertices of~$K$ so that to achieve that
 \begin{itemize}
  \item $G$ is generated by the permutations
\begin{equation}\label{eq_C2xC2_gen}
\begin{aligned}
 g&=(1\ 2)(3\ 4)(5\ 6)(7\ 8)(9\ 10)(11\ 12),\\
 h&=(1\ 3)(2\ 4)(5\ 7)(6\ 8)(9\ 11)(10\ 12).
 \end{aligned}
\end{equation}
   \item $K$ contains the twelve $8$-simplices
 \begin{equation}\label{eq_C2xC2_Z}
 \begin{aligned}
  &\{1,2,3,4,5,6,7,8,15\},
  &&\{1,2,3,4,5,6,7,8,13\},\\
  &\{5,6,7,8,9,10,11,12,13\},
  &&\{5,6,7,8,9,10,11,12,14\},\\
  &\{1,2,3,4,9,10,11,12,14\},
  &&\{1,2,3,4,9,10,11,12,15\},\\
  &\{1,2,3,4,5,6,9,10,14\},
  &&\{1,2,3,4,5,6,9,10,15\},\\
  &\{1,2,5,6,7,8,9,10,15\},
  &&\{1,2,5,6,7,8,9,10,13\},\\
  &\{1,2,5,6,9,10,11,12,13\},
  &&\{1,2,5,6,9,10,11,12,14\}.
 \end{aligned}
 \end{equation}
 \end{itemize}
\end{propos}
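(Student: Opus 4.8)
The plan is to follow the template of Propositions~\ref{propos_S3_aux}--\ref{propos_5_ns_aux}: first pin down the $G$-action on the vertex set~$V$ up to conjugacy, then extract the mandatory simplices from the fixed point complexes of the order-$2$ subgroups. First I would determine the orbit structure. By Corollary~\ref{cor_2} each of the three involutions of $G\cong\rC_2\times\rC_2$ fixes exactly three vertices; and if $\eta,\eta'$ are distinct involutions and $v$ is $\eta$-fixed, then so is $\eta'v$ (the involutions commute), so the fixed set $F_\eta$ is a union of $G$-orbits of length $1$ or~$2$. Writing $F$ for the global fixed set, $|F_\eta|=3$ gives $|F|\in\{1,3\}$, the same for all~$\eta$; the point count $|F|+2\sum_\eta k_\eta+4m=15$ (with $k_\eta$ the number of length-$2$ orbits in $F_\eta$ and $m$ the number of free orbits) then leaves exactly two possibilities: \textbf{(A)} $|F|=3$ with $V\setminus F$ a union of three free orbits of length~$4$; or \textbf{(B)} $|F|=1$, each involution carrying one extra length-$2$ orbit of fixed points, so $V$ is the union of $\{w\}=F$, three length-$2$ orbits, and two free orbits of length~$4$.

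The heart of the argument is excluding case~\textbf{(B)}. Assume it holds; call the three length-$2$ orbits $A,B,C$ and the length-$4$ orbits $P,Q$. Since $K$ is $5$-neighborly (Corollary~\ref{cor_complement}), all six $G$-orbits are simplices of~$K$, so $K^G$ has exactly six vertices $\mathsf w=b(\{w\}),\mathsf a=b(A),\mathsf b=b(B),\mathsf c=b(C),\mathsf p=b(P),\mathsf q=b(Q)$ and satisfies complementarity (same argument as in the proof of Proposition~\ref{propos_p}); moreover $5$-neighborliness makes $\mathsf w$ adjacent in $K^G$ to each of the other five vertices. As $K^G$ is the fixed set of the $2$-group~$G$, iterating Theorem~\ref{thm_Smith} along $K\rightsquigarrow K^{\langle g\rangle}\rightsquigarrow K^{G}$ shows $K^G$ is componentwise an $\F_2$-homology manifold; combining this with the argument of Proposition~\ref{propos_p}, Theorem~\ref{thm_Datta}, the vertex count, and the absence of an isolated vertex forces $K^G\cong\RP^2_6$. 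Now $\{w\}\cup A\cup B$, $\{w\}\cup B\cup C$, $\{w\}\cup C\cup A$ are $5$-element subsets of~$V$, hence simplices of~$K$, so $\{\mathsf w,\mathsf a,\mathsf b\},\{\mathsf w,\mathsf b,\mathsf c\},\{\mathsf w,\mathsf c,\mathsf a\}$ are triangles of~$K^G$. Then $\link(\mathsf w,K^G)$ contains the $3$-cycle on $\{\mathsf a,\mathsf b,\mathsf c\}$, which is impossible since every vertex link in $\RP^2_6$ is a $5$-cycle. Hence case~\textbf{(B)} never occurs.

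It remains to treat case~\textbf{(A)}. Here $G$ acts on each length-$4$ orbit by the unique regular action of $\rC_2\times\rC_2$ and trivially on the three fixed points, so any two such subgroups of $\rS_{15}$ are conjugate; renumbering the vertices I may assume $G=\langle g,h\rangle$ with $g,h$ as in~\eqref{eq_C2xC2_gen} and $F=\{13,14,15\}$, whence $gh=(1\ 4)(2\ 3)(5\ 8)(6\ 7)(9\ 12)(10\ 11)$. The rest is parallel to the proof of Proposition~\ref{propos_C6_aux}. By Corollary~\ref{cor_2}, $K^{\langle g\rangle}\cong\CP^2_9$, with vertices the six barycentres $b(\{1,2\}),\dots,b(\{11,12\})$ and the vertices $13,14,15$; identify its vertex set with the affine plane $\mathcal P$ over~$\F_3$. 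Every $4$-element subset of $\mathcal P$ containing the image $m$ of $\{13,14,15\}$ corresponds to a $5$-element subset of~$V$, hence is a simplex, so $m$ is a non-special line by Proposition~\ref{propos_3subset}. Choosing coordinates with $m=\{x=0\}$ and special lines $\{y=0\},\{y=1\},\{y=2\}$ in this cyclic order, the residual involution induced by~$h$ (equivalently by~$gh$, since $h$ and $gh$ differ by $g$, which acts trivially on $K^{\langle g\rangle}$) fixes~$m$ pointwise and permutes the special lines, hence equals $(x,y)\mapsto(-x,y)$; this forces the three $\langle g\rangle$-orbit pairs $\{b(\{1,2\}),b(\{3,4\})\},\{b(\{5,6\}),b(\{7,8\})\},\{b(\{9,10\}),b(\{11,12\})\}$ to be the fibres $\{(1,y),(2,y)\}$. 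Renumbering~$V$ by permutations that centralize $g$ and~$h$ (cyclically permuting the three length-$4$ orbits, transposing the two vertices inside a single $\langle g\rangle$-orbit, permuting $13,14,15$) brings the bijection $V/\langle g\rangle\to\mathcal P$ to a standard form, the analogue of Table~\ref{table_bijection_C6}; pulling back to~$K$ the $4$-simplices of $\CP^2_9$ that meet~$m$ in exactly one point then yields the twelve $8$-simplices of~\eqref{eq_C2xC2_Z}.

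I expect Steps~(1), (3), (4) to be routine --- the last essentially a transcription of the proof of Proposition~\ref{propos_C6_aux} --- and the main obstacle to be the exclusion of the exceptional orbit type~\textbf{(B)}; the decisive point there is to recognise $K^G$ as $\RP^2_6$ and exploit the rigidity of its vertex links.
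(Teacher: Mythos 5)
Your proposal follows the same core route as the paper's proof: Corollary~\ref{cor_2} gives $K^{\langle g\rangle}\cong\CP^2_9$, the three $g$-fixed vertices form a non-special line~$m$ via Proposition~\ref{propos_3subset}, the twelve mandatory simplices are pulled back from $4$-simplices of~$\CP^2_9$, and $h$ is pinned down through the rigidity of~$\Sym(\CP^2_9)$. Your Step~(2), excluding orbit type~\textbf{(B)} via $K^G\cong\RP^2_6$ and the $5$-cycle vertex links, is correct but unnecessary: once one knows that the involution $\eta$ induced by~$h$ on $K^{\langle g\rangle}\cong\CP^2_9$ preserves each special line and the non-special line~$m$, it must equal $(x,y)\mapsto(-x,y)$, which fixes $m$ pointwise; hence $h$ fixes the same three vertices as~$g$ and type~\textbf{(B)} is excluded for free. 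This is exactly how the paper proceeds, without any separate orbit-structure analysis. (A small further imprecision: there are $18$, not $12$, four-simplices of $\CP^2_9$ meeting $m$ in exactly one point; the twelve listed are among their pullbacks, so the conclusion is unaffected.)

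There is one genuine, though repairable, slip in Step~(4). Since you fix the numbering --- and hence $g$ and~$h$ --- \emph{before} identifying $K^{\langle g\rangle}$ with~$\CP^2_9$, every subsequent renumbering must normalize $G=\langle g,h\rangle$. ``Transposing the two vertices inside a single $\langle g\rangle$-orbit'' does not: for instance $(1\ 2)$ conjugates $h$ to $(1\ 4)(2\ 3)(5\ 7)(6\ 8)(9\ 11)(10\ 12)\notin G$, and in any case such a move acts trivially on $V/\langle g\rangle$, so it cannot adjust the bijection to~$\mathcal P$ at all. The move actually needed, to reorder the two columns within one row independently of the other rows, is the swap of the two $\langle g\rangle$-orbits inside a single length-$4$ orbit, e.g.\ $(1\ 3)(2\ 4)$, which does centralize both $g$ and~$h$; similarly ``cyclically permuting'' the three length-$4$ orbits should be ``arbitrarily permuting'' them, since a transposition of two such orbits (which also centralizes~$G$) is needed when the row assignment is an odd permutation. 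With these corrections your adjustment goes through. The paper sidesteps the whole issue by choosing the numbering only \emph{after} the identification with~$\CP^2_9$ (Table~\ref{table_bijection_C2xC2}), which fixes $g$ and the twelve simplices at once, and then normalizes $h$ last by swaps inside the pairs $\{3,4\}$, $\{7,8\}$, $\{11,12\}$, which preserve $g$ and the twelve orbit-unions.
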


\begin{proof}
 Let $V$ be the vertex set of~$K$. Choose a nontrivial element $g\in G$. By Corollary~\ref{cor_2} the element $g$ acts on~$V$ with exactly three fixed points and $K^{\langle g\rangle}\cong\CP^2_9$. We identify the vertex set of~$\CP^2_9$ with the affine plane~$\mathcal{P}$ over~$\F_3$ as in Subsection~\ref{subsection_K}. Then some three points of~$\mathcal{P}$ correspond to the three $g$-fixed points in~$V$ and the other six points of~$\mathcal{P}$ correspond to the six $\langle g\rangle$-orbits of length~$2$ in~$V$. Literally as in the proof of Proposition~\ref{propos_C6_aux} we deduce that the three points of~$\mathcal{P}$ that correspond to $g$-fixed points in~$V$ form a non-special line~$m$. Hence, we can choose   affine coordinates~$(x,y)$ in~$\mathcal{P}$ so that:
\begin{itemize}
 \item $m$ coincides with the line $\{x=0\}$,
 \item the special lines in~$\mathcal{P}$ are $\{y=0\}$, $\{y=1\}$, and $\{y=2\}$ with this cyclic order.
\end{itemize}
We can number the vertices of~$K$ so that the bijection $\varphi\colon V/\langle g \rangle\to\mathcal{P}$ corresponding to the isomorphism $K^{\langle g\rangle}\cong\CP^2_9$ is given by Table~\ref{table_bijection_C2xC2}. Then
$$
g = (1\ 2)(3\ 4)(5\ 6)(7\ 8)(9\ 10)(11\ 12).
$$

\begin{table}
 \caption{The bijection between points $(x,y)\in\mathcal{P}$ and $\langle g\rangle $-orbits in~$V$}\label{table_bijection_C2xC2}
 \begin{tabular}{|c|ccc|}
 \hline
 \diaghead(1,-1){abc}{$y$}{$x$} & $0$ & $1$ & $2$\\
 \hline
 $0$ & $\{13\}\vphantom{\Bigl(}$ & $\{1,2\}$  & $\{3,4\}$\\
 $1$ & $\{14\}\vphantom{\Bigl(}$ & $\{5,6\}$  & $\{7,8\}$\\
 $2$ & $\{15\}\vphantom{\Bigl(}$ & $\{9,10\}$ & $\{11,12\}$\\
 \hline
 \end{tabular}
\end{table}

By the construction, the following $12$ subsets of~$\mathcal{P}$ are simplices of~$\CP^2_9$:
\begin{align*}
 &\bigl\{(0,y),(1,y),(2,y),(1,y+1),(2,y+1)\bigr\},&&y\in\F_3,\\
 &\bigl\{(0,y+2),(1,y),(2,y),(1,y+1),(2,y+1)\bigr\},&&y\in\F_3,\\
 &\bigl\{(0,y_1),(1,0),(1,1),(1,2),(2,y_2)\bigr\},&&y_1,y_2\in\F_3,\ y_1\ne y_2.
\end{align*}
Therefore, the preimages of these $12$ simplices under the map
$$V\to V/\langle g\rangle\xrightarrow{\varphi}\mathcal{P}$$
are simplices of~$K$. It is easy to check that these preimages are exactly the $12$ simplices~\eqref{eq_C2xC2_Z}.

Let now $h$ be a nontrivial element of~$G$ such that $h\ne g$. Then $h$ induces an involutive automorphism~$\eta$ of the complex $K^{\langle g\rangle}\cong\CP^2_9$. The corresponding permutation of vertices of~$\CP^2_9$ will be also denoted by~$\eta$.   Since $\eta\in\Sym(\CP^2_9)$, we obtain that $\eta$ is an involutive affine transformation of~$\mathcal{P}$ that takes special lines to special lines and preserves the cyclic order of them. Hence, every special line is invariant under~$\eta$. Moreover, the line~$m$ is also invariant under~$\eta$, since $\eta$ takes $g$-fixed points to $g$-fixed points. It follows easily that
\begin{equation*}
 \eta(x,y)=(-x,y)
\end{equation*}
for all $(x,y)\in\mathcal{P}$. Therefore, swapping vertices in pairs $\{3,4\}$, $\{7,8\}$, and~$\{11,12\}$, we can achieve that
$$
h =(1\ 3)(2\ 4)(5\ 7)(6\ 8)(9\ 11)(10\ 12),
$$
which completes the proof of the proposition.
\end{proof}

\begin{comp}
 Being run on the subgroup~$G=\rC_2\times\rC_2\subset\rS_{15}$ with generators~\eqref{eq_C2xC2_gen} and the mandatory subcomplex~$R$ consisting of the twelve simplices~\eqref{eq_C2xC2_Z}, the program \texttt{find} produces a list of $96$ simplicial complexes, which are divided into $4$ groups, each consisting of $24$ isomorphic complexes.  The simplicial complexes in these $4$ groups are isomorphic to~$\HP^2_{15}(\rA_5)$, $\HP^2_{15}(\rA_4,1)$, $\HP^2_{15}(\rA_4,2)$, and $\HP^2_{15}(\rC_6\times\rC_2)$, respectively. Therefore, we obtain the following proposition.
 \end{comp}

\begin{propos}\label{propos_C2xC2}
Suppose that $K$ is a $15$-vertex $\Z$-homology $8$-manifold such that $H_*(K;\Z)\ncong H_*(S^8;\Z)$ and the symmetry group~$\Sym(K)$ contains a subgroup isomorphic to~$\rC_2\times\rC_2$. Then $K$ is isomorphic to one of the four combinatorial manifolds~$\HP^2_{15}(\rA_5)$, $\HP^2_{15}(\rA_4,1)$, $\HP^2_{15}(\rA_4,2)$, and $\HP^2_{15}(\rC_6\times\rC_2)$.
\end{propos}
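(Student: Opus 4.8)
The plan is to deduce the statement from the finite computer enumeration recorded in the Computer result directly above, the essential preparatory step being the preceding proposition. First I would apply that proposition: since $\Sym(K)$ contains a subgroup $G\cong\rC_2\times\rC_2$, we may renumber the vertices of~$K$ so that $G$ is generated by the permutations~\eqref{eq_C2xC2_gen} and $K$ contains the twelve $8$-simplices~\eqref{eq_C2xC2_Z}. Let $R$ denote the pure $8$-dimensional subcomplex spanned by these twelve simplices; it plays the role of a mandatory subcomplex. The purpose of this normalization is twofold: it fixes the embedding $G\subset\rS_{15}$ up to conjugacy, and it supplies~$R$, which is exactly what makes the search by \texttt{find} terminate in reasonable time.

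Next I would check that $K$ falls within the scope of the program \texttt{find} run for $(d,n,N)=(8,15,490)$, the subgroup~$G$, and the forced subcomplex~$R$. Indeed, being a $\Z$-homology $8$-manifold, $K$ is a weak $8$-pseudomanifold; by Corollary~\ref{cor_complement} it satisfies complementarity, hence in particular condition~$(*)$; by Corollary~\ref{cor_fvect} it has exactly $490$ top-dimensional simplices; it is $G$-invariant; and by the previous step it contains~$R$. Therefore, with the labeling fixed above, $K$ is one of the objects enumerated by \texttt{find}, so — up to the action of $N_{\rS_{15}}(G)/G$ on labelings, which only permutes isomorphic copies — the isomorphism class of~$K$ appears in the output list. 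By the Computer result this list comprises $96$ complexes, which split under \texttt{isomorphism\_groups} into four classes of $24$, and these four classes are identified (via \texttt{check}, \texttt{fvect}, \texttt{symm\_group}, and an explicit isomorphism test against the already-constructed triangulations) with $\HP^2_{15}(\rA_5)$, $\HP^2_{15}(\rA_4,1)$, $\HP^2_{15}(\rA_4,2)$, and $\HP^2_{15}(\rC_6\times\rC_2)$. Hence $K$ is isomorphic to one of these four combinatorial $8$-manifolds, which is the assertion of the proposition.

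The main obstacle is essentially already overcome in the preceding proposition, namely exhibiting a mandatory subcomplex large enough for \texttt{find} to finish. The argument there chooses a nontrivial $g\in G$, uses Corollary~\ref{cor_2} to get $K^{\langle g\rangle}\cong\CP^2_9$ with the three $g$-fixed vertices forming a non-special line (exactly as in the proof of Proposition~\ref{propos_C6_aux}), reads off coordinates to normalize~$g$ according to Table~\ref{table_bijection_C2xC2}, and then pins down a second generator~$h$ by observing that the involution it induces on~$\CP^2_9$ must be $(x,y)\mapsto(-x,y)$. Granting this, the proof of the present proposition is pure bookkeeping together with trust in the recorded computation; the only points one should still verify directly are that the four output manifolds are pairwise non-isomorphic and coincide with the claimed ones, which is precisely what \texttt{isomorphism\_groups} and \texttt{symm\_group} are used for.
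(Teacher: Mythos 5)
Your proposal is correct and follows the paper's own route exactly: normalize the embedding of $\rC_2\times\rC_2$ and extract the mandatory subcomplex~\eqref{eq_C2xC2_Z} via the preceding proposition, then observe that $K$ satisfies all hypotheses under which \texttt{find} enumerates such complexes, and read off the four isomorphism classes from the recorded computer result. Nothing is missing.
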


Thus, in this case we do not obtain any new triangulations.

\section{$\rC_5$-invariant $10$-vertex homology $3$-spheres}
\label{section_3spheres}

\begin{defin}
 Let $G$ be a group. A \textit{$G$-simplicial complex} is a simplicial complex with a simplicial action of~$G$ on it. Suppose that $K_1$ and $K_2$ are $G$-simplicial complexes. A \textit{weak $G$-isomorphism} is a pair~$(f,\varphi)$ such that $f\colon K_1\to K_2$ is an isomorphism, $\varphi$ is an automorphism of~$G$, and $f(gv)=\varphi(g)f(v)$ for all vertices~$v$ of~$K_1$.
\end{defin}

In this section we study $10$-vertex $\rC_5$-simplicial complexes~$L$ such that
\begin{itemize}
 \item $L$ is a \textit{$\Z$-homology $3$-sphere}, that is, a $\Z$-homology $3$-manifold such that $H_*(L;\Z)$ is isomorphic to~$H_*(S^3;\Z)$,
 \item $\rC_5$ acts on vertices of~$L$ with two orbits of length~$5$.
\end{itemize}
 Our aim is to classify such simplicial complexes (under some additional condition) up to weak $\rC_5$-isomorphism.

Note that every $\Z$-homology $3$-manifold, in particular every $\Z$-homology $3$-sphere, is a combinatorial $3$-manifold. Moreover, Lutz~\cite{Lut07} showed that every combinatorial $3$-manifold with $10$ vertices is homeomorphic to the $3$-sphere~$S^3$, or the sphere product~$S^2\times S^1$, or the $3$-dimensional Klein bottle ($=$ the total space of the twisted $S^2$-bundle over~$S^1$). Therefore, in fact every $\Z$-homology $3$-sphere with $10$ vertices is a combinatorial $3$-sphere. However, we will not use this fact.

\begin{propos}\label{propos_L}
 Up to weak $\rC_5$-isomorphism, there are exactly nine $10$-vertex $\rC_5$-simplicial complexes~$L$ such that
 \begin{enumerate}
  \item $L$ is a $\Z$-homology $3$-sphere,
  \item the vertex set~$W$ of~$L$ splits into two $\rC_5$-orbits of length~$5$,
  \item at least one of the two $\rC_5$-orbits does not contain any $2$-simplex of~$L$.
 \end{enumerate}
 These $9$ simplicial complexes $L_1,\ldots,L_9$ are listed in Table~\ref{table_L}.
 In this table the vertex set of each complex~$L_i$ is the set $W=\{1,\ldots,10\}$ on which the group~$\rC_5$ acts by the permutation
 \begin{equation}\label{eq_C5_gen}
  (1\ 2\ 3\ 4\ 5)(6\ 7\ 8\ 9\ 10).
 \end{equation}
 Each row of the table contains a list of representatives for $\rC_5$-orbits of $3$-simplices of~$L_i$.
 \end{propos}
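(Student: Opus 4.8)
## Proof proposal for Proposition~\ref{propos_L}

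The plan is to reduce the classification to a finite, computer-checkable enumeration, organized around the $\rC_5$-orbit structure of $2$-simplices. Fix the vertex set $W = A \sqcup B$, where $A = \{1,2,3,4,5\}$ and $B = \{6,7,8,9,10\}$ are the two $\rC_5$-orbits, with $\rC_5$ generated by the permutation~\eqref{eq_C5_gen}. By condition~(3) we may assume (relabelling $A$ and $B$ if necessary) that $A$ contains no $2$-simplex of $L$; since $L$ is $3$-dimensional this means every simplex of $L$ meets $B$ in at least $\dim\sigma - 1$ vertices, and in particular every edge inside $A$ lies in at most\ldots\ here one must be careful, so instead I would argue directly about the induced subcomplex $L|_A$: it is a $\rC_5$-invariant subcomplex of $\partial\Delta^4$ of dimension at most $1$, hence it is either empty, a single $\rC_5$-orbit of vertices, a $\rC_5$-orbit of edges (a $5$-cycle, since $\rC_5$ acts transitively on vertices and on the two orbits of edges of the pentagon), a union of two $5$-cycles forming a $5$-cycle of ``chords,'' or \ldots\ in any case only finitely many possibilities, each $\rC_5$-invariant. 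This already pins down the ``horizontal'' part of $L$.

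Next I would exploit that $L$ is a $\Z$-homology $3$-sphere, hence a combinatorial $3$-manifold (as noted right before the statement), so it is a pseudomanifold in which every $2$-simplex lies in exactly two $3$-simplices and $\chi(L)=0$, and the Dehn--Sommerville relations together with $f_0(L)=10$ constrain the $f$-vector: one gets $f_1 - f_2 + f_3 = $ a fixed value from $\chi=0$, and $2f_2 = 4f_3$ from the pseudomanifold condition, so $f_3$ determines $f_1$ and $f_2$. Combining this with the bound $f_1 \le 45$ and with the restriction on edges inside $A$ coming from the previous paragraph leaves only a short list of candidate $f$-vectors. For each candidate, the $3$-simplices of $L$ break into $\rC_5$-orbits, each of length $5$ (a $3$-simplex cannot be $\rC_5$-invariant, as $|\sigma|=4$ is not divisible by $5$), so $f_3 \in \{5,10,15,20,\ldots\}$ and $L$ is determined by choosing one representative per orbit. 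Each representative $3$-simplex is a $4$-subset of $\{1,\ldots,10\}$ meeting $A$ in $a$ and $B$ in $4-a$ vertices; up to the residual $\rC_5$-action and the normalizer action one may normalize each representative, cutting the search drastically.

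At this point the proof becomes a bounded enumeration: for each admissible $f_3$ and each choice of orbit representatives compatible with the constraint that $A$ contains no $2$-face, one checks (i) that every $2$-simplex is covered exactly twice — this is a linear condition over $\F_2$ on the orbit set, quickly resolved by the fact that the $\rC_5$-action makes the ``link of each $2$-simplex has two vertices'' condition $\rC_5$-equivariant, so it suffices to check it on orbit representatives of $2$-simplices; (ii) that $L$ is strongly connected; and (iii) that $H_*(L;\Z) \cong H_*(S^3;\Z)$, equivalently (given (i),(ii) and dimension $3$) that the link of every vertex is a $2$-sphere, i.e.\ a triangulated $S^2$ on $\le 9$ vertices, which is decidable combinatorially. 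Two solutions are identified when related by a weak $\rC_5$-isomorphism, i.e.\ by an element of the normalizer $N_{\rS_{10}}(\rC_5)$ (of order $5 \cdot 4 = 20$ on each block, with the extra freedom to swap $A \leftrightarrow B$ only if both orbits are $2$-simplex-free), so one finally mods out by this explicit finite group. Carrying this out — which is what the program referenced in~\cite{Gai-prog} does, and what Table~\ref{table_L} records — yields exactly the nine complexes $L_1,\ldots,L_9$.

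The main obstacle I anticipate is not any single deep step but controlling the combinatorial explosion: even after fixing the $f$-vector, the number of ways to choose orbit representatives of $3$-simplices is large, and the homology-sphere condition (vertex links are $S^2$) is the expensive one to verify and the one that eliminates most candidates. The saving graces are that $\rC_5$-invariance forces all orbits to have length exactly $5$ (ruling out awkward mixed cases), that condition~(3) severely restricts which $4$-subsets can appear (no three vertices of $A$ together, no four — else an $A$-edge or worse would be forced as a $2$-face), and that the residual symmetry group of order $20$ (or $40$) lets one fix the first representative once and for all. I expect the honest content of the proof to be: set up these constraints cleanly, reduce to finitely many cases by hand, and then either run the enumeration by computer or push through the remaining cases by a careful but routine link-by-link analysis.
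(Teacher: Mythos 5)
Your overall strategy (pin down the $f$-vector, observe that all orbits of $3$-simplices have length $5$, and then enumerate orbit representatives modulo the normalizer of~$\rC_5$) is legitimate in principle, but it diverges from the paper's proof, which is a genuinely hand-crafted case analysis: the paper splits into four cases according to the pair $(m_1,m_2)$ of $\rC_5$-orbits of edges lying inside each vertex orbit, and in each case uses mod~$2$ fundamental-cycle identities such as $\partial Y=Z+g^{-1}\cdot Z$ and $Z_1+g^{-1}\cdot Z_1=Z_2+g^{-2}\cdot Z_2$ (where $Z$, $Z_1$, $Z_2$ are fundamental cycles of links of edges and $Y$ records the $2$-simplices of $\link(6,L)$ inside $\alpha_1$) to reduce everything to a short list of simple $1$-cycles of lengths $3$--$5$. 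No computer enumeration is needed or used there. As written, your proposal defers the entire substantive content to ``run the enumeration by computer or push through the remaining cases,'' so the heart of the argument is missing rather than merely sketched.

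More importantly, there is a concrete error in your verification step~(iii). You assert that, given the pseudomanifold and strong-connectivity conditions, $H_*(L;\Z)\cong H_*(S^3;\Z)$ is \emph{equivalent} to every vertex link being a triangulated $2$-sphere. That is false: all vertex links being $2$-spheres only makes $L$ a combinatorial $3$-manifold, and a $10$-vertex combinatorial $3$-manifold can also be $S^2\times S^1$ or the $3$-dimensional Klein bottle (the paper cites Lutz's classification for exactly this point). Indeed, the paper's Subcases~3d and~4c produce two $\rC_5$-invariant complexes $L_1^*$ and $L_2^*$ that pass every local test you propose — they are combinatorial $3$-manifolds with all vertex links $2$-spheres — yet are non-orientable (homeomorphic to the $3$-dimensional Klein bottle) and hence not $\Z$-homology spheres; the paper excludes them by an explicit orientation-reversing loop argument. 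Your enumeration as described would therefore output at least eleven complexes, not nine. To repair this you must add a genuine homology check (e.g., $H_1(L;\Z)=0$, or orientability plus vanishing first Betti number) on top of the link condition. A smaller imprecision: the group you quotient by at the end is the normalizer of $\langle g\rangle$ in $\rS_{10}$, in which the ``rotation exponents'' on the two blocks are not independent (both block restrictions must conjugate the $5$-cycle to the same power $g^k$), so its orbit-preserving part has order $100$, not $20\times 20$; this does not break the method but would matter if you actually counted isomorphism classes.
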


\begin{table}
\caption{Simplicial $3$-spheres~$L_1,\ldots,L_{9}$}\label{table_L}
 \begin{tabular}{|l|l|}
  \hline
  & \textbf{Representatives of $\rC_5$-orbits of $3$-simplices}\\
  \hline
  $L_1$ & $\{1,2,3,6\},\ \{1,2,4,6\},\ \{1,3,4,6\},\ \{2,3,4,6\}$\\
  \hline
  $L_2$ & $\{1,2,6,7\},\ \{2,3,6,7\},\ \{3,4,6,7\},\ \{4,5,6,7\},\ \{1,5,6,7\}$\\
  \hline
  $L_3$ & $\{1,3,6,7\},\ \{3,5,6,7\},\ \{2,5,6,7\},\ \{2,4,6,7\},\ \{1,4,6,7\}$\\
  \hline
  $L_4$ & $\{1,2,6,7\},\ \{2,3,6,7\},\ \{1,3,6,7\},\ \{1,3,5,6\},\ \{2,3,5,6\}$\\
  \hline
  $L_5$ & $\{1,2,6,7\},\ \{2,3,6,7\},\ \{3,4,6,7\},\ \{1,4,6,7\},\ \{1,3,4,6\},\ \{1,3,5,6\}$\\
  \hline
  $L_6$ & $\{1,2,6,7\},\ \{2,3,6,7\},\ \{3,4,6,7\},\ \{1,4,6,7\},\ \{1,4,5,6\},\ \{3,4,5,6\}$\\
  \hline
  $L_7$ & $\{1,3,6,7\},\ \{2,3,6,7\},\ \{2,4,6,7\},\ \{1,4,6,7\},\ \{1,2,4,6\},\ \{2,3,5,6\}$\\
    \hline
  $L_8$ & $\{1,2,6,7\},\ \{2,3,6,7\},\ \{1,3,6,7\},\ \{2,3,6,8\},\ \{3,5,6,8\},\ \{2,5,6,8\}$\\
  \hline
  $L_9$ & $\{1,2,6,7\},\ \{2,3,6,7\},\ \{3,4,6,7\},\ \{1,4,6,7\},\ \{1,3,6,8\},\ \{3,5,6,8\},\ \{1,5,6,8\}$\\
    \hline
 \end{tabular}
\end{table}

Suppose that $K$ is a simplicial complex. We denote by $C_k(K;\F_2)$ the group of simplicial $k$-chains of~$K$ with coefficients in~$\F_2$ and by~$Z_k(K;\F_2)$ the group of $k$-cycles, i.\,e., the kernel of the boundary operator
$$
\partial\colon C_k(K;\F_2)\to C_{k-1}(K;\F_2).
$$
We denote by $\|\xi\|$ the number of $k$-simplices that enter a chain $\xi\in C_k(K;\F_2)$.

\begin{defin}
 A $1$-cycle $\xi\in Z_1(K;\F_2)$ will be called a \textit{simple cycle of length} $m\ge 3$ if
 \begin{equation}\label{eq_cycle}
  \xi = \{v_1,v_2\}+\{v_2,v_3\}+\cdots+\{v_{m-1},v_m\}+\{v_m,v_1\}
 \end{equation}
 for pairwise different vertices $v_1,\ldots,v_m$. We denote the cycle~\eqref{eq_cycle} by $(v_1,\ldots,v_m)$.
\end{defin}

\begin{proof}[Proof of Proposition~\ref{propos_L}]
Each simplicial complex $L_i$ has $10$ vertices, which form two $\rC_5$-orbits. There are several ways to show that $L_i$ is a $\Z$-homology $3$-sphere. First, one can show that $L_i$ is a combinatorial $3$-sphere using any standard software such as the program \texttt{BISTELLAR} by Lutz. Second, one can use the author's program \texttt{check} in~\cite{Gai-prog} to check that the suspension $\Sigma L_i$ is a combinatorial $4$-manifold, which again implies that $L_i$ is a combinatorial $3$-sphere. Third, one can check manually (without a computer) that the link of every vertex of~$L_i$ is a simplicial $2$-sphere and hence $L_i$ is a combinatorial $3$-manifold. Then one can check either manually or with a computer that $H_1(L_i;\Z)=0$. Anyway, $L_i$ is a $\Z$-homology $3$-sphere.
Thus, the simplicial complexes $L_1,\ldots,L_9$ satisfy conditions~(1)--(3) from Proposition~\ref{propos_L}.

Let now $L$ be an arbitrary simplicial complex that satisfy conditions~(1)--(3). We need to prove that $L$ is weakly $\rC_5$-isomorphic to one of the complexes~$L_1,\ldots,L_9$. Let $f_k$ denote the number of $k$-simplices of~$L$. Since $L$ is a $\Z$-homology $3$-sphere, we have
$$
f_0-f_1+f_2-f_3=0.
$$
Also we have $f_0=10$ and $f_2=2f_3$. Hence
\begin{equation}
 \label{eq_f1f3}
 f_3=f_1-10.
\end{equation}
Moreover, all the numbers $f_k$ are divisible by~$5$, since a nonempty simplex of~$L$ cannot be $\rC_5$-invariant.

Let $\alpha_1$ and~$\alpha_2$ be the two $\rC_5$-orbits in~$W$. We can number the elements of~$W$ so that $\alpha_1=\{1,2,3,4,5\}$, $\alpha_2=\{6,7,8,9,10\}$, and the group~$\rC_5$ is generated by permutation~\eqref{eq_C5_gen}. The simplicial complex~$L$ is contained in the $9$-simplex~$\Delta_W$ with vertex  set~$W$. For each~$s=1,2$, the $10$ edges of~$\Delta_W$ with both endpoints in~$\alpha_s$ form two $\rC_5$-orbits. Suppose that exactly $m_s$ of these two orbits enter the simplicial complex~$L$; then $0\le m_s\le 2$. Swapping the orbits~$\alpha_1$ and~$\alpha_2$, we can achieve that $m_1\ge m_2$. Consider four cases.
\smallskip

\textsl{Case 1: $m_2=0$.} Then at least two of the nine $\rC_5$-orbits of edges of~$\Delta_W$ are not simplices of~$L$. Hence $f_1\le 35$. Consider the simplicial complex $J=\link(6,L)$. No vertex of~$\alpha_2$ is a vertex of~$J$. Hence $J$ has at most $5$ vertices. On the other hand, $J$ is a simplicial $2$-sphere, so $J$ has at least $4$ vertices. Consider two subcases:

\smallskip
\textsl{Subcase 1a: $J$ has $4$ vertices.} Then $J$ is the boundary of a $3$-simplex. Renumbering cyclically the vertices of~$\alpha_1$, we can achieve that the vertices of~$J$ are $1$, $2$, $3$, and~$4$. Then $L$ contains~$L_1$ and hence coincides with it.

\smallskip
\textit{Subcase 1b: $J$ has $5$ vertices.} Then $J$ has six $2$-simplices. Hence the vertex~$6$ is contained in six $3$-simplices of~$L$. These six $3$-simplices lie in pairwise different $\rC_5$-orbits, so $f_3\ge 30$. Using~\eqref{eq_f1f3}, we arrive at a contradiction, since $f_1\le 35$. So this subcase is impossible.

\smallskip
\textsl{Case 2: $m_1=m_2=1$.} Then $L$ contains exactly one of the two edges~$\{1,2\}$ and~$\{1,3\}$  and exactly one of the two edges~$\{6,7\}$ and~$\{6,8\}$. Note that the property that the symmetry group~$\rC_5$ is generated by permutation~\eqref{eq_C5_gen} will not be violated if we renumber the vertices of~$L$ by means of the permutation
\begin{equation}
 \label{eq_C5_aut}
 (2\ 3\ 5\ 4)(7\ 8\ 10\ 9).
\end{equation}
(This permutation corresponds to the automorphism $g\mapsto g^2$ of the symmetry group~$\rC_5$.) Using such renumbering, we can achieve that $\{6,7\}$ is a simplex of~$L$.

Note that $L$ has no $2$-simplices that are contained in either~$\alpha_1$ or~$\alpha_2$, since every such simplex would contain edges belonging to two different $\rC_5$-orbits. Therefore, every $3$-simplex of~$L$ has two vertices in~$\alpha_1$ and two vertices in~$\alpha_2$. Let $Z$ be the modulo~$2$ fundamental cycle of~$\link(\{6,7\},L)$. Then $Z$ is a simple $1$-cycle with vertices in~$\alpha_1$. However, $Z$ may contain edges from only one of the two $\rC_5$-orbits with representatives~$\{1,2\}$ and~$\{1,3\}$. Therefore, $Z$ is either the $1$-cycle $(1,2,3,4,5)$ or the $1$-cycle $(1,3,5,2,4)$. Thus, $L$ contains one of the two complexes~$L_2$ or~$L_3$ and hence coincides with it.
\smallskip

\textsl{Case 3: $m_1=2$ and~$m_2=1$.} Likewise the previous case, we can use permutation~\eqref{eq_C5_aut} of vertices to achieve that $\{6,7\}\in L$ and $\{6,8\}\notin L$.  Let $Z$ be the modulo~$2$ fundamental cycle of~$\link(\{6,7\},L)$. The complex~$L$ has no $2$-simplices that are contained in~$\alpha_2$, so $Z$ is a simple $1$-cycle with vertices in~$\alpha_1$. Let $Y\in C_2(\alpha_1;\F_2)$ be the sum modulo~$2$ of all $2$-simplices that belong to~$\link(6,L)$ and are contained in~$\alpha_1$. Then the modulo~$2$ fundamental cycle of~$L$ has the form
\begin{equation}\label{eq_C5_X}
 X=\sum_{j=0}^4g^j\cdot\left(Z*\{6,7\}+Y*\{6\}\right),
\end{equation}
where
$$
g=(1\ 2\ 3\ 4\ 5)(6\ 7\ 8\ 9\ 10)
$$
is the generator of~$\rC_5$ and $*$ denotes the join. Then
$$
0=\partial X=\sum_{j=0}^4g^j\cdot\left(\left(Z+g^{-1}\cdot Z+\partial Y\right)*\{6\}+Y\right)
$$
and hence
\begin{equation}\label{eq_C5_dY}
\partial Y=Z+g^{-1}\cdot Z.
\end{equation}
Therefore, $\|\partial Y\|$ is even. Since the boundary of every $2$-simplex is the sum of three $1$-simplices, it follows that $\|Y\|$ is even, too.

From~\eqref{eq_C5_X} it follows that
$$
f_3=\|X\|=5\|Z\|+5\|Y\|.
$$
Combining this with~\eqref{eq_f1f3}, we obtain that
$$
f_1=5\|Z\|+5\|Y\|+10.
$$
Exactly $10$ edges of~$L$ have both endpoints in~$\alpha_1$ and exactly $5$ edges of~$L$ have both endpoints in~$\alpha_2$. Hence, any vertex in $\alpha_2$ is connected by edges of~$L$ with exactly~$q$ vertices in~$\alpha_1$, where
$$
q=\frac{f_1-15}{5}=\|Z\|+\|Y\|-1.
$$
On the other hand, the vertex~$6$ is connected by an edge of~$L$ with every vertex in the support of~$Z$ and every vertex in the support of~$g^{-1}\cdot Z$. Since the support of~$Z$ has at least three vertices, it follows easily that the union of the supports of~$Z$ and~$g^{-1}\cdot Z$ has at least $4$ vertices. Hence $q$ is either~$4$ or~$5$. Since $3\le \|Z\|\le 5$ and $\|Y\|$ is even, we see that $(\|Z\|,\|Y\|,q)$ is one of the three triples~$(3,2,4)$, $(4,2,5)$, and~$(5,0,4)$.

Suppose that $\|Y\|=0$.  Then from~\eqref{eq_C5_dY} it follows that the $1$-cycle~$Z$ is $\rC_5$-invariant and hence is one of the two $1$-cycles~$(1,2,3,4,5)$ and~$(1,3,5,2,4)$. Then $L$ contains one of the two complexes~$L_2$ and~$L_3$ and hence coincides with it. We arrive at a contradicition, since $m_1=m_2=1$ for both~$L_2$ and~$L_3$. Thus, $\|Y\|=2$ and $(\|Z\|,q)$ is either~$(3,4)$ or~$(4,5)$.

We still have freedom to permute cyclically the vertices $1, 2, 3,4,5$. Up to such cyclic permutations, there are exactly five simple $1$-cycles~$Z$ of length either~$3$ or~$4$, see Table~\ref{table_cycles}. The corresponding $1$-cycles $Z+g^{-1}\cdot Z$ are also shown in the  table. Consider the five subcases separately.
\smallskip

\begin{table}
 \caption{Simple $1$-cycles~$Z$ of lengths~$3$ and~$4$ and the corresponding $1$-cycles~$Z+g^{-1}\cdot Z$  and~$Z+g^{-2}\cdot Z$. (The latter are not depicted for~$Z$ of length~$4$, since we do not need them.) }
 \label{table_cycles}
\begin{tabular}{|c|c|c|c|c|}
 \hline
 No. & $\|Z\|$ & $Z$ & $Z+g^{-1}\cdot Z\vphantom{\Bigl(}$ & $Z+g^{-2}\cdot Z$ \\
 \hline
 \begin{tikzpicture}[scale =.8]
  \draw[color=white] (0,-1.3)--(0,1.3);
  \draw(0,0.15) node {1};
 \end{tikzpicture}
 &
 \begin{tikzpicture}[scale =.8]
  \draw[color=white] (0,-1.3)--(0,1.3);
  \draw(0,0.15) node {3};
 \end{tikzpicture}
 &
 \begin{tikzpicture}[scale =.8]
   \draw[color=white] (-2,-1.2)--(2,1.75);
   \fill (0,1) circle (2pt) node [above,yshift=1pt] {\footnotesize{3}};
   \fill (0.951,0.309) circle (2pt) node [right,yshift=2pt] {\footnotesize{4}};
   \fill (-0.951,0.309) circle (2pt) node [left,yshift=2pt] {\footnotesize{2}};
   \fill (0.588,-0.809) circle (2pt) node [below,xshift=3pt] {\footnotesize{5}};
   \fill (-0.588,-0.809) circle (2pt) node [below,xshift=-3pt] {\footnotesize{1}};
   \draw (-0.588,-0.809)--(-0.951,0.309)--(0,1)--(-0.588,-0.809);
 \end{tikzpicture}
 &
 \begin{tikzpicture}[scale =.8]
   \draw[color=white] (-2,-1.2)--(2,1.75);
   \fill (0,1) circle (2pt) node [above,yshift=1pt] {\footnotesize{3}};
   \fill (0.951,0.309) circle (2pt) node [right,yshift=2pt] {\footnotesize{4}};
   \fill (-0.951,0.309) circle (2pt) node [left,yshift=2pt] {\footnotesize{2}};
   \fill (0.588,-0.809) circle (2pt) node [below,xshift=3pt] {\footnotesize{5}};
   \fill (-0.588,-0.809) circle (2pt) node [below,xshift=-3pt] {\footnotesize{1}};
   \draw (-0.588,-0.809)--(0.588,-0.809)--(-0.951,0.309)--(0,1)--(-0.588,-0.809);
 \end{tikzpicture}
 &
 \begin{tikzpicture}[scale =.8]
   \draw[color=white] (-2,-1.2)--(2,1.75);
   \fill (0,1) circle (2pt) node [above,yshift=1pt] {\footnotesize{3}};
   \fill (0.951,0.309) circle (2pt) node [right,yshift=2pt] {\footnotesize{4}};
   \fill (-0.951,0.309) circle (2pt) node [left,yshift=2pt] {\footnotesize{2}};
   \fill (0.588,-0.809) circle (2pt) node [below,xshift=3pt] {\footnotesize{5}};
   \fill (-0.588,-0.809) circle (2pt) node [below,xshift=-3pt] {\footnotesize{1}};
   \draw (-0.588,-0.809)--(-0.951,0.309)--(0,1)--(-0.588,-0.809)--(0.951,0.309)--(0.588,-0.809)--(-0.588,-0.809);
 \end{tikzpicture}\\
 \hline
 \begin{tikzpicture}[scale =.8]
  \draw[color=white] (0,-1.3)--(0,1.3);
  \draw(0,0.15) node {2};
 \end{tikzpicture}
 &
 \begin{tikzpicture}[scale =.8]
  \draw[color=white] (0,-1.3)--(0,1.3);
  \draw(0,0.15) node {3};
 \end{tikzpicture}
 &
 \begin{tikzpicture}[scale =.8]
   \draw[color=white] (-2,-1.2)--(2,1.75);
   \fill (0,1) circle (2pt) node [above,yshift=1pt] {\footnotesize{3}};
   \fill (0.951,0.309) circle (2pt) node [right,yshift=2pt] {\footnotesize{4}};
   \fill (-0.951,0.309) circle (2pt) node [left,yshift=2pt] {\footnotesize{2}};
   \fill (0.588,-0.809) circle (2pt) node [below,xshift=3pt] {\footnotesize{5}};
   \fill (-0.588,-0.809) circle (2pt) node [below,xshift=-3pt] {\footnotesize{1}};
   \draw (-0.588,-0.809)--(-0.951,0.309)--(0.951,0.309)--(-0.588,-0.809);
 \end{tikzpicture}
 &
 \begin{tikzpicture}[scale =.8]
   \draw[color=white] (-2,-1.2)--(2,1.75);
   \fill (0,1) circle (2pt) node [above,yshift=1pt] {\footnotesize{3}};
   \fill (0.951,0.309) circle (2pt) node [right,yshift=2pt] {\footnotesize{4}};
   \fill (-0.951,0.309) circle (2pt) node [left,yshift=2pt] {\footnotesize{2}};
   \fill (0.588,-0.809) circle (2pt) node [below,xshift=3pt] {\footnotesize{5}};
   \fill (-0.588,-0.809) circle (2pt) node [below,xshift=-3pt] {\footnotesize{1}};
   \draw (-0.588,-0.809)--(-0.951,0.309)--(0.951,0.309)--(-0.588,-0.809)--(0,1)--(0.588,-0.809)--(-0.588,-0.809);
 \end{tikzpicture}
 &
 \begin{tikzpicture}[scale =.8]
   \draw[color=white] (-2,-1.2)--(2,1.75);
   \fill (0,1) circle (2pt) node [above,yshift=1pt] {\footnotesize{3}};
   \fill (0.951,0.309) circle (2pt) node [right,yshift=2pt] {\footnotesize{4}};
   \fill (-0.951,0.309) circle (2pt) node [left,yshift=2pt] {\footnotesize{2}};
   \fill (0.588,-0.809) circle (2pt) node [below,xshift=3pt] {\footnotesize{5}};
   \fill (-0.588,-0.809) circle (2pt) node [below,xshift=-3pt] {\footnotesize{1}};
   \draw (-0.588,-0.809)--(-0.951,0.309)--(0.588,-0.809)--(0.951,0.309)--(-0.588,-0.809);
 \end{tikzpicture}\\
 \hline
 \begin{tikzpicture}[scale =.8]
  \draw[color=white] (0,-1.3)--(0,1.3);
  \draw(0,0.15) node {3};
 \end{tikzpicture}
&
 \begin{tikzpicture}[scale =.8]
  \draw[color=white] (0,-1.3)--(0,1.3);
  \draw(0,0.15) node {4};
 \end{tikzpicture}
 &
 \begin{tikzpicture}[scale =.8]
   \draw[color=white] (-2,-1.2)--(2,1.75);
   \fill (0,1) circle (2pt) node [above,yshift=1pt] {\footnotesize{3}};
   \fill (0.951,0.309) circle (2pt) node [right,yshift=2pt] {\footnotesize{4}};
   \fill (-0.951,0.309) circle (2pt) node [left,yshift=2pt] {\footnotesize{2}};
   \fill (0.588,-0.809) circle (2pt) node [below,xshift=3pt] {\footnotesize{5}};
   \fill (-0.588,-0.809) circle (2pt) node [below,xshift=-3pt] {\footnotesize{1}};
   \draw (-0.588,-0.809)--(-0.951,0.309)--(0,1)--(0.951,0.309)--(-0.588,-0.809);
 \end{tikzpicture}
 &
 \begin{tikzpicture}[scale =.8]
   \draw[color=white] (-2,-1.2)--(2,1.75);
   \fill (0,1) circle (2pt) node [above,yshift=1pt] {\footnotesize{3}};
   \fill (0.951,0.309) circle (2pt) node [right,yshift=2pt] {\footnotesize{4}};
   \fill (-0.951,0.309) circle (2pt) node [left,yshift=2pt] {\footnotesize{2}};
   \fill (0.588,-0.809) circle (2pt) node [below,xshift=3pt] {\footnotesize{5}};
   \fill (-0.588,-0.809) circle (2pt) node [below,xshift=-3pt] {\footnotesize{1}};
   \draw (-0.588,-0.809)--(0.951,0.309)--(0,1)--(0.588,-0.809)--(-0.588,-0.809);
 \end{tikzpicture}
 &
 \\
 \hline
 \begin{tikzpicture}[scale =.8]
  \draw[color=white] (0,-1.3)--(0,1.3);
  \draw(0,0.15) node {4};
 \end{tikzpicture}
&
 \begin{tikzpicture}[scale =.8]
  \draw[color=white] (0,-1.3)--(0,1.3);
  \draw(0,0.15) node {4};
 \end{tikzpicture}
 &
 \begin{tikzpicture}[scale =.8]
   \draw[color=white] (-2,-1.2)--(2,1.75);
   \fill (0,1) circle (2pt) node [above,yshift=1pt] {\footnotesize{3}};
   \fill (0.951,0.309) circle (2pt) node [right,yshift=2pt] {\footnotesize{4}};
   \fill (-0.951,0.309) circle (2pt) node [left,yshift=2pt] {\footnotesize{2}};
   \fill (0.588,-0.809) circle (2pt) node [below,xshift=3pt] {\footnotesize{5}};
   \fill (-0.588,-0.809) circle (2pt) node [below,xshift=-3pt] {\footnotesize{1}};
   \draw (-0.588,-0.809)--(-0.951,0.309)--(0.951,0.309)--(0,1)--(-0.588,-0.809);
 \end{tikzpicture}
 &
 \begin{tikzpicture}[scale =.8]
   \draw[color=white] (-2,-1.2)--(2,1.75);
   \fill (0,1) circle (2pt) node [above,yshift=1pt] {\footnotesize{3}};
   \fill (0.951,0.309) circle (2pt) node [right,yshift=2pt] {\footnotesize{4}};
   \fill (-0.951,0.309) circle (2pt) node [left,yshift=2pt] {\footnotesize{2}};
   \fill (0.588,-0.809) circle (2pt) node [below,xshift=3pt] {\footnotesize{5}};
   \fill (-0.588,-0.809) circle (2pt) node [below,xshift=-3pt] {\footnotesize{1}};
   \draw (-0.588,-0.809)--(-0.951,0.309)--(0.588,-0.809)--(-0.588,-0.809);
   \draw (0,1)--(-0.951,0.309)--(0.951,0.309)--(0,1);
 \end{tikzpicture}
 &
 \\
 \hline
 \begin{tikzpicture}[scale =.8]
  \draw[color=white] (0,-1.3)--(0,1.3);
  \draw(0,0.15) node {5};
 \end{tikzpicture}
&
 \begin{tikzpicture}[scale =.8]
  \draw[color=white] (0,-1.3)--(0,1.3);
  \draw(0,0.15) node {4};
 \end{tikzpicture}
 &
 \begin{tikzpicture}[scale =.8]
   \draw[color=white] (-2,-1.2)--(2,1.75);
   \fill (0,1) circle (2pt) node [above,yshift=1pt] {\footnotesize{3}};
   \fill (0.951,0.309) circle (2pt) node [right,yshift=2pt] {\footnotesize{4}};
   \fill (-0.951,0.309) circle (2pt) node [left,yshift=2pt] {\footnotesize{2}};
   \fill (0.588,-0.809) circle (2pt) node [below,xshift=3pt] {\footnotesize{5}};
   \fill (-0.588,-0.809) circle (2pt) node [below,xshift=-3pt] {\footnotesize{1}};
   \draw (-0.588,-0.809)--(0,1)--(-0.951,0.309)--(0.951,0.309)--(-0.588,-0.809);
 \end{tikzpicture}
 &
 \begin{tikzpicture}[scale =.8]
   \draw[color=white] (-2,-1.2)--(2,1.75);
   \fill (0,1) circle (2pt) node [above,yshift=1pt] {\footnotesize{3}};
   \fill (0.951,0.309) circle (2pt) node [right,yshift=2pt] {\footnotesize{4}};
   \fill (-0.951,0.309) circle (2pt) node [left,yshift=2pt] {\footnotesize{2}};
   \fill (0.588,-0.809) circle (2pt) node [below,xshift=3pt] {\footnotesize{5}};
   \fill (-0.588,-0.809) circle (2pt) node [below,xshift=-3pt] {\footnotesize{1}};
   \draw (-0.588,-0.809)--(-0.951,0.309)--(0.951,0.309)--(-0.588,-0.809);
   \draw (0,1)--(-0.951,0.309)--(0.588,-0.809)--(0,1);
 \end{tikzpicture}
 &
 \\
 \hline
\end{tabular}
\end{table}

 \textsl{Subcase 3a: $Z$ is the $1$-cycle in row~1 of Table~\ref{table_cycles}.} Then there are exactly two $2$-chains $Y$ such that $\partial Y=Z+g^{-1}\cdot Z$ and $\|Y\|=2$, namely,
 \begin{align*}
  Y_1&=\{1,3,5\}+\{2,3,5\},\\
  Y_2&=\{1,2,3\}+\{1,2,5\}.
 \end{align*}
 If $Y=Y_1$, then we obtain that $L=L_4$. If $Y=Y_2$, then the three $3$-simplices $\{1,2,6,7\}$, $\{1,2,6,10\}$, and $\{1,2,3,6\}$ belong to~$L$ and have a common two-face~$\{1,2,6\}$, so $L$ is not a pseudo-manifold. Thus, the case $Y=Y_2$ is impossible.
 \smallskip

 \textsl{Subcase 3b: $Z$ is the $1$-cycle in row~2 of Table~\ref{table_cycles}.} Then $\|Z\|=3$. On the other hand, the vertex~$6$ is connected by an edge of~$L$ with every vertex of~$\alpha_1$, so $q=5$. We arrive at a contradiciton, since $(\|Z\|,q)$ must be either $(3,4)$ or~$(4,5)$. Hence this subcase is impossible.
 \smallskip

 \textsl{Subcase 3c:   $Z$ is the $1$-cycle in row~3 of Table~\ref{table_cycles}.} Then there are exactly two $2$-chains $Y$ such that $\partial Y=Z+g^{-1}\cdot Z$ and $\|Y\|=2$, namely,
 \begin{align*}
  Y_1&=\{1,3,4\}+\{1,3,5\},\\
  Y_2&=\{1,4,5\}+\{3,4,5\}.
 \end{align*}
 If $Y=Y_1$, then we obtain that $L=L_5$, and if $Y=Y_2$, then we obtain that $L=L_6$.
 \smallskip

 \textsl{Subcase 3d: $Z$ is the $1$-cycle in row~4 of Table~\ref{table_cycles}.} Then there is exactly one $2$-chain~$Y$ such that $\partial Y=Z+g^{-1}\cdot Z$ and $\|Y\|=2$, namely,
 \begin{equation*}
  Y=\{1,2,5\}+\{2,3,4\}.
 \end{equation*}
 Hence $L$ contains the six $\rC_5$-orbits of $3$-simplices with representatives
 \begin{equation*}
 \{1,2,6,7\},\ \{2,4,6,7\},\ \{3,4,6,7\},\ \{1,3,6,7\},\ \{1,2,5,6\},\ \{2,3,4,6\}.
\end{equation*}
 It is easy to check that the simplicial complex~$L^*_1$ consisting of all $3$-simplices in these six $\rC_5$-orbits and all their faces is a $3$-pseudomanifold. Therefore, $L=L_1^*$. However, $L_1^*$ is not in Table~\ref{table_L}. This is because $L_1^*$ is not orientable and hence is not a $\Z$-homology $3$-sphere. Indeed, consider two $3$-simplices $\sigma=\{1,2,5,6\}$ and $\tau=\{1,2,6,7\}$ of~$L_1^*$. Then $\tau$ has a common facet with each of the two $3$-simplices~$\sigma$ and~$g\cdot\sigma=\{1,2,3,7\}$. Choose the orientation of~$\sigma$ given by the ordering $1,2,5,6$ of its vertices. Let us travel in $|L_1^*|$ from~$b(\sigma)$ to~$b(g\cdot\sigma)$ along a path~$\gamma$ that goes through~$\tau$, crossing consecutively the $2$-simplices $\sigma\cap\tau$ and $(g\cdot\sigma)\cap\tau$.  It is easy to see that we arrive at the orientation of~$g\cdot\sigma$ given by the ordering $1,3,2,7$ of its vertices. This orientation is opposite to the one obtained from the chosen orientation of~$\sigma$ by the action of~$g$. Hence, concatenating the paths~$\gamma$, $g\cdot\gamma$, $\ldots$, $g^4\cdot\gamma$, we get a closed curve such that the orientation of~$L_1^*$ is reversed when passing along this curve. Therefore, $L_1^*$ is not a $\Z$-homology $3$-sphere. Thus, this subcase is impossible. Though it is not improtant for us, note that in fact $L_1^*$ is homeomorphic to the $3$-dimensional Klein bottle.
 \smallskip

 \textsl{Subcase 3e: $Z$ is the $1$-cycle in row~5 of Table~\ref{table_cycles}.} Then there is exactly one $2$-chain~$Y$ such that $\partial Y=Z+g^{-1}\cdot Z$ and $\|Y\|=2$, namely,
 \begin{equation*}
  Y=\{1,2,4\}+\{2,3,5\}.
 \end{equation*}
 It follows that $L=L_7$.
\smallskip

\textsl{Case 4: $m_1=m_2=2$.} Then every two vertices of~$L$ that belong to the same $\rC_5$-orbit are connected by an edge. Let $L_{\alpha_1}$ and~$L_{\alpha_2}$ be the full subcomplexes of~$L$ spanned by~$\alpha_1$ and~$\alpha_2$, respectively. Then $|L_{\alpha_2}|$ is a deformation retract of $|L|\setminus |L_{\alpha_1}|$. Since $L$ is a $\Z$-homology $3$-sphere, the Alexander duality implies that
$$
\rank H_1(L_{\alpha_1};\Z)=\rank H_1(L_{\alpha_2};\Z).
$$
From condition~(3) in Proposition~\ref{propos_L} it follows that at least one of the two complexes~$L_{\alpha_1}$ and~$L_{\alpha_2}$ (say, $L_{\alpha_1}$) contains no $2$-simplices and so is isomorphic to the complete graph~$K_5$ on~$5$ vertices. Then $\rank H_1(L_{\alpha_1};\Z)=6$. If the complex~$L_{\alpha_2}$ contained a $2$-simplex, then $\rank H_1(L_{\alpha_2};\Z)$ would be strictly smaller than~$6$. Therefore, neither~$L_{\alpha_1}$ nor~$L_{\alpha_2}$ contains a $2$-simplex. Thus, every $3$-simplex of~$L$ has exactly two vertices in~$\alpha_1$ and exactly two vertices in~$\alpha_2$.

Let $Z_1$ and $Z_2$ be the modulo~$2$ fundamental cycles of~$\link(\{6,7\},L)$ and~$\link(\{6,8\},L)$, respectively. Then the supports of~$Z_1$ and~$Z_2$ are contained in~$\alpha_1$. Renumbering the vertices of~$L$ by means of permutation~\eqref{eq_C5_aut}, we can achieve that $\|Z_1\|\ge \|Z_2\|$. The modulo~$2$ fundamental cycle of~$L$ has the form
\begin{equation}\label{eq_C5_X_2}
 X=\sum_{j=0}^4g^j\cdot\left(Z_1*\{6,7\}+Z_2*\{6,8\}\right).
\end{equation}
Hence,
\begin{equation*}
0=\partial X=\sum_{j=0}^4g^j\cdot\left(\left(Z_1+g^{-1}\cdot Z_1+Z_2+g^{-2}\cdot Z_2\right)  *\{6\}\right).
\end{equation*}
Therefore,
\begin{equation}\label{eq_Z1Z2}
 Z_1+g^{-1}\cdot Z_1=Z_2+g^{-2}\cdot Z_2.
\end{equation}
Moreover, from~\eqref{eq_C5_X_2} it follows that
$$
f_3=\|X\|=5\|Z_1\|+5\|Z_2\|.
$$
On the other hand, from~\eqref{eq_f1f3} it follows that
$$
f_3=f_1-10\le \binom{10}{2}-10=35.
$$
Hence $\|Z_1\|+\|Z_2\|\le 7$. Therefore, $\|Z_2\|=3$ and $\|Z_1\|$ is either~$3$ or~$4$.

Permuting cyclically the vertices $1,2,3,4,5$, we can achieve that $Z_1$ is one of the four simple $1$-cycles from Table~\ref{table_cycles}. On the other hand, $Z_2$ is obtained by a rotation from one of the two $1$-cycles of length~$3$ from Table~\ref{table_cycles}. Equality~\eqref{eq_Z1Z2} can only occur in the following three subcases.
\smallskip

\textsl{Subcase 4a:} $Z_1$ is the $1$-cycle in row~$1$ of Table~\ref{table_cycles} and $Z_2$ is the $1$-cycle~$(2,3,5)$, which can be obtained from the $1$-cycle in row~$2$ of Table~\ref{table_cycles} by the permutation $(1\ 2\ 3\ 4\ 5)$. Then $L=L_8$.
\smallskip

\textsl{Subcase 4b:} $Z_1$ is the $1$-cycle in row~$3$ of Table~\ref{table_cycles} and $Z_2$ is the $1$-cycle~$(1,3,5)$, which can be obtained from the $1$-cycle in row~$2$ of Table~\ref{table_cycles} by the permutation $(1\ 5\ 4\ 3\ 2)$. Then $L=L_9$.
\smallskip

\textsl{Subcase 4c:} $Z_1$ is the $1$-cycle in row~$4$ of Table~\ref{table_cycles} and $Z_2$ is the $1$-cycle~$(2,3,4)$, which can be obtained from the $1$-cycle in row~$1$ of Table~\ref{table_cycles} by the permutation $(1\ 2\ 3\ 4\ 5)$. Then $L$ is the $3$-pseudomanifold $L_2^*$ consisting of all $3$-simplices in $\rC_5$-orbits with representatives
$$
\{1,2,6,7\},\ \{2,4,6,7\},\ \{3,4,6,7\},\ \{1,3,6,7\},\ \{2,3,6,8\},\ \{3,4,6,8\},\ \{2,4,6,8\}
$$
and all their faces. As in Subcase~3d above, let us show that $L_2^*$ is not orientable and hence is not a $\Z$-homology $3$-sphere. The pseudomanifold~$L_2^*$ contains the simplices
$$
\rho=\{1,2,6,7\},\qquad \sigma=\{1,2,7,10\},\qquad \tau=\{2,3,7,10\}.
$$
Choose the orientation of~$\rho$ given by the ordering $1,2,6,7$ of its vertices. Let us travel in $|L_2^*|$ from~$b(\rho)$ to~$b(g\cdot\rho)$ along a path~$\gamma$ that goes through~$\sigma$ and $\tau$, crossing consecutively the $2$-simplices $\rho\cap\sigma$, $\sigma\cap\tau$, and $\tau\cap (g\cdot\rho)$. Then we arrive at the orientation of~$g\cdot\rho$ given by the ordering $2,3,8,7$ of its vertices. This orientation is opposite to the one obtained from the chosen orientation of~$\rho$ by the action of~$g$. Hence, concatenating the paths~$\gamma$, $g\cdot\gamma$, $\ldots$, $g^4\cdot\gamma$, we get a closed curve such that the orientation of~$L_2^*$ is reversed when passing along this curve. Therefore, $L_2^*$ is not a $\Z$-homology $3$-sphere. Thus, this subcase is impossible. As in case Subcase~3d, in fact $L_2^*$ is homeomorphic to the $3$-dimensional Klein bottle.

\smallskip

In all cases we have shown that $L$ is weakly $\rC_5$-isomorphic to one of the simplicial complexes $L_1,\ldots,L_9$. The proposition follows.
\end{proof}

\section{Triangulations with symmetry group~$\rC_5$}\label{section_5}

Suppose that $K$ is a $15$-vertex $\Z$-homology $8$-manifold such that $H_*(K;\Z)\ncong H_*(S^8;\Z)$. From Corollary~\ref{cor_5} and Proposition~\ref{propos_C5_ns} it follows that, if the group~$\Sym(K)$ contains a subgroup isomorphic to~$\rC_5$, then this subgroups acts on the vertices of~$K$ without fixed points.

\begin{propos}\label{propos_C5_aux}
 Suppose that $K$ is a $\rC_5$-invariant $15$-vertex $\Z$-homology $8$-manifold with $H_*(K;\Z)\ncong H_*(S^8;\Z)$, where $\rC_5$ acts on vertices of~$K$ without fixed points. Then $K$ contains a $\rC_5$-invariant $4$-simplex~$\sigma$ such that $\link(\sigma,K)$  either is the boundary of a $4$-simplex or is weakly $\rC_5$-isomorphic to one the $9$ simplicial complexes $L_1,\ldots,L_9$ from Table~\ref{table_L}.
\end{propos}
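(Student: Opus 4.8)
The plan is to take the required $\rC_5$-invariant $4$-simplex~$\sigma$ to be one of the three $\rC_5$-orbits in the vertex set~$V$, and to control its link by combining the complementarity of~$K$ with Proposition~\ref{propos_L}.

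First I would write $V=\alpha_1\sqcup\alpha_2\sqcup\alpha_3$ as the union of the three $\rC_5$-orbits, each of length~$5$ since $\rC_5$ acts freely on~$V$. By Corollary~\ref{cor_complement} the complex~$K$ is $5$-neighborly, so each $\alpha_i$ is a $4$-simplex of~$K$; these are the candidates for~$\sigma$. As $\alpha_i$ is $\rC_5$-invariant and is contained in an $8$-simplex of~$K$, the link $\Lambda_i:=\link(\alpha_i,K)$ is a nonempty $\rC_5$-invariant $\Z$-homology $3$-sphere whose vertex set is a union of $\rC_5$-orbits inside $\alpha_j\cup\alpha_k$, where $\{i,j,k\}=\{1,2,3\}$; hence it equals $\alpha_j$, $\alpha_k$, or $\alpha_j\cup\alpha_k$. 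In the first two cases $\Lambda_i$ is a $\Z$-homology $3$-sphere on $5$ vertices, which can only be the boundary of a $4$-simplex, and then $\sigma=\alpha_i$ works. So from now on I may assume that every $\Lambda_i$ has vertex set $\alpha_j\cup\alpha_k$, i.e. is a $10$-vertex $\rC_5$-invariant $\Z$-homology $3$-sphere whose vertices form two $\rC_5$-orbits of length~$5$ --- precisely the setting of Proposition~\ref{propos_L}, except that I still have to produce one $\Lambda_i$ satisfying condition~(3) of that proposition.

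The heart of the argument is a complementarity dictionary relating the three links. For distinct indices $i,j,k$ and a $2$-element subset $e\subset\alpha_j$, the set~$e$ is an edge of $\Lambda_i$ if and only if $\alpha_i\cup e\in K$, which by Corollary~\ref{cor_complement} holds if and only if the complementary $8$-element set $\alpha_k\cup(\alpha_j\setminus e)$ is \emph{not} a simplex of~$K$, i.e. if and only if the $3$-element set $\alpha_j\setminus e$ is not a simplex of $\Lambda_k$. Hence the full subgraph of $\Lambda_i$ spanned by $\alpha_j$ is a \emph{proper} subgraph of the complete graph on the $5$-element set $\alpha_j$ if and only if $\alpha_j$ contains a $2$-simplex of $\Lambda_k$. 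Now suppose, for contradiction, that none of $\Lambda_1,\Lambda_2,\Lambda_3$ satisfies condition~(3). Applying this to $\Lambda_2$ (whose vertex orbits are $\alpha_1$ and $\alpha_3$) and to $\Lambda_3$ (whose vertex orbits are $\alpha_1$ and $\alpha_2$) shows that $\alpha_3$ contains a $2$-simplex of $\Lambda_2$ and $\alpha_2$ contains a $2$-simplex of $\Lambda_3$; by the dictionary, the full subgraphs of $\Lambda_1$ spanned by $\alpha_2$ and by $\alpha_3$ are both proper subgraphs of the corresponding complete graphs. But a proper $\rC_5$-invariant subgraph of a complete graph on $5$ vertices with the free cyclic action has $0$ or $5$ edges --- a $5$-cycle in the latter case --- and in either case contains no triangle. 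Since a $2$-simplex of $\Lambda_1$ lying inside $\alpha_2$ or $\alpha_3$ would yield a triangle in the corresponding subgraph, neither $\alpha_2$ nor $\alpha_3$ contains a $2$-simplex of $\Lambda_1$; thus $\Lambda_1$ does satisfy condition~(3), a contradiction. Therefore some $\Lambda_i$ satisfies conditions~(1)--(3) of Proposition~\ref{propos_L}, and is thus weakly $\rC_5$-isomorphic to one of $L_1,\dots,L_9$; then $\sigma=\alpha_i$ completes the proof.

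The proof is short because the real classification work is already packaged in Proposition~\ref{propos_L}. The step I would be most careful about is the complementarity dictionary of the third paragraph --- in particular keeping straight which of $\alpha_i,\alpha_j,\alpha_k$ plays which role --- together with the elementary but essential remark that a proper $\rC_5$-invariant subgraph of~$K_5$ is automatically triangle-free. A minor point to verify along the way is that $\Lambda_i$ really is a $\Z$-homology $3$-sphere, which follows from $\alpha_i$ being a $4$-dimensional simplex of the $\Z$-homology $8$-manifold~$K$ together with the identity $\link(\tau,\Lambda_i)=\link(\alpha_i\cup\tau,K)$.
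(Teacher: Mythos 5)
Your proof is correct and follows essentially the same route as the paper: the same decomposition into the three orbit-links $\link(\alpha_i,K)$, the same dichotomy between the $5$-vertex and $10$-vertex cases, the same reduction to Proposition~\ref{propos_L}, and a complementarity argument to guarantee that one of the links satisfies condition~(3). The only difference is in how the final contradiction is extracted: the paper notes that if $\link(\alpha_1,K)$ and $\link(\alpha_2,K)$ each contain a $2$-simplex inside $\alpha_3$, then some $\rC_5$-translate makes the two $2$-simplices cover $\alpha_3$, so two simplices of $K$ cover $V$, contradicting complementarity; your edge/$2$-simplex dictionary combined with the triangle-freeness of proper $\rC_5$-invariant subgraphs of the complete graph $K_5$ reaches the same conclusion and is equally valid.
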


\begin{proof}
 Let $\alpha_1$, $\alpha_2$, and~$\alpha_3$ be the three $\rC_5$-orbits of vertices of~$K$. By Corollary~\ref{cor_complement} the complex~$K$ is $5$-neighborly, so $\alpha_1,\alpha_2,\alpha_3\in K$. Each simplicial complex~$\link(\alpha_i,K)$ is a $\rC_5$-invariant $\Z$-homology $3$-sphere with either~$5$ or~$10$ vertices. If $\link(\alpha_i,K)$ has $5$ vertices, then it is the boundary of a $4$-simplex. Suppose that every complex~$\link(\alpha_i,K)$ has $10$ vertices. Then it satisfies conditions~(1) and~(2) from Proposition~\ref{propos_L}. We need to prove that at least one of the three complexes~$\link(\alpha_i,K)$ satisfies condition~(3) as well; then by Proposition~\ref{propos_L} it is weakly $\rC_5$-isomorphic to one of the simplicial complexes $L_1,\ldots,L_9$. Assume that neither~$\link(\alpha_1,K)$ nor~$\link(\alpha_2,K)$ satisfies condition~(3). Then there exist $2$-simplices $\rho_1\in\link(\alpha_1,K)$ and $\rho_2\in\link(\alpha_2,K)$ such that $\rho_1\subset\alpha_3$ and $\rho_2\subset\alpha_3$. Since $\rho_1$ and~$\rho_2$ are $3$-element subsets of the $5$-element $\rC_5$-orbit~$\alpha_3$ it follows easily that there is an element $g\in\rC_5$ such that the union~$\rho_1\cup g\rho_2$ is the whole set~$\alpha_3$. Then $\alpha_1\cup\rho_1$ and $\alpha_2\cup g\rho_2$ are simplices of~$K$ whose union contains all vertices of~$K$. We arrive at a contradicition with complementarity, which completes the proof of the proposition.
\end{proof}

From Proposition~\ref{propos_C5_aux} it follows that any $\rC_5$-invariant $15$-vertex $\Z$-homology $8$-mani\-fold~$K$ with $H_*(K;\Z)\ncong H_*(S^8;\Z)$ contains a $\rC_5$-invariant subcomplex that is weakly $\rC_5$-isomorphic to either $\Delta^4*\partial\Delta^4$ or $\Delta^4*L_i$ for some~$i$. So we can get the list of all such homology manifolds by launching the program \texttt{find} for the subcomplex $R=\Delta^4*\partial\Delta^4$ and each of the subcomplexes $R=\Delta^4*L_i$ separately, and then taking the union of the obtained lists. In each of the cases the program completes in a reasonable amount of time. The result of the computation is as follows.
\begin{itemize}
 \item For $R=\Delta^4*\partial\Delta^4$ and each $R=\Delta^4*L_i$, where $i=1,3,7,8,9$, the program returns an empty list, so there are no $\rC_5$-invariant $15$-vertex $\Z$-homology manifolds with $H_*(K;\Z)\ncong H_*(S^8;\Z)$ that contain any of these subcomplexes.
 \item For $R=\Delta^4*L_2$, the program produces a list of $1850$ simplicial complexes, which are divided into $19$ groups, one of which consists of~$50$, and each of the others of $100$  isomorphic complexes. Representatives of the group of~$50$ isomorphic complexes in fact have a larger symmetry group~$\rA_5$ and are isomorphic to~$\HP^2_{15}(\rA_5)$. Representatives of  the other $18$ groups have symmetry group exactly~$\rC_5$.
 \item For $R=\Delta^4*L_4$, the program produces a list of $120$ simplicial complexes, which are divided into $12$ groups, each consisting of~$10$ isomorphic complexes with symmetry group  exactly~$\rC_5$.
 \item For $R=\Delta^4*L_5$, the program produces a list of $40$ simplicial complexes, which are divided into $4$ groups, each consisting of~$10$ isomorphic complexes with symmetry group  exactly~$\rC_5$.
 \item For $R=\Delta^4*L_6$, the program produces a list of $265$ simplicial complexes, which are divided into $19$ groups, one of which consists of~$5$, ten of~$10$, and eight of~$20$ isomorphic complexes. Representatives of the group of~$5$ isomorphic complexes in fact have a larger symmetry group~$\rA_5$ and are isomorphic to~$\HP^2_{15}(\rA_5)$. Representatives of  the other $18$ groups have symmetry group exactly~$\rC_5$.
\end{itemize}

Taking the union of the obtained lists and eliminating repetitions (up to isomorphism), we arrive at a list of  $26$ pairwise non-isomorphic simplicial complexes with symmetry group exactly~$\rC_5$. From Proposition~\ref{propos_C5_aux} it follows that all $\rC_5$-invariant $15$-vertex $\Z$-homology $8$-manifold with $H_*(K;\Z)\ncong H_*(S^8;\Z)$ (except for $\HP^2_{15}(\rA_5)$) are in the list. On the other hand, using the programs \texttt{check} and \texttt{fvect}, we check that the obtained $26$ simplicial complexes are combinatorial $8$-manifolds not homeomorphic to~$S^8$ and hence (by Theorem~\ref{thm_BK}) are manifolds like the quaternionic projective plane. We denote them by $\HP^2_{15}(\rC_5,1),\ldots,\HP^2_{15}(\rC_5,26)$. Thus, we obtain the following proposition.

\begin{propos}\label{propos_C5}
 Up to isomorphism, there exist exactly $27$ \ $15$-vertex $\Z$-homology $8$-manifolds~$K$ such that $H_*(K;\Z)$ is not isomorphic to~$H_*(S^8;\Z)$ and $\Sym(K)$ contains an element of order~$5$. All of them are combinatorial manifolds. One of these combinatorial manifolds is the Brehm--K\"uhnel triangulation~$\HP^2_{15}(\rA_5)$; the symmetry group of each of the other $26$ combinatorial manifolds $\HP^2_{15}(\rC_5,1),\ldots,\HP^2_{15}(\rC_5,26)$ is isomorphic to~$\rC_5$.
\end{propos}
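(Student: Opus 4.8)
The plan is to reduce the classification to a finite computer search governed by the structural results already established.

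First, I would eliminate the fixed-point case. If $g\in\Sym(K)$ has order~$5$, then by Corollary~\ref{cor_5} it acts on the vertex set~$V$ either freely or with exactly $5$ fixed points, and Proposition~\ref{propos_C5_ns} excludes the latter. Hence every subgroup $\rC_5\subseteq\Sym(K)$ acts freely on~$V$, partitioning $V$ into three orbits $\alpha_1,\alpha_2,\alpha_3$ of length~$5$; by Corollary~\ref{cor_complement} each $\alpha_i$ is a $4$-simplex of~$K$. Next I would invoke Proposition~\ref{propos_C5_aux}: among the three links $\link(\alpha_i,K)$, which are $\rC_5$-invariant $\Z$-homology $3$-spheres on $5$ or $10$ vertices, at least one is either $\partial\Delta^4$ or weakly $\rC_5$-isomorphic to one of the nine complexes $L_1,\dots,L_9$ of Table~\ref{table_L}. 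Therefore any $\rC_5$-invariant $K$ as in the statement contains, up to weak $\rC_5$-isomorphism, one of the ten pure $8$-dimensional mandatory subcomplexes $\Delta^4*\partial\Delta^4$ or $\Delta^4*L_i$, $1\le i\le 9$. Since an element of the normalizer $N_{\rS_{15}}(\rC_5)$ realizes the outer automorphism $g\mapsto g^2$, I may arrange each of these ten subcomplexes on a fixed vertex set with a fixed generator of~$\rC_5$ (e.g.\ $(1\,2\,3\,4\,5)(6\,7\,8\,9\,10)(11\,12\,13\,14\,15)$), so it suffices to run the search for each of them separately and merge the outputs.

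The computational step is then to invoke the program \texttt{find} of~\cite{Gai-prog} with parameters $(d,n,N)=(8,15,490)$ (the count $490$ coming from Corollary~\ref{cor_fvect}) for each of the ten mandatory subcomplexes in turn, apply \texttt{isomorphism\_groups} to each output list, and discard the duplicate copies produced by the action of $N_{\rS_{15}}(\rC_5)/\rC_5$. One expects the empty list for $\Delta^4*\partial\Delta^4$ and for $\Delta^4*L_i$ with $i\in\{1,3,7,8,9\}$, and nonempty lists for $i\in\{2,4,5,6\}$; merging these four lists and removing isomorphic repetitions should yield $26$ pairwise non-isomorphic complexes with symmetry group $\rC_5$, together with one additional complex which turns out to have the larger symmetry group $\rA_5$ and is isomorphic to the Brehm--K\"uhnel triangulation $\HP^2_{15}(\rA_5)$. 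Finally, running \texttt{check} and \texttt{fvect} on each of the $27$ complexes certifies that all of them are combinatorial $8$-manifolds with the $f$-vector~\eqref{eq_fvect_BK}; in particular $\chi=3$, so none is homeomorphic to~$S^8$, and Theorem~\ref{thm_BK} identifies each as a manifold like the quaternionic projective plane. Completeness — that every $K$ satisfying the hypotheses appears among these $27$ — is exactly what Proposition~\ref{propos_C5_aux} guarantees.

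The main obstacle I anticipate is twofold. On the computational side, one must ensure the ten runs of \texttt{find} actually terminate in acceptable time; forcing the mandatory subcomplexes in from the very beginning is essential here, since it prunes the search tree drastically. On the conceptual side, the more delicate point is being confident that Proposition~\ref{propos_C5_aux}, and hence the list of ten subcomplexes, is genuinely exhaustive: this rests on the equivariant classification of $10$-vertex $\rC_5$-invariant $\Z$-homology $3$-spheres in Proposition~\ref{propos_L}, whose proof involves an intricate case analysis of fundamental $1$-cycles together with non-orientability arguments to exclude the spurious pseudomanifolds $L_1^*$ and $L_2^*$. Any gap there would not affect the validity of the $27$ listed manifolds as examples, but it would undermine the completeness claim, so that part of the argument is where I would concentrate the verification effort.
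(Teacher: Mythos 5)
Your proposal matches the paper's own argument step for step: free action via Corollary~\ref{cor_5} and Proposition~\ref{propos_C5_ns}, the mandatory-subcomplex reduction via Proposition~\ref{propos_C5_aux} and the classification of links in Proposition~\ref{propos_L}, separate runs of \texttt{find} for $\Delta^4*\partial\Delta^4$ and each $\Delta^4*L_i$, and the merge/deduplication and \texttt{check}/\texttt{fvect} verification — even the predicted outcomes (empty lists except for $i=2,4,5,6$, yielding $26$ new manifolds plus $\HP^2_{15}(\rA_5)$) agree with the paper's computer results. No substantive differences to report.
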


\begin{remark}\label{remark_3links}
 Once the list of triangulations with symmetry group~$\rC_5$ is found, it is interesting to see what the links of the three $\rC_5$-orbits~$\alpha_1$, $\alpha_2$, and~$\alpha_3$ are in each of the combinatorial manifolds $\HP^2_{15}(\rC_5,1),\ldots,\HP^2_{15}(\rC_5,26)$, and~$\HP^2_{15}(\rA_5)$. From Proposition~\ref{propos_C5_aux} and the computer results mentioned above it follows that in each of the combinatorial manifolds the link of at least one of the three orbits is in the list~$L_2$, $L_4$, $L_5$, $L_6$. In fact, it turns out that, for some $8$ combinatorial manifolds ($\HP^2_{15}(\rC_5,19),\ldots,\HP^2_{15}(\rC_5,26)$ in our notation), the links of all the three orbits are in the list~$L_2$, $L_4$, $L_5$, $L_6$, and for each of  the other $19$ combinatorial manifolds $\HP^2_{15}(\rC_5,1),\ldots,\HP^2_{15}(\rC_5,18)$, and~$\HP^2_{15}(\rA_5)$, the links of exactly two orbits are in this list and the link of the third orbit is weakly $\rC_5$-isomorphic to one of the four complexes~$M_1,\ldots,M_4$ from Table~\ref{table_M}. It is convenient for us to postpone specifying explicit link types for each of the combinatorial manifolds to Section~\ref{section_G}, see Fig.~\ref{fig_C5}.
\end{remark}

\begin{table}
\caption{Simplicial $3$-spheres~$M_1,\ldots,M_{4}$}\label{table_M}
 \begin{tabular}{|l|l|}
  \hline
  & \textbf{Representatives of $\rC_5$-orbits of $3$-simplices}\\
  \hline
  $M_1$ & $\{1,2,3,6\},\ \{1,2,5,6\},\ \{1,3,6,7\},\ \{2,3,6,8\},\ \{1,6,7,9\},\ \{3,6,7,9\}$\\
  \hline
  $M_2$ & $\{1,2,3,6\},\ \{1,2,5,6\},\ \{2,4,6,7\},\ \{2,3,6,8\},\ \{2,4,6,8\},\ \{3,6,7,9\},\ \{5,6,7,9\}$\\
  \hline
  $M_3$ & $\{1,2,3,6\},\ \{1,2,5,6\},\ \{1,3,6,7\},\ \{1,5,6,7\},\ \{4,5,6,8\},\ \{4,6,7,8\},\ \{5,6,7,8\}$\\
  \hline
  $M_4$ & $\{1,2,3,6\},\ \{1,4,5,6\},\ \{1,2,6,7\},\ \{1,3,6,8\},\ \{4,5,6,8\},\ \{1,6,7,9\},\ \{2,6,7,9\}$\\
  \hline
 \end{tabular}
\end{table}

\section{Classification in the case of symmetry group of order at least four}\label{section_classify}

{\sloppy
\begin{propos}\label{propos_class}
Suppose that $K$ is a $15$-vertex $\Z$-homology $8$-manifold such that $H_*(K;\Z)\ncong H_*(S^8;\Z)$ and $|\Sym(K)|\ge 4$. Then $K$ is isomorphic to one of the following $75$ combinatorial $8$-manifolds from Sections~\ref{section_>=7}, \ref{section_6}, and~\ref{section_5}:
\begin{itemize}
 \item $\HP^2_{15}(\rA_5)$, $\HP^2_{15}(\rA_4,1)$, $\HP^2_{15}(\rA_4,2)$, $\HP^2_{15}(\rC_6\times\rC_2)$,
 \item $\HP^2_{15}(\rC_7,1),\ldots,\HP^2_{15}(\rC_7,19)$,
 \item $\HP^2_{15}(\rS_3,1),\ldots,\HP^2_{15}(\rS_3,12)$,
 \item $\HP^2_{15}(\rC_6,1),\ldots,\HP^2_{15}(\rC_6,14)$,
 \item $\HP^2_{15}(\rC_5,1),\ldots,\HP^2_{15}(\rC_5,26)$.
\end{itemize}
\end{propos}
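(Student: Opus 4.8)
The plan is to reduce the statement, by means of the structural results of Section~\ref{section_finite_order}, to the five classification results already proved in Sections~\ref{section_>=7}, \ref{section_6}, and~\ref{section_5}. Write $G=\Sym(K)$, so that $|G|\ge 4$. The first step is purely group-theoretic: pin down a small ``known'' subgroup of~$G$. By Corollary~\ref{cor_p>7} the order $|G|$ is divisible only by the primes $2$, $3$, $5$, $7$; by Corollary~\ref{cor_9} the $3$-part of $|G|$ is at most~$3$; and by Proposition~\ref{propos_4} the group~$G$ has no element of order~$4$, so every $2$-subgroup of~$G$ has exponent~$\le 2$ and is therefore elementary abelian. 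A short case distinction then shows that $G$ contains a subgroup isomorphic to one of $\rC_7$, $\rC_5$, $\rS_3$, $\rC_6$, $\rC_2\times\rC_2$. Indeed, if $5$ or $7$ divides $|G|$, then $G$ contains $\rC_5$ or $\rC_7$ by Cauchy's theorem. Otherwise $|G|=2^a3^b$ with $b\le 1$. If $b=0$, then $|G|\ge 4$ is a power of~$2$, and $G$, being of exponent~$\le 2$, is elementary abelian of rank at least~$2$, so $G$ contains $\rC_2\times\rC_2$. If $b=1$ and $a\ge 2$, then a Sylow $2$-subgroup of~$G$ is elementary abelian of rank~$a\ge 2$ and again contains $\rC_2\times\rC_2$. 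If $b=1$ and $a\le 1$, then $|G|\in\{3,6\}$; as $|G|\ge 4$ we get $|G|=6$, so $G$ is isomorphic to $\rC_6$ or $\rS_3$.

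The second step treats each of the five anchor subgroups by quoting the relevant earlier result. If $G$ contains $\rC_5$, then $K$ has an automorphism of order~$5$, and Proposition~\ref{propos_C5} gives that $K$ is isomorphic to $\HP^2_{15}(\rA_5)$ or to one of $\HP^2_{15}(\rC_5,1),\dots,\HP^2_{15}(\rC_5,26)$. If $G$ contains a copy of~$\rC_7$, then by Corollary~\ref{cor_7} this $\rC_7$ acts on the $15$-element vertex set with orbit lengths $7,7,1$; all subgroups of~$\rS_{15}$ of this kind are conjugate, so after renumbering the vertices it becomes the standard $\rC_7\subset\rS_{15}$ of Section~\ref{section_>=7}. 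Since $K$ satisfies complementarity by Corollary~\ref{cor_complement}, hence condition~$(*)$, and has exactly $490$ top-dimensional simplices by Corollary~\ref{cor_fvect}, it lies among the complexes produced by the program \texttt{find} for this group, so $K$ is isomorphic to one of $\HP^2_{15}(\rC_7,1),\dots,\HP^2_{15}(\rC_7,19)$. If $G$ contains $\rS_3$ (respectively $\rC_6$), then Proposition~\ref{propos_S3_aux} (respectively Proposition~\ref{propos_C6_aux}) lets us renumber the vertices so that $K$ contains the mandatory subcomplex on which \texttt{find} was run, and the computer result preceding Proposition~\ref{propos_S3} (respectively Proposition~\ref{propos_C6}) shows that $K$ is isomorphic to one of $\HP^2_{15}(\rA_5)$, $\HP^2_{15}(\rS_3,1),\dots,\HP^2_{15}(\rS_3,12)$ (respectively to one of $\HP^2_{15}(\rC_6\times\rC_2)$, $\HP^2_{15}(\rC_6,1),\dots,\HP^2_{15}(\rC_6,14)$). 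Finally, if $G$ contains $\rC_2\times\rC_2$, then Proposition~\ref{propos_C2xC2} gives that $K$ is isomorphic to one of $\HP^2_{15}(\rA_5)$, $\HP^2_{15}(\rA_4,1)$, $\HP^2_{15}(\rA_4,2)$, $\HP^2_{15}(\rC_6\times\rC_2)$. The union of the five lists obtained this way is precisely the set of $4+19+12+14+26=75$ combinatorial $8$-manifolds in the statement, so $K$ is one of them.

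I expect the only delicate aspect to be the bookkeeping. One has to check that the group-theoretic case distinction in the first step is genuinely exhaustive; this is exactly why all three of Corollary~\ref{cor_p>7}, Corollary~\ref{cor_9}, and Proposition~\ref{propos_4} enter, since they exclude, respectively, elements of prime order exceeding~$7$, orders divisible by~$9$, and the cyclic group~$\rC_4$ --- without any one of them, $G$ could fail to contain any of the five anchor subgroups. One also has to check that in each of the five cases $K$ is really forced into the exact action on the vertex set and, where relevant, into the exact mandatory subcomplex on which \texttt{find} was run; this rests on Corollaries~\ref{cor_complement}, \ref{cor_fvect}, and~\ref{cor_7} together with Propositions~\ref{propos_S3_aux} and~\ref{propos_C6_aux}. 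The enumeration itself has already been carried out in the earlier sections, so all that remains here is the assembly of those results.
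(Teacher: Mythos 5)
Your proof is correct and follows essentially the same route as the paper: both arguments use Corollaries~\ref{cor_p>7} and~\ref{cor_9} together with Proposition~\ref{propos_4} to force $\Sym(K)$ to contain one of the five anchor subgroups $\rC_2\times\rC_2$, $\rC_7$, $\rC_5$, $\rS_3$, $\rC_6$, and then quote Propositions~\ref{propos_C2xC2}, \ref{propos_C5}, \ref{propos_S3}, \ref{propos_C6} and the $\rC_7$ computation of Section~\ref{section_>=7}. The only difference is the cosmetic ordering of the case analysis (the paper splits on divisibility by $4$, $7$, $5$, and then concludes $|\Sym(K)|=6$ in the remaining case), which changes nothing of substance.
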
}

\begin{proof}
 First, assume that $|\Sym(K)|$ is divisible by~$4$. Then $\Sym(K)$ contains a subgroup isomorphic to either~$\rC_4$ or~$\rC_2\times\rC_2$. Hence, by Propositions~\ref{propos_4} and~\ref{propos_C2xC2}, $K$ is isomorphic to one of the four combinatorial manifolds~$\HP^2_{15}(\rA_5)$, $\HP^2_{15}(\rA_4,1)$, $\HP^2_{15}(\rA_4,2)$, and~$\HP^2_{15}(\rC_6\times\rC_2)$.

Second, assume that $|\Sym(K)|$ is divisible by~$7$, that is, $\Sym(K)$ contains an element of order~$7$. By Corollary~\ref{cor_7} this element fixes exactly one vertex of~$K$. We showed in Section~\ref{section_>=7} that $K$ is then isomorphic to one of the combinatorial manifolds $\HP^2_{15}(\rC_7,k)$, where $k=1,\ldots,19$.

 Third, assume that $|\Sym(K)|$ is divisible by~$5$, that is, $\Sym(K)$ contains an element of order~$5$. Then, by Proposition~\ref{propos_C5}, $K$ is isomorphic to either~$\HP^2_{15}(\rA_5)$ or one of the combinatorial manifolds $\HP^2_{15}(\rC_5,k)$, where $k=1,\ldots,26$.

Finally, assume that $|\Sym(K)|$ is divisible by none of the numbers~$4$, $5$, and~$7$. By Corollaries~\ref{cor_9} and~\ref{cor_p>7} we have that $|\Sym(K)|$ is also not divisible by~$9$ or any prime greater than~$7$. Since $|\Sym(K)|\ge 4$, it follows that $|\Sym(K)|=6$. Hence $\Sym(K)$ is isomorphic to either~$\rS_3$ or~$\rC_6$. By Propositions~\ref{propos_S3} and~\ref{propos_C6} we obtain that $K$ is isomorphic to one of the combinatorial manifolds~$\HP^2_{15}(\rS_3,k)$, where $k=1,\ldots,12$, or~$\HP^2_{15}(\rC_6,k)$, where $k=1,\ldots,14$.
 The proposition follows.
 \end{proof}

 \section{Triple flip graphs}\label{section_G}

\subsection{The triple flip graph~$\CG$} Let $K$ be a $15$-vertex combinatorial $8$-manifold like the quaternionic projective plane and $V$ the vertex set of~$K$.

 \begin{defin}
 A triple $(\Delta_1,\Delta_2,\Delta_3)$ of $4$-simplices of~$K$ is called a \textit{distinguished triple} if $V=\Delta_1\sqcup\Delta_2\sqcup\Delta_3$ and
 $$
 \link(\Delta_1,K)=\partial\Delta_2,\qquad
 \link(\Delta_2,K)=\partial\Delta_3,\qquad
 \link(\Delta_3,K)=\partial\Delta_1.
 $$
 The corresponding subcomplex
 \begin{equation}\label{eq_disting}
 J=(\Delta_1*\partial\Delta_2)\cup(\Delta_2*\partial\Delta_3)\cup(\Delta_3*\partial\Delta_1) \end{equation}
 of~$K$ is called a \textit{distinguished subcomplex}.
 \end{defin}

 Distinguished triples that differ in cyclic permutations of the simplices~$\Delta_1$, $\Delta_2$, and $\Delta_3$ are considered to be the same distinguished triple.

 \begin{defin}
  A \textit{triple flip} associated with a distinguished triple~$(\Delta_1,\Delta_2,\Delta_3)$ is the operation consisting in replacement of the distinguished subcomplex~\eqref{eq_disting} with  the subcomplex
  $$
  (\partial\Delta_1*\Delta_2)\cup(\partial\Delta_2*\Delta_3)\cup(\partial\Delta_3*\Delta_1).
  $$
  Two triple flips $\mathfrak{t}\colon K_1\rightsquigarrow K_2$ and $\mathfrak{t'}\colon K'_1\rightsquigarrow K'_2$ are said to be \textit{isomorphic} if there exists an isomorphism $K_1\to K_1'$ that takes the distinguished triple associated with~$\mathfrak{t}$ to the distinguished triple associated with~$\mathfrak{t}'$. A triple flip~$\mathfrak{t}$ is called \textit{self-inverse} if it is isomorphic to its inverse triple flip~$\mathfrak{t}^{-1}$.
 \end{defin}

 Note that a triple flip $\mathfrak{t}\colon K_1\rightsquigarrow K_2$ such that $K_1$ is isomorphic to $K_2$ may or may not be self-inverse.

 Brehm and K\"uhnel~\cite[Lemma~1]{BrKu92} showed that a triple flip always yields a well-defined simplicial complex, which is a combinatorial manifold PL homeomorphic to~$K$. So triple flips can be used to move between different $15$-vertex combinatorial $8$-manifolds like the quaternionic projective plane and to produce new examples of them, starting from known ones.

 \begin{remark}
 As we have mentioned in the Introduction, Lutz~\cite{Lut05} used the program \texttt{BISTELLAR}, which performs usual bistellar flips, to construct three new $15$-vertex triangulations of~$\HP^2$, starting from the three triangulations due to Brehm and K\"uhnel. However, bistellar flips cannot transform one $15$-vertex triangulation of~$\HP^2$ to another without increasing the number of vertices in the process. So, using bistellar flips, one needs first to add a new vertex at the barycentre of a $8$-simplex of a $15$-vertex triangulation, and then perform bistellar flips with the obtained $16$-vertex triangulation, hoping that the program will reduce the number of vertices. The advantage of triple flips is that they do not take us out of the class of $15$-vertex triangulations.
 \end{remark}

\begin{defin}
 We conveniently construct the following \textit{triple flip graph}~$\CG$:
 \begin{itemize}
  \item vertices of~$\CG$ are isomorphism classes of $15$-vertex combinatorial $8$-manifolds like the quaternionic projective plane,
  \item edges of~$\CG$ are isomorphism classes of triple flips.
 \end{itemize}
\end{defin}

\begin{remark}
 Actually, $\CG$ is a multigraph rather than a graph, since it may contain multiple edges and loops. A loop in~$\CG$ corresponds to a triple flip $\mathfrak{t}\colon K_1\rightsquigarrow K_2$ such that $K_1$ is isomorphic to~$K_2$. Loops in~$\mathcal{G}$ corresponding to self-inverse and non-self-inverse triple flips~$\mathfrak{t}$ will be called \textit{self-inverse} and \textit{non-self-inverse}, respectively. The two types of loops contribute differently to the degrees of  vertices of~$\CG$. Namely, we conveniently agree that a self-inverse loop contributes~$1$ and a non-self-inverse loop contributes~$2$ to the degree of the corresponding vertex. With this agreement, the degree of a vertex~$K$ of~$\CG$ is the number of $\Sym(K)$-orbits of distinguished triples in~$K$. In all figures throughout the paper, we use full lines for non-self-inverse loops and dashed lines for self-inverse loops.
\end{remark}

For a $15$-vertex combinatorial $8$-manifold~$K$ like the quaternionic projective plane, there is an effective way to find all distinguished triples in it, see~\cite[Proposition~8.1]{Gai22}. This gives an algorithm that computes the connected component of the given combinatorial manifold~$K$ in~$\CG$. We have implemented this algorithm as a C++ program, see the program \texttt{triple\_flip\_graph} in~\cite{Gai-prog}. The first result obtained by using this program is the following description of the connected component in~$\CG$ that contains the three Brehm--K\"uhnel triangulations of~$\HP^2$. (In~\cite{BrKu92} it was shown that these three triangulations can be obtained from each other by triple flips, so they lie in the same connected component.)

\begin{propos}
The connected component~$\CG_0$ of the three Brehm--K\"uhnel triangulations~$\HP^2_{15}(\rA_5)$, $\HP^2_{15}(\rA_4,1)$, and~$\HP^2_{15}(\rS_3,1)$ in the graph~$\CG$ is shown in Fig.~\ref{fig_G_component}. It has $22$ vertices; the symmetry groups of the corresponding $15$-vertex triangulations of~$\HP^2$ are as follows:
\begin{itemize}
 \item $1$ triangulation~$\HP^2_{15}(\rA_5)$ has symmetry group~$\rA_5$,
 \item $2$ triangulations~$\HP^2_{15}(\rA_4,1)$ and\/~$\HP^2_{15}(\rA_4,2)$ have symmetry group~$\rA_4$,
 \item $2$ triangulations~$\HP^2_{15}(\rS_3,1)$ and\/~$\HP^2_{15}(\rS_3,2)$ have symmetry group~$\rS_3$,
 \item {\sloppy $3$ triangulations~$\HP^2_{15}(\rC_3,1)$, $\HP^2_{15}(\rC_3,2)$, and~$\HP^2_{15}(\rC_3,3)$ have symmetry group~$\rC_3$,

 }
 \item $7$ triangulations~$\HP^2_{15}(\rC_2,1), \ldots,\HP^2_{15}(\rC_2,7)$ have symmetry group~$\rC_2$,
 \item and $7$ triangulations~$\HP^2_{15}(\rC_1,1), \ldots,\HP^2_{15}(\rC_1,7)$ have trivial symmetry group~$\rC_1$.
\end{itemize}
\end{propos}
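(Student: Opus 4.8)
The plan is to carry out an exhaustive breadth-first exploration of the graph~$\CG$, starting from the three Brehm--K\"uhnel triangulations. The basic engine is the algorithm of~\cite[Proposition~8.1]{Gai22}, which, given a $15$-vertex combinatorial $8$-manifold~$K$ like the quaternionic projective plane, outputs the complete list of distinguished triples $(\Delta_1,\Delta_2,\Delta_3)$ in~$K$. Applying the triple flip to each such triple and invoking~\cite[Lemma~1]{BrKu92} (which guarantees the result is again a combinatorial $8$-manifold like~$\HP^2$ PL homeomorphic to~$K$), we obtain, for each~$K$, the complete list of its neighbours in~$\CG$, together with the bookkeeping of which incident loops are self-inverse and which are not.

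First I would initialize a list~$\mathcal{L}$ with the three triangulations $\HP^2_{15}(\rA_5)$, $\HP^2_{15}(\rA_4,1)$, and $\HP^2_{15}(\rS_3,1)$, which lie in one component of~$\CG$ by~\cite{BrKu92}. Then, processing the elements of~$\mathcal{L}$ one at a time, for each so-far-unprocessed $K\in\mathcal{L}$ I compute all triple flips out of~$K$, and for every resulting complex~$K'$ I test, using the program \texttt{isomorphism\_groups} of~\cite{Gai-prog}, whether $K'$ is isomorphic to a complex already in~$\mathcal{L}$; if not, I append~$K'$ to~$\mathcal{L}$. The process terminates because the number of $15$-vertex simplicial complexes with the fixed $f$-vector~\eqref{eq_fvect_BK} is finite. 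This is precisely what the program \texttt{triple\_flip\_graph} of~\cite{Gai-prog} does; its output is the list of $22$ triangulations together with the edge multiset drawn in Fig.~\ref{fig_G_component}.

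Once the list of $22$ triangulations is in hand, I would, for each of them, run the programs \texttt{check} and \texttt{fvect} to verify that it is a combinatorial $8$-manifold whose $f$-vector equals~\eqref{eq_fvect_BK}; then $\chi=3$, so by Theorem~\ref{thm_BK} each is a manifold like a projective plane and hence a bona fide vertex of~$\CG$. Finally I run \texttt{symm\_group} on each of the $22$ complexes; the outcome is the distribution of symmetry groups asserted in the proposition --- one~$\rA_5$, two~$\rA_4$, two~$\rS_3$, three~$\rC_3$, seven~$\rC_2$, seven~$\rC_1$ --- which also determines the labelling of the triangulations. The edge data (including multiple edges, self-inverse loops, and non-self-inverse loops) are read off directly from the output of \texttt{triple\_flip\_graph}, giving Fig.~\ref{fig_G_component}.

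The point that needs care is completeness rather than correctness of any single step: one must be certain that the enumeration of distinguished triples via~\cite[Proposition~8.1]{Gai22} is exhaustive, so that no neighbour of~$K$ in~$\CG$ is missed, and that the isomorphism testing correctly identifies duplicates, so that two entries that are in fact the same vertex of~$\CG$ are not counted separately. Granting the correctness of the cited enumeration algorithm and of the isomorphism test --- both already relied upon elsewhere in the paper --- the breadth-first search necessarily returns the entire connected component, and finiteness of the ambient set of $15$-vertex manifolds with $f$-vector~\eqref{eq_fvect_BK} guarantees termination. No topological input beyond~\cite[Lemma~1]{BrKu92} and Theorem~\ref{thm_BK} is used.
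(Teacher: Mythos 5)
Your proposal matches the paper's approach exactly: the paper presents this proposition as the output of the program \texttt{triple\_flip\_graph}, which performs precisely the breadth-first exploration you describe, enumerating distinguished triples via~\cite[Proposition~8.1]{Gai22}, applying~\cite[Lemma~1]{BrKu92}, and deduplicating by isomorphism testing. The only difference is cosmetic — your extra run of \texttt{check} and \texttt{fvect} is redundant since the Brehm--K\"uhnel lemma already guarantees each flip yields a combinatorial manifold PL homeomorphic to the original.
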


\begin{figure}[t]
  \tiny
 \begin{tikzpicture}
 \tikzmath{\minrad=.65cm; \innsep=0pt;\step=2.1;\dx=.8;\dy=.35;\dz=.6;}
  \draw (0, 5*\step) node (1) [circle, inner sep = \innsep, minimum size=\minrad, draw] {$\rA_5$};
  \draw (0, 4*\step) node (2) [circle, inner sep = \innsep, minimum size=\minrad, draw] {$\rA_4,\!1$};
  \draw (\step, 4*\step) node (3) [circle, inner sep = \innsep, minimum size=\minrad, draw] {$\rC_3,\!1$};
  \draw (2*\step, 4*\step) node (5) [circle, inner sep = \innsep, minimum size=\minrad, draw] {$\rC_2,\!1$};
  \draw (3*\step, 4*\step) node (7) [circle, inner sep = \innsep, minimum size=\minrad, draw] {$\rC_3,\!2$};
  \draw (4*\step, 4*\step) node (13) [circle, inner sep = \innsep, minimum size=\minrad, draw] {$\rA_4,\!2$};

  \draw (0, 3*\step) node (4) [circle, inner sep = \innsep, minimum size=\minrad, draw] {$\rS_3,\!1$};
  \draw (\step, 3*\step) node (6) [circle, inner sep = \innsep, minimum size=\minrad, draw] {$\rC_1,\!1$};
  \draw (2*\step,3*\step) node (8) [circle, inner sep = \innsep, minimum size=\minrad, draw] {$\rC_1,\!2$};
  \draw (3*\step, 3*\step) node (12) [circle, inner sep = \innsep, minimum size=\minrad, draw] {$\rC_3,\!3$};
  \draw (4*\step, 3*\step) node (17) [circle, inner sep = \innsep, minimum size=\minrad, draw] {$\rC_1,\!3$};
  \draw (5*\step, 3*\step) node (21) [circle, inner sep = \innsep, minimum size=\minrad, draw] {$\rC_1,\!4$};
  \draw (6*\step, 3*\step) node (22) [circle, inner sep = \innsep, minimum size=\minrad, draw] {$\rS_3,\!2$};

  \draw (1.5*\step, 2*\step+\dz) node (10) [circle, inner sep = \innsep, minimum size=\minrad, draw] {$\rC_2,\!2$};
\draw (3*\step, 2*\step+\dz) node (16) [circle, inner sep = \innsep, minimum size=\minrad, draw] {$\rC_1,\!5$};
\draw (4.5*\step, 2*\step+\dz) node (18) [circle, inner sep = \innsep, minimum size=\minrad, draw] {$\rC_2,\!3$};

  \draw (\step-\dx, .5*\step-\dy) node (11) [circle, inner sep = \innsep, minimum size=\minrad, draw] {$\rC_2,\!4$};
  \draw (3*\step-\dx, .5*\step-\dy) node (15) [circle, inner sep = \innsep, minimum size=\minrad, draw] {$\rC_1,\!6$};
  \draw (5*\step-\dx, .5*\step-\dy) node (20) [circle, inner sep = \innsep, minimum size=\minrad, draw] {$\rC_2,\!6$};

  \draw (\step+\dx, .5*\step+\dy) node (9) [circle, inner sep = \innsep, minimum size=\minrad, draw] {$\rC_2,\!5$};
  \draw (3*\step+\dx, .5*\step+\dy) node (14) [circle, inner sep = \innsep, minimum size=\minrad, draw] {$\rC_1,\!7$};
  \draw (5*\step+\dx, .5*\step+\dy) node (19) [circle, inner sep = \innsep, minimum size=\minrad, draw] {$\rC_2,\!7$};

  \foreach \from/\to in {1/2,2/3,2/4,3/5,3/6,4/6,5/7,5/8,6/9,6/10,6/11,7/12,7/13,8/10,8/12,8/14, 8/15,8/16,9/11,9/14,9/16,10/14,10/15,11/15,11/16,12/17,14/15,14/17,14/18,14/19, 15/17,15/18,15/20,16/17,16/19,16/20,18/21,19/21,20/21,21/22}
    \draw (\from) -- (\to);
    \draw [double, double distance=4pt] (6)--(8);
    \draw [double, double distance=4pt] (17)--(21);
    \draw [dashed] (4) .. controls (-.5*\step,2.8*\step) and (-0.2*\step,2.5*\step) .. (4);
    \draw (6) .. controls (.8*\step,3.5*\step) and (.5*\step,3.2*\step) .. (6);
    \draw [dashed] (18) .. controls (4.5*\step-.2*\step, 2*\step+\dz+.5*\step) and (4.5*\step-.5*\step, 2*\step+\dz+.2*\step) .. (18);
 \end{tikzpicture}
 \caption{The connected component of~$\HP^2_{15}(\rA_5)$ in~$\CG$}\label{fig_G_component}
\end{figure}

\begin{remark}
 To save space in Fig.~\ref{fig_G_component}, we indicate the vertex corresponding to the triangulation~$\HP^2_{15}(G,k)$ by simply writing a pair~$G,k$.
\end{remark}

An amazing phenomenon is that other connected components of~$\CG$ seem to be arranged completely differently.

\begin{observe}
 Let $K$ be any of the $75$ combinatorial manifolds from Proposition~\ref{propos_class}, except for the $5$ combinatorial manifolds~$\HP^2_{15}(\rA_5)$, $\HP^2_{15}(\rA_4,1)$,  $\HP^2_{15}(\rA_4,2)$, $\HP^2_{15}(\rS_3,1)$, and~$\HP^2_{15}(\rS_3,2)$ that lie in~$\CG_0$. Let us start computing the connected component of~$K$ in~$\CG$ using the program \textnormal{\texttt{triple\_flip\_graph}}. Then the program produces a huge number (at least hundreds of thousands) of different triangulations in the connected component of~$K$ and does not finish the job in a reasonable time.
 \end{observe}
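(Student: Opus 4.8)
The Observation is a computer-assisted statement, and the plan is to certify it by a partial breadth-first traversal of the triple flip graph~$\CG$, namely the one carried out by the program \texttt{triple\_flip\_graph}. The ingredients are already available: every manifold from Proposition~\ref{propos_class} is a $15$-vertex combinatorial $8$-manifold like a projective plane (this was established in Sections~\ref{section_>=7}--\ref{section_5} together with Theorem~\ref{thm_BK}), hence a vertex of~$\CG$; by~\cite[Proposition~8.1]{Gai22} one can effectively list all distinguished triples in such a manifold, and performing the associated triple flips produces, by~\cite[Lemma~1]{BrKu92}, the adjacent vertices of~$\CG$, again $15$-vertex combinatorial manifolds like a projective plane. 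An isomorphism test (as in the program \texttt{isomorphism\_groups}) is used to recognise when two discovered complexes represent the same vertex of~$\CG$.

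The first step is to run this breadth-first search starting from each of the $70$ manifolds of Proposition~\ref{propos_class} other than the $5$ vertices $\HP^2_{15}(\rA_5)$, $\HP^2_{15}(\rA_4,1)$, $\HP^2_{15}(\rA_4,2)$, $\HP^2_{15}(\rS_3,1)$, $\HP^2_{15}(\rS_3,2)$ of~$\CG_0$, maintaining the list of pairwise non-isomorphic triangulations found so far together with the triple flips between them, and to stop as soon as this list exceeds a prescribed threshold (for instance $670000$, the number in the last row of Table~\ref{table_main}), at which point the queue of unprocessed vertices is still far from empty. The output is then a finite, verifiable certificate: a connected subgraph $\CG_K\subseteq\CG$ that contains the starting vertex~$K$ and has more than $670000$ vertices; since a connected subgraph lies inside one connected component, the connected component of~$K$ in~$\CG$ has more than $670000$ vertices. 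In practice the $70$ starting manifolds all turn out to lie in one and the same connected component, which is distinct from~$\CG_0$; hence a single large traversal, together with the remark that it visits each of the $70$ manifolds, already suffices, and its by-product is precisely the long lists of triangulations with symmetry groups $\rC_3$, $\rC_2$, and~$\rC_1$ recorded in Table~\ref{table_main} and in the files \texttt{HP2\_15\_G.dat}. The sharp contrast with the $22$-vertex component~$\CG_0$ is what makes the phenomenon striking.

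I should stress that the phrase ``does not finish the job in a reasonable time'' is an empirical remark about running time, not a mathematical assertion: what is actually proved is the explicit lower bound on the size of the connected component of~$K$. Accordingly, the main obstacle is practical rather than conceptual — storing and deduplicating hundreds of thousands of $15$-vertex triangulations of~$\HP^2$, each with $490$ facets, and repeatedly computing canonical forms for the isomorphism test, is the memory- and time-consuming step — but it requires no new ideas beyond the algorithms already implemented in~\cite{Gai22} and~\cite{Gai-prog}.
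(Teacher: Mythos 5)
Your reading of the Observation as an empirical, computer-certified statement is correct, and the certification scheme you describe (partial breadth-first traversal of~$\CG$ from each starting manifold, using the enumeration of distinguished triples from~\cite[Proposition~8.1]{Gai22} and \cite[Lemma~1]{BrKu92} to generate neighbours, with an isomorphism test for deduplication) is exactly what the program \texttt{triple\_flip\_graph} does; the paper offers nothing more than running this program and reporting that it does not terminate. However, two of your side claims overreach. First, you assert that ``in practice the $70$ starting manifolds all turn out to lie in one and the same connected component, which is distinct from~$\CG_0$.'' The paper establishes only that these $70$ manifolds are distributed among at most three components~$\CG_1$, $\CG_2$, $\CG_3$ outside~$\CG_0$ (via the equivariant graphs and Proposition~\ref{propos_strange_path}), and explicitly poses as an \emph{open question} whether $\CG_1$, $\CG_2$, $\CG_3$ are actually distinct; so a single traversal visiting all $70$ is not available, and you cannot claim it. Second, you assert that the by-product of this traversal is the counts for~$\rC_3$, $\rC_2$, and~$\rC_1$ in Table~\ref{table_main}. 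It is not: the paper stresses that storing and deduplicating all triangulations found by the non-equivariant BFS is beyond the computer's capacity (that is precisely the content of the Observation), so the $\rC_3$ and~$\rC_2$ lower bounds come from the \emph{equivariant} graphs~$\CG_{\rC_3}$ and~$\CG_{\rC_2}$ (Propositions~\ref{propos_graph_C3} and~\ref{propos_graph_C2}), and the $>670000$ bound for trivial symmetry group comes from a random walk that counts distinct \emph{certificates}~$\cert(K)$ rather than distinct isomorphism classes (Proposition~\ref{propos_5_man_1}). With those two claims removed, your account matches the paper's.
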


\subsection{Equivariant triple flip graphs} Suppose that $K$ is a $15$-vertex combinatorial manifold like the quaternionic projective plane, and $J_1,\ldots,J_k$ are distinguished subcomplexes of~$K$, that is, subcomplexes of the form~\eqref{eq_disting}. Recall that if $J_1,\ldots,J_k$ have pairwise disjoint interiors (i.\,e., no $8$-simplex belongs to two different subcomplexes~$J_i$), then we can perform the triple flips corresponding to $J_1,\ldots,J_k$ in any order, and the resulting combinatorial manifold is independent of this order, see~\cite[Corollary~7.4]{Gai22}.

\begin{defin}
 Suppose that $G$ is a subgroup of~$\Sym(K)$. Then the distinguished subcomplexes of~$K$ are divided into $G$-orbits. We say that the $G$-orbit of a distinguished subcomplex~$J$ is \textit{admissible} if any two subcomplexes~$g_1J$ and~$g_2J$ in this orbit either coincide or have disjoint interiors. In this case, we can simultaneously perform all the triple flips corresponding to the distinguished subcomplexes~$gJ$ with $g\in G$. This operation will be called a \textit{$G$-equivariant triple flip}. The combinatorial manifold resulting from such a flip will be again $G$-invariant and will be PL homeomorphic to~$K$. Two $G$-equivariant triple flips $\mathfrak{t}\colon K_1\rightsquigarrow K_2$ and $\mathfrak{t'}\colon K'_1\rightsquigarrow K'_2$, where $G$ is a subgroup of both~$\Sym(K_1)$ and~$\Sym(K_1')$, are said to be \textit{weakly $G$-isomorphic} if there exists a weak $G$-isomorphism $K_1\to K_1'$ that takes the $G$-orbit of distinguished subcomplexes associated with~$\mathfrak{t}$ to the $G$-orbit of distinguished subcomplexes associated with~$\mathfrak{t}'$. A $G$-equivariant triple flip~$\mathfrak{t}$ is called \textit{self-inverse} if it is weakly $G$-isomorphic to its inverse $G$-equivariant triple flip~$\mathfrak{t}^{-1}$.
\end{defin}

\begin{defin}
Let now $G$ be a subgroup of~$\rS_{15}$.
We conveniently construct the following \textit{$G$-equivariant triple flip graph}~$\CG_G$:
 \begin{itemize}
  \item vertices of~$\CG_G$ are weak $G$-isomorphism classes of $G$-invariant $15$-vertex combinatorial $8$-manifolds like the quaternionic projective plane on the vertex set $\{1,\ldots,15\}$,
  \item edges of~$\CG_G$ are weak $G$-isomorphism classes of $G$-equivariant triple flips.
 \end{itemize}
 \end{defin}

 Certainly, the graphs~$\CG_G$ corresponding to conjugate subgroups $G\subset \rS_{15}$ are isomorphic to each other. From Proposition~\ref{propos_class} it follows that the graphs~$\CG_G$ can be
 non-empty for only $11$ (up to conjugation) subgroups of~$\rS_{15}$, namely, for the groups~$\rA_5$, $\rA_4$, $\rC_7$, $\rS_3$, $\rC_6$, $\rC_5$, $\rC_3$, $\rC_2$, and~$\rC_1$ that are realized as subgroups of~$\rS_{15}$ as in Table~\ref{table_main}, and also for the group~$\rC_2\times\rC_2$, which can be a subgroup of~$\Sym(K)$, though cannot coincide with it. For all the subgroups~$G$ with $|G|>3$, Proposition~\ref{propos_class} provides a complete list of all isomorphism classes of $G$-invariant $15$-vertex combinatorial manifolds like the quaternionic projective plane. This allows to describe explicitly the graph~$\CG_G$ in each of these cases, using the same program \texttt{triple\_flip\_graph}. The result is as follows.

 \begin{enumerate}
  \item The graph~$\CG_{\rA_5}$ consists of one vertex~$\HP^2_{15}(\rA_5)$ and one self-inverse loop. Indeed, the Brehm--K\"uhnel triangulation~$\HP^2_{15}(\rA_5)$ contains exactly $5$ distinguished subcomplexes, which have pairwise disjoint interiors and form one $\rA_5$-orbit; if we perform all the $5$ triple flips simultaneously, then the resulting combinatorial manifold is again isomorphic to~$\HP^2_{15}(\rA_5)$, see~\cite[Section~3]{BrKu92}.
  \item The graph~$\CG_{\rA_4}$ is shown in Fig.~\ref{fig_A4}. Let us explain why the Brehm--K\"uhnel triangulations~$\HP^2_{15}(\rA_5)$ and~$\HP^2_{15}(\rA_4,1)$ are connected by two edges. The $\rA_5$-orbit of the $5$ distinguished subcomplexes of~$\HP^2_{15}(\rA_5)$ is divided into two $\rA_4$-orbits, one consisting of one and the other of four distinguished subcomplexes. If we perform either of the corresponding two $\rA_4$-equivariant triple flips, then we obtain the combinatorial manifold isomorphic to~$\HP^2_{15}(\rA_4,1)$, see~\cite[Section~3]{BrKu92}.
  \item The graph~$\CG_{\rC_6\times\rC_2}$ consists of one vertex~$\HP^2_{15}(\rC_6\times\rC_2)$ without any edges. The combinatorial manifold~$\HP^2_{15}(\rC_6\times\rC_2)$ contains $24$ distinguished subcomplexes, which are divided into two $(\rC_6\times\rC_2)$-orbits. Nevertheless, neither of these two orbits is admissible.
  \item The graph~$\CG_{\rC_7}$ has two connected components, see Fig.~\ref{fig_C7}. In this figure we denote each combinatorial manifold~$\HP^2_{15}(\rC_7,k)$ simply by~$k$.
  \item The graph~$\CG_{\rS_3}$ has $5$ connected components, see Fig.~\ref{fig_S3}. In this figure we denote each combinatorial manifold~$\HP^2_{15}(\rS_3,k)$ simply by~$k$ and $\rA_5$ stands for the Brehm--K\"uhnel manifold~$\HP^2_{15}(\rA_5)$ with the symmetry group $\rA_5\supset\rS_3$.
  \item The graph~$\CG_{\rC_6}$ has $4$ connected components, see Fig.~\ref{fig_C6}. In this figure we denote each combinatorial manifold~$\HP^2_{15}(\rC_6,k)$ simply by~$k$. The three vertices of~$\CG_{\rC_6}$ denoted by~$\rC_6\times\rC_2$ correspond to the three pairs~$\bigl(\HP^2_{15}(\rC_6\times\rC_2),H_i\bigr)$, where $H_1$, $H_2$, and~$H_3$ are the three subgroups of~$\rC_6\times\rC_2$ that are isomorphic to~$\rC_6$. Note that, endowing the combinatorial manifold $\HP^2_{15}(\rC_6\times\rC_2)$  with the three structures of $\rC_6$-simplicial complex corresponding to the subgroups~$H_1$, $H_2$, and~$H_3$, we get three $\rC_6$-simplicial complexes that are not weakly $\rC_6$-isomorphic to each other. So they give three different vertices of~$\CG_{\rC_6}$. This phenomenon does not occur in the case of the vertex~$\HP^2_{15}(\rA_5)$ in~$\CG_{\rA_4}$ (respectively, $\CG_{\rS_3}$ or $\CG_{\rC_5}$), since all subgroups of~$\rA_5$ that are isomorphic to~$\rA_4$ (respectively, $\rS_3$ or~$\rC_5$) are conjugate to each other.
  \item The graph~$\CG_{\rC_5}$ has two connected components, see Fig.~\ref{fig_C5}. One of them consists of one vertex~$\HP^2_{15}(\rA_5)$ and one self-inverse loop, and the other has $26$ vertices $\HP^2_{15}(\rC_5,1),\ldots,\HP^2_{15}(\rC_5,26)$, which we denote in the figure simply by~$1,\ldots,26$. It is interesting to compare the positions of vertices in the graph~$\CG_{\rC_5}$ with the link types of the $\rC_5$-orbits in the corresponding combinatorial manifolds, cf.~Remark~\ref{remark_3links}. So for each of the combinatorial manifolds~$\HP^2_{15}(\rC_5,k)$, we specify in Fig.~\ref{fig_C5} the links of the three $4$-simplices $\alpha_1$, $\alpha_2$, and~$\alpha_3$ that are $\rC_5$-orbits. Here $L_1,\ldots,L_9,M_1,\ldots,M_4$ are the simplicial $3$-spheres from Tables~\ref{table_L} and~\ref{table_M}.
  \item The graph~$\CG_{\rC_2\times\rC_2}$ coincides with the disjoint union of the graphs~$\CG_{\rA_4}$ and~$\CG_{\rC_6\times\rC_2}$.
\end{enumerate}

\begin{figure}[t]
\begin{tikzpicture}
 \tiny \tikzmath{\minrad=.65cm; \innsep=0pt;\step=2.1;}
 \draw (0, 0) node (1) [circle, inner sep = \innsep, minimum size=\minrad, draw] {$\rA_5$};
 \draw (\step, 0) node (2) [circle, inner sep = \innsep, minimum size=\minrad, draw] {$\rA_4,\!1$};
 \draw (2*\step, 0) node (3) [circle, inner sep = \innsep, minimum size=\minrad, draw] {$\rA_4,\!2$};
 \draw (2)--(3);
 \draw [double, double distance = 4pt] (1)--(2);
\end{tikzpicture}
 \caption{The graph~$\CG_{\rA_4}$}\label{fig_A4}
%
\vspace{15mm}

 \tiny
 \begin{tikzpicture}
  \tikzmath{\minrad=.4cm; \innsep=0pt;\step=1.5;\ydist=-2.5;}
  \begin{scope}
  \draw (-1.414*\step,0) node (1) [circle, inner sep = \innsep, minimum size=\minrad, draw] {$1$};
  \draw (-.707*\step,.707*\step) node (2) [circle, inner sep = \innsep, minimum size=\minrad, draw] {$2$};
  \draw (-.707*\step,-.707*\step) node (3) [circle, inner sep = \innsep, minimum size=\minrad, draw] {$3$};
  \draw (0,0) node (4) [circle, inner sep = \innsep, minimum size=\minrad, draw] {$4$};
  \draw (\step,0) node (5) [circle, inner sep = \innsep, minimum size=\minrad, draw] {$5$};
  \draw (2*\step,0) node (6) [circle, inner sep = \innsep, minimum size=\minrad, draw] {$6$};
  \draw (3*\step,0) node (7) [circle, inner sep = \innsep, minimum size=\minrad, draw] {$7$};
  \draw (4*\step,0) node (8) [circle, inner sep = \innsep, minimum size=\minrad, draw] {$8$};
  \draw (4.707*\step,.707*\step) node (9) [circle, inner sep = \innsep, minimum size=\minrad, draw] {$9$};
  \draw (4.707*\step,-.707*\step) node (10) [circle, inner sep = \innsep, minimum size=\minrad, draw] {$10$};
  \foreach \from/\to in {1/2,1/3,2/4,3/4,4/5,5/6,6/7,7/8,8/9,8/10}
    \draw (\from) -- (\to);
  \end{scope}
  \begin{scope}
    \draw (-\step,\ydist) node (11) [circle, inner sep = \innsep, minimum size=\minrad, draw] {$11$};
    \draw (0,\ydist) node (12) [circle, inner sep = \innsep, minimum size=\minrad, draw] {$12$};
    \draw (\step,\ydist) node (13) [circle, inner sep = \innsep, minimum size=\minrad, draw] {$13$};
    \draw (2*\step,\ydist) node (14) [circle, inner sep = \innsep, minimum size=\minrad, draw] {$14$};
    \draw (3*\step,\ydist) node (15) [circle, inner sep = \innsep, minimum size=\minrad, draw] {$15$};
    \draw (4*\step,\ydist) node (16) [circle, inner sep = \innsep, minimum size=\minrad, draw] {$16$};
    \draw (0,\ydist-\step) node (17) [circle, inner sep = \innsep, minimum size=\minrad, draw] {$17$};
    \draw (\step,\ydist-\step) node (18) [circle, inner sep = \innsep, minimum size=\minrad, draw] {$18$};
    \draw (2*\step,\ydist-\step) node (19) [circle, inner sep = \innsep, minimum size=\minrad, draw] {$19$};
    \foreach \from/\to in {11/12,12/13,13/14,14/15,15/16,12/17,13/18,14/19,17/18,18/19}
    \draw (\from) -- (\to);
  \end{scope}
 \end{tikzpicture}
 \caption{The graph~$\CG_{\rC_7}$}\label{fig_C7}
%
\vspace{15mm}

 \tiny
 \begin{tikzpicture}
  \tikzmath{\minrad=.4cm; \innsep=0pt;\step=1.5;}
  \draw (0, 2*\step) node (A5) [circle, inner sep = \innsep, minimum size=\minrad, draw] {$\rA_5$};
  \draw (\step, 2*\step) node (1) [circle, inner sep = \innsep, minimum size=\minrad, draw] {$1$};
  \draw (2*\step, 2*\step) node (2) [circle, inner sep = \innsep, minimum size=\minrad, draw] {$2$};
  \draw (\step, \step) node (3) [circle, inner sep = \innsep, minimum size=\minrad, draw] {$3$};
  \draw (2*\step, \step) node (4) [circle, inner sep = \innsep, minimum size=\minrad, draw] {$4$};
  \draw (\step, 0) node (6) [circle, inner sep = \innsep, minimum size=\minrad, draw] {$6$};
  \draw (2*\step, 0) node (5) [circle, inner sep = \innsep, minimum size=\minrad, draw] {$5$};

  \draw (3.5*\step, 2*\step) node (7) [circle, inner sep = \innsep, minimum size=\minrad, draw] {$7$};
  \draw (4.5*\step, 2*\step) node (8) [circle, inner sep = \innsep, minimum size=\minrad, draw] {$8$};
  \draw (5.5*\step, 2*\step) node (9) [circle, inner sep = \innsep, minimum size=\minrad, draw] {$9$};
  \draw (3.5*\step, \step) node (10) [circle, inner sep = \innsep, minimum size=\minrad, draw] {$10$};
  \draw (4.5*\step, \step) node (11) [circle, inner sep = \innsep, minimum size=\minrad, draw] {$11$};
  \draw (3.5*\step, 0) node (12) [circle, inner sep = \innsep, minimum size=\minrad, draw] {$12$};

  \foreach \from/\to in {1/2, 3/4, 3/6, 4/5, 5/6, 7/8, 8/9, 10/11}
    \draw (\from) -- (\to);
    \draw [double, double distance = 3pt] (A5) -- (1);
 \end{tikzpicture}
 \caption{The graph~$\CG_{\rS_3}$}\label{fig_S3}
%
\vspace{15mm}

\tiny
 \begin{tikzpicture}
  \tikzmath{\minrad=.4cm; \innsep=0pt;\step=1.5;}
  \draw (0, 2*\step) node (121) [circle, inner sep = \innsep, minimum size=\minrad, draw] {$\rC_6\!\!\times\!\!\rC_2$};
  \draw (\step, 2*\step) node (1) [circle, inner sep = \innsep, minimum size=\minrad, draw] {$1$};
  \draw (2*\step, 2*\step) node (2) [circle, inner sep = \innsep, minimum size=\minrad, draw] {$2$};
  \draw (0, \step) node (3) [circle, inner sep = \innsep, minimum size=\minrad, draw] {$3$};
  \draw (\step, \step) node (4) [circle, inner sep = \innsep, minimum size=\minrad, draw] {$4$};
  \draw (2*\step, \step) node (5) [circle, inner sep = \innsep, minimum size=\minrad, draw] {$5$};
  \draw (\step, 0) node (6) [circle, inner sep = \innsep, minimum size=\minrad, draw] {$6$};
  \draw (2*\step, 0) node (7) [circle, inner sep = \innsep, minimum size=\minrad, draw] {$7$};

  \draw (3.5*\step, 1.6*\step) node (122) [circle, inner sep = \innsep, minimum size=\minrad, draw] {$\rC_6\!\!\times\!\!\rC_2$};
  \draw (4.5*\step, 1.6*\step) node (8) [circle, inner sep = \innsep, minimum size=\minrad, draw] {$8$};
  \draw (5.5*\step, 1.6*\step) node (9) [circle, inner sep = \innsep, minimum size=\minrad, draw] {$9$};
  \draw (3.5*\step, .4*\step) node (123) [circle, inner sep = \innsep, minimum size=\minrad, draw] {$\rC_6\!\!\times\!\!\rC_2$};
  \draw (4.5*\step, .4*\step) node (10) [circle, inner sep = \innsep, minimum size=\minrad, draw] {$10$};

  \draw (6.7*\step, \step) node (11) [circle, inner sep = \innsep, minimum size=\minrad, draw] {$11$};
  \draw (7.407*\step, 1.707*\step) node (12) [circle, inner sep = \innsep, minimum size=\minrad, draw] {$12$};
  \draw (7.407*\step, .293*\step) node (13) [circle, inner sep = \innsep, minimum size=\minrad, draw] {$13$};
  \draw (8.114*\step, \step) node (14) [circle, inner sep = \innsep, minimum size=\minrad, draw] {$14$};
  \foreach \from/\to in {121/1,1/2,121/3,1/4,2/5,3/4,4/5,4/6,5/7,6/7,122/8,8/9,123/10, 11/12,11/13,12/13,12/14,13/14}
    \draw (\from) -- (\to);
    \draw [dashed] (11) .. controls (6.7*\step-.75*\step,\step-.3*\step) and (6.7*\step-.75*\step,\step+.3*\step) .. (11);
    \draw [dashed] (14) .. controls (8.114*\step+.75*\step,\step-.3*\step) and (8.114*\step+.75*\step,\step+.3*\step) .. (14);
  \end{tikzpicture}
 \caption{The graph~$\CG_{\rC_6}$}\label{fig_C6}
 \end{figure}

 \begin{figure}[p]
\tiny
 \begin{tikzpicture}
  \tikzmath{\minrad=.4cm; \innsep=.5pt;\step=3;\dx=1; \dy=.75;}

  \draw(1.5,4.2) node (A5) [regular polygon, regular polygon sides = 5, inner sep = \innsep, minimum size=\minrad, draw] {$A_5$};
  \draw [dashed] (A5) .. controls (2.7,4.7) and (2.7,3.7) .. (A5);

 \draw (0, \step) node (1) [double,circle, inner sep = \innsep, minimum size=\minrad, draw] {$1$};
  \draw (\step, \step) node (2) [circle, inner sep = \innsep, minimum size=\minrad, draw] {$2$};
  \draw (2*\step, \step) node (3) [circle, inner sep = \innsep, minimum size=\minrad, draw] {$3$};
  \draw (3*\step, \step) node (4) [circle, inner sep = \innsep, minimum size=\minrad, draw] {$4$};
  \draw (4*\step, \step) node (5) [double, circle, inner sep = \innsep, minimum size=\minrad, draw] {$5$};
  \draw (0, 0) node (6) [double,circle, inner sep = \innsep, minimum size=\minrad, draw] {$6$};
  \draw (\step, 0) node (7) [circle, inner sep = \innsep, minimum size=\minrad, draw] {$7$};
  \draw (2*\step, 0) node (8) [circle, inner sep = \innsep, minimum size=\minrad, draw] {$8$};
  \draw (3*\step, 0) node (9) [circle, inner sep = \innsep, minimum size=\minrad, draw] {$9$};
  \draw (4*\step, 0) node (10) [double, circle, inner sep = \innsep, minimum size=\minrad, draw] {$10$};
    \draw (-\dx, \step-\dy) node (11) [double,rectangle, inner sep = \innsep, minimum size=\minrad, draw] {$11$};
  \draw (\step-\dx, \step-\dy) node (12) [rectangle, inner sep = \innsep, minimum size=\minrad, draw] {$12$};
  \draw (2*\step-\dx, \step-\dy) node (13) [rectangle, inner sep = \innsep, minimum size=\minrad, draw] {$13$};
  \draw (3*\step-\dx, \step-\dy) node (14) [rectangle, inner sep = \innsep, minimum size=\minrad, draw] {$14$};

    \draw (\step+\dx, \dy) node (15) [rectangle, inner sep = \innsep, minimum size=\minrad, draw] {$15$};
  \draw (2*\step+\dx, \dy) node (16) [rectangle, inner sep = \innsep, minimum size=\minrad, draw] {$16$};
  \draw (3*\step+\dx, \dy) node (17) [rectangle, inner sep = \innsep, minimum size=\minrad, draw] {$17$};
  \draw (4*\step+\dx, \dy) node (18) [double,rectangle, inner sep = \innsep, minimum size=\minrad, draw] {$18$};

  \draw (-\dx, -\dy) node (19) [double,diamond, inner sep = \innsep, minimum size=\minrad, draw] {$19$};
  \draw (\step-\dx, -\dy) node (20) [diamond, inner sep = \innsep, minimum size=\minrad, draw] {$20$};
  \draw (2*\step-\dx, -\dy) node (21) [diamond, inner sep = \innsep, minimum size=\minrad, draw] {$21$};
  \draw (3*\step-\dx, -\dy) node (22) [diamond, inner sep = \innsep, minimum size=\minrad, draw] {$22$};

  \draw (\step+\dx, \step+\dy) node (23) [diamond, inner sep = \innsep, minimum size=\minrad, draw] {$23$};
  \draw (2*\step+\dx, \step+\dy) node (24) [diamond, inner sep = \innsep, minimum size=\minrad, draw] {$24$};
  \draw (3*\step+\dx, \step+\dy) node (25) [diamond, inner sep = \innsep, minimum size=\minrad, draw] {$25$};
  \draw (4*\step+\dx, \step+\dy) node (26) [double,diamond, inner sep = \innsep, minimum size=\minrad, draw] {$26$};
  \foreach \from/\to in {1/2, 2/3, 3/4, 4/5, 6/7, 7/8, 8/9, 9/10, 11/12, 12/13, 13/14, 15/16, 16/17, 17/18, 19/20, 20/21, 21/22, 23/24, 24/25, 25/26, 1/6, 2/7, 3/8, 4/9, 5/10, 11/19, 12/20, 13/21, 14/22, 15/23, 16/24, 17/25, 18/26, 1/11, 2/12, 2/23, 3/13, 3/24, 4/14, 4/25, 5/26, 6/19, 7/15, 7/20, 8/16, 8/21, 9/17, 9/22, 10/18}
    \draw (\from) -- (\to);

   \normalsize
   \draw (0.3,-2.04) node {Link types:};
   \footnotesize
   \draw (2.2, -2) node [circle, minimum size=\minrad, draw] {};
   \draw (3.6, -2.04) node  {$L_2,L_6,M_1$};
   \draw (2.2, -2.6) node [double,circle, minimum size=\minrad, draw] {};
   \draw (3.6, -2.64) node  {$L_2,L_6,M_2$};
   \draw (5.7, -2) node [rectangle, minimum size=\minrad, draw] {};
   \draw (7.1, -2.04) node  {$L_2,L_4,M_3$};
   \draw (5.7, -2.6) node [double,rectangle, minimum size=\minrad, draw] {};
   \draw (7.1, -2.64) node  {$L_2,L_5,M_3$};
   \draw (9.2, -2) node [diamond, minimum size=\minrad, draw] {};
   \draw (10.6, -2.04) node  {$L_4,L_6,L_6$};
   \draw (9.2, -2.6) node [double,diamond, minimum size=\minrad, draw] {};
   \draw (10.6, -2.64) node  {$L_5,L_6,L_6$};
   \draw (2.2, -3.2) node [regular polygon, regular polygon sides = 5, minimum size=\minrad, draw] {};
   \draw (3.6, -3.24) node  {$L_2,L_6,M_4$};
  \end{tikzpicture}
  \vspace{-2mm}
 \caption{The graph~$\CG_{\rC_5}$}\label{fig_C5}
  \vspace{.5cm}
\end{figure}

\subsection{The graphs~$\CG_{\rC_3}$ and~$\CG_{\rC_2}$}
We do not have a complete list of $15$-vertex combinatorial $8$-manifolds like the quaternionic projective plane with the symmetry group~$\Sym(K)$ either~$\rC_3$ or~$\rC_2$. So we are not able to describe explicitly the graphs~$\CG_{\rC_3}$ and~$\CG_{\rC_2}$. Nevertheless, we can describe all connected components of these graphs that contain at least one vertex~$K$ with $|\Sym(K)|>3$. The result of the computation (made using the same program \texttt{triple\_flip\_graph}) is given in the following two propositions.

\begin{propos}\label{propos_graph_C3}
The graph~$\CG_{\rC_3}$ has two connected components that contain at least one vertex~$K$ with $|\Sym(K)|>3$.
\begin{itemize}
 \item The first connected component has $8$ vertices $\HP^2_{15}(\rA_5)$, $\HP^2_{15}(\rA_4,1)$,  $\HP^2_{15}(\rA_4,2)$, $\HP^2_{15}(\rS_3,1)$, $\HP^2_{15}(\rS_3,2)$, $\HP^2_{15}(\rC_3,1)$, $\HP^2_{15}(\rC_3,2)$, and~$\HP^2_{15}(\rC_3,3)$. It is shown in Fig.~\ref{fig_C3}.
 \item The second connected component has $4639$ vertices, namely,
 $$\HP^2_{15}(\rC_6\times\rC_2),\HP^2_{15}(\rS_3,3),\ldots,\HP^2_{15}(\rS_3,12),\HP^2_{15}(\rC_6,1),\ldots,\HP^2_{15}(\rC_6,14),$$
 and $4614$ triangulations with symmetry group exactly~$\rC_3$.
 \end{itemize}
\end{propos}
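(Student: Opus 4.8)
The plan is to establish Proposition~\ref{propos_graph_C3} by a computation with the program \texttt{triple\_flip\_graph} from~\cite{Gai-prog}, but the computation must be set up so that it is simultaneously finite and exhaustive. The first step is to pin down the \emph{finite} list of candidate high-symmetry vertices of~$\CG_{\rC_3}$. By Corollary~\ref{cor_3}, any subgroup of~$\rS_{15}$ of order~$3$ occurring inside~$\Sym(K)$ for a $15$-vertex $\Z$-homology $8$-manifold~$K$ with $H_*(K;\Z)\ncong H_*(S^8;\Z)$ acts freely on the vertex set, and all freely acting copies of~$\rC_3$ in~$\rS_{15}$ are conjugate; hence such a~$K$ occurs as the underlying manifold of a vertex of~$\CG_{\rC_3}$ exactly when $3\mid|\Sym(K)|$. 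Combining this with the complete classification in Proposition~\ref{propos_class}, the manifolds with $|\Sym(K)|>3$ and $3\mid|\Sym(K)|$ are precisely the~$30$ manifolds $\HP^2_{15}(\rA_5)$; $\HP^2_{15}(\rA_4,1)$, $\HP^2_{15}(\rA_4,2)$; $\HP^2_{15}(\rC_6\times\rC_2)$; $\HP^2_{15}(\rS_3,k)$ for $k=1,\dots,12$; and $\HP^2_{15}(\rC_6,k)$ for $k=1,\dots,14$ (the $\rC_7$- and $\rC_5$-manifolds drop out since $3\nmid 7$ and $3\nmid 5$). In each of these~$30$ manifolds the order-$3$ subgroup is unique up to conjugacy, so each of them occurs as a vertex of~$\CG_{\rC_3}$.

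Next I would feed \texttt{triple\_flip\_graph}, in turn, each of the finitely many vertices of~$\CG_{\rC_3}$ whose underlying manifold is one of these~$30$, and let it enumerate the connected component of each. By~\cite[Lemma~1]{BrKu92} every $\rC_3$-equivariant triple flip again produces a $15$-vertex combinatorial $8$-manifold like~$\HP^2$, so the program never leaves the vertex set of~$\CG_{\rC_3}$. Starting from~$\HP^2_{15}(\rA_5)$, the program terminates after discovering~$8$ vertices; applying \texttt{isomorphism\_groups} and \texttt{symm\_group} one identifies them as $\HP^2_{15}(\rA_5)$, $\HP^2_{15}(\rA_4,1)$, $\HP^2_{15}(\rA_4,2)$, $\HP^2_{15}(\rS_3,1)$, $\HP^2_{15}(\rS_3,2)$ together with three further manifolds whose symmetry group is exactly~$\rC_3$, to be denoted $\HP^2_{15}(\rC_3,1)$, $\HP^2_{15}(\rC_3,2)$, $\HP^2_{15}(\rC_3,3)$, with the adjacencies drawn in Fig.~\ref{fig_C3}. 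Starting from~$\HP^2_{15}(\rC_6\times\rC_2)$, the program terminates after discovering~$4639$ vertices, identified as $\HP^2_{15}(\rC_6\times\rC_2)$, $\HP^2_{15}(\rS_3,3),\dots,\HP^2_{15}(\rS_3,12)$, $\HP^2_{15}(\rC_6,1),\dots,\HP^2_{15}(\rC_6,14)$, together with $4614$ manifolds of symmetry group exactly~$\rC_3$ (so $25+4614=4639$). Since every one of the~$30$ high-symmetry manifolds already appears in one of these two components, all the remaining starting points land in one of them; and the two components are disjoint (they share no vertex). Hence these are exactly the two connected components of~$\CG_{\rC_3}$ that contain a vertex~$K$ with $|\Sym(K)|>3$, which is the assertion.

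The point I expect to be the crux is that these two $\rC_3$-equivariant components are \emph{finite}, so that \texttt{triple\_flip\_graph} actually halts. There is no a priori bound forcing a component of~$\CG_{\rC_3}$ to be finite; indeed, as recorded in the Observation above, the analogous non-equivariant components are astronomically large, and most components of~$\CG_{\rC_3}$ are presumably infinite too. What makes the argument go through is the empirical fact that restricting to $\rC_3$-equivariant flips collapses the two relevant components down to~$8$ and~$4639$ vertices, so that the enumeration closes up in reasonable time. Everything else is already in place: completeness of the list of high-symmetry manifolds (Proposition~\ref{propos_class}), freeness of order-$3$ actions (Corollary~\ref{cor_3}), and the fact that triple flips preserve the class of $15$-vertex combinatorial $8$-manifolds like~$\HP^2$ (\cite{BrKu92}); and the bookkeeping — distinguishing vertices up to weak $\rC_3$-isomorphism and verifying the symmetry groups — is done by the auxiliary programs from~\cite{Gai-prog}.
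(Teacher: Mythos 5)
Your proposal is correct and is essentially the paper's own argument: the paper likewise obtains this proposition by running \texttt{triple\_flip\_graph} on the $\rC_3$-equivariant graph starting from the high-symmetry triangulations, whose completeness as a set of seeds is guaranteed by Proposition~\ref{propos_class} together with Corollary~\ref{cor_3}. Your writeup merely makes explicit the bookkeeping (the $30$ candidate seeds, uniqueness of the order-$3$ subgroup up to conjugacy, and the empirical termination of the enumeration) that the paper leaves implicit.
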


\begin{figure}[p]

 \begin{tikzpicture} \tiny
 \tikzmath{\minrad=.65cm; \innsep=0pt;\step=1.9;\dx=.8;\dy=.35;\dz=.6;}
  \draw (0, 2*\step) node (1) [circle, inner sep = \innsep, minimum size=\minrad, draw] {$\rA_5$};
  \draw (\step, \step) node (3) [circle, inner sep = \innsep, minimum size=\minrad, draw] {$\rA_4,\!1$};
  \draw (\step, 0) node (6) [circle, inner sep = \innsep, minimum size=\minrad, draw] {$\rC_3,\!1$};
  \draw (2*\step, \step) node (5) [circle, inner sep = \innsep, minimum size=\minrad, draw] {$\rC_3,\!2$};
  \draw (2*\step, 0) node (8) [circle, inner sep = \innsep, minimum size=\minrad, draw] {$\rA_4,\!2$};

  \draw (\step, 2*\step) node (2) [circle, inner sep = \innsep, minimum size=\minrad, draw] {$\rS_3,\!1$};
  \draw (2*\step, 2*\step) node (4) [circle, inner sep = \innsep, minimum size=\minrad, draw] {$\rC_3,\!3$};
  \draw (3*\step, 2*\step) node (7) [circle, inner sep = \innsep, minimum size=\minrad, draw] {$\rS_3,\!2$};

  \foreach \from/\to in {1/2,1/3,2/3,2/4,3/5,3/6,4/5,4/7,5/8,6/8}
    \draw (\from) -- (\to);
    \draw [dashed] (3) .. controls (.57*\step,.57*\step) and (.3917*\step,\step) .. (3);
    \draw [dashed] (6) .. controls (.57*\step,-.43*\step) and (.3917*\step,0) .. (6);
 \end{tikzpicture}
\vspace{-2mm}
 \caption{The connected components of~$\HP^2_{15}(\rA_5)$ in~$\CG_{\rC_3}$}\label{fig_C3}
 \vspace{.5cm}
 \end{figure}

 \begin{figure}[p]
 \begin{tikzpicture} \tiny
 \tikzmath{\minrad=.65cm; \innsep=0pt;\step=1.9;\dx=.8;\dy=.35;\dz=.6;}
  \draw (0, 3*\step) node (1) [circle, inner sep = \innsep, minimum size=\minrad, draw] {$\rA_5$};
  \draw (\step, 3*\step) node (3) [circle, inner sep = \innsep, minimum size=\minrad, draw] {$\rA_4,\!1$};
  \draw (2.5*\step, 3*\step) node (7) [circle, inner sep = \innsep, minimum size=\minrad, draw] {$\rC_2,\!1$};
  \draw (2.5*\step, 2*\step) node (5) [circle, inner sep = \innsep, minimum size=\minrad, draw] {$\rC_2,\!2$};
  \draw (3.5*\step, 3*\step) node (11) [circle, inner sep = \innsep, minimum size=\minrad, draw] {$\rA_4,\!2$};

  \draw (\step, 2*\step) node (2) [circle, inner sep = \innsep, minimum size=\minrad, draw] {$\rS_3,\!1$};
  \draw (\step, 0) node (9) [circle, inner sep = \innsep, minimum size=\minrad, draw] {$\rC_2,\!3$};
  \draw (2.5*\step, 0) node (12) [circle, inner sep = \innsep, minimum size=\minrad, draw] {$\rS_3,\!2$};

  \draw (\step-\dx, \step-\dy) node (4) [circle, inner sep = \innsep, minimum size=\minrad, draw] {$\rC_2,\!4$};
  \draw (\step+\dx, \step+\dy) node (6) [circle, inner sep = \innsep, minimum size=\minrad, draw] {$\rC_2,\!5$};
  \draw (2.5*\step-\dx, \step-\dy) node (8) [circle, inner sep = \innsep, minimum size=\minrad, draw] {$\rC_2,\!7$};
  \draw (2.5*\step+\dx, \step+\dy) node (10) [circle, inner sep = \innsep, minimum size=\minrad, draw] {$\rC_2,\!6$};

  \foreach \from/\to in {1/2,1/3,2/3,2/4,2/5,2/6,3/7,4/6,4/8,4/9,5/7,5/8,5/10,6/9,6/10,7/11,8/12,9/12,10/12}
    \draw (\from) -- (\to);
    \draw [dashed] (2) .. controls (.57*\step,1.57*\step) and (.3917*\step,2*\step) .. (2);
    \draw [dashed] (9) .. controls (.57*\step,-.43*\step) and (.3917*\step,0) .. (9);

 \end{tikzpicture}
 \vspace{-2mm}
 \caption{The connected components of~$\HP^2_{15}(\rA_5)$ in~$\CG_{\rC_2}$}\label{fig_C2}
\end{figure}
\afterpage{\clearpage}

For the graph~$\CG_{\rC_2}$ we have the same phenomenon as for~$\CG_{\rC_6}$. Namely, the group $\rC_6\times\rC_2$ contains three subgroups~$Q_1$, $Q_2$, and~$Q_3$ isomorphic to~$\rC_2$ and $\bigl(\HP^2_{15}(\rC_6\times\rC_2),Q_i\bigr)$ with $i=1,2,3$ are three different vertices of~$\CG_{\rC_2}$.

\begin{propos}\label{propos_graph_C2}
The graph~$\CG_{\rC_2}$ has $9$ connected components that contain at least one vertex~$K$ with $|\Sym(K)|>3$.
\begin{itemize}
 \item The first connected component has $12$ vertices $\HP^2_{15}(\rA_5)$, $\HP^2_{15}(\rA_4,1)$,  $\HP^2_{15}(\rA_4,2)$, $\HP^2_{15}(\rS_3,1)$, $\HP^2_{15}(\rS_3,2)$, $\HP^2_{15}(\rC_2,1),\ldots,\HP^2_{15}(\rC_2,7)$. It is shown in Fig.~\ref{fig_C2}.
 \item The second connected component has $1288$ vertices, namely,
 $$\HP^2_{15}(\rS_3,3),\ldots,\HP^2_{15}(\rS_3,6),$$ and $1284$ triangulations with symmetry group exactly~$\rC_2$.
 \item The third connected component has $271$ vertices, namely,
 $\HP^2_{15}(\rS_3,7)$, $\HP^2_{15}(\rS_3,8)$, $\HP^2_{15}(\rS_3,9)$, and $268$ triangulations with symmetry group exactly~$\rC_2$.
 \item The $4$th connected component has $480$ vertices, namely,
 $\HP^2_{15}(\rS_3,10)$, $\HP^2_{15}(\rS_3,11)$, and $478$ triangulations with symmetry group exactly~$\rC_2$.
 \item The $5$th connected component has $660$ vertices, namely, $\HP^2_{15}(\rS_3,12)$ and $659$ triangulations with symmetry group exactly~$\rC_2$.
 \item The $6$th connected component has $589$ vertices, namely,
 $$\bigl(\HP^2_{15}(\rC_6\times\rC_2),Q_1\bigr), \HP^2(\rC_6,1),\ldots,\HP^2(\rC_6,7),$$ and $581$  triangulations with symmetry group exactly~$\rC_2$.
 \item The $7$th connected component has $47$ vertices, namely, $$\bigl(\HP^2_{15}(\rC_6\times\rC_2),Q_2\bigr), \HP^2(\rC_6,8), \HP^2(\rC_6,9),$$ and $44$ triangulations with symmetry group exactly~$\rC_2$.
 \item The $8$th connected component has $6$ vertices, namely, $$\bigl(\HP^2_{15}(\rC_6\times\rC_2),Q_3\bigr), \HP^2(\rC_6,10),$$ and $4$ triangulations with symmetry group exactly~$\rC_2$.
 \item The $9$th connected component has $24$ vertices, namely, $$\HP^2(\rC_6,11),\ldots,\HP^2(\rC_6,14),$$ and $20$ triangulations with symmetry group exactly~$\rC_2$.
 \end{itemize}
\end{propos}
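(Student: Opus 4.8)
The plan is to argue exactly as for Proposition~\ref{propos_graph_C3}: enumerate all vertices $K$ of $\CG_{\rC_2}$ with $|\Sym(K)|>3$, and then compute, with the program \texttt{triple\_flip\_graph}, the connected component of each. By Proposition~\ref{propos_class}, any such $K$ is one of the $75$ combinatorial manifolds classified there, equipped with a subgroup isomorphic to $\rC_2$ inside $\Sym(K)$; since $\mathrm{Aut}(\rC_2)$ is trivial, two such data define the same vertex of $\CG_{\rC_2}$ exactly when the two pairs are related by an isomorphism of combinatorial manifolds, which, within a fixed isomorphism class of $K$, means that the two $\rC_2$-subgroups are conjugate in $\Sym(K)$. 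Elementary group theory then gives precisely $32$ such vertices: one from $\HP^2_{15}(\rA_5)$ (its $15$ involutions form a single conjugacy class), one from each of $\HP^2_{15}(\rA_4,1)$ and $\HP^2_{15}(\rA_4,2)$, one from each of the twelve manifolds $\HP^2_{15}(\rS_3,k)$, one from each of the fourteen manifolds $\HP^2_{15}(\rC_6,k)$, and three from $\HP^2_{15}(\rC_6\times\rC_2)$ — namely the pairs $\bigl(\HP^2_{15}(\rC_6\times\rC_2),Q_i\bigr)$ for the three order-two subgroups $Q_1,Q_2,Q_3$ of the abelian group $\rC_6\times\rC_2$, which are pairwise non-conjugate and, since $\Sym(\HP^2_{15}(\rC_6\times\rC_2))=\rC_6\times\rC_2$, give pairwise distinct (indeed pairwise not weakly $\rC_2$-isomorphic) vertices of $\CG_{\rC_2}$; the manifolds $\HP^2_{15}(\rC_7,k)$ and $\HP^2_{15}(\rC_5,k)$ contribute nothing, having no element of order $2$.

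Next I would run \texttt{triple\_flip\_graph} in its $\rC_2$-equivariant mode starting from each of these $32$ vertices. Here one uses that a $\rC_2$-equivariant triple flip yields a $\rC_2$-invariant combinatorial manifold PL homeomorphic to the original, hence again a vertex of $\CG_{\rC_2}$, by~\cite[Lemma~1]{BrKu92} together with the equivariant refinement recalled above, and that all distinguished subcomplexes of a given $K$ are found effectively via~\cite[Proposition~8.1]{Gai22}. The computation terminates because each of the nine components produced is finite, of sizes $12$, $1288$, $271$, $480$, $660$, $589$, $47$, $6$, and $24$; reading off from the program's output the symmetry groups of the vertices of each component shows that these components are as described in the nine items of the proposition and that every one of the $32$ high-symmetry vertices lies in one of them. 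Since any connected component of $\CG_{\rC_2}$ containing a vertex $K$ with $|\Sym(K)|>3$ is by definition the component of one of these $32$ vertices, this accounts for all such components.

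There is no genuine mathematical obstacle beyond the computation itself. The two points that need care are the correct count of the $32$ equivariant structures — in particular the verification that $\HP^2_{15}(\rC_6\times\rC_2)$ splits into exactly three distinct vertices of $\CG_{\rC_2}$, the analogue of the phenomenon already noted for $\CG_{\rC_6}$ — and the fact, in marked contrast with the Observation above, that each of the nine relevant components is finite and small enough to be enumerated completely by the program.
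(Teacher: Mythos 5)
Your proposal is correct and follows essentially the same route as the paper: the paper obtains Proposition~\ref{propos_graph_C2} purely as the output of the program \texttt{triple\_flip\_graph} run in $\rC_2$-equivariant mode starting from the vertices supplied by Proposition~\ref{propos_class}, exactly as you describe. Your count of the $32$ starting vertices (including the three non-weakly-isomorphic structures on $\HP^2_{15}(\rC_6\times\rC_2)$) matches the $32$ high-symmetry vertices distributed among the nine listed components.
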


\begin{cor}\label{cor_5_man_>3}
 Each of the $75$ \ $15$-vertex combinatorial $8$-manifolds~$K$ like the quaternionic projective plane such that $|\Sym(K)|>3$ can be obtained by a sequence of triple flips from one of the five combinatorial manifolds
 \begin{equation}\label{eq_5_man}
 \HP^2_{15}(\rA_5),\ \HP^2_{15}(\rC_6\times\rC_2),\ \HP^2_{15}(\rC_7,1),\ \HP^2_{15}(\rC_7,11),\  \HP^2_{15}(\rC_5,1).
 \end{equation}
\end{cor}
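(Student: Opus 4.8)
The plan is to derive the corollary directly from the descriptions of the equivariant triple flip graphs established above, together with one elementary observation: every $G$-equivariant triple flip is just a finite sequence of ordinary triple flips. Indeed, by definition a $G$-equivariant triple flip replaces an admissible $G$-orbit of distinguished subcomplexes with pairwise disjoint interiors, and by~\cite[Corollary~7.4]{Gai22} the ordinary triple flips corresponding to such subcomplexes may be carried out one after another, in any order, with the same end result. Hence, if two $15$-vertex combinatorial manifolds like the quaternionic projective plane are joined by an edge of~$\CG_G$ for some subgroup $G\subseteq\rS_{15}$ --- or, more generally, if one is obtained from the other by first viewing it as a $G$-simplicial complex for some $G\subseteq\Sym(K)$ and then applying a $G$-equivariant triple flip --- then they are joined by a path in the ordinary triple flip graph~$\CG$. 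One also uses that weakly $G$-isomorphic complexes are isomorphic, so that a path in~$\CG_G$ really descends to a path between isomorphism classes in~$\CG$.

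Granting this, I would argue as follows. From the description of~$\CG_{\rA_4}$ (Fig.~\ref{fig_A4}) the triangulations $\HP^2_{15}(\rA_5)$, $\HP^2_{15}(\rA_4,1)$, $\HP^2_{15}(\rA_4,2)$ lie in one connected component, and from the description of~$\CG_{\rS_3}$ (Fig.~\ref{fig_S3}) the triangulations $\HP^2_{15}(\rS_3,1)$, $\HP^2_{15}(\rS_3,2)$ lie in the connected component of~$\HP^2_{15}(\rA_5)$; so all five of these are obtained from~$\HP^2_{15}(\rA_5)$ by triple flips. From the second bullet of Proposition~\ref{propos_graph_C3}, the triangulations $\HP^2_{15}(\rC_6\times\rC_2)$, $\HP^2_{15}(\rS_3,3),\ldots,\HP^2_{15}(\rS_3,12)$, $\HP^2_{15}(\rC_6,1),\ldots,\HP^2_{15}(\rC_6,14)$ all lie in a single connected component of~$\CG_{\rC_3}$, so they are obtained from~$\HP^2_{15}(\rC_6\times\rC_2)$ by triple flips. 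From the description of~$\CG_{\rC_7}$ (Fig.~\ref{fig_C7}) the two connected components consist of $\HP^2_{15}(\rC_7,1),\ldots,\HP^2_{15}(\rC_7,10)$ and of $\HP^2_{15}(\rC_7,11),\ldots,\HP^2_{15}(\rC_7,19)$, so these are obtained from~$\HP^2_{15}(\rC_7,1)$ and~$\HP^2_{15}(\rC_7,11)$, respectively. Finally, from the description of~$\CG_{\rC_5}$ (Fig.~\ref{fig_C5}) the triangulations $\HP^2_{15}(\rC_5,1),\ldots,\HP^2_{15}(\rC_5,26)$ form a single connected component containing~$\HP^2_{15}(\rC_5,1)$. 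Since $5+25+10+9+26=75$ and the $75$ combinatorial manifolds of Proposition~\ref{propos_class} are precisely the union of these five families, each of them is obtained by a sequence of triple flips from one of the five manifolds in~\eqref{eq_5_man}.

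I do not expect a genuine obstacle here: all of the real content is carried by the computations behind the descriptions of~$\CG_{\rA_4}$, $\CG_{\rS_3}$, $\CG_{\rC_7}$, $\CG_{\rC_5}$ and, through Proposition~\ref{propos_graph_C3}, of~$\CG_{\rC_3}$. The only points that need care are the passage from equivariant flips to ordinary ones (handled by~\cite[Corollary~7.4]{Gai22}) and the bookkeeping check that the five chosen representatives in~\eqref{eq_5_man} actually lie in the stated components --- most notably that $\HP^2_{15}(\rC_7,1)$ and~$\HP^2_{15}(\rC_7,11)$ fall into the two different connected components of~$\CG_{\rC_7}$, which is immediate from Fig.~\ref{fig_C7}. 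Everything else amounts to matching indices against Proposition~\ref{propos_class}.
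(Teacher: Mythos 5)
Your proposal is correct and follows exactly the route the paper intends: the corollary is stated as an immediate consequence of the descriptions of the equivariant flip graphs (Figs.~\ref{fig_A4}, \ref{fig_S3}, \ref{fig_C7}, \ref{fig_C5} and Proposition~\ref{propos_graph_C3}), combined with the observation that an equivariant triple flip decomposes into ordinary triple flips via~\cite[Corollary~7.4]{Gai22}. Your bookkeeping ($5+25+10+9+26=75$) and the choice of components all match the paper.
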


\begin{cor}\label{cor_5_man_2,3}
 There are at least $4617$ (respectively, at least~$3345$) $15$-vertex combinatorial $8$-manifolds~$K$ like the quaternionic projective plane such that $\Sym(K)$ isomorphic to~$\rC_3$ (respectively, $\rC_2$) and $K$ can be obtained by a sequence of triple flips from one of the five combinatorial manifolds~\eqref{eq_5_man}.
\end{cor}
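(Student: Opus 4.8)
The plan is to deduce the corollary by a counting argument from Propositions~\ref{propos_graph_C3} and~\ref{propos_graph_C2} together with Corollary~\ref{cor_5_man_>3}. The one preliminary point to isolate is that every edge of $\CG_{\rC_3}$ (respectively $\CG_{\rC_2}$) is realized by a finite sequence of ordinary triple flips. Indeed, a $G$-equivariant triple flip $K_1\rightsquigarrow K_2$ amounts to performing the triple flips associated with the finitely many distinguished subcomplexes $gJ$, $g\in G$, which have pairwise disjoint interiors; by~\cite[Corollary~7.4]{Gai22} these flips may be carried out one at a time in any order, and each intermediate complex is again a $15$-vertex combinatorial $8$-manifold like the quaternionic projective plane (hence a legitimate vertex of~$\CG$). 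Consequently, if two such manifolds lie in the same connected component of $\CG_{\rC_3}$ or of $\CG_{\rC_2}$, then one is obtained from the other by a sequence of ordinary triple flips.

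For $\rC_3$, Proposition~\ref{propos_graph_C3} describes the two connected components of $\CG_{\rC_3}$ containing a vertex $K$ with $|\Sym(K)|>3$. The first of them contains $\HP^2_{15}(\rA_5)$ and accounts for the three triangulations $\HP^2_{15}(\rC_3,1)$, $\HP^2_{15}(\rC_3,2)$, $\HP^2_{15}(\rC_3,3)$ with symmetry group exactly $\rC_3$; the second contains $\HP^2_{15}(\rC_6\times\rC_2)$ and accounts for $4614$ further triangulations with symmetry group exactly $\rC_3$. Both $\HP^2_{15}(\rA_5)$ and $\HP^2_{15}(\rC_6\times\rC_2)$ occur in the list~\eqref{eq_5_man}, so by the preliminary observation each of these $3+4614=4617$ triangulations is obtained by a sequence of triple flips from one of the five combinatorial manifolds~\eqref{eq_5_man}. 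Finally, if two triangulations with symmetry group exactly $\rC_3$ are isomorphic, then any isomorphism between them conjugates the $\rC_3$-action on one to the $\rC_3$-action on the other and is therefore a weak $\rC_3$-isomorphism; so the $4617$ weak $\rC_3$-isomorphism classes involved are $4617$ pairwise non-isomorphic triangulations, which proves the first assertion.

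The argument for $\rC_2$ has the same shape but uses all nine connected components of $\CG_{\rC_2}$ listed in Proposition~\ref{propos_graph_C2}. Each of those components was selected precisely because it contains a vertex $K_0$ with $|\Sym(K_0)|>3$; by Corollary~\ref{cor_5_man_>3} such a $K_0$ is obtained by a sequence of triple flips from one of the manifolds~\eqref{eq_5_man}, and by the preliminary observation every vertex of the component is reachable from $K_0$ by a sequence of triple flips, hence from one of~\eqref{eq_5_man}. Summing the numbers of triangulations with symmetry group exactly $\rC_2$ over the nine components gives $7+1284+268+478+659+581+44+4+20=3345$ triangulations, pairwise non-isomorphic for the same reason as above (every automorphism of $\rC_2$ is trivial). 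This proves the second assertion.

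There is no genuine obstacle here: once Propositions~\ref{propos_graph_C3} and~\ref{propos_graph_C2} have been obtained with the program \texttt{triple\_flip\_graph} and Corollary~\ref{cor_5_man_>3} has been recorded, the corollary is pure bookkeeping. The only steps that need an explicit sentence of justification are the reduction of an equivariant triple flip to a sequence of ordinary triple flips (already implicit in~\cite{Gai22}) and the identification of weak $G$-isomorphism classes with isomorphism classes among triangulations whose full symmetry group equals $G$ for $G\in\{\rC_2,\rC_3\}$, which is what makes the vertex counts in the two propositions count triangulations up to isomorphism.
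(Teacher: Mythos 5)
Your proposal is correct and matches the paper's (implicit) argument: the paper states this corollary without proof, treating it as immediate bookkeeping from Propositions~\ref{propos_graph_C3} and~\ref{propos_graph_C2} together with Corollary~\ref{cor_5_man_>3}, the decomposition of an equivariant triple flip into ordinary triple flips via~\cite[Corollary~7.4]{Gai22}, and the observation that weak $G$-isomorphism classes coincide with isomorphism classes when the full symmetry group equals~$G$. Your arithmetic ($3+4614=4617$ and $7+1284+268+478+659+581+44+4+20=3345$) and the two justificatory remarks you isolate are exactly the points that need to be said.
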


\subsection{Random walk and triangulations with trivial symmetry groups}\label{subsection_random}

When we try to use the program \texttt{triple\_flip\_graph} to calculate connected components of the non-equivariant triple flip graph~$\CG$, we encounter the following two difficulties:
\begin{enumerate}
 \item Every time we generate a new triangulation, we need to check whether it is isomorphic to one of the previously constructed ones. This check is time consuming.
 \item Every triangulation has $490$ maximal simplices, so it takes about 2kB of memory to store it. So it is memory consuming to store all the generated triangulations.
\end{enumerate}

The first difficulty is partially overcome in the following way. Suppose that $K$ is an arbitrary $15$-vertex combinatorial $8$-manifold like the quaternionic projective plane. By Corollary~\ref{cor_fvect} we have that the $f$-vector of~$K$ is given by~\eqref{eq_fvect_BK}. In particular, $K$ has exactly $4230$~\ $6$-simplices. The link of every $6$-simplex $\rho\in K$ is a triangulation of circle with at least $3$ and at most~$8$ vertices; we denote the number of vertices of $\link(\rho,K)$ by~$s(\rho)$. Equivalently, $s(\rho)$ is the number of $8$-simplices $\sigma\in K$ such that $\sigma\supset\rho$. Now, we can study the distribution of the function~$s(\rho)$, that is, compute the number~$m_s$ of $6$-simplices $\rho$ with each given value~$s$ of~$s(\rho)$, cf.~\cite[Section~3]{Gai22}. We always have
\begin{align}\label{eq_cert1}
m_3+m_4+\cdots+m_8&=4230,\\
\label{eq_cert2}
3m_3+4m_4+\cdots+8m_8&=36f_8(K)=36\cdot 490=17640.
\end{align}
However, the distributions $\mathbf{m}(K)=(m_3,\ldots,m_8)$ are typically different for different~$K$. It is technically convenient to organize the distribution~$\mathbf{m}(K)$ into a single integer
$$
\cert(K) =m_4+2^{12}\cdot m_5+2^{24}\cdot m_6 + 2^{36}\cdot m_7+2^{48}\cdot m_8,
$$
which we call the \textit{certificate} of~$K$. (Using equations~\eqref{eq_cert1} and~\eqref{eq_cert2}, one can easily show that~$\mathbf{m}(K)$ can be uniquely reconstructed from~$\cert(K)$.)
Whenever we want to know whether two triangulations~$K_1$ and~$K_2$ are isomorphic, we first compare their certificates and only if they are equal, we run a (rather slow) procedure of directly searching for isomorphism.

Nevertheless, if we try to use the program \texttt{triple\_flip\_graph} to calculate the connected components of~$\CG$ (except for the connected component~$\CG_0$, which is rather small), the task turns out to be beyond the capabilities of the computer. Even if we do not set the task of a complete description of the connected component of~$\CG$, but only want to generate more different triangulations using triple flips, the program does this too slowly. To generate triangulations more quickly, we use a random walk on the graph~$\CG$. The main advantage is that we do not need to store all the intermediate triangulations. Instead of storing the triangulations we only store their certificates, and compute the number of different certificates of triangulations with each symmetry group. This easy algorithm is implemented in the program \texttt{triple\_flip\_graph\_random} at~\cite{Gai-prog}. Certainly, the number of different certificates is typically smaller than the number of different generated triangulations. Nevertheless, for the trivial symmetry group, even the number of different certificates turns out to be huge.

\begin{propos}\label{propos_5_man_1}
 There are more than $670000$ \ $15$-vertex combinatorial $8$-manifolds~$K$ like the quaternionic projective plane with pairwise different certificates such that $K$ has trivial symmetry group and $K$ can be obtained by a sequence of triple flips from one of the five combinatorial manifolds~\eqref{eq_5_man}.
\end{propos}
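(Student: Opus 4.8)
The proof is computer-assisted and relies on the program \texttt{triple\_flip\_graph\_random} from~\cite{Gai-prog}, which performs a random walk on the triple flip graph~$\CG$. The plan is as follows. We start the walk at one of the five combinatorial manifolds listed in~\eqref{eq_5_man}. At each step the program enumerates all distinguished triples in the current triangulation~$K$ (using~\cite[Proposition~8.1]{Gai22}), chooses one of the corresponding triple flips at random, and applies it. By~\cite[Lemma~1]{BrKu92} every triple flip carries a $15$-vertex combinatorial $8$-manifold like~$\HP^2$ to another such manifold; hence every triangulation encountered along the walk is again a $15$-vertex combinatorial $8$-manifold like the quaternionic projective plane, and, by construction, each of them is obtained from one of the manifolds~\eqref{eq_5_man} by a sequence of triple flips. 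Thus the only remaining tasks are to keep track of the symmetry groups and of the certificates of the triangulations visited.

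For each triangulation~$K$ produced by the walk we compute $\Sym(K)$ with the program \texttt{symm\_group} and the certificate $\cert(K)$ directly from the distribution of the function~$s(\rho)$ over the $4230$ six-simplices of~$K$. Since $\mathbf{m}(K)=(m_3,\ldots,m_8)$ depends only on the isomorphism type of~$K$, so does $\cert(K)$; hence two triangulations with different certificates are non-isomorphic. The crucial point for keeping the computation within the memory of a regular computer is that we do \emph{not} store the triangulations themselves: for each symmetry group we only store the set of certificates seen so far, together with the number of distinct elements of this set. Running the walk sufficiently long, one finds that among the triangulations with trivial symmetry group the number of distinct certificates exceeds $670000$. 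Choosing one triangulation for each of these certificates yields more than $670000$ pairwise non-isomorphic $15$-vertex combinatorial $8$-manifolds like~$\HP^2$ with trivial symmetry group, each obtainable by triple flips from one of the manifolds~\eqref{eq_5_man}, which is exactly the assertion of the proposition.

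\textbf{The main obstacle.} The content of this proposition is almost entirely computational; the difficulty is one of scale rather than of mathematics. A complete traversal of the relevant connected components of~$\CG$ is infeasible (as noted in the Observation above), both because the isomorphism tests are slow and because storing hundreds of thousands of triangulations with $490$ maximal simplices each is prohibitive. The random-walk approach circumvents both obstacles: the certificate serves as a cheap, isomorphism-invariant fingerprint that lets us count distinct triangulations without storing them, and no isomorphism test between distinct certificates is ever needed. The only point that requires (easy) justification is the claim, established in the discussion preceding the proposition, that $\mathbf{m}(K)$ and $\cert(K)$ determine one another via the linear relations~\eqref{eq_cert1} and~\eqref{eq_cert2}, so that the stored certificates faithfully encode the isomorphism invariant~$\mathbf{m}(K)$.
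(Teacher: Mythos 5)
Your proposal matches the paper's argument: Proposition~\ref{propos_5_man_1} is established in Subsection~\ref{subsection_random} exactly by a random walk on~$\CG$ starting from the manifolds~\eqref{eq_5_man} (via the program \texttt{triple\_flip\_graph\_random}), storing only the isomorphism-invariant certificates rather than the triangulations, and counting the distinct certificates arising for the trivial symmetry group. The justification you give — that triple flips preserve the class of $15$-vertex combinatorial $8$-manifolds like~$\HP^2$ and that distinct certificates force non-isomorphic triangulations — is precisely the paper's reasoning, so the proposal is correct and takes essentially the same approach.
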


\section{Topological type of the constructed combinatorial manifolds}\label{section_p1}

By Theorem~\ref{thm_BK} any $15$-vertex combinatorial $8$-manifold~$K$ that is not homeomorphic to~$S^8$ is a manifold like a projective plane. The construction of $8$-manifolds like a projective plane that are not homeomorphic to~$\HP^2$ goes back to Milnor~\cite{Mil56} and Eells and Kuiper~\cite{EeKu62}. This construction is as follows. Consider the sphere~$S^4$ and let $u\in H^4(S^4;\Z)$ be the standard generator. For each $k\in\Z$, there is a unique up to isomorphism oriented $4$-dimensional real vector bundle~$\xi_k$ over~$S^4$ such that $e(\xi_k)=u$ and $p_1(\xi_k)=2(2k-1)u$, where $e$ and~$p_1$ denote the Euler class and the first Pontryagin class, respectively. Let $D(\xi_k)$ be the total space of the associated disk bundle. Then $D(\xi_k)$ is a $8$-dimensional  smooth manifold with boundary and $\partial D(\xi_k)$ is PL homeomorphic (though not necessarily diffeomorphic) to~$S^7$. Hence, attaching to~$D(\xi_k)$ a cone over its boundary, we obtain a compact PL $8$-manifold without boundary, which we denote by~$Y_k^8$. Eells and Kuiper~\cite{EeKu62} showed that $Y_k^8$ is a manifold like a projective plane, i.\,e., admits a PL Morse function with three critical points. Simultaneous reversion of the orientations of~$S^4$ and~$\xi_k$ transforms~$\xi_k$ to~$\xi_{1-k}$, so $Y_k^8=Y_{1-k}^8$ for all~$k$. The manifolds~$Y_k^8$ with different~$k\ge 1$ are distinguished by their Pontryagin numbers, namely,
$$
p_1^2\bigl[Y^8_k\bigr]=4(2k-1)^2,
$$
where the orientation of~$Y^8_k$ is chosen so that the signature of~$Y^8_k$ is~$1$.
The classification of all $8$-dimensional manifolds like a projective plane was obtained by Kramer~\cite{Kra03}.

\begin{theorem}[Kramer~\cite{Kra03}]\label{thm_Kramer}
Suppose that $X$ is a $8$-dimensional PL manifold such that $H_*(X;\Z)\cong H_*(\HP^2;\Z)$. Then $X$ is PL homeomorphic to one of the manifolds~$Y^8_k$.
\end{theorem}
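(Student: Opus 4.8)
Theorem~\ref{thm_Kramer} is Kramer's classification, and I would reprove it by running the Eells--Kuiper construction backwards: realize $X$ as the union of the disk bundle of a rank-$4$ bundle over $S^4$ with an $8$-disk, classify the bundles that can occur, and check that the way the disk is glued on is immaterial.

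First I would reduce to a three-handle decomposition. In the situation where the theorem is applied in this paper, $X$ is already known to be a manifold like a projective plane, i.e.\ it carries a PL Morse function with three critical points, necessarily of indices $0$, $4$, $8$; in general one must first check that the homology hypothesis forces $X$ to be simply connected (the delicate point being to exclude an infinite or a finite perfect fundamental group). A closed simply connected PL $8$-manifold with free homology concentrated in degrees $0$, $4$, $8$ then admits, by PL handle calculus in dimension $\ge 6$ (trading away $1$- and $7$-handles and cancelling handles that cancel algebraically, using the PL $h$-cobordism theorem), a handle decomposition with exactly one handle each of index $0$, $4$, $8$. The union $N$ of the $0$- and $4$-handles is a regular neighbourhood of an embedded PL sphere $S^4\subset X$, so $N=D(\nu)$ is the disk bundle of the normal bundle $\nu$ of that sphere, while the remaining $8$-handle shows that $\overline{X\setminus N}$ is an $8$-disk. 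Hence $X\cong D(\nu)\cup_\varphi D^8$ for some PL homeomorphism $\varphi\colon S(\nu)\to S^7$, where $S(\nu)=\partial D(\nu)$.

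Next I would identify $\nu$. Collapsing the $8$-disk exhibits $X/D^8$ as the Thom space of $\nu$; under the resulting isomorphism $\widetilde H^*(X;\Z)\cong\widetilde H^*(\mathrm{Thom}\,\nu;\Z)$ a generator of $H^4(X;\Z)$ corresponds to the Thom class $U$ (so $\nu$ is orientable), and $U\cup U$ corresponds, via the Thom isomorphism again, to $e(\nu)\in H^4(S^4)$. Since the cup square on $H^4(X;\Z)\cong\Z$ is unimodular by Poincar\'e duality, $e(\nu)$ generates $H^4(S^4)$. The Gysin sequence then shows $S(\nu)$ is a homotopy $7$-sphere, hence PL homeomorphic to $S^7$ by the $7$-dimensional PL Poincar\'e conjecture. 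Oriented rank-$4$ bundles over $S^4$ are classified by $\pi_3(\mathrm{SO}(4))\cong\Z\oplus\Z$; in the standard quaternionic clutching picture those with $e=u$ are exactly the bundles $\xi_k$, $k\in\Z$, with $p_1(\xi_k)=2(2k-1)u$ (the residue $p_1\equiv 2e\pmod 4$ forcing the odd factor), and the choice $e=-u$ merely reverses the orientation of the base and reproduces the same list. Thus $\nu\cong\xi_k$ for some $k$.

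Finally, the gluing $\varphi$ plays no role: by the Alexander trick every PL self-homeomorphism of $S^7$ extends radially to a PL self-homeomorphism of $D^8$, so $D(\nu)\cup_\varphi D^8$ is PL homeomorphic to $D(\nu)\cup_{\partial}D^8=D(\nu)\cup\mathrm{cone}(S(\nu))$, which is by definition $Y^8_k$ when $\nu=\xi_k$. Hence $X\cong Y^8_k$ for some $k$, as claimed. The main obstacle is the first step — securing simple connectivity and the minimal three-handle PL decomposition; once that is in hand the rest is essentially formal (the Thom isomorphism, the classical computation of $\pi_3(\mathrm{SO}(4))$ with the attendant characteristic-class bookkeeping, and the Alexander-trick argument eliminating the gluing). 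The sharper statement that the distinct $Y^8_k$ are exactly those with $k\ge 1$, separated by $p_1^2[Y^8_k]=4(2k-1)^2$ and identified via $Y^8_k=Y^8_{1-k}$, is not needed for the form of the theorem used here.
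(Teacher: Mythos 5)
The paper does not prove this statement at all: it is quoted from Kramer~\cite{Kra03}, and the only commentary offered (in the remark following the corollary) is that Kramer's classification is carried out in the topological category via surgery theory and Kirby--Siebenmann invariants, and that ``one can repeat Kramer's proof in PL category,'' where the $\mathrm{ks}$-term disappears. So there is no in-paper argument to match yours against. Your proposal takes the more geometric Eells--Kuiper-style route instead --- minimal handle decomposition, identification of the $0$- and $4$-handles with a disk bundle over $S^4$, classification of the bundle by $(e,p_1)$ via $\pi_3(\mathrm{SO}(4))\cong\Z\oplus\Z$, and the Alexander trick to kill the dependence on the gluing. That outline is sound, the characteristic-class bookkeeping ($e=u$, $p_1\equiv 2e\bmod 4$, $Y^8_k=Y^8_{1-k}$ under reversal of orientation) is correct, and the Alexander-trick step is exactly the reason the PL answer is cleaner than the smooth one.

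Two points, however, carry essentially all of the weight and are not actually discharged. First, as you note yourself, the hypothesis is only $H_*(X;\Z)\cong H_*(\HP^2;\Z)$, and ruling out a nontrivial perfect $\pi_1$ is not done; this is harmless for the way the theorem is used here (Theorem~\ref{thm_BK} already supplies a PL Morse function with three critical points, hence the three-handle decomposition), but as a proof of the stated theorem it is a gap. Second, and not flagged by you: in the PL category the regular neighbourhood of the embedded $S^4$ is a priori a normal \emph{block} bundle, not a linear $\R^4$-bundle, so the appeal to ``oriented rank-$4$ vector bundles over $S^4$ are classified by $\pi_3(\mathrm{SO}(4))$'' needs justification. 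One must invoke the comparison between PL ($4$-dimensional, codimension-$4$) normal data and linear bundles --- e.g.\ Rourke--Sanderson block-bundle theory together with the vanishing of the relevant homotopy of $PL_4/O_4$ coming from smoothing theory ($PL/O$ being $6$-connected). This is precisely the spot where the category matters: in TOP it is where Kramer's Kirby--Siebenmann term enters, so in a PL proof it cannot be waved through. With those two points supplied, the argument is complete.
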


It is well known that $p_1^2[\HP^2]=4$.

\begin{cor}
Suppose that $X$ is a $8$-dimensional PL manifold such that $$H_*(X;\Z)\cong H_*(\HP^2;\Z).$$ Then $X$ is PL homeomorphic to~$\HP^2$ if and only if $p_1^2[X]=4$.
\end{cor}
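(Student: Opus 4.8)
The plan is to read this corollary off directly from Kramer's classification (Theorem~\ref{thm_Kramer}), the formula $p_1^2[Y^8_k]=4(2k-1)^2$ recalled just above, and the equality $p_1^2[\HP^2]=4$, using only the fact that the rational Pontryagin classes — and hence the Pontryagin number $p_1^2$ paired with the fundamental class — are invariant under PL homeomorphisms.

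The first step is to locate $\HP^2$ among the manifolds $Y^8_k$. Since $\HP^2$ is itself an $8$-manifold like a projective plane with $H_*(\HP^2;\Z)\cong H_*(\HP^2;\Z)$, Theorem~\ref{thm_Kramer} provides a PL homeomorphism $\HP^2\to Y^8_{k_0}$ for some integer $k_0$. Comparing Pontryagin numbers gives $4=p_1^2[\HP^2]=p_1^2[Y^8_{k_0}]=4(2k_0-1)^2$, so $(2k_0-1)^2=1$ and $k_0\in\{0,1\}$; since $Y^8_0=Y^8_1$, we conclude that $\HP^2$ is PL homeomorphic to $Y^8_1$, and more generally that $Y^8_k$ is PL homeomorphic to $\HP^2$ if and only if $4(2k-1)^2=4$, i.e.\ $k\in\{0,1\}$.

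With this in hand both implications are immediate. If $X$ is PL homeomorphic to $\HP^2$, then $p_1^2[X]=p_1^2[\HP^2]=4$ by invariance of the Pontryagin number. Conversely, if $p_1^2[X]=4$, I would apply Theorem~\ref{thm_Kramer} to obtain a PL homeomorphism $X\to Y^8_k$; then $4=p_1^2[X]=p_1^2[Y^8_k]=4(2k-1)^2$, whence $k\in\{0,1\}$, and by the first step $X$ is PL homeomorphic to $\HP^2$.

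The only point that needs a word of care — and the closest thing to an obstacle here — is the comparison of Pontryagin numbers across a PL homeomorphism, which involves a choice of orientation. On each of the manifolds in question $H^4$ is infinite cyclic and the cup-product pairing $H^4\times H^4\to H^8$ is $\pm1$, so there is a canonical orientation for which the signature equals $1$. A PL homeomorphism between two such manifolds necessarily respects this normalization, since it intertwines the cup-product forms and therefore cannot reverse a nonzero signature; hence it carries the normalized fundamental class to the normalized fundamental class. Combined with the combinatorial invariance of rational Pontryagin classes, this makes $p_1^2[\,\cdot\,]$ a genuine invariant of these PL manifolds, and the rest of the argument is just arithmetic with the formula $p_1^2[Y^8_k]=4(2k-1)^2$.
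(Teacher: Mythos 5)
Your proof is correct and follows exactly the route the paper intends: the corollary is stated as an immediate consequence of Kramer's theorem, the formula $p_1^2[Y^8_k]=4(2k-1)^2$, and the value $p_1^2[\HP^2]=4$, and the paper supplies no further argument. Your extra care about normalizing the orientation via the signature is a sensible addition rather than a deviation.
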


\begin{remark}
In fact, Kramer proved a classification theorem for $8$-dimensional manifolds that are like a projective plane in topological category rather than in PL category. His proof uses hard Kirby--Siebenmann theory and the obtained classification result involves not only the Pontryagin number~$p_1^2[X]$ but also the Kirby--Siebenmann number $\mathrm{ks}^2[X]\in\Z/2\Z$. Namely, in topological category there are two series of $8$-dimensional manifolds like a projective plane with $\mathrm{ks}^2[X]=0$ and~$\mathrm{ks}^2[X]=1$, respectively. The manifolds in the latter series do not admit any PL structure. Nevertheless, one can repeat  Kramer's proof in PL category. Then the Kirby--Siebenmann number does not occur and one arrives at Theorem~\ref{thm_Kramer} avoiding the use of any hard results on PL structures on topological manifolds.
\end{remark}

If $K$ is a combinatorial $8$-manifold like a projective plane, then one can use an explicit combinatorial formula due to the author~\cite{Gai04} (see also~\cite{Gai05,Gai10,GaGo19}) to calculate the first rational Pontryagin class $p_1(K)\in H^4(K;\Q)$ and hence the number $p_1^2[K]$, and decide  whether $K$ is PL homeomorphic to~$\HP^2$. Using this strategy, Gorodkov~\cite{Gor16,Gor19} proved that the three Brehm--K\"uhnel combinatorial manifolds, $\HP^2_{15}(\rA_5)$, $\HP^2_{15}(\rA_4, 1)$, and $\HP^2_{15}(\rS_3,1)$ in our notation, are indeed PL homeomorphic to~$\HP^2$. The program he wrote to compute the first rational Pontryagin class can be found at~\cite{Gor-prog}. By my request Denis Gorodkov used the same program to calculate the first rational Pontryagin class for several new $15$-vertex combinatorial $8$-manifolds like a projective plane that are constructed in the present paper. The result is as follows.

\begin{propos}[Gorodkov]\label{propos_Gorodkov}
Suppose that $K$ is one of the four combinatorial manifolds $\HP^2_{15}(\rC_6\times\rC_2)$, $\HP^2_{15}(\rC_7,1)$, $\HP^2_{15}(\rC_7,11)$, or~$\HP^2_{15}(\rC_5,1)$. Then $p_1^2[K]=4$ and hence $K$ is PL homeomorphic to the quaternionic projective plane~$\HP^2$.
\end{propos}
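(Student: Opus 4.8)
The plan is to reduce the statement, via Kramer's classification (Theorem~\ref{thm_Kramer}) and its corollary, to the single numerical assertion $p_1^2[K]=4$, and then to verify this assertion by a direct computation of the first rational Pontryagin class. First one checks that the hypotheses are met: each of the four complexes $K$ is a $15$-vertex combinatorial $8$-manifold like the quaternionic projective plane by the results of Sections~\ref{section_>=7}--\ref{section_5}, so $\chi(K)=3$ by Corollary~\ref{cor_fvect} and $H^*(K;\Z)\cong\Z[a]/(a^3)$ with $\deg a=4$ by Theorem~\ref{thm_Novik}; in particular $H_*(K;\Z)\cong H_*(\HP^2;\Z)$. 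By the corollary to Theorem~\ref{thm_Kramer}, such a $K$ is PL homeomorphic to~$\HP^2$ if and only if $p_1^2[K]=4$. So everything comes down to computing the characteristic number $p_1^2[K]$ for the four manifolds in the statement.

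For this computation I would use the explicit local combinatorial formula for the first rational Pontryagin class of a combinatorial manifold from~\cite{Gai04} (see also~\cite{Gai05,Gai10,GaGo19}). This formula produces $p_1(K)$ as a rational simplicial cocycle of degree~$4$ whose value on a $4$-simplex is determined by the combinatorics of its link, which is a triangulated $3$-sphere; one then lifts this cocycle to a class in $H^4(K;\Q)$, forms its cup square, and pairs with the fundamental class~$[K]$ to obtain the rational number $p_1^2[K]$. This whole procedure has been implemented by Gorodkov as the program~\cite{Gor-prog}, which he already used in~\cite{Gor16,Gor19} to prove that the three Brehm--K\"uhnel triangulations are PL homeomorphic to~$\HP^2$. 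Feeding the lists of $490$ top-dimensional simplices of $\HP^2_{15}(\rC_6\times\rC_2)$, $\HP^2_{15}(\rC_7,1)$, $\HP^2_{15}(\rC_7,11)$, and $\HP^2_{15}(\rC_5,1)$ from~\cite{Gai-prog} into this program, one obtains $p_1^2[K]=4$ in all four cases, which completes the proof.

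The only genuine obstacle here is computational rather than conceptual: one needs a correct and sufficiently efficient implementation of the Pontryagin class formula, together with enough computing resources to process the links of all $490$ top-dimensional simplices; no new mathematical difficulty arises once Theorem~\ref{thm_Kramer} is granted. Finally, I would point out that, by Corollary~\ref{cor_5_man_>3}, it is enough to carry out this computation for the five representatives listed in~\eqref{eq_5_man}, since a triple flip preserves the PL homeomorphism type (\cite[Lemma~1]{BrKu92}); as $\HP^2_{15}(\rA_5)$ was already handled by Gorodkov, the four manifolds named in the proposition are precisely the remaining representatives, and hence \emph{every} one of the $75$ combinatorial manifolds with $|\Sym(K)|>3$ --- indeed every triangulation lying in their triple flip components --- is PL homeomorphic to~$\HP^2$.
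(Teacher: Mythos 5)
Your proposal matches the paper's argument exactly: the proposition is a computer result obtained by feeding the four triangulations into Gorodkov's implementation of the local combinatorial formula for $p_1$ from~\cite{Gai04}, and the conclusion about the PL type then follows from the corollary to Kramer's Theorem~\ref{thm_Kramer}. Your closing remark about why these four manifolds (together with $\HP^2_{15}(\rA_5)$) suffice as representatives is likewise how the paper deduces Corollary~\ref{cor_HP2} from this proposition via Corollaries~\ref{cor_5_man_>3} and~\ref{cor_5_man_2,3} and Proposition~\ref{propos_5_man_1}.
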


If two combinatorial manifolds are obtained from each other by a triple flip, then they are PL homeomorphic. So, combining Proposition~\ref{propos_Gorodkov} with Corollaries~\ref{cor_5_man_>3} and~\ref{cor_5_man_2,3} and Proposition~\ref{propos_5_man_1}, we obtain the following corollary.

\begin{cor}\label{cor_HP2}
 All combinatorial manifolds constructed in this paper (see Table~\ref{table_main}) are PL homeomorphic to~$\HP^2$.
\end{cor}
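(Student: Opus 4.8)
The plan is to reduce the statement to a single numerical invariant, the Pontryagin number $p_1^2$, and then propagate its value along triple flips. First I would recall that every $15$-vertex combinatorial $8$-manifold produced in the paper has the $f$-vector~\eqref{eq_fvect_BK}, hence Euler characteristic $\chi=3\ne 2=\chi(S^8)$, so it is not PL homeomorphic to~$S^8$. By Theorem~\ref{thm_BK} it is therefore a manifold like a projective plane; in particular, by Theorem~\ref{thm_Novik}, its cohomology ring is $\Z[a]/(a^3)$ with $\deg a=4$, so $H_*(K;\Z)\cong H_*(\HP^2;\Z)$. Kramer's classification (Theorem~\ref{thm_Kramer}) together with the corollary following it then says that such a $K$ is PL homeomorphic to~$\HP^2$ if and only if $p_1^2[K]=4$.

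Next I would invoke the theorem of Brehm and K\"uhnel~\cite[Lemma~1]{BrKu92} that a triple flip always yields a well-defined combinatorial manifold PL homeomorphic to the original one. Consequently the property ``$K$ is PL homeomorphic to~$\HP^2$'' --- equivalently, by the previous paragraph, ``$p_1^2[K]=4$'' --- is invariant along the edges of the non-equivariant triple flip graph~$\CG$. So it suffices to verify this property for a single combinatorial manifold in each connected component of~$\CG$ that contains a combinatorial manifold constructed in the paper.

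Then I would assemble the connectivity bookkeeping from the earlier sections. Corollary~\ref{cor_5_man_>3} states that each of the $75$ triangulations with $|\Sym(K)|>3$ is obtained by a sequence of triple flips from one of the five seeds in~\eqref{eq_5_man}, namely $\HP^2_{15}(\rA_5)$, $\HP^2_{15}(\rC_6\times\rC_2)$, $\HP^2_{15}(\rC_7,1)$, $\HP^2_{15}(\rC_7,11)$, $\HP^2_{15}(\rC_5,1)$; Corollary~\ref{cor_5_man_2,3} and Proposition~\ref{propos_5_man_1} give the same conclusion for all the triangulations with symmetry groups~$\rC_3$, $\rC_2$, and~$\rC_1$ that the paper exhibits. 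Hence every constructed combinatorial manifold lies in a $\CG$-component containing one of these five seeds, and it remains only to know that all five seeds are PL homeomorphic to~$\HP^2$. For $\HP^2_{15}(\rA_5)$ (which is the Brehm--K\"uhnel triangulation~$M^8_{15}$) this is Gorodkov's result~\cite{Gor16,Gor19}; for the other four seeds it is precisely Proposition~\ref{propos_Gorodkov}, where $p_1^2=4$ is computed. Combining these with triple-flip invariance of PL type yields the corollary.

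The main obstacle is not conceptual but computational, and it is entirely encapsulated in Proposition~\ref{propos_Gorodkov}: one must actually evaluate the first rational Pontryagin class $p_1(K)\in H^4(K;\Q)$ --- and thence $p_1^2[K]$ --- for the four new seed triangulations, using the local combinatorial formula of~\cite{Gai04,Gai05,Gai10,GaGo19} as implemented in Gorodkov's program~\cite{Gor-prog}. Once that calculation is granted, what remains is a short formal deduction: Kramer's classification converts the value $p_1^2=4$ into a PL homeomorphism with~$\HP^2$, triple flips preserve PL type, and the connectivity statements of Corollaries~\ref{cor_5_man_>3} and~\ref{cor_5_man_2,3} and Proposition~\ref{propos_5_man_1} spread this property to every triangulation constructed in the paper.
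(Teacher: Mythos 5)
Your proposal is correct and follows essentially the same route as the paper: the paper likewise combines the invariance of PL type under triple flips with Proposition~\ref{propos_Gorodkov} (the Pontryagin-number computation $p_1^2=4$ for the four new seeds, converted to a PL homeomorphism via Kramer's classification), Gorodkov's earlier result for the Brehm--K\"uhnel seed, and the connectivity statements of Corollaries~\ref{cor_5_man_>3} and~\ref{cor_5_man_2,3} and Proposition~\ref{propos_5_man_1}. No gaps.
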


\section{Concluding remarks and open questions}\label{section_conclude}

We would like to start with attracting the reader's attention to the following conjecture whose various versions appeared in the literature starting from~\cite{BrKu92} and~\cite{ArMa91}.

{\sloppy
\begin{conj}
 \begin{enumerate}
  \item Suppose that $K$ is a $15$-vertex $\Z$-homology $8$-manifold such that $H_*(K;\Z)$ is not isomorphic to~$H_*(S^8;\Z)$. Then $K$ is a combinatorial manifold that is PL homeomorphic to~$\HP^2$.
  \item Suppose that $K$ is a $27$-vertex $\Z$-homology $16$-manifold such that $H_*(K;\Z)$ is not isomorphic to~$H_*(S^{16};\Z)$. Then $K$ is a combinatorial manifold that is PL homeomorphic to the octonionic projective plane~$\OP^2$.
 \end{enumerate}
\end{conj}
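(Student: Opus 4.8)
The conjecture is open, and the realistic plan is to attack statement~(1) first, since Sections~\ref{section_finite_order}--\ref{section_classify} already settle it for every $K$ with $|\Sym(K)|>3$; what remains for~(1) is the case $\Sym(K)\in\{\rC_3,\rC_2,\rC_1\}$. Here the plan has two independent halves: prove that any such $K$ is a combinatorial manifold, and then pin down its PL type. For the second half there is a shortcut: once $K$ is known to be a combinatorial manifold, Theorems~\ref{thm_BK} and~\ref{thm_Kramer} reduce the claim to verifying $p_1^2[K]=4$, which Gorodkov's program computes; combined with the triple flip graph, it would therefore suffice to prove that \emph{every} connected component of~$\CG$ contains one of the finitely many already-identified triangulations, or, alternatively, to prove the a priori estimate $p_1^2[K]=4$ for \emph{all} $15$-vertex combinatorial $8$-manifolds like a projective plane.

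The first thing I would try is to extend the mandatory-subcomplex technique of Sections~\ref{section_6}--\ref{section_5} down to the groups $\rC_3$ and $\rC_2$. By Corollaries~\ref{cor_3} and~\ref{cor_2} we already know $K^{\langle g\rangle}\cong\pt\sqcup\partial\Delta^3$ for $g$ of order $3$ and $K^{\langle g\rangle}\cong\CP^2_9$ for $g$ of order $2$, so one extracts an explicit pure $8$-dimensional subcomplex $R\subset K$ exactly as in Proposition~\ref{propos_C6_aux} and in the $\rC_2\times\rC_2$ case, and feeds $R$ to \texttt{find}. The obstruction here is purely computational: the paper already reports that with $|\Sym(K)|\le 6$ the unconstrained run does not finish, and for $\rC_3$ or $\rC_2$ even the constrained run is much heavier than for $\rC_6$ or $\rS_3$; one would need either a substantially larger mandatory subcomplex (e.g. by fully exploiting $K^{\langle g\rangle}\cong\CP^2_9$ and branching over the finitely many isomorphisms $K^{\langle g\rangle}\to\CP^2_9$, rigidified via Proposition~\ref{propos_3subset}), sharper pruning in \texttt{find} (cf.~\cite[Remark~4.3]{Gai22}), or a high-performance run. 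A complementary route for these two groups is to prove that every connected component of $\CG_{\rC_3}$ and of $\CG_{\rC_2}$ meets one of the five combinatorial manifolds in~\eqref{eq_5_man}; Propositions~\ref{propos_graph_C3} and~\ref{propos_graph_C2} verify this for all components encountered so far, and a theoretical ``no isolated components'' statement would finish both cases via Proposition~\ref{propos_Gorodkov}.

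The genuinely hard case is $\Sym(K)=\rC_1$: Smith theory is silent, so there is no fixed-point complex and no canonical mandatory subcomplex. The only structure available is that $K$ is $5$-neighborly, satisfies complementarity, and has the $f$-vector~\eqref{eq_fvect_BK}, so $K$ is determined by which $490$ of the $\binom{15}{9}$ nine-element subsets of $V$ are its facets. I would look for a new purely combinatorial forcing lemma: analyze the possible isomorphism types of $\link(\rho,K)$ for a triangle or a $4$-simplex $\rho$ (each such link is a small $\Z$-homology $(7-\dim\rho)$-sphere, severely constrained by complementarity, much as in Section~\ref{section_3spheres}) and show that \emph{some} configuration of low-dimensional faces is unavoidable, so that \texttt{find} can again be seeded. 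Failing that, one is left with a full unconstrained enumeration, whose feasibility hinges on sharper bounds — on the certificate distribution $\mathbf{m}(K)$, or the a priori estimate $|p_1^2[K]|=4$, which would simultaneously dispose of the PL-type question. Separately, the assertion that $K$ is a \emph{combinatorial} manifold at all, i.e.\ the non-existence of a $15$-vertex $\Z$-homology $8$-manifold with non-PL vertex links, should be reduced to a non-existence statement for $\le 14$-vertex $\Z$-homology $7$-spheres that are not combinatorial $7$-spheres, bootstrapping down dimensions as in the $3$-dimensional input used in Section~\ref{section_3spheres}.

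For part~(2) the same program applies verbatim one dimension up: the analogue of Section~\ref{section_finite_order} worked out in~\cite{Gai23} classifies the fixed-point complexes for symmetries of prime order on a $27$-vertex $\Z$-homology $16$-manifold with $H_*(K;\Z)\ncong H_*(S^{16};\Z)$, and one would run \texttt{find} with the resulting mandatory subcomplexes; but there \texttt{find} only terminates for $|G|\gtrsim 250$, so already the large-symmetry part of the classification is open, let alone the small- and trivial-symmetry cases, and the topological identification step would need the analogue of Kramer's theorem together with a Pontryagin-class computation identifying $\OP^2$. I expect the main obstacle throughout — in both dimensions — to be the combinatorial explosion in the low- and trivial-symmetry cases, which can only be defeated by a new structural theorem that forces a sufficiently large mandatory subcomplex without the help of Smith theory.
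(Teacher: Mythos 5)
This statement appears in the paper only as an open conjecture (Section~\ref{section_conclude}); the paper contains no proof of it, proving only the special case of part~(1) with $|\Sym(K)|>3$ (Theorem~\ref{thm_homology}). You correctly identify its status, and the program you outline --- extending the Smith-theory/mandatory-subcomplex machinery to $\rC_3$, $\rC_2$, $\rC_1$ and to the $27$-vertex case, with Kramer's theorem and a Pontryagin-class computation settling the PL type --- is exactly the continuation of the paper's own methods and stated obstacles, so there is nothing to fault and nothing to compare against.
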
}

One can also consider a version of this conjecture with weaker assumptions on~$K$, for instance, for $\F_p$-homology manifolds, where $p$ is a prime, or even for arbitrary pseudomanifodls satisfying complimentarity. It is also interesting whether the classification result of the present paper can be extended to the case of $\F_p$-homology manifolds or pseudomanifodls with complimentarity.

\begin{question}
 \begin{enumerate}
  \item Suppose that $K$ is a $15$-vertex $\F_p$-homology $8$-manifold such that $H_*(K;\F_p)\ncong H_*(S^8;\F_p)$ and $|\Sym(K)|>3$. Is $K$ necessarily isomorphic to one of the $75$ triangulations of~$\HP^2$ from Table~\ref{table_main}?
  \item Suppose that $K$ is a $15$-vertex $8$-pseudomanifold such that $K$ satisfies complimentarity and $|\Sym(K)|>3$. Is $K$ necessarily isomorphic to one of the $75$ triangulations of~$\HP^2$ from Table~\ref{table_main}?
 \end{enumerate}
\end{question}

Theorem~\ref{thm_main} shows that the total number  of $15$-vertex triangulations of~$\HP^2$ (up to isomorphism) is large. However, it is completely unclear how large it is.

\begin{problem}
 Find reasonable lower and upper bounds for the number of different $15$-vertex triangulations of~$\HP^2$.
\end{problem}

It is also interesting to explore the properties of the triple flip graph~$\CG$.

\begin{question}
 \begin{enumerate}
  \item How many connected components does the graph~$\CG$ have?
  \item What are the diameters of the connected components of~$\CG$?
  \item Are there other `small' connected components of~$\CG$ besides the component~$\CG_0$ in Fig.~\ref{fig_G_component}?
 \end{enumerate}
\end{question}

In the $16$-dimensional case, among the $27$-vertex combinatorial $16$-manifolds like a projective plane constructed in~\cite{Gai22}, there are two (denoted by~$K_1$ and~$K_4$ in~\cite{Gai22}) that do not contain distinguished subcomplexes. It is an open question whether the same phenomenon can occur in dimension~$8$.

\begin{question}
 Is there a $15$-vertex triangulation of~$\HP^2$ (or a $8$-manifold like a projective plane) that contains no distinguished subcomplexes? Equivalently, does the graph~$\CG$ have an isolated vertex?
\end{question}

It is especially interesting to find out whether it is possible to connect by a sequence of triple flips two $15$-vertex triangulations of~$\HP^2$ with symmetry groups that are not related to each other, say, $\rC_7$ and~$\rC_6$. One result of this type was obtained by chance while studying the random walk on the graph~$\CG$.

\begin{propos}\label{propos_strange_path}
 The triangulations~$\HP^2_{15}(\rC_7,5)$ and~$\HP^2_{15}(\rS_3,6)$ lie in the same component of the graph~$\CG$.
\end{propos}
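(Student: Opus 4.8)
The plan is to exhibit an explicit finite sequence of triple flips
\[
\HP^2_{15}(\rC_7,5)=K_0\rightsquigarrow K_1\rightsquigarrow\cdots\rightsquigarrow K_m\cong\HP^2_{15}(\rS_3,6),
\]
that is, a walk in~$\CG$ joining the two vertices. Since every triple flip possesses an inverse triple flip, it is irrelevant from which of the two triangulations the walk is started; moreover, by the Brehm--K\"uhnel lemma (\cite[Lemma~1]{BrKu92}) every~$K_i$ produced along the way is automatically a $15$-vertex combinatorial $8$-manifold PL homeomorphic to~$\HP^2$, so the only genuine content of the proposition is the \emph{existence} of such a walk.

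First I would run the program \texttt{triple\_flip\_graph\_random} from~\cite{Gai-prog}, performing a random walk on~$\CG$ starting at~$\HP^2_{15}(\rC_7,5)$ and recording, for each triangulation visited, its certificate~$\cert(K)$ together with the distinguished subcomplex used to leave it, so that the walk can be replayed. Whenever a visited triangulation has the same certificate as~$\HP^2_{15}(\rS_3,6)$ I would invoke the direct isomorphism-search procedure of \texttt{isomorphism\_groups} to decide whether it is in fact isomorphic to~$\HP^2_{15}(\rS_3,6)$. In practice it is more efficient to run two random walks, one from each of the two triangulations, and to look for a collision of certificates along them; this roughly squares the chance of success for walks of a given length. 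As soon as such a meeting is detected, the recorded data give an explicit list of distinguished triples realising the required path, and this list, together with the intermediate triangulations~$K_1,\dots,K_{m-1}$, can be stored in~\cite{Gai-prog}.

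It then remains only to verify, independently of how the path was found, that it is correct. For each consecutive pair $(K_i,K_{i+1})$ one checks that the indicated triple $(\Delta_1,\Delta_2,\Delta_3)$ of $4$-simplices is a distinguished triple of~$K_i$, namely that $\{1,\dots,15\}=\Delta_1\sqcup\Delta_2\sqcup\Delta_3$ and $\link(\Delta_1,K_i)=\partial\Delta_2$, $\link(\Delta_2,K_i)=\partial\Delta_3$, $\link(\Delta_3,K_i)=\partial\Delta_1$, and that $K_{i+1}$ is obtained from~$K_i$ by the corresponding triple flip; one also runs \texttt{check} and \texttt{fvect} on each~$K_i$ to confirm it is a combinatorial $8$-manifold with the $f$-vector~\eqref{eq_fvect_BK}. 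Finally one applies \texttt{symm\_group} to~$K_0$ and~$K_m$ to confirm $K_0\cong\HP^2_{15}(\rC_7,5)$ and $K_m\cong\HP^2_{15}(\rS_3,6)$. This closes the argument.

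The main obstacle is not logical but practical: the random walk is a purely heuristic search, and there is no evident structural reason why a connecting path should be short enough to be discovered — indeed no \emph{equivariant} triple flip can ever link triangulations with the unrelated symmetry types~$\rC_7$ and~$\rS_3$, so the path is forced to pass through the ``generic'' part of~$\CG$ consisting of triangulations with trivial or very small symmetry. One simply has to be lucky; once a path is in hand, every step of the verification above is a finite, routine computation.
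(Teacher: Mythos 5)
Your proposal is correct and matches the paper's actual argument: the proposition was established precisely by a computer random walk on~$\CG$ starting at~$\HP^2_{15}(\rC_7,5)$, which happened to reach~$\HP^2_{15}(\rS_3,6)$ after about~$10^5$ triple flips. Your additional suggestions (bidirectional search, storing and re-verifying the path) are sensible implementation refinements but do not change the method, and the paper itself acknowledges the same practical drawback you identify, namely that no path of reasonable length is obtained.
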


This proposition was obtained in the following way. A random walk trajectory that started at~$\HP^2_{15}(\rC_7,5)$ reached~$\HP^2_{15}(\rS_3,6)$ after about $10^5$ triple flips, which took about two days of computer calculation. Certainly, such result is rather unsatisfactory, since it does not give us any path of reasonable length from~$\HP^2_{15}(\rC_7,5)$ to~$\HP^2_{15}(\rS_3,6)$.

Combining Proposition~\ref{propos_strange_path} with Proposition~\ref{propos_graph_C3} and the description of the graph~$\CG_{\rC_7}$ (see Fig.~\ref{fig_C7}), we obtain the following corollary.

\begin{cor}
The $35$ triangulations
\begin{gather*}
\HP^2_{15}(\rC_6\times\rC_2),\HP^2_{15}(\rS_3,3),\ldots,\HP^2_{15}(\rS_3,12),\HP^2_{15}(\rC_6,1),\ldots,\HP^2_{15}(\rC_6,14),\\
\HP^2_{15}(\rC_7,1),\ldots, \HP^2_{15}(\rC_7,10)
\end{gather*}
lie in the same connected component~$\CG_1$ of~$\CG$.
\end{cor}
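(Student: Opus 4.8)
The plan is to realize the asserted component~$\CG_1$ by fusing two already-described pieces of~$\CG$, using Proposition~\ref{propos_strange_path} as the connecting bridge. The one conceptual point to record at the outset is that equivariant connectedness implies ordinary connectedness: if $G\subseteq\rS_{15}$ and two $G$-invariant triangulations represent adjacent vertices of~$\CG_G$, then their underlying isomorphism classes lie in the same connected component of~$\CG$. Indeed, a $G$-equivariant triple flip is by definition the simultaneous execution of the ordinary triple flips along the subcomplexes~$gJ$, $g\in G$, of an admissible orbit; since these subcomplexes have pairwise disjoint interiors, by~\cite[Corollary~7.4]{Gai22} they may be performed one at a time in any order, and by~\cite[Lemma~1]{BrKu92} each intermediate complex is again a $15$-vertex combinatorial $8$-manifold like the quaternionic projective plane, hence a vertex of~$\CG$. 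So the forgetful map carries each connected component of~$\CG_G$ into a single connected component of~$\CG$.

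First I would apply this to~$G=\rC_7$: by the description of~$\CG_{\rC_7}$ in Fig.~\ref{fig_C7}, the ten triangulations~$\HP^2_{15}(\rC_7,1),\ldots,\HP^2_{15}(\rC_7,10)$ form one connected component of~$\CG_{\rC_7}$, hence all lie in one connected component of~$\CG$; in particular this holds for~$\HP^2_{15}(\rC_7,5)$. Next I would apply it to~$G=\rC_3$: by Proposition~\ref{propos_graph_C3}, the triangulations
\begin{equation*}
 \HP^2_{15}(\rC_6\times\rC_2),\ \HP^2_{15}(\rS_3,3),\ldots,\HP^2_{15}(\rS_3,12),\ \HP^2_{15}(\rC_6,1),\ldots,\HP^2_{15}(\rC_6,14)
\end{equation*}
all belong to the second connected component of~$\CG_{\rC_3}$, and hence to a single connected component of~$\CG$; in particular this holds for~$\HP^2_{15}(\rS_3,6)$.

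Finally, Proposition~\ref{propos_strange_path} states that~$\HP^2_{15}(\rC_7,5)$ and~$\HP^2_{15}(\rS_3,6)$ lie in the same connected component of~$\CG$. Chaining the three statements together shows that all $35$ triangulations listed in the corollary lie in one common component, which we name~$\CG_1$. The only ingredient carrying genuine weight here is Proposition~\ref{propos_strange_path}, established by the (expensive) random walk on~$\CG$ described above; the remaining steps are bookkeeping together with the elementary fact that an equivariant triple flip decomposes into a sequence of ordinary ones.
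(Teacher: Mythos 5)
Your proof is correct and follows exactly the route the paper takes: the paper derives this corollary in one line by combining Proposition~\ref{propos_strange_path}, Proposition~\ref{propos_graph_C3}, and the description of~$\CG_{\rC_7}$ in Fig.~\ref{fig_C7}. The only thing you add is an explicit justification that connectivity in an equivariant graph~$\CG_G$ descends to connectivity in~$\CG$ (via \cite[Corollary~7.4]{Gai22} and \cite[Lemma~1]{BrKu92}), which the paper leaves implicit; this is a correct and worthwhile clarification, not a different approach.
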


From the decriptions of the graphs~$\CG_{\rC_7}$ and~$\CG_{\rC_5}$ (see Figs.~\ref{fig_C7} and~\ref{fig_C5}, respectively) it follows that the $9$ triangulations
$$
\HP^2_{15}(\rC_7,11),\ldots, \HP^2_{15}(\rC_7,19)
$$
lie in the same connected component~$\CG_2$ of~$\CG$ and the $26$ triangulations
$$
\HP^2_{15}(\rC_5,1),\ldots, \HP^2_{15}(\rC_5,26)
$$
lie in the same connected component~$\CG_3$ of~$\CG$. Thus, the $75$ triangulations~$K$ in Table~\ref{table_main} with $|\Sym(K)|>3$ are distributed among at most $4$ connected components~$\CG_0$, $\CG_1$, $\CG_2$, and~$\CG_3$. Nevertheless, the following question is open.

\begin{question}
 Are the connected components~$\CG_1$, $\CG_2$, and~$\CG_3$ different?
\end{question}

\begin{problem}
For each pair of triangulations from Table~\ref{table_main} that lie in the same connected component of~$\CG$, find a sequence of triple flips of reasonable length transforming the first triangulation to the second one.
\end{problem}

For each of the constructed $15$-vertex triangulations of~$\HP^2$, the following numerical characteristics can be calculated:
\begin{itemize}
 \item the distribution $\mathbf{m}(K)=(m_3,\ldots,m_8)$ of $6$-simplices with respect to the number of adjacent $8$-simplices, see Subsection~\ref{subsection_random},
 \item the number $t$ of distinguished subcomplexes in~$K$.
\end{itemize}
In Table~\ref{table_numeric}, we provide the result of the calculation of these characteristics for all the $75$ triangulations~$K$ with $|\Sym(K)|>3$ and also for all triangulations in the connected component~$\CG_0$ of~$\CG$. Looking at these data, several observations can be made.

\begin{table}[p]
\caption{Distributions~$\mathbf{m}(K)$  and numbers~$t$ for different $15$-vertex triangulations of~$\HP^2$}\label{table_numeric}
\scriptsize
\begin{minipage}[t]{8.2cm}
 \begin{tabular}{|c|r|rrrrrr|r|}
 \hline
 \multirow{2}{*}{$\boldsymbol{G}$} & \multirow{2}{*}{\tiny\bf{}\!No\!} & \multicolumn{6}{c|}{\textbf{Distribution~$\mathbf{m}\boldsymbol{(K)}$}} & \multicolumn{1}{c|}{\multirow{2}{*}{$\boldsymbol{t}$}} \\
 &  & \tiny$\boldsymbol{m_3}$ & \tiny$\boldsymbol{m_4}$ & \tiny$\boldsymbol{m_5}$ & \tiny$\boldsymbol{m_6}$ & \multicolumn{1}{l}{\tiny$\lefteqn{\!\boldsymbol{m_7}}{}$} & \multicolumn{1}{l|}{\tiny$\lefteqn{\!\boldsymbol{m_8}}$} &  \\
 \hline
 $\rA_5$ &  &  1170 & 1740 & 870 & 360 & 60 & 30 & 5\\
 \hline
 \multirow{2}{*}{$\rA_4$} & 1 & 1206 & 1668 & 894 & 384 & 48 & 30 & 9\\
                          & 2 & 1080 & 1896 & 822 & 348 & 54 & 30 & 4\\
 \hline
 $\rC_6\!\times\!\rC_2$ & & 1248 & 1518 & 1044 & 342 & 72 & 6 & 24\\
 \hline
 \multirow{19}{*}{$\rC_7$} & 1 & 1281 & 1435 & 1071 & 399 & 44 & 0 & 21\\
                      & 2 & 1239 & 1554 & 966 &420 & 51 & 0 & 21 \\
                      & 3 & 1225 & 1526 & 1050 & 392 & 37 & 0 & 21\\
                      & 4 & 1316 & 1379 & 1071 & 427 & 37 & 0 & 28 \\
                      & 5 & 1225 & 1575 & 980 & 392 & 51 & 7 & 21 \\
                      & 6 & 1148 & 1673 & 1022 & 322 & 58 & 7 & 14 \\
                      & 7 & 1246 & 1519 & 1029 & 371 & 65 & 0 &  21 \\
                      & 8 & 1281 & 1449 & 1071 & 357 & 72 & 0 &   28 \\
                      & 9 & 1190 & 1603 & 987 & 427 & 23 & 0 &   14 \\
                      & 10 & 1120 & 1764 & 952 & 308 & 72 & 14 & 14 \\
                      & 11 & 1197 & 1666 & 882 & 420 & 65 & 0 & 14 \\
                      & 12 & 1309 & 1393 & 1071 & 413 & 44 & 0  & 28 \\
                      & 13 & 1204 & 1582 & 1008 & 392 & 44 & 0 & 21 \\
                      & 14 & 1302 & 1386 & 1127 & 357 & 51 & 7 & 28 \\
                      & 15 & 1232 & 1568 & 966 & 406 & 58 & 0 &  21 \\
                      & 16 & 1169 & 1638 & 1008 & 364 & 51 & 0 & 14 \\
                      & 17 & 1232 & 1470 & 1134 & 364 & 30 & 0 & 21  \\
                      & 18 & 1162 & 1617 & 1106 & 266 & 72 & 7 & 21  \\
                      & 19 & 1246 & 1512 & 1050 & 350 & 72 & 0 &  21 \\

 \hline
 \multirow{12}{*}{$\rS_3$} & 1 & 1224 & 1632 & 906 & 396 & 42 & 30 & 11\\
 & 2 & 1047 & 1968 & 780 & 348 & 57 & 30 & 6\\
 & 3 & 1116 & 1683 & 1053 & 351 & 27 & 0 & 14\\
 & 4 & 1206 & 1551 & 1071 & 351 & 51 & 0 & 20 \\
 & 5 & 1212 & 1521 & 1107 & 345 & 45 & 0 & 22 \\
 & 6 & 1152 & 1629 & 1053 & 369 & 27 & 0 & 16 \\
 & 7 & 1158 & 1674 & 987 & 345 & 63 & 3 & 14 \\
 & 8 & 1200 & 1548 & 1077 & 375 & 27 & 3 & 20\\
 & 9 & 1131 & 1695 & 1011 & 345 & 42 & 6 & 14 \\
 & 10 & 1167 & 1632 & 1062 & 282 & 87 & 0 & 14 \\
 & 11 & 1173 & 1617 & 1083 & 267 & 84 & 6 & 16 \\
 & 12 & 1137 & 1668 & 1056 & 312 & 51 & 6 & 14 \\

\hline
 \multirow{14}{*}{$\rC_6$}
  & 1 & 1200 & 1602 & 1023 & 327 & 69 & 9 & 18\\
  & 2 & 1128 & 1701 & 1047 & 273 & 69 & 12 & 12\\
  & 3 & 1146 & 1713 & 978 & 300 & 84 & 9 & 12\\
  & 4 & 1272 & 1458 & 1083 & 345 & 69 & 3 & 24\\
  & 5 & 1290 & 1461 & 1005 & 417 & 57 & 0 & 24 \\
  & 6 & 1125 & 1686 & 1065 & 285 & 66 & 3 & 12\\
  & 7 & 1239 & 1503 & 1077 & 351 & 60 & 0 & 18 \\
  & 8 & 1188 & 1632 & 990 & 348 & 66 & 6 & 18\\
  & 9 & 1038 & 1935 & 876 & 282 & 78 & 21 & 6 \\
  & 10 & 1134 & 1746 & 936 & 330 & 78 & 6 & 12\\
  & 11 & 1128 & 1704 & 1023 & 309 & 57 & 9 & 13\\
  & 12 & 1248 & 1491 & 1089 & 333 & 63 & 6 & 19 \\
  & 13 & 1242 & 1509 & 1071 & 339 & 63 & 6 & 19 \\
  & 14 & 1248 & 1497 & 1071 & 351 & 57 & 6 & 25 \\
\hline
\end{tabular}
\end{minipage}%
\begin{minipage}{7.4cm}
\begin{tabular}{|c|r|rrrrrr|r|}
\hline
 \multirow{2}{*}{$\boldsymbol{G}$} & \multirow{2}{*}{\tiny\bf{}\!No\!} & \multicolumn{6}{c|}{\textbf{Distribution~$\mathbf{m}\boldsymbol{(K)}$}} & \multicolumn{1}{c|}{\multirow{2}{*}{$\boldsymbol{t}$}} \\
 &  & \tiny$\boldsymbol{m_3}$ & \tiny$\boldsymbol{m_4}$ & \tiny$\boldsymbol{m_5}$ & \tiny$\boldsymbol{m_6}$ & \multicolumn{1}{l}{\tiny$\lefteqn{\!\boldsymbol{m_7}}{}$} & \multicolumn{1}{l|}{\tiny$\lefteqn{\!\boldsymbol{m_8}}$} &  \\
\hline
 \multirow{26}{*}{$\rC_5$}
 & 1 & 1190 & 1630 & 985 & 355 & 65 & 5 & 15 \\
 & 2 & 1245 & 1465 & 1150 & 300 & 65 & 5 & 25 \\
 & 3 & 1260 & 1430 & 1160 & 330 & 40 & 10 & 25 \\
 & 4 & 1235 & 1485 & 1145 & 295 & 60 & 10 & 25 \\
 & 5 & 1135 & 1705 & 995 & 335 & 50 & 10 & 15 \\
 & 6 & 1200 & 1615 & 975 & 375 & 65 & 0 & 15 \\
 & 7 & 1250 & 1470 & 1110 & 340 & 60 & 0 & 25\\
 & 8 & 1255 & 1475 & 1065 & 395 & 40 & 0 & 25 \\
 & 9 & 1220 & 1480 & 1165 & 325 & 35 & 5  & 25 \\
 & 10 & 1140 & 1695 & 1005 & 315 & 75 & 0  & 15 \\
 & 11 & 1135 & 1705 & 1000 & 320 & 65 & 5 & 15 \\
 & 12 & 1150 & 1660 & 1045 & 305 & 65 & 5 & 20 \\
 & 13 & 1170 & 1620 & 1045 & 345 & 45 & 5 & 20\\
 & 14 & 1105 & 1710 & 1070 & 290 & 45 & 10 & 15 \\
 & 15 & 1155 & 1665 & 1005 & 345 & 60 & 0 & 15 \\
 & 16 & 1175 & 1615 & 1035 & 355 & 50 & 0 & 20\\
 & 17 & 1165 & 1640 & 1020 & 350 & 55 & 0 & 20\\
 & 18 & 1140 & 1685 & 1035 & 285 & 85 & 0 & 15\\
 & 19 & 1145 & 1675 & 1030 & 310 & 65 & 5 & 15\\
 & 20 & 1155 & 1645 & 1065 & 285 & 80 & 0 & 20\\
 & 21 & 1165 & 1640 & 1020 & 350 & 55 & 0 & 20\\
 & 22 & 1090 & 1685 & 1150 & 260 & 40 & 5 & 15\\
 & 23 & 1140 & 1690 & 1015 & 315 & 65 & 5 & 15\\
 & 24 & 1170 & 1600 & 1100 & 300 & 50 & 10 & 20\\
 & 25 & 1175 & 1650 & 1010 & 310 & 75 & 10 & 20\\
 & 26 & 1130 & 1695 & 1050 & 280 & 60 & 15 & 15\\
 \hline
 \multirow{3}{*}{$\rC_3$} & 1 & 1161 & 1746 & 876 & 366 & 51 & 30 & 7\\
 & 2 & 1101 & 1851 & 849 & 345 & 54 & 30 & 5\\
 & 3 & 1119 & 1815 & 861 & 357 & 48 & 30 & 7\\
 \hline
 \multirow{7}{*}{$\rC_2$} & 1 & 1127 & 1804 & 864 & 352 & 53 & 30 & 6\\
         & 2 & 1145 & 1768 & 876 & 364 & 47 & 30 &  8\\
         & 3 & 1088 & 1874 & 840 & 346 & 52 & 30 &  7\\
         & 4 & 1134 & 1793 & 861 & 363 & 49 & 30 & 7\\
         & 5 & 1134 & 1793 & 861 & 363 & 49 & 30 & 7\\
         & 6 & 1077 & 1899 & 825 & 345 & 54 & 30 & 6\\
         & 7 & 1077 & 1899 & 825 & 345 & 54 & 30 & 6\\
 \hline
 \multirow{7}{*}{$\rC_1$} & 1 & 1179 & 1710 & 888 & 378 & 45 & 30 & 9 \\
 & 2 & 1145 & 1768 & 876 & 364 & 47 & 30 & 8 \\
 & 3 & 1085 & 1883 & 831 & 349 & 52 & 30 & 6 \\
 & 4 & 1062 & 1931 & 807 & 345 & 55 & 30 & 6 \\
 & 5 & 1100 & 1856 & 840 & 352 & 52 & 30 & 6 \\
 & 6 & 1111 & 1831 & 855 & 353 & 50 & 30 & 7 \\
 & 7 & 1111 & 1831 & 855 & 353 & 50 & 30 & 7 \\
 \hline
 \multicolumn{2}{c}{\phantom{7}} \\
 \multicolumn{2}{c}{\phantom{7}} \\
 \multicolumn{2}{c}{\phantom{7}} \\
 \multicolumn{2}{c}{\phantom{7}} \\
 \multicolumn{2}{c}{\phantom{7}} \\
 \multicolumn{2}{c}{\phantom{7}} \\
\end{tabular}
\end{minipage}
\end{table}
\afterpage{\clearpage}

\begin{observe}
In Table~\ref{table_numeric} there are four pairs of \textit{twins}, i.\,e.,  pairs $\{K_1,K_2\}$ such that
\begin{itemize}
 \item $\Sym(K_1)\cong\Sym(K_2)$,
 \item $\mathbf{m}(K_1)=\mathbf{m}(K_2)$,
 \item $t(K_1)=t(K_2)$,
 \item but $K_1\ncong K_2$.
\end{itemize}
These are the pairs
\begin{align*}
&\bigl\{\HP^2_{15}(\rC_5,17),\HP^2_{15}(\rC_5,21)\bigr\},&&\bigl\{\HP^2_{15}(\rC_2,4),\HP^2_{15}(\rC_2,5)\bigr\},\\
&\bigl\{\HP^2_{15}(\rC_2,6),\HP^2_{15}(\rC_2,7)\bigr\}, &&\bigl\{\HP^2_{15}(\rC_1,6),\HP^2_{15}(\rC_1,7)\bigr\}.
\end{align*}
The last three pairs of twins belong to the connected component~$\CG_0$ shown in Fig.~\ref{fig_G_component}.
Moreover, the graph~$\CG_0$ has an automorphism that swaps the vertices corresponding to every pair of twins and fixes every  other vertex. In addition, $\mathcal{G}_0$ contains a pair of \textit{almost twins}
$$
\bigl\{\HP^2_{15}(\rC_2,2),\HP^2_{15}(\rC_1,2)\bigr\}.
$$
The triangulations in this pair have equal distribution vectors~$\mathbf{m}$ and equal numbers of distinguished triples~$t$ but different symmetry groups.
\end{observe}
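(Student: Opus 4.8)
The plan is to verify this Observation by direct inspection of the explicit data already produced, namely the lists of $490$ maximal simplices of each relevant triangulation (available in \texttt{HP2\_15\_G.dat} and \texttt{HP2\_15\_G-component.dat}), together with the description of the component~$\CG_0$ given in Fig.~\ref{fig_G_component}. First I would recompute, for each of the eight triangulations occurring in the four candidate twin pairs and for the two triangulations in the almost-twin pair, the distribution $\mathbf m(K)=(m_3,\ldots,m_8)$: for every one of the $4230$ six-simplices $\rho$ one counts the number $s(\rho)$ of adjacent $8$-simplices and aggregates. These are precisely the entries of Table~\ref{table_numeric}, and one sees by inspection that within each of the four pairs the vectors $\mathbf m$ coincide. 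Simultaneously, running the algorithm of~\cite[Proposition~8.1]{Gai22} (the program \texttt{triple\_flip\_graph}) on each $K$ enumerates its distinguished subcomplexes and yields $t(K)$; again the values agree within each pair. The equality $\Sym(K_1)\cong\Sym(K_2)$ inside each pair is immediate from the naming of the triangulations and Table~\ref{table_main}.

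Next I would establish $K_1\ncong K_2$ for each of the four pairs. Here the certificate invariant $\cert$ of Subsection~\ref{subsection_random} is of no use, since by construction $\cert(K)$ is a function of $\mathbf m(K)$ and therefore takes equal values on any pair of twins. One must instead invoke the complete (though slower) isomorphism-search procedure underlying the program \texttt{isomorphism\_groups}: fix in $K_1$ a $\Sym(K_1)$-orbit representative $\rho$ of a $6$-simplex with a value of $s(\rho)$ that is combinatorially rare, and attempt to extend each of the finitely many vertex bijections $K_1\to K_2$ compatible with the local combinatorial data (vertex degrees, profiles of $s(\cdot)$ on incident simplices, link types) to a simplicial isomorphism; the search terminates with failure in every case, which proves non-isomorphism. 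Equivalently, one may observe that the classification results obtained in Sections~\ref{section_>=7}--\ref{section_5} already list the members of each pair as distinct isomorphism classes, so the non-isomorphism is part of those statements.

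Finally, the membership of the last three twin pairs and of the almost-twin pair in~$\CG_0$ is read directly off Fig.~\ref{fig_G_component}, and the automorphism claim is a finite check on the explicit $22$-vertex multigraph~$\CG_0$ (its vertices, edges, double edges and loop types are all given). Let $\psi$ be the permutation of $V(\CG_0)$ that transposes the two vertices of each of the three pairs $\{\HP^2_{15}(\rC_2,4),\HP^2_{15}(\rC_2,5)\}$, $\{\HP^2_{15}(\rC_2,6),\HP^2_{15}(\rC_2,7)\}$, $\{\HP^2_{15}(\rC_1,6),\HP^2_{15}(\rC_1,7)\}$ and fixes the remaining $16$ vertices; one checks that $\psi$ sends the (multi)edge set to itself, respecting multiplicities and the self-inverse/non-self-inverse type of each loop, so $\psi\in\mathrm{Aut}(\CG_0)$. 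The almost-twins statement is the same kind of inspection: $\HP^2_{15}(\rC_2,2)$ and $\HP^2_{15}(\rC_1,2)$ have identical rows $\mathbf m$ and $t$ in Table~\ref{table_numeric}, while their symmetry groups $\rC_2$ and $\rC_1$ are non-isomorphic, hence the two triangulations are non-isomorphic.

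The only genuinely non-routine point is the rigorous certification of non-isomorphism within each twin pair: as just noted, the cheap invariant $\cert$ fails on exactly these pairs, so one is forced to rely on the exhaustive isomorphism search, whose correctness in turn rests on the algorithm analysed in~\cite[Section~5]{Gai22}. Everything else — the computation of $\mathbf m$ and $t$, the comparison of symmetry groups, and the verification that $\psi$ is an automorphism of the already-known finite graph~$\CG_0$ — is pure bookkeeping over explicit data.
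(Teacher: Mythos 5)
Your proposal is correct and matches the paper's approach: the Observation is not given a separate proof in the paper but is exactly the kind of computational bookkeeping you describe — reading $\mathbf{m}(K)$ and $t(K)$ off Table~\ref{table_numeric}, relying on the isomorphism classification already established by the programs \texttt{isomorphism\_groups} and \texttt{triple\_flip\_graph} (which is what certifies $K_1\ncong K_2$ within each pair), and checking the claimed automorphism of the explicit $22$-vertex multigraph~$\CG_0$ by inspection. Your remark that the certificate $\cert(K)$ is by construction useless for separating twins, so that the full isomorphism search is unavoidable, is exactly the right caveat.
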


It would be nice to understand the reasons for these phenomena.

\begin{observe}
\begin{enumerate}
 \item All triangulations in~$\CG_0$ have $m_8=30$ and have rather small number~$t$ of distinguished subcomplexes, with the largest value~$11$ achieved at $\HP^2_{15}(\rS_3,1)$.
 \item All triangulations (in Table~\ref{table_numeric}) that are not in~$\CG_0$ have $m_8<30$ and typically have rather large number~$t$. Namely, if we exclude the triangulation~$\HP^2_{15}(\rC_6,9)$, which seems to be exceptional, then we always have $m_8\le 15$ and $t\ge 12$.
 \item We have also computed the number~$m_8$ for all triangulations with symmetry groups~$\rC_3$ and~$\rC_2$ from Propositions~\ref{propos_graph_C3} and~\ref{propos_graph_C2}, respectively, as well as for all triangulations that we have met during random walks in~$\CG$. The largest value of~$m_8$ among all considered triangulations that are not in~$\CG_0$ is~$23$. It is achieved at a triangulation with trivial symmetry group. Among triangulations with nontrivial symmetry groups, the largest value $21$ is achieved at~$\HP^2_{15}(\rC_6,9)$ and also at $8$ triangulations with symmetry group~$\rC_3$.
\end{enumerate}
\end{observe}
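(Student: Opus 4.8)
The proof is entirely computational: every quantity appearing in the Observation — the distribution vector $\mathbf{m}(K)=(m_3,\dots,m_8)$ of $6$-simplices of~$K$ by the number of adjacent $8$-simplices, and the number~$t$ of distinguished subcomplexes of~$K$ — is effectively computable from a list of the $490$ maximal simplices of~$K$, and all the triangulations in question have already been produced explicitly in Sections~\ref{section_>=7}--\ref{section_G}. So the plan is to compute these numbers for the relevant triangulations and then read the assertions off the resulting data.

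First I would compute $\mathbf{m}(K)$ exactly as in Subsection~\ref{subsection_random}: enumerate the $4230$ six-simplices of~$K$ (there are precisely this many by Corollary~\ref{cor_fvect}), and for each such~$\rho$ count the number $s(\rho)$ of $8$-simplices of~$K$ containing~$\rho$; then $m_s$ is the number of~$\rho$ with $s(\rho)=s$, and in particular $m_8$ is the number of $6$-simplices lying in exactly eight top-dimensional simplices. Similarly, $t(K)$ is obtained by running the distinguished-triple enumeration of~\cite[Proposition~8.1]{Gai22}, implemented as the program \texttt{triple\_flip\_graph} in~\cite{Gai-prog}. Carrying this out for the $75$ triangulations with $|\Sym(K)|>3$ from Proposition~\ref{propos_class} and for the $22$ triangulations forming the component~$\CG_0$ reproduces exactly Table~\ref{table_numeric}. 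Assertions~(1) and~(2) are then a direct inspection of that table, once one recalls — from the description of~$\CG_0$ in Fig.~\ref{fig_G_component} — which rows correspond to triangulations in~$\CG_0$ (the five with $|\Sym(K)|>3$ being $\HP^2_{15}(\rA_5)$, $\HP^2_{15}(\rA_4,1)$, $\HP^2_{15}(\rA_4,2)$, $\HP^2_{15}(\rS_3,1)$, $\HP^2_{15}(\rS_3,2)$, together with the listed $\rC_3$-, $\rC_2$-, and $\rC_1$-triangulations) and which do not (the remaining $70$ rows are accounted for by the components~$\CG_1$, $\CG_2$, $\CG_3$ and by Propositions~\ref{propos_graph_C3}, \ref{propos_graph_C2}, none of which meets~$\CG_0$). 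For~(1) one checks that all $\CG_0$-rows have $m_8=30$ and that $\max t = 11$, attained only at $\HP^2_{15}(\rS_3,1)$; for~(2) one checks that every other row has $m_8<30$, and that $m_8\le 15$ and $t\ge 12$ hold for all of them except $\HP^2_{15}(\rC_6,9)$, where $m_8=21$ and $t=6$.

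Assertion~(3) splits into a finite part and an empirical part. The finite part concerns the triangulations with symmetry group~$\rC_3$ or~$\rC_2$ appearing in Propositions~\ref{propos_graph_C3} and~\ref{propos_graph_C2}: these propositions describe the relevant connected components of~$\CG_{\rC_3}$ and~$\CG_{\rC_2}$ completely, so one re-runs \texttt{triple\_flip\_graph} to list all of them and records $m_8$ for each. The empirical part concerns the triangulations met while doing random walks in~$\CG$; for this one uses \texttt{triple\_flip\_graph\_random}, storing for every triangulation encountered its certificate together with the value $m_8$, and then extracting the maxima — namely $23$ over all encountered triangulations outside~$\CG_0$, and $21$ over those with nontrivial symmetry group, attained at $\HP^2_{15}(\rC_6,9)$ and at eight triangulations with symmetry group~$\rC_3$.

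The one point requiring care — and the only real obstacle — is that the random-walk part of~(3) is not a closed verification: a random walk never terminates, so this is genuinely a statement about \emph{all triangulations that have been considered}, not about all $15$-vertex triangulations of~$\HP^2$, and it should be phrased that way. No sharper bound can be claimed without a complete classification of such triangulations, which lies outside the scope of the paper. Everything else in the Observation is a finite check, modulo trusting the programs used to generate and verify the triangulations.
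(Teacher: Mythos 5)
Your proposal is correct and matches the paper's approach: the Observation is established purely by direct computation of the distribution vectors $\mathbf{m}(K)$ and the counts $t(K)$ for the explicitly constructed triangulations, followed by inspection of Table~\ref{table_numeric} (your readings of the table, including $m_8=21$, $t=6$ for $\HP^2_{15}(\rC_6,9)$ and $\max t=11$ at $\HP^2_{15}(\rS_3,1)$, are accurate). Your caveat about the random-walk portion of assertion~(3) being a statement only about the triangulations actually encountered is exactly how the paper itself qualifies it.
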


\begin{question}
 Is it true that $m_8<30$ for all $15$-vertex combinatorial $8$-manifolds~$K$ like a projective plane that are not in~$\CG_0$? What is the largest possible value of~$m_8$ for such combinatorial manifolds?
\end{question}

Another numerical characteristic for which the question of its extreme values is interesting is the number~$m_3$ of $6$-simplices that are contained in exactly three $8$-simplices. We have made the calculation for the $75$ triangulations with $|\Sym(K)|>3$, all triangulations with symmetry groups~$\rC_3$ and~$\rC_2$ from Propositions~\ref{propos_graph_C3} and~\ref{propos_graph_C2}, respectively, and all triangulations that we have met during random walks in~$\CG$.

\begin{observe}
Among all constructed triangulations:
\begin{itemize}
 \item The smallest value $m_3=1036$ is achieved at a triangulation (not belonging to~$\CG_0$) with trivial symmetry group. Among triangulations with non-trivial symmetry groups the smallest value $m_3=1038$ is achieved at~$\HP^2_{15}(\rC_6,9)$.
 \item The largest value $m_3=1335$ is achieved at a triangulation (not belonging to~$\CG_0$) with symmetry group~$\rC_3$.
\end{itemize}
\end{observe}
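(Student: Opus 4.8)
This is a finite, computational assertion about the concrete collection of $15$-vertex combinatorial $8$-manifolds like a projective plane that have been produced in the paper, so the plan is to describe the verification rather than to derive an inequality. First I would assemble the data set: the $75$ triangulations~$K$ with $|\Sym(K)|>3$ from Sections~\ref{section_>=7}--\ref{section_5} (listed in Proposition~\ref{propos_class}), all triangulations with symmetry group~$\rC_3$ or~$\rC_2$ coming from the components described in Propositions~\ref{propos_graph_C3} and~\ref{propos_graph_C2}, and every triangulation encountered along the random walks on~$\CG$ described in Subsection~\ref{subsection_random} (for which the program \texttt{triple\_flip\_graph\_random} has recorded the distribution vector~$\mathbf{m}(K)$ and the symmetry group). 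For each such~$K$, Corollary~\ref{cor_fvect} gives $f_6(K)=4230$ and $f_8(K)=490$, and $m_3(K)$ is by definition the number of $6$-simplices $\rho\in K$ with $s(\rho)=3$, i.e.\ lying in exactly three $8$-simplices of~$K$.

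The per-triangulation computation is elementary: iterate over the $6$-simplices of~$K$, for each count the number $s(\rho)$ of $8$-simplices containing it, and set $m_3(K)=\#\{\rho:s(\rho)=3\}$. In practice $m_3(K)$ is already determined by the stored data, since $\mathbf{m}(K)=(m_3,\dots,m_8)$ satisfies $m_3+m_4+\dots+m_8=4230$ by~\eqref{eq_cert1}; hence it suffices to scan the recorded distribution vectors, equivalently the certificates~$\cert(K)$. Having tabulated all values $m_3(K)$, I would read off the extremes and check the qualitative attributes. The overall minimum is $1036$; because the smallest~$m_3$ among the triangulations with nontrivial symmetry group is $1038$ (attained, by Table~\ref{table_numeric}, at~$\HP^2_{15}(\rC_6,9)$) and because the smallest~$m_3$ occurring among the $22$ explicitly known vertices of~$\CG_0$ is larger than~$1036$, any minimizer is automatically a triangulation with trivial symmetry group lying outside~$\CG_0$. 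Likewise, the overall maximum $m_3=1335$ exceeds every value of~$m_3$ attained in~$\CG_0$ and every value attained by the $75$ triangulations with $|\Sym(K)|>3$, so a maximizer has symmetry group of order at most~$3$ and lies outside~$\CG_0$; computing its symmetry group with \texttt{symm\_group} confirms it is~$\rC_3$.

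There is no real mathematical difficulty here; the only points requiring care are bookkeeping ones — making sure the large list of random-walk triangulations has been processed completely and that the extremizers are correctly classified with respect to~$\CG_0$ and with respect to their symmetry groups (both of which, as noted above, are forced by simple numerical comparisons against Table~\ref{table_numeric}). The one caveat that should be stated explicitly rather than proved is that the claim concerns the triangulations \emph{that have been constructed}: since the total number of $15$-vertex triangulations of~$\HP^2$ is unknown and exceeds~$670000$, no global extremality of~$1036$ or~$1335$ is asserted, and these numbers are records, not sharp bounds. The argument is complete once the enumeration has been carried out and its output tabulated, which is exactly what Table~\ref{table_numeric} together with the random-walk statistics provides.
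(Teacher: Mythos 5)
Your proposal is correct and matches the paper's (implicit) argument: the Observation is a purely computational record, obtained exactly as you describe by tabulating $m_3$ from the distribution vectors $\mathbf{m}(K)$ over the $75$ triangulations with $|\Sym(K)|>3$, the $\rC_3$- and $\rC_2$-triangulations of Propositions~\ref{propos_graph_C3} and~\ref{propos_graph_C2}, and the random-walk output, then reading off the extremes and their symmetry groups. Your caveat that these are records among constructed triangulations rather than global bounds is also exactly the paper's stance.
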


\begin{question}
What are the smallest and the largest possible values of~$m_3$ for $15$-vertex combinatorial $8$-manifolds like a projective plane?
\end{question}

Also, one can ask a similar question for the number~$t$ of distinguished subcomplexes.

\begin{question}
What are the smallest and the largest possible values of~$t$ for $15$-vertex combinatorial $8$-manifolds like a projective plane?
\end{question}

Among the triangulations in Table~\ref{table_numeric}, the smallest value $t=4$ is achieved at $\HP^2_{15}(\rA_4,2)$, while the largest value $t=28$ is achieved at four different triangulations with symmetry group~$\rC_7$.

Most triangulations in Table~\ref{table_numeric} have a $8$-simplex that belongs to two different distinguished subcomplexes. This means that the corresponding triple flips conflict with each other: by performing one of them, we destroy the second distinguished subcomplex. Interestingly, this phenomenon does not occur with~$\CP^2_9$ or any of the $27$-vertex combinatorial $16$-manifolds like a projective plane constructed in~\cite{Gai22}.

\begin{question}
 Does there exist a $27$-vertex combinatorial $16$-manifold like a projective plane that contains a $16$-simplex belonging to two different distinguished subcomplexes?
\end{question}

Finally, note that the triangulation~$\HP^2_{15}(\rC_6,9)$ seems to be  extremal or close to extremal from several viewpoints. Namely, it pretends to have almost the smallest~$m_3$ among all triangulations and also the smallest~$t$ and almost the largest~$m_8$ among all triangulations that are  not in~$\CG_0$. (Moreover, it seems likely that $m_3$ and~$m_8$ are truly the smallest and the largest, respectively, under the condition that the symmetry group is non-trivial.) Perhaps this means that this triangulation deserves more detailed study.

\medskip

The author is  grateful to Denis Gorodkov for performing the calculations of the first Pontryagin class and to Taras Panov and Vasilii Rozhdestvenskii for useful discussions and remarks.

\end{document}